\tikzstyle{vertex}=[circle, draw, fill=blue, minimum size=3pt]
\newtheorem{theorem}{Theorem}[section]
\newtheorem*{theorem*}{Theorem}
\newtheorem{corollary}[theorem]{Corollary}
\newtheorem*{corollary*}{Corollary}
\newtheorem{proposition}[theorem]{Proposition}
\newtheorem*{proposition*}{Proposition}
\newtheorem{lemma}[theorem]{Lemma}
\theoremstyle{definition}
\newtheorem{definition}[theorem]{Definition} 
\newtheorem*{definition*}{Definition}
\newtheorem{example}[theorem]{Example}
\newtheorem*{example*}{Example}
\theoremstyle{remark}
\newtheorem{remark}[theorem]{Remark}
\numberwithin{equation}{section}
\newcommand{\Pow}[1]{2^{#1}}
\newcommand{\qdcl}{\cat{Cl_{A}}}
\newcommand{\sucl}{\cat{Cl_s}}
\newcommand{\suqdcl}{\cat{Cl_{sA}}}
\DeclareMathOperator{\colim}{colim}
\DeclareMathOperator{\St}{St}
\DeclareMathOperator{\mdist}{mdist}
\newcommand{\topsimp}[1]{|\Delta^{#1}|^{I_{\tau}}}
\newcommand{\indsimp}[1]{|\Delta^{#1}|^{J_{\top}}}
\newcommand{\disimp}[1]{|\Delta^{#1}|^{J_+}}
\newcommand{\gsimp}[1]{|\Delta^{#1}|^{J}}
\newcommand{\indcube}[1]{|\Box^{#1}|^{J_{\top}}}
\newcommand{\iindcube}[1]{|\Box^{#1}|^{(J_{\top},\boxplus)}}
\newcommand{\dicube}[1]{|\Box^{#1}|^{J_+}}
\newcommand{\idicube}[1]{|\Box^{#1}|^{(J_+,\boxplus)}}
\newcommand{\gcube}[1]{|\Box^{#1}|^{(J,\otimes)}}
\newcommand{\ttimesgcube}[1]{|\Box^{#1}|^{(J,\times)}}
\newcommand{\timesgcube}[1]{|\Box^{#1}|^{J}}
\newcommand{\gsimpchain}[2]{C^{J}_{#1}(#2)}
\newcommand{\indcubechain}[2]{C^{(J_{\top},\times)}_{#1}(#2)}
\newcommand{\dicubechain}[2]{C^{(J_+,\times)}_{#1}(#2)}
\newcommand{\gcubechain}[2]{C^{(J,\otimes)}_{#1}(#2)}
\newcommand{\gcubechainwhole}[2]{Q^{(J,\otimes)}_{#1}(#2)}
\newcommand{\gsimphom}[2]{H^{J}_{#1}(#2)}
\newcommand{\topcubehom}[2]{H^{(I_{\tau},\times)}_{#1}(#2)}
\newcommand{\itopcubehom}[2]{H^{(I_{\tau},\boxplus)}_{#1}(#2)}
\newcommand{\indcubehom}[2]{H^{(J_{\top},\times)}_{#1}(#2)}
\newcommand{\iindcubehom}[2]{H^{(J_{\top},\boxplus)}_{#1}(#2)}
\newcommand{\dicubehom}[2]{H^{(J_+,\times)}_{#1}(#2)}
\newcommand{\idicubehom}[2]{H^{(J_+,\boxplus)}_{#1}(#2)}
\newcommand{\gcubehom}[2]{H^{(J,\otimes)}_{#1}(#2)}
\newcommand{\rgcubehom}[2]{\tilde{H}^{(J,\otimes)}_{#1}(#2)}
\newcommand\new[1]{#1}
\newcommand{\cat}{\mathbf}
\newcommand{\abs}[1]{\lvert#1\vert}
\newcommand{\Id}{\mathbf{1}}
\newcommand{\isom}{\cong}
\newcommand{\incl}{\hookrightarrow}
\newcommand{\xto}{\xrightarrow}
\newcommand{\To}{\Rightarrow}
\newcommand{\R}{\mathbb{R}}
\newcommand{\Z}{\mathbb{Z}}
\newcommand{\N}{\mathbb{N}}
\newcommand{\eps}{\varepsilon}
\newcommand{\isomto}{\xrightarrow{\cong}}
\newcommand{\Cech}{\check{C}}
\newcommand{\transpose}{T}
\DeclareMathOperator{\im}{im}
\DeclareMathOperator{\VR}{VR}
\DeclareMathOperator{\qd}{A}
\DeclareMathOperator{\dc}{dc}
\DeclareMathOperator{\tr}{tr}
\DeclareMathOperator{\cosk}{cosk}
\DeclareMathOperator{\dist}{dist}
\DeclareMathOperator{\Sub}{Sub}
\title{Homotopy, homology, and persistent homology using closure spaces}
\subjclass[2020]{Primary 55N31; Secondary 54A05, 05C20}  
\author{Peter Bubenik \and Nikola Mili\'cevi\'c}
\tikzset{%
    symbol/.style={%
        draw=none,
        every to/.append style={%
            edge node={node [sloped, allow upside down, auto=false]{$#1$}}}
    }
}
\begin{document}

\begin{abstract}
  We develop persistent homology in the setting of \new{filtrations of} (\v{C}ech) closure spaces. Examples of \new{filtrations of} closure spaces include metric spaces, weighted graphs, weighted directed graphs, and \new{filtrations of} topological spaces. We use various products and intervals for closure spaces to obtain six homotopy theories, six cubical singular homology theories, and three simplicial singular homology theories. Applied to \new{filtrations of} closure spaces, these homology theories produce persistence modules. We extend the definition of Gromov-Hausdorff distance \new{from metric spaces to filtrations of} closure spaces and use it to prove that \new{any persistence module obtained from a homotopy-invariant functor on closure spaces is stable}. 
\end{abstract}

\maketitle

\setcounter{tocdepth}{1}
\tableofcontents

\section*{Introduction} \label{sec:intro} 

A primary tool in applied algebraic topology is persistent homology~\cite{elz:tPaS,MR2121296,oudot:book}, which is a key component of topological data analysis~\cite{MR3839171,Rabadan:2019}. For data encoded as a metric space, consider the cover by balls of a fixed radius $r$ and its nerve, called the \new{(intrinsic)} \v{C}ech complex, or the clique complex of the $1$-skeleton of this nerve, called the Vietoris-Rips complex. Allowing $r$ to vary, we obtain a filtered simplicial complex. Applying homology with coefficients in a field produces a persistence module. Under mild hypotheses, this persistence module has a complete invariant called a barcode or persistence diagram~\cite{Crawley-Boevey:2015,cdsgo:book,ccbds,cdso:geometric,Schmahl:2020}.

This approach uses simplicial homology, or, if one uses filtered topological spaces instead of filtered simplicial complexes, singular homology. An alternative approach to applied algebraic topology has  developed homology theories and homotopy theories for metric spaces~\cite{barcelo2014discrete}, graphs~\cite{barcelo2001foundations,dochtermann2009hom}, and directed graphs~\cite{grigor2014homotopy,dochtermann2023homomorphism}.
Digital images, the object of study of digital topology, are a special case of graphs and  their homology and homotopy theories have been defined as well  \cite{Boxer:1999,Lupton:2022}.
However, there is a more general axiomatization of topological spaces, namely Eduard \v{C}ech's closure spaces~\cite{vcech1966topological} (\cref{def:closure_spaces}), that includes topological spaces, graphs, and directed graphs as full subcategories, which suggests that these approaches may be combined~\cite{rieser2021vcech}.
Closure spaces are also called pretopological spaces.
Closure spaces have been used in
shape recognition~\cite{emptoz1983modele,frelicot1998pretopological},
image analysis~\cite{lamure1987espaces,bonnevay2009pretopological},
supervised learning~\cite{frank1996pretopological,frelicot1998pretopology}  
and complex systems modeling~\cite{ahat2009pollution,largeron2002pretopological}.

\subsection*{Our contributions}

We use closure spaces to port methods of algebraic topology to settings used in applied topology.
The category of closure spaces and continuous maps has, as full subcategories, the categories of topological spaces and continuous maps, graphs and graph homomorphisms, and directed graphs and directed graph homomorphisms.
  We use interval objects, an idea from abstract homotopy theory, to develop homotopy theories and homology theories for closure spaces.
  For example, we give a systematic development of homotopy theories and homology theories for graphs and directed graphs.
  We introduce a new setting for applied algebraic topology, the category of \new{filtrations of} closure spaces and natural transformations.
  This category has, as full subcategories, the categories of metric spaces, \new{filtrations of} topological spaces, weighted graphs, and weighted directed graphs.
  Each of our homology theories for closure spaces produces a persistent homology of \new{filtrations of} closure spaces.
  We give functorial definitions of Vietoris-Rips and (intrinsic) \v{C}ech complexes for closure spaces, which together with simplicial homology also produce persistent homologies of \new{filtrations of} closure spaces.
  We prove that all of these persistent homologies are stable; these results include and extend many existing stability theorems.
  For example, we obtain stability of many variants of persistent homology for metric spaces and for weighted directed graphs.

A closure space $(X,c)$, consists of a set $X$ and a closure operation $c$, which sends subsets of $X$ to subsets of $X$ and satisfies certain axioms (\cref{def:closure_spaces}).
Let $\cat{Cl}$ denote the category of closure spaces and continuous maps (\cref{def:continuous_functions}).

\subsubsection*{Homotopy}
 
We define homotopies between maps of closure spaces using cylinders, which are given by taking a
product operation (\cref{def:product_operator})
with an interval (\cref{def:interval_object}).
For examples of intervals, we have (\cref{def:interval_objects}): $I_{\tau}$, the unit interval with the topological closure; $J_{\top}$, the set $\{0,1\}$ with the indiscrete closure, 
in which $c(\{0\}) = \{0,1\}$ and $c(\{1\}) = \{0,1\}$;
and the Sierpinski space $J_{+}$,
in which $c(\{0\}) = \{0,1\}$ and $c(\{1\}) = \{1\}$.
For examples of product operations, we have $\times$, the (categorical) product (\cref{def:product_closure}), and $\boxplus$, the inductive product (\cref{def:inductive_product_closure}).

\begin{definition*}[\cref{def:homotopy}]
  For each interval $J$ and product operation $\otimes$ and each pair of closure spaces $X$ and $Y$ there is a equivalence relation $\sim_{(J,\otimes)}$, which we call \emph{$(J,\otimes)$ homotopy}, on the set of continuous maps from $X$ to $Y$.
\end{definition*}

For closure spaces $X$ and $Y$ we have a partial order on pairs $(J,\otimes)$, where $J$ is an interval and $\otimes$ is a product operation, given by $(J,\otimes) \leq (J',\otimes')$ if for all $f,g:X \to Y$, $f \sim_{(J,\otimes)} g$
implies that
$f \sim_{(J',\otimes')} g$.
We consider numerous intervals and reduce to the following poset, which is independent of the choices of $X$ and $Y$.

\begin{theorem*}[\cref{cor:homotopy-poset,thm:homotopy-poset}]
  There is a poset of
  intervals and product operations with
  distinct non-trivial homotopy relations given by
  the following Hasse diagram.
  \begin{equation}  \label{cd:hasse}
    \begin{tikzcd}[every arrow/.append style={dash},row sep=2ex]
      & (I_{\tau},\boxplus) \ar[dr] \ar[dl] \\
      (I_{\tau},\times) \ar[d] & & (J_+,\boxplus) \ar[dll] \ar[d]\\
      (J_+,\times) \ar[dr] & & (J_{\top},\boxplus) \ar[dl]\\
      & (J_\top,\times)
    \end{tikzcd}
  \end{equation}
\end{theorem*}

\subsubsection*{Homology}

We study cubical and simplicial singular homology theories for closure spaces.

\begin{definition*}[\cref{def:cube,def:singular_cubes}]
  For each pair consisting of an interval $J$ and a closure operation $\otimes$ there is a corresponding cubical singular homology theory.
\end{definition*}

\begin{theorem*}[\cref{theorem:homotopy_invariance_of_homology}]
  Each of the these cubical homology theories is invariant with respect to the corresponding homotopy relation.
\end{theorem*}

\begin{definition*}[\cref{def:simplices,def:singular_simplices}]
  For each interval $J$ and the (categorical) product, $\times$, there is a corresponding simplicial singular homology theory.
\end{definition*}

\subsubsection*{Persistent homology}

We develop a new framework for applied algebraic topology using
\new{\emph{filtrations of closure spaces}, given by diagrams of monomorphisms of closure spaces (\cref{def:filtered-closure-space}).
An example of such a diagram is a \emph{filtered closure space}, which consists of a closure space $(X,c)$ and
a family of closure spaces $\{(X_a,c_a)\}_{a \in \R}$ such that 
for $a \leq b$, $X_a \subset X_b \subset X$ and for all $A \subset X$, $c_a(A) \subset c_b(A) \subset c(A)$.
Let $\cat{FCl}$ denote the category of \new{filtrations of} closure spaces and natural transformations,
and let $\cat{F^{\subset} Cl}$ denote the full subcategory of filtered closure spaces.}

Let $\cat{Met}$ denote the category of metric spaces and $1$-Lipschitz (i.e. non-expansive) maps.
A weighted graph is a graph in which each vertex and each edge has a real number called its weight, such that the weight of an edge is not less than the weight of its boundary vertices.
Let $\cat{wGph}$ denote the category of weighted simple graphs and non-weight-increasing graph homomorphisms.
Similarly, we have the category $\cat{wDiGph}$ of weighted simple directed graphs.

\begin{proposition*}[\cref{prop:embeddings-R}]
  We have the following full embeddings of categories
  \begin{equation*}
    \cat{Met} \incl \cat{wGph} \incl \cat{wDiGph} \incl \cat{FCl},
  \end{equation*}
  \new{where an embedding is a faithful functor that is injective on objects.}
\end{proposition*}





If we apply any of our cubical or simplicial singular homology theories elementwise to a \new{filtration of} closure spaces we obtain a persistence module (\cref{sec:pm}).
%
  \new{So,} for each of our cubical and simplicial singular homology theories and for each of the categories $\cat{Met}$,
  \new{$\cat{wGph}$},
  $\cat{w DiGph}$, 
  $\cat{FCl}$, we have a functor to 
  the category of persistence modules.

\subsubsection*{Stability}

For \new{filtrations of} closure spaces, we define a distance $d_{GH}$, which we call the Gromov-Hausdorff distance
(\cref{def:GH-FRCl}).

\begin{theorem*}[\cref{thm:gh-metric}]
  The Gromov-Hausdorff distance for \new{filtrations of} closure spaces is reflexive, symmetric, and satisfies the triangle inequality.
\end{theorem*}

This distance generalizes the usual Gromov-Hausdorff distance for metric spaces.

\begin{theorem*}[\cref{thm:gh}]
     There is an isometric embedding $(\cat{Met},d_{GH}) \incl (\cat{F Cl},d_{GH})$.
\end{theorem*}

\new{We prove the following stability theorem, which generalizes many of the stability theorems in the persistence literature.}

\new{
\begin{theorem*}[\cref{thm:stability}]
    Let $X$ and $Y$ be filtrations of closure spaces. 
    Let $H$ be a functor that is homotopy invariant for one of the homotopy theories in \eqref{cd:hasse}.
    Then \[d_I(HX,HY) \leq 2d_{GH}(X,Y),\]
    where $d_I$ denotes the interleaving distance (\cref{def:interleaving_distance}).
\end{theorem*}
}

\new{The results of this section are based on our definition of $\eps$-correspondence for filtered closure spaces (\cref{def:eps-multivalued-map}).
The stability theorem above is a consequence of the following sharper result.}

\new{
\begin{theorem*}[\cref{thm:stability-new}]
    Let $X$ and $Y$ be filtered closure spaces. 
    Let $H$ be a functor that is homotopy invariant for one of the homotopy theories in \eqref{cd:hasse}.
    If there is a $\eps$-correspondence between $X$ and $Y$ then
    $HX$ and $HY$ are $\eps$-interleaved.
\end{theorem*}}

\subsubsection*{Vietoris-Rips and \v{C}ech complexes}

Let $\cat{Simp}$ denote the category of (abstract) simplicial complexes and simplicial maps.
  We generalize the Vietoris-Rips complex and \new{(intrinsic)} \v{C}ech complex constructions to define functors $\VR, \Cech: \cat{Cl} \to \cat{Simp}$ (\cref{def:vr,def:cech}).
  We use \new{the closed}
  stars of simplices 
  to define a functor $\St: \cat{Simp} \to \cat{Cl}$ (\cref{def:star,def:G}).
  Let $\cat{Gph}$ denote the category of simple graphs and graph homomorphisms.
  
\begin{theorem*}[\cref{prop:adjunction-vr-g,theorem:cech_doesnt_have_adjoint,thm:adjunctions}]
  The Vietoris-Rips functor $\VR$ has left adjoint the star functor $\St$.
This adjunction factors through $\cat{Gph}$.
  \begin{equation*}
    \begin{tikzcd}
    \VR: \cat{Cl} \ar[r,shift right=1ex,"\bot"] &
      \cat{Gph} \ar[l,shift right=1ex] \ar[r,shift right=1ex,"\bot"] & \cat{Simp} : \St \ar[l,shift right=1ex]
    \end{tikzcd}
  \end{equation*}
  In contrast, the \v{C}ech functor, $\Cech$ does not have a left or right adjoint.
\end{theorem*}

If we apply the Vietoris-Rips or \v{C}ech functors elementwise to a filtered closure space, we obtain a filtered simplicial complex.
Applying simplicial homology we obtain a persistence module. The stability of this persistence module and its persistence diagram
\new{(\cref{thm:stability-rips} follow from the stability theorem above and the following result}.

\begin{theorem*}[\cref{proposition:homotopy_and_contiguity}]
The functors $\VR$ and $\Cech$ send one-step $(J_{\top},\times)$-homotopic maps to contiguous simplicial maps.
  Conversely, the functor $\St$ sends contiguous simplicial maps to one-step $(J_{\top},\times)$-homotopic maps.
\end{theorem*}

\subsubsection*{Application to graphs and directed graphs}

Recall that simple graphs and simple directed graphs are full subcategories of closure spaces. 
Simple (directed) graphs have the (di)graph product (\cref{def:regular_digraph_product}) and the cartesian product (\cref{def:digraph_product}).

\begin{proposition*}[\cref{prop:regular_digraph_product_is_product,prop:digraph_product_is_inductive_product}]
  Restricted to simple (directed) graphs, the product and the inductive product of closure spaces are the (di)graph product and the \new{(di)graph} cartesian product, respectively.
\end{proposition*}

Our homotopy theories and cubical singular homology theories are of interest in the special cases of graphs and directed graphs and some of these have been previously studied (\cref{lemma:restriction_of_homotopies,lemma:restricting_homology}).
As observed above, these homology theories produce persistent homology theories for weighted graphs and weighted directed graphs.

\subsection*{Related work}

Antonio Rieser \cite{rieser2021vcech}
and Demaria and Bogin \cite{demaria1984homotopy}
used the unit interval and the categorical product to define a homotopy theory for closure spaces. 
The latter also used this interval and product to define a simplicial singular homology theory for closure spaces.
Our work is particularly indebted to \cite{rieser2021vcech}.
 
Babson, Barcelo, de Longueville, Kramer, Laubenbacher, and Weaver used the simple graph with one edge as an interval together with the inductive/cartesian product on graphs to define the discrete homotopy theory of simple graphs, which they first called $A$-theory~
\cite{barcelo2001foundations,barcelo2005perspectives,babson2006homotopy}.
Barcelo, Capraro and White \cite{barcelo2014discrete}
used the same interval and product to define a cubical singular homology theory.
They also observed that as a special case, these homotopy and homology theories may be applied to metric spaces at a fixed scale.
Dochtermann~\cite{dochtermann2009hom} used the same interval and the categorical product to define a homotopy theory for simple graphs.
A special case of these homotopy theories, in which the underlying set is a finite subset of the lattice $\Z^n$, is studied in digital topology~\cite{Boxer:1999,Lupton:2022}.
Grigor'yan, Lin, Muranov, and Yau~\cite{grigor2014homotopy} used the simple directed graph with a single directed edge as an interval together with the inductive/cartesian product on directed graphs to define a homotopy theory of simple directed graphs. 
Dochtermann and Singh~\cite{dochtermann2023homomorphism} used the same interval with the categorical product to define a homotopy theory for simple directed graphs.

  In a companion paper~\cite{bubenik2021eilenberg},
  we use acyclic models to show that for an interval $J$ and the categorical product $\times$, corresponding cubical and simplicial singular homology theories agree.
  In fact, we show that their underlying chain complexes are chain homotopic.
    We also show that they satisfy a Mayer-Vietoris property and the excision property.
    Rieser~\cite{rieser2020vietoris} and Palacios~\cite{vela2019homology} have previously defined Vietoris-Rips homology and \v{C}ech homology, respectively for closure spaces.

Our results on stability with respect to Gromov-Hausdorff distance are indebted to
Chazal, de Silva, and Oudot~\cite{cdso:geometric}.
  Correspondences have also been defined and used by Segarra for finite edge-weighted digraphs~\cite{segarra2016metric} and by Chowdhury and M\'{e}moli for finite weighted digraphs~\cite{chowdhury2018functorial}.
  Turner has studied filtered simplicial complexes and persistence modules obtained from weighted digraphs~\cite{turner2019rips}.


\section{Closure spaces}
\label{section:background} 

In this section we provide background on Eduard \v{C}ech's closure spaces \cite{vcech1966topological}.
To start, we give three equivalent definitions of closure spaces using closures, interiors, and neighborhoods.
In each case, with an additional axiom we obtain topological spaces.
For any set $X$, let $\Pow{X}$ denote the collection of subsets of $X$.

\subsection{Closures}

We begin by defining a closure operator, which assigns subsets to subsets.

\begin{definition}
\label{def:closure_spaces}
Let $X$ be a set.
A function $c:\Pow{X}\to \Pow{X}$ is called a
  \emph{closure} or \emph{closure operator}
on $X$ if the following axioms are satisfied for all $A,B \subset X$:
\begin{enumerate}
\item (grounded) $c(\varnothing)=\varnothing$,
\item (extensive) $A\subset c(A)$,
\item \new{(additive)  $c(A \cup B) = c(A) \cup c(B)$.}
\end{enumerate}
\new{The ordered pair $(X,c)$ is called a (\emph{\v{C}ech}) \emph{closure space}.}
\new{Note that by additivity, for $A\subset B \subset X$, we have $c(A\cup B)=c(B)=c(A)\cup c(B)$, from which we get monotonicity, $c(A)\subset c(B)$.}

Subsets $A \subset X$ for which $c(A)=A$ are called \emph{closed}.
If in addition we have the following axiom for all $A \subset X$,
  \begin{enumerate}
      \setcounter{enumi}{3}
  \item (idempotent) $c(c(A)) = c(A)$,
  \end{enumerate}
  then we call $c$ a \emph{Kuratowski} closure \new{operator}.
\end{definition}

\begin{example}
\label{example:discrete_and_indiscrete_closures}
Let $X$ be a set.
The identity map $\mathbf{1}_{\Pow{X}}:\Pow{X}\to \Pow{X}$ is a closure operator on $X$, called the \emph{discrete} closure on $X$.
The closure operator defined by the map $A\mapsto X$ for $A\neq \varnothing$ and $\varnothing\mapsto\varnothing$ is called the \emph{indiscrete} closure on $X$.
These are both Kuratowski \new{closure operators}.
\end{example}

\begin{example} \label{example:closure_space_not_a_topological_space}
Consider $\R^n$  with the euclidean metric $d$. Let $r \ge 0$ and let $c_r$ be the closure on $\R^n$ defined by $c_r(A) = \{x\in \R^n\, |\, d(x, A)\le r\}$ for $A\subset \R^n$,  where $d(x,A) = \inf_{y\in A}d(x,y)$.
Then $c_0$ is \new{a} Kuratowski \new{closure operator} and for $r > 0$, $c_r$ is not \new{a} Kuratowski \new{closure operator}.
\end{example}

\new{
\begin{example}
  Let $(X,c)$ be a closure space and let $A \subset X$.
  For $B \subset A$, define $c_A(B) = c(B) \cap A$.
  Then $(A,c_A)$ is a closure space called a \emph{subspace} of $(X,c)$.
\end{example}
}

\new{Next, we consider morphisms of closure spaces.}

\new{
\begin{definition}{\cite[16.A.1]{vcech1966topological}}
\label{def:continuous_functions}
Let $(X,c_X)$ and $(Y,c_Y)$ be closure spaces.
A \emph{continuous map} $f:(X,c_X) \to (Y,c_Y)$ is a function $f:X \to Y$ such that for every $A\subset X$, $f(c_X(A))\subset c_Y(f(A))$.
A continuous map $f$ is called a \emph{homeomorphism} if $f$ is a bijection and has a continuous inverse.
\end{definition}}

\new{
For closure spaces $(X,c_X)$ and $(Y,c_Y)$, any function $f:X \to Y$ is continuous if $c_X$ is the discrete closure or if $c_Y$ is the indiscrete closure.}

\new{
\begin{lemma}{\cite[16.A.3]{vcech1966topological}}
\label{prop:composition_of_cont_maps}
The composition of continuous maps is continuous.
\end{lemma}}

\new{
\begin{definition} \label{def:closure_space_category}
Let $\cat{Cl}$ denote the category of closure spaces and continuous maps.
\end{definition}}

\new{
The initial object in $\cat{Cl}$ is the empty set with its unique closure and the terminal object in $\cat{Cl}$ is the one point set $*$ with its unique closure. Both of these are Kuratowski closure spaces.
The collection of closures on a set has a partial order, which is induced by the partial order on the collection of subsets given by inclusion.}

\new{
\begin{definition}
\label{def:coarser_finer_closure_operators}
Let $X$ be a set. Suppose $c_1$ and $c_2$ are two closures on $X$.
We write $c_1\le c_2$ and say that $c_1$ is \emph{finer} than $c_2$
and $c_2$ is \emph{coarser} than $c_1$ if $c_1(A)\subset c_2(A)$ for all $A\subset X$. 
\end{definition}}

\new{
Observe that $c_1$ is finer than $c_2$ if and only if the identity map $\mathbf{1}_X:(X,c_1)\to (X,c_2)$ is continuous.
Under the relation $\leq$, the collection of closure operators on a set $X$ forms a poset with initial element the discrete closure and terminal element the indiscrete closure.}

\subsection{Interiors}

Dual to a closure we have the following.

\begin{definition}
\label{def:interior}
Let $X$ be a set.
A function $i:\Pow{X}\to \Pow{X}$ is called an \emph{interior (operator)}
on $X$ if the following axioms are satisfied for all $A,B \subset X$:
\begin{enumerate}
\item (grounded) $i(\varnothing)=\varnothing$,
\item (intensive) $i(A) \subset A$,
\item \new{(additive) $i(A\cap B)=i(A) \cap i(B)$.}
\end{enumerate}
\new{Note that by additivity, for all $A\subset B \subset X$, $i(A \cap B)=i(A)=i(A)\cap i(B)$, from which we get monotonicity, $i(A)\subset i(B)$.}
Subsets $A \subset X$ for which $i(A)=A$ are called \emph{open}.
If in addition, we have the following axiom for all $A \subset X$,
  \begin{enumerate}
      \setcounter{enumi}{3}
  \item (idempotent) $i(i(A)) = i(A)$,
  \end{enumerate}
  then we call $i$ a \emph{Kuratowski} interior \new{operator}.
\end{definition}

\begin{proposition}{\cite[Proposition 2.6]{rieser2021vcech}}
  For a set $X$, let $n: \Pow{X} \to \Pow{X}$ be the \emph{complement} operator given by $n(A) = X \setminus A$, for $A \subset X$. Then a closure $c$ has a corresponding interior $i$, and vice versa, under the following relationships,
  \begin{equation*}
    i = ncn, \quad \text{and} \quad c = nin.
  \end{equation*}
\end{proposition}

\new{From the previous proposition it follows that a subset in a closure space is closed iff its complement is open and vice versa.
  Furthermore, $c$ is a Kuratowski closure \new{operator} iff $i$ is a Kuratowski interior \new{operator}.}

%
%

\subsection{Neighborhoods}

\new{Hausdorff defined topological spaces using neighborhoods~\cite[p.\ 213]{Hausdorff:1914}. It is now standard to omit his last axiom (the Hausdorff axiom). If we also omit his second last axiom, then we obtain what we call neighborhood spaces, which we will show in the next section are equivalent to \v{C}ech's closure spaces.}
Neighborhood functions assign each element a collection of subsets.

\begin{definition} \label{def:neighborhood}
  Let $X$ be a set.
  Let $\mathcal{N}: X \to \Pow{\Pow{X}}$ be a function such that for all $x \in X$,
  \begin{enumerate}
  \item (nonempty) $\mathcal{N}(x) \neq \varnothing$,
  \item (contains $x$) for all $A \in \mathcal{N}(x)$, $x \in A$,
  \item (upward closed) if $A \in \mathcal{N}(x)$ and $A \subset B$ then $B \in \mathcal{N}(x)$, and
  \item (closed under binary intersections) if $A,B \in \mathcal{N}(x)$ then $A \cap B \in \mathcal{N}(x)$.
  \end{enumerate}
  The last two conditions say that $\mathcal{N}(x)$ is a \emph{filter}.
  Thus we have that $\mathcal{N}$ specifies for each $x \in X$ a nonempty filter, each of whose sets contains $x$.
  For $x \in X$, call $A \in \mathcal{N}(x)$ a \emph{neighborhood} of $x$ and call $\mathcal{N}(x)$ a \emph{neighborhood filter}.
  Call $\mathcal{N}$ a \emph{collection of neighborhood filters} and call $(X,\mathcal{N})$ a \emph{neighborhood space}.
  If in addition, the following axiom is satisfied,
  \begin{enumerate}
      \setcounter{enumi}{4}
  \item if $A \in \mathcal{N}(x)$ then there exists a $B \in \mathcal{N}(x)$ with $B \subset A$ such that for all $y \in B$, there exists $C \in \mathcal{N}(y)$ such that $C \subset B$.
  \end{enumerate}
  then call $(X,\mathcal{N})$ a \emph{topological space}~\cite[p.\ 213]{Hausdorff:1914}.
\end{definition}

\begin{lemma} \label{cor:nghd-base}
  Given a set $X$, we may define a unique neighborhood space by specifying for each $x \in X$, a \emph{base} for a neighborhood filter. That is, a nonempty collection $\mathcal{B}(x)$ of subsets of $X$ such that each $U \in \mathcal{B}(x)$ contains $x$ and if $U,V \in \mathcal{B}(x)$ then there exists a $W \in \mathcal{B}(x)$ such that $W \subset U \cap V$.
\end{lemma}

\begin{definition}
    Let $(X,\mathcal{N})$ and $(Y,\mathcal{M})$ be neighborhood spaces.
    A \emph{continuous} map $f:(X,\mathcal{N}) \to (Y,\mathcal{M})$ is a function $f:X \to Y$ such that for all $x \in X$ and for all $A \in \mathcal{M}(f(x))$, $f^{-1}(A) \in \mathcal{N}(x)$.
Equivalently, $f$ is continuous if and only if for each $x \in X$ and for each $A \in \mathcal{M}(f(x))$ there is a $B \in \mathcal{N}(x)$ such that $f(B) \subset A$.
\end{definition}

If we have a base $\mathcal{B}$ for $\mathcal{N}$ and a base $\mathcal{C}$ for $\mathcal{M}$ then $f$ is continuous iff for each $x \in X$ and for each $A \in \mathcal{C}(f(x))$ there is a $B \in \mathcal{B}(x)$ such that $f(B) \subset A$.

\subsection{Correspondence of closures and neighborhoods}

For topological spaces, closures and interiors may be defined using neighborhoods and vice versa; the same is true for closure spaces.

\begin{definition} \label{def:cl-nghd}
  Let $X$ be a set.
  Given a closure $c$ and corresponding interior $i$, define $\mathcal{N}:X \to \Pow{\Pow{X}}$ by
  \begin{align*}
    \mathcal{N}(x) &= \{A \subset X \ | \ x \in i(A)\} \\
                   &= \{A \subset X \ | \ x \not\in c(X \setminus A)\}.    
  \end{align*}
  Given a collection of neighborhood filters $\mathcal{N}$, define $i,c: \Pow{X} \to \Pow{X}$ by
  \begin{gather*}
      i(A) = \{x \in X \ | \ \exists U \in \mathcal{N}(x) \text{ such that } U \subset A\}, \\
    c(A) = \{x \in X \ | \ \forall U \in \mathcal{N}(x), U \cap A \neq \varnothing\}.
  \end{gather*}

\end{definition}

\begin{proposition}{\cite[16.A.4, 16.A.5]{vcech1966topological}}
  The constructions in \cref{def:cl-nghd} together with the identity map on functions define an isomorphism of categories between closure spaces and continuous maps and neighborhood spaces and continuous maps.
  Furthermore, this isomorphism restricts to an isomorphism of the full subcategories of Kuratowski closure spaces and topological spaces.
\end{proposition}

We will use this isomorphism of categories implicitly.
For example, we will consider topological spaces to be synonymous with Kuratowski closure spaces.
%
Under this correspondence, a neighborhood $U$ of $x \in X$ is open iff $i(U)=U$ iff for all $y \in U$ there is a neighborhood $V$ of $y$ such that $V \subset U$.
Therefore, if in \cref{cor:nghd-base}, for each $x \in X$, $\mathcal{B}(x)$ consists of open neighborhoods, then we obtain a topology on $X$.

\subsection{Topological spaces}

Topological spaces are special cases of closure spaces and for each closure space there is a canonical topological space.

\begin{definition} \label{def:top}
  Let $\cat{Top}$ denote the full subcategory of $\mathbf{Cl}$ whose objects are Kuratowski closure spaces. That is, $\cat{Top}$ is the category of topological spaces and continuous maps.
\end{definition}

\begin{lemma}{\cite[16.A.10]{vcech1966topological}}
\label{theorem:continuous_maps_of_topological_spaces}
Let $(X,\new{c_X}) \in \cat{Cl}$ and $(Y,\new{c_Y}) \in \cat{Top}$.
A function $f:X \to Y$ is a continuous map if and only if the inverse image of every open set is open.
\end{lemma}

\begin{lemma}{\cite[16.B.1-16.B.3]{vcech1966topological}}
\label{prop:topological_modification}
Let $(X,c)$ be a closure space.
For $A \subset X$, define $\tau(c)(A)$ to be the intersection of all closed sets containing $A$.
Then $\tau (c)$ is
is the finest Kuratowski closure \new{operator} coarser than $c$, and is called the \emph{topological modification of $c$}.
\end{lemma}

\begin{proposition}{\cite[16.B.4]{vcech1966topological}}
\label{prop:adjoints_between_closures_and_topologies}
Let $(X,c)$ be a closure space and let $(Y,\tau)$ be a Kuratowski closure space.
Consider a function $f:X\to Y$. Then $f:(X,c)\to (Y,\tau)$ is continuous if and only if $f:(X,\tau(c))\to (Y,\tau)$ is continuous. That is, there exists a natural bijection between the sets of morphisms
\[\mathbf{Cl}((X,c),(Y,\tau))\cong \mathbf{Top}((X,\tau(c)),(Y,\tau)).\]
Equivalently, $\tau$ is the left adjoint to the inclusion functor $\iota:\mathbf{Top}\to \mathbf{Cl}$.
\end{proposition}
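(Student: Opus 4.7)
The plan is to establish the bijection as the identity on underlying set-functions $f\colon X\to Y$, showing in both directions that such an $f$ is continuous as a map $(X,c)\to(Y,\tau)$ of closure spaces if and only if it is continuous as a map $(X,\tau(c))\to(Y,\tau)$. Once this pointwise equivalence is in hand, naturality in $X$ and $Y$ is automatic: pre- and post-composition with continuous maps in either category agrees with ordinary function composition, so the bijection commutes with the induced maps on hom-sets. The final claim, that $\tau$ is left adjoint to $\iota$, is then just the definition of an adjunction in terms of natural hom-set bijections.

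For the easy direction, assume $f\colon(X,\tau(c))\to(Y,\tau)$ is continuous. By \cref{prop:topological_modification}, $\tau(c)$ is coarser than $c$, so by the identity-map characterization at the end of \cref{def:coarser_finer_closure_operations}, the map $\mathbf{1}_X\colon(X,c)\to(X,\tau(c))$ is continuous. Composing with $f$ via \cref{prop:composition_of_cont_maps} yields continuity of $f\colon(X,c)\to(Y,\tau)$.

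For the converse, assume $f\colon(X,c)\to(Y,\tau)$ is continuous. Since $(Y,\tau)$ is topological, I will test continuity of $f\colon(X,\tau(c))\to(Y,\tau)$ on inverse images of $\tau$-closed sets by \cref{theorem:continuous_maps_of_topological_spaces}. Fix a $\tau$-closed $F\subset Y$; the same theorem applied to $f\colon(X,c)\to(Y,\tau)$ gives that $f^{-1}(F)$ is $c$-closed, i.e.\ $c(f^{-1}(F))=f^{-1}(F)$. Using the explicit formula from \cref{prop:topological_modification},
\[\tau(c)(f^{-1}(F))=\bigcap\{G\subset X\mid c(G)=G,\ f^{-1}(F)\subset G\},\]
the set $f^{-1}(F)$ itself appears as one of the intersected sets, so $\tau(c)(f^{-1}(F))\subset f^{-1}(F)$. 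Combined with axiom 2 from \cref{def:closure_spaces}, this gives equality, i.e.\ $f^{-1}(F)$ is $\tau(c)$-closed, and continuity of $f\colon(X,\tau(c))\to(Y,\tau)$ follows by invoking \cref{theorem:continuous_maps_of_topological_spaces} once more.

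The only step requiring any care is this second direction, where one must observe that $c$-closedness of $f^{-1}(F)$ immediately forces $\tau(c)$-closedness via the intersection formula for $\tau(c)$. Everything else—naturality, and the promotion of the pointwise bijection to the adjunction $\tau\dashv\iota$—reduces to unwinding definitions.
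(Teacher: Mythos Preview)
Your proof is correct. Note that the paper does not supply its own proof of this proposition; it simply cites \v{C}ech \cite[16.B.4]{vcech1966topological}. Your argument is a clean self-contained verification using exactly the tools the paper has set up: \cref{prop:topological_modification} for the description of $\tau(c)$ and the fact that it is coarser than $c$, \cref{def:coarser_finer_closure_operations} and \cref{prop:composition_of_cont_maps} for the easy direction, and \cref{theorem:continuous_maps_of_topological_spaces} for the closed-set criterion in the other direction. The key observation---that a $c$-closed set is automatically $\tau(c)$-closed because it appears in the intersection defining $\tau(c)$---is exactly right.
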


\subsection{Covers and pasting}

Similarly to topological spaces, we have a pasting lemma.

A \emph{cover} of a closure space $(X,c)$ is a family of subsets of $X$ 
whose union is $X$.
  A cover
  is called \emph{locally finite} if each $x\in X$ has a neighborhood intersecting only finitely many
  elements of the cover.

\begin{theorem}{\cite[17.A.16]{vcech1966topological}}
\label{theorem:pasting_lemma_for_closure_spaces}
Let $(X,\new{c_X})$ and $(Y,\new{c_Y})$ be closure spaces and let $\{U_{\alpha}\,|\,\alpha\in A\}$ be a locally finite cover of $(X,\new{c_X})$. Let $f:X \to Y$ be a map of sets. If $f|_{\new{c_X}(U_{\alpha})}:(\new{c_X}(U_{\alpha}),c_{\new{c_X}(U_{\alpha})}) \to (Y,\new{c_Y})$ is continuous for each $\alpha\in A$, then $f:(X,\new{c_X})\to (Y,\new{c_Y})$ is continuous.
\end{theorem}

\subsection{Elementary examples}

We will use some of these examples in \cref{section:homotopy,section:homology}.
Let $m \geq 0$.

\begin{definition}
\label{def:interval_objects}
\begin{enumerate}[wide,
labelindent=0pt]
\item Let $I_{\tau}$  denote the unit interval $[0,1]$ with the Kuratowski closure \new{operator} given the closure $c_0$ from \cref{example:closure_space_not_a_topological_space}. This closure corresponds to the standard topology on the unit interval.
\item Let $J_{m,\bot}$ denote the set $\{0,\dots, m\}$ with the discrete closure. As a special case, set $J_{\bot} = J_{1,\bot}$.
\item Let $J_{m,\top}$ denote the set $\{0,\ldots,m\}$ with the indiscrete closure. As a special case, set $J_{\top} = J_{1,\top}$.
\item Let $J_m$ denote the set $\{0,\dots ,m\}$ with the closure operator $c(i)=\{j\in \{0,\dots, m\}\,|\, |i-j|\le 1\}$. Note that $J_1 = J_{\top}$.
\item Let $J_+$ denote the set $\{0,1\}$ with the closure operator $c_+(0)=\{0,1\}$, $c_+(1)=\{1\}$. Let $J_{-}$ denote the set $\{0,1\}$ with the closure operator $c_-(0)=\{0\}$, $c_-(1)=\{0,1\}$.
These are homeomorphic Kuratowski closure spaces.
In fact, $J_{-}$ is the \emph{Sierpinski space}, which has open sets $\varnothing$, $\{1\}$, and $\{0,1\}$.
\item
For each $0 \leq k \leq 2^m-1$ we define a closure operator $c_k$ on the set $\{0,\ldots,m\}$
as follows.
Consider the binary representation of $k$.
For $1 \leq i \leq m$, $i-1$ is contained in $c_k(i)$ iff the $i$th lowest order bit is $0$ and
$i$ is contained in $c_k(i-1)$ iff the $i$th lowest order bit is $1$.
  We denote this closure space by $J_{m,k}$.
  Note that $J_{1,1} = J_{+}$ and $J_{1,0} = J_{-}$.
\item Let $J_{m,\leq}$ denote the set $\{0,1,\dots ,m\}$ with the Kuratowski closure operator $c(i)=\{j\,\mid\, i\le j\}$.  The open sets are the down-sets. Note that $J_{1,\leq} = J_+$.
\end{enumerate}
\end{definition}

 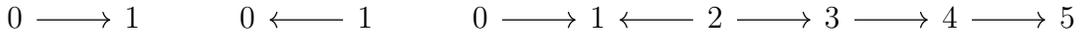
\begin{figure}[H]
  \begin{tikzcd}
    0 \ar[r] & 1\\
  \end{tikzcd} \quad \quad
  \begin{tikzcd}
    0 & 1 \ar[l]\\
  \end{tikzcd} \quad \quad
  \begin{tikzcd}
    0 \ar[r] & 1 & 2 \ar[l] \ar[r] & 3 \ar[r] & 4 \ar[r] & 5\\
  \end{tikzcd}
  \caption{Representations of the closure spaces $J_+$ (left), $J_-$ (middle), and $J_{5,29}$ (right).
    The head of the arrow is contained in the closure of the tail of the arrow. Note that 29 in binary is 11101}
 \label{fig:closure_space_examples}
 \end{figure}

\subsection{Products and inductive products}

Closure spaces have two canonical products, the product and the inductive product, which we describe below.

\begin{definition}{\cite[17.C.1 and 17.C.3]{vcech1966topological}}
\label{def:product_closure}
Let $\{(X_{\alpha},c_{\alpha})\}_{\alpha\in A}$ be a collection of closure spaces.
For $\alpha \in A$, 
let $\mathcal{N}_{\alpha}$ denote the corresponding collection of neighborhood filters, or more generally, a collection of neighborhood filter bases.
Let $X$ denote the set $\prod_{\alpha\in A}X_{\alpha}$, and for $\alpha\in A$, let $\pi_{\alpha}:X\to X_{\alpha}$ denote the projection map.
For $x\in X$,
  define $\mathcal{N}(x)$ to be given by finite intersections of sets of the form
$\pi_{\alpha}^{-1}(V_{\alpha})$, where $V_{\alpha} \in \mathcal{N}_{\alpha}(x_{\alpha})$, and $x_{\alpha} \in X_{\alpha}$.
Then $\mathcal{N}$ is a collection of neighborhood filter bases for $X$.
We call the corresponding closure $c$ the \emph{product closure} for $X$ and call the pair $(X,c)$ the \emph{product closure space}.

When we have two closure spaces $(X,\new{c_X})$ and $(Y,\new{c_Y})$ we will denote the product closure space by $(X\times Y,\new{c_X\times c_Y})$.
In this case, if $X$ and $Y$ have collections of neighborhood filters or neighborhood filter bases $\mathcal{N}$ and $\mathcal{M}$ then 
for $(x,y) \in X \times Y$, $\mathcal{N}(x,y) = \mathcal{N}(x) \times \mathcal{M}(y)$.
It follows that for $A \subset X \times Y$, $(\new{c_X \times c_Y})(A) = \{(x,y) \in X \times Y \ | \ \forall U \in \mathcal{N}(x), V \in \mathcal{M}(y), (U \times V) \cap A \neq \varnothing\}$.
\end{definition}

\begin{theorem}{\cite[17.C.6]{vcech1966topological}}
\label{prop:product_closure}
  The product closure is the coarsest closure for which the
projection maps are continuous.
\end{theorem}

\begin{definition}{\cite[17.D.1]{vcech1966topological}}
\label{def:inductive_product_closure}
Let $(X,\new{c_X})$ and $(Y,\new{c_Y})$ be closure spaces with corresponding collections of neighborhood filters or neighborhood filter bases, $\mathcal{N}$ and $\mathcal{M}$, respectively.
For $(x,y)\in X\times Y$, let $\mathcal{P}(x,y)$ be the collection of all sets of the form
\begin{equation} \label{eq:inductive} 
(\{x\}\times V)\cup (U\times \{y\})  
\end{equation}
where $V \in \mathcal{M}(y)$ and $U \in \mathcal{N}(x)$.
Then $\mathcal{P}$ is a collection of neighborhood filter bases for $X \times Y$.
We call the corresponding closure $\new{c_X} \boxplus \new{c_Y}$
the \emph{inductive product closure},
and call $(X \times Y, \new{c_X \boxplus c_Y})$ the inductive product closure space.
The sets in \eqref{eq:inductive} may be written as $\{(x',y') \in U \times V \,|\, x'=x \text{ or } y'=y\}$.
\end{definition}

The terminology `inductive product' is due to the following result, which may be used to define the inductive product as an `inductively generated closure' on the product of the underlying sets~\cite[33.D.1]{vcech1966topological}, whereas the product may be called the `projective product' and may be defined as a `projectively generated closure' on the same set~\cite[32.A.3(f)]{vcech1966topological}.

\begin{theorem}{\cite[17.D.3]{vcech1966topological}}
\label{theorem:characterization_of_ind_prod_closure}
Let $(X,\new{c_X})$ and $(Y,\new{c_Y})$ be closure spaces.
The inductive product closure is the finest closure for $X\times Y$ for which all of the maps
\begin{itemize}
\item $X \to X \times Y$ given by $x\mapsto (x,y_0)$, where $y_0 \in Y$, and
\item $Y \to X \times Y$ given by $y \mapsto (x_0,y)$, where $x_0 \in X$,
\end{itemize}
are continuous.
\end{theorem}

\begin{proposition}{\cite[17.D.2]{vcech1966topological}}
\label{prop:comparison_of_products}
The inductive product closure is finer than
the product closure.
\end{proposition}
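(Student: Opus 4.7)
The plan is to prove the comparison by showing that the identity map $(X\times Y, c_X \boxdot c_Y) \to (X\times Y, c_X \times c_Y)$ is continuous, which by \cref{def:coarser_finer_closure_operations} is equivalent to the product closure being coarser than the inductive product closure. The argument would rely on the explicit local bases furnished in \cref{def:product_closure,def:inductive_product_closure}, together with the characterization of continuity via neighborhoods in \cref{theorem:equivalent_definitions_of_continuity}.

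Fix $(x,y) \in X \times Y$. A local base at $(x,y)$ in the product closure consists of the sets $U \times V$ where $U$ is a neighborhood of $x$ in $(X,c_X)$ and $V$ is a neighborhood of $y$ in $(Y,c_Y)$, while a local base at $(x,y)$ in the inductive product closure consists of the sets $(\{x\} \times V) \cup (U \times \{y\})$ for the same $U$ and $V$. The key observation is that every neighborhood contains the point of which it is a neighborhood (for instance $x \in \text{int}_{c_X}(U) \subset U$, and likewise $y \in V$), so $\{x\} \times V \subset U \times V$ and $U \times \{y\} \subset U \times V$, and hence
\[(\{x\} \times V) \cup (U \times \{y\}) \subset U \times V.\]

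With this inclusion in hand, every basic — and therefore every — neighborhood of $(x,y)$ in the product closure is also a neighborhood of $(x,y)$ in the inductive product closure. By \cref{theorem:equivalent_definitions_of_continuity}, this makes the identity map continuous at each point, hence continuous, and \cref{def:coarser_finer_closure_operations} then yields the stated comparison.

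I do not expect a substantial obstacle in this argument; the only point that demands any care is the verification that the collections listed in the two product definitions really do serve as local bases at $(x,y)$, but this is already guaranteed by \cref{theorem:filter_determines_a_local_base_for_a_closure} as invoked in \cref{def:product_closure,def:inductive_product_closure}.
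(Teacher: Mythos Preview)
Your argument is correct. The paper does not supply its own proof of this proposition, merely citing \v{C}ech's book, so there is no in-paper argument to compare against directly. Your neighborhood-base approach is sound: the inclusion $(\{x\}\times V)\cup(U\times\{y\})\subset U\times V$ shows that every basic product-closure neighborhood of $(x,y)$ contains a basic inductive-product neighborhood, hence is itself an inductive-product neighborhood (since neighborhood systems are filters, \cref{theorem:neighborhood_system_is_a_filter}), and continuity of the identity follows.

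An alternative route, closer in spirit to how the paper later organizes things, uses the universal property in \cref{theorem:characterization_of_ind_prod_closure}: the inductive product closure is the \emph{finest} closure on $X\times Y$ for which all the slice embeddings $x\mapsto(x,y)$ and $y\mapsto(x,y)$ are continuous. One checks directly (or, as the paper does in \cref{lemma:embedding_maps_into_product}, via the unit and functoriality) that these embeddings are continuous for the product closure; the comparison then drops out of the universal property. Both arguments are short; yours is more concrete and self-contained, while the universal-property route foreshadows the general comparison $\boxdot\le\otimes\le\times$ in \cref{corollary:regular_product_is_coarsest}.
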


\begin{lemma}
\label{lemma:ind_product_and_product_of_discrete_spaces}
Let $X$ be a closure space and let $Y$ be a discrete space. Then $X\times Y=X\boxplus Y$.
\end{lemma}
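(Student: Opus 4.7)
The plan is to show that the product and inductive product closures agree by producing identical local bases at every point $(x,y) \in X \times Y$. By \cref{def:product_closure} and \cref{def:inductive_product_closure}, it suffices to compare local bases, since each closure is uniquely determined by the local bases at each point.

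First, I would observe that in a discrete space $Y$, the singleton $\{y\}$ is a neighborhood of $y$: indeed, $\text{int}_{c_Y}(\{y\}) = Y \setminus c_Y(Y \setminus \{y\}) = Y \setminus (Y \setminus \{y\}) = \{y\}$, so $y \in \text{int}_{c_Y}(\{y\})$. Moreover, the singleton collection $\mathcal{U}_y := \{\{y\}\}$ is a local base at $y$, since every neighborhood of $y$ contains $y$ and hence contains $\{y\}$.

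Next, let $\mathcal{U}_x$ be any local base at $x$ in $(X,c_X)$. Using the remarks at the end of \cref{def:product_closure} and \cref{def:inductive_product_closure} that allow one to restrict to local bases, a local base at $(x,y)$ in the product closure consists of sets of the form $U \times \{y\}$ with $U \in \mathcal{U}_x$, while a local base at $(x,y)$ in the inductive product consists of sets of the form
\[ (\{x\} \times \{y\}) \cup (U \times \{y\}) \]
with $U \in \mathcal{U}_x$. Since $x \in U$, we have $\{x\} \times \{y\} \subset U \times \{y\}$, so each such inductive neighborhood equals $U \times \{y\}$. Thus the two local bases coincide at every $(x,y)$.

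The main (minor) obstacle is simply handling the definitions cleanly — one must be slightly careful that restricting to a local base really is allowed on both sides, and that $\{y\}$ qualifies as such for the discrete closure. Once these verifications are in place, uniqueness of the closure determined by local bases immediately yields $c_X \times c_Y = c_X \boxdot c_Y$, completing the proof. This is also consistent with \cref{prop:comparison_of_products}, which only guarantees one inclusion in general.
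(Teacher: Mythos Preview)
Your proposal is correct and follows essentially the same approach as the paper: both argue by exhibiting a common local base $\{U \times \{y\} : U \in \mathcal{U}_x\}$ at each point $(x,y)$, using that $\{\{y\}\}$ is a local base at $y$ in the discrete space, and then invoking uniqueness of the closure determined by local bases. You simply supply a bit more detail (verifying explicitly that $\{y\}$ is a neighborhood and that the canonical inductive neighborhood collapses to $U \times \{y\}$ since $x \in U$).
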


\begin{proof}
  Let $\mathcal{N}$ be the corresponding collection of neighborhood filters for $X$.
  Since $Y$ is discrete it has a corresponding collection of neighborhood filter bases given by $\mathcal{M}(y) = \{y\}$, for $y \in Y$.
Let $(x,y) \in X \times Y$.
Then
$\{U \times \{y\} \ | \ U \in \mathcal{N}(x)\}$ is a
neighborhood filter base
of $(x,y)$ for both closures.
\end{proof}

\begin{proposition}{\cite[17.C.11]{vcech1966topological}}
\label{prop:product_of_maps}
  Suppose we are given for each $a \in A$ closure spaces  $(X_a,c_{X_a})$ and
  $(Y_a,c_{Y_a})$ and a map of sets $f_a:X_a \to Y_a$.
  If for all $a \in A$, $f_a$ is continuous, then the mapping $f:(\prod_{a\in A}X_a,\prod_{a\in A}c_{X_a})\to (\prod_{a\in A}Y_a,\prod_{a\in A}c_{Y_a})$ defined by $\{x_a\}_{a\in A}\mapsto \{f_x(x_a)\}_{a\in A}$ is continuous. Conversely, if $f$ is continuous and $\prod_{a\in A}X_a\neq \varnothing$, then for all $a \in A$, $f_a$ is continuous.
\end{proposition}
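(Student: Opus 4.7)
The plan is to combine the local-base description of the product closure in \cref{def:product_closure} with the neighborhood criterion for continuity in \cref{theorem:equivalent_definitions_of_continuity}, so that continuity only needs to be checked against basic neighborhoods on both sides.

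For the forward direction, I would fix $x = (x_a)_{a \in A}$ in the product and let $W$ be a neighborhood of $f(x) = (f_a(x_a))_{a \in A}$. By \cref{def:product_closure}, $W$ contains a basic neighborhood $W' = \bigcap_{a \in F} \pi_a^{-1}(V_a)$ with $F \subset A$ finite and each $V_a$ a neighborhood of $f_a(x_a)$ in $Y_a$. Using continuity of $f_a$ at $x_a$ for each $a \in F$, pick a neighborhood $U_a$ of $x_a$ in $X_a$ with $f_a(U_a) \subset V_a$. Then $U := \bigcap_{a \in F} \pi_a^{-1}(U_a)$ is a basic neighborhood of $x$ in the product, and a componentwise check shows $f(U) \subset W' \subset W$, so $f$ is continuous at $x$ by \cref{theorem:equivalent_definitions_of_continuity}.

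For the converse, I would first note that each projection $\pi_a$ is continuous: if $V$ is a neighborhood of $\pi_a(z)$ in $X_a$, then $\pi_a^{-1}(V)$ is itself a basic neighborhood of $z$ (the case $F = \{a\}$). Using $\prod X_a \neq \emptyset$, fix a basepoint $y = (y_a)_{a \in A}$ and, for each $a_0 \in A$, define the section $s_{a_0} : X_{a_0} \to \prod_{a \in A} X_a$ by $(s_{a_0}(x))_{a_0} = x$ and $(s_{a_0}(x))_a = y_a$ for $a \neq a_0$. To check that $s_{a_0}$ is continuous at $x \in X_{a_0}$, take a basic neighborhood $\bigcap_{a \in F} \pi_a^{-1}(V_a)$ of $s_{a_0}(x)$ and split into cases: if $a_0 \notin F$, then all of $X_{a_0}$ maps into this basic set, while if $a_0 \in F$, the neighborhood $V_{a_0}$ of $x$ maps into it. Then $f_{a_0} = \pi_{a_0} \circ f \circ s_{a_0}$ is a composition of continuous maps and hence continuous by \cref{prop:composition_of_cont_maps}.

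There is no substantial obstacle: the argument is a routine unwinding of the universal property encoded in the local base. The only point requiring some care is that a ``neighborhood of $x$'' in \v{C}ech's sense is not an open set but a set whose interior contains $x$, so one must consistently reason with local bases and the neighborhood form of continuity rather than with open sets. The non-emptiness assumption in the converse is precisely what allows the basepoint $y$ and the sections $s_{a_0}$ to be constructed; if some factor were empty, the product would be empty and continuity of $f$ would give no information about the $f_a$ on the remaining factors.
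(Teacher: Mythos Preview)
Your argument is correct. The paper does not supply its own proof of this proposition; it is simply quoted from \cite[Proposition 17.C.11]{vcech1966topological} and used as a black box, so there is nothing in the paper to compare against. Your route via the local-base description of the product closure (\cref{def:product_closure}) together with the neighborhood form of continuity (\cref{theorem:equivalent_definitions_of_continuity}) is exactly the standard one and works without issue in the closure-space setting. For the converse, your factorization $f_{a_0} = \pi_{a_0} \circ f \circ s_{a_0}$ is correct, and you could streamline slightly by invoking \cref{prop:product_closure} directly for continuity of the projections rather than rederiving it.
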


\begin{proposition}{\cite[17.D.5]{vcech1966topological}}
\label{prop:ind_prod_cl_good_for_homotopy_theory}
Let $(X,\new{c_X}),(Y,\new{c_Y})$ and $(Z,\new{c_Z})$ be closure spaces. A function $f:X \times Y \to Z$ is a continuous map $f:(X,\new{c_X})\boxplus (Y,\new{c_Y})\to (Z,\new{c_Z})$ if and only if all of the maps 
\begin{itemize}
\item $X \to Z$ given by $x\mapsto f(x,y_0)$, where $y_0\in Y$, and
\item $Y \to Z$ given by $y \mapsto f(x_0,y)$, where $x_0\in X$,
\end{itemize}
are continuous.
\end{proposition}

\begin{corollary}
\label{prop:inductive_product_of_maps}
Let $f:(X,\new{c_X})\to (Y,\new{c_Y})$ and $g:(X',\new{c_{X'}})\to (Y',\new{c_{Y'}})$ be continuous maps of closure spaces. Then the map $f\times g:(X,\new{c_X})\boxplus (X',\new{c_{X'}})\to (Y,\new{c_{Y}})\boxplus (Y',\new{c_{Y'}})$ defined by $(f\times g)(x,x') =(f(x),g(x'))$ is continuous.
\end{corollary}

\subsection{Coproducts and pushouts}

Closure spaces have (small) colimits and, in particular, have pushouts.

\begin{definition}{\cite[17.B.1]{vcech1966topological}}
\label{def:coproducts}
Let $\{(X_i,c_i)\}_{i\in I}$ be a collection of closure spaces. The \emph{coproduct} 
of $\{(X_i,c_i)\}_{i\in I}$ is the  disjoint union of sets $X=\coprod_i X_i$ with the closure operator $c$ defined by $c(\coprod_i A_i)=\coprod_i c_i(A_i)$ for any subset $\coprod_i A_i$ of $X$.
\end{definition}

\begin{definition} \label{def:coequalizer}
  Given continuous maps
  $
  \begin{tikzcd}(X,\new{c_X}) \ar[r,shift left,"f"] \ar[r,shift right,"g"'] & (Y,\new{c_Y})
  \end{tikzcd}
  $
 the \emph{coequalizer of $f$ and $g$} consists of the closure space $(Q,\new{c_Q})$ and the map $p:Y \to Q$ defined as follows.
  Let $Q$ be the quotient set $Y/\! \sim$ for the equivalence relation given by $f(x) \sim g(x)$ for all $x \in X$. Let $p:Y \to Q$ be the quotient map.
  For $A \subset Q$, define $\new{c_Q}(A) = p(\new{c_Y}(p^{-1}(A)))$.
\end{definition}

\begin{theorem}{\cite[33.A.4 and 33.A.5]{vcech1966topological}} \label{cocomplete}
 The coproduct and coequalizer defined above are the categorical coproduct and coequalizer in the category $\cat{Cl}$ and hence $\cat{Cl}$ is cocomplete.
\end{theorem}

As an application, we have pushouts of closure spaces.

\begin{definition} \label{def:pushout}
  The \emph{pushout} in $\cat{Cl}$ is defined as follows. Given the solid arrow diagram
    \begin{equation*}
      \begin{tikzcd}
        (A,\new{c_A}) \ar[r,"f"] \ar[d,"g"] & (X,\new{c_X}) \ar[d,dashed,"i"]\\
        (Y,\new{c_X}) \ar[r,dashed,"j"'] & (P,\new{c_P})
      \end{tikzcd}
    \end{equation*}
   define $P = (X \amalg Y)/\! \sim$ where $f(a) \sim g(a)$ for all $a \in A$, let $i,j$ be the induced maps, and for $B \subset P$, define $\new{c_P}(B) = i(\new{c_X}(i^{-1}(B))) \cup j(\new{c_Y}(j^{-1}(B)))$.
\end{definition}

\begin{lemma}
\label{lemma:generalized_intervals}
$J_{m,\bot}$, $J_m$ and $J_{m,k}$ are obtained by $m$-fold pushouts of $J_{\bot}$, $J_{\top}$, and $J_{-},J_{+}$, respectively, under $*$. 
\end{lemma}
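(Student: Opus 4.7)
The plan is to induct on $m$ in each case, with the base case $m=1$ tautological. For the inductive step, by associativity of pushouts one may write the $m$-fold pushout as the binary pushout
\[
\begin{tikzcd}
  * \ar[r,"0"] \ar[d,"m-1"'] & Y \ar[d,"j"] \\
  X \ar[r,"i"'] & P
\end{tikzcd}
\]
where $X$ is the $(m-1)$-fold pushout (equal to $J_{m-1,\bot}$, $J_{m-1}$, or $J_{m-1,k'}$ by induction, with $k' = k \bmod 2^{m-1}$ in the third case), and $Y$ is a single copy of $J_{1,\bot}$, $J_1$, or (in the $J_{m,k}$ case) $J_-$ when the $m$-th rightmost bit of $k$ is $0$ and $J_+$ when it is $1$. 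The gluing maps identify $m-1 \in X$ with $0 \in Y$, so the underlying set of $P$ is naturally $\{0,\ldots,m\}$, with $m-1$ the gluing point and $m$ the free right endpoint coming from $Y$.

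To verify that $P$ carries the prescribed closure, it suffices by axiom 3 of \cref{def:closure_spaces} to compute $c_P$ on singletons. Using the pushout formula $c_P(B) = i(c_X(i^{-1}(B))) \cup j(c_Y(j^{-1}(B)))$ from \cref{def:coequalizer}, for any $p \le m-2$ one has $j^{-1}(\{p\}) = \emptyset$, so $c_P(\{p\}) = c_X(\{p\})$, which matches the required value by induction (in the $J_{m,k}$ case, the closure at such $p$ depends only on bits of index $\le m-1$, which agree between $k$ and $k'$). For $p \in \{m-1, m\}$, a direct evaluation yields the desired value. For instance, in the $J_{m,k}$ case with the $m$-th rightmost bit of $k$ equal to $1$, the term $j(c_{J_+}(\{0\})) = \{m-1, m\}$ correctly forces $m \in c_P(\{m-1\})$, while $j(c_{J_+}(\{1\})) = \{m\}$ keeps $m-1 \notin c_P(\{m\})$; the case bit $m = 0$ with $J_-$, and the analogous cases for $J_1$ and $J_{1,\bot}$, are essentially identical.

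The only real obstacle is notational bookkeeping in the $J_{m,k}$ case, namely keeping straight the orientation conventions of $J_\pm$ (which asymmetrically place the generic point in the closure of the closed point) together with the bit indexing of $k$. Once these conventions are pinned down, the verification reduces to the routine closure computations sketched above, and the result follows by induction.
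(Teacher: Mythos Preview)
Your proof is correct and follows the same inductive approach as the paper, which simply exhibits the $m=2$ case via explicit pushout diagrams and then says ``for the general case proceed by induction.'' Your version fills in the inductive step with the singleton closure computation; the only quibble is that the pushout closure formula you quote appears in the (unlabeled) pushout definition following \cref{cocomplete}, not in \cref{def:coequalizer}.
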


\begin{proof}
  For $m=2$ consider the following pushouts.
\begin{equation*}
\begin{tikzcd}
*\arrow[r,"1"] \arrow[d,"0"]& J_{\bot}\arrow[d]\\
J_{\bot}\arrow[r]& J_{2,\bot}
\end{tikzcd}
\begin{tikzcd}
*\arrow[r,"1"] \arrow[d,"0"]& J_{\top}\arrow[d]\\
J_{\top}\arrow[r]& J_2
\end{tikzcd}
\begin{tikzcd}
*\arrow[r,"1"] \arrow[d,"0"]& J_-\arrow[d]\\
J_-\arrow[r]& J_{2,0}
\end{tikzcd}
\begin{tikzcd}
*\arrow[r,"1"] \arrow[d,"0"]& J_{+}\arrow[d]\\
J_{-}\arrow[r]& J_{2,1}
\end{tikzcd}
\begin{tikzcd}
*\arrow[r,"1"] \arrow[d,"0"]& J_{-}\arrow[d]\\
J_{+}\arrow[r]& J_{2,2}
\end{tikzcd}
\begin{tikzcd}
*\arrow[r,"1"] \arrow[d,"0"]& J_+\arrow[d]\\
J_+\arrow[r]& J_{2,3}
\end{tikzcd}
\end{equation*}
For the general case proceed by induction. 
\end{proof}

\begin{lemma}
\label{lemma:comparison_of_intervals}
The identity maps $J_{m,k}\xrightarrow{\mathbf{1}} J_m$ are continuous for all $m\ge 0$ and $0\le k\le 2^m-1$.
The `round up' map $f_+:I_{\tau}\to J_{+}$ defined by $f(x)=0$ if $x<\frac{1}{2}$ and $f(x)=1$ if $x\ge \frac{1}{2}$
and the `round down' map $f_{-}:I_{\tau}\to J_{-}$ defined by $f(x)=0$ if $x\leq\frac{1}{2}$ and $f(x)=1$ if $x> \frac{1}{2}$
are continuous.
For any $m\ge 1$ and $0\le k\le 2^m-1$,
these may be combined to obtain continuous maps $f:[0,m] \to J_{m,k}$,
where $[0,m]$ has the Kuratowski closure \new{operator} corresponding to the standard topology.
Precomposing with the map $t \mapsto mt$, we obtain a continuous surjective map $f:I_{\tau}\to J_{m,k}$.
\end{lemma}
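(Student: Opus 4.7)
The plan is to verify each of the three parts of the lemma in turn by direct computation, using the pasting lemma for the combining step.

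For the identity $J_{m,k}\xto{\mathbf{1}} J_m$, I would show that $c_k$ is finer than the closure of $J_m$. By axiom 3 of \cref{def:closure_spaces}, it suffices to verify $c_k(\{i\}) \subset c_{J_m}(\{i\})$ for each $i \in \{0,\ldots,m\}$. From the definition of $c_k$, the only possible elements in $c_k(\{i\})$ besides $i$ itself are $i-1$ (controlled by the $i$th bit of $k$) and $i+1$ (controlled by the $(i+1)$th bit), while $c_{J_m}(\{i\}) = \{i-1,i,i+1\} \cap \{0,\ldots,m\}$. Continuity then follows from \cref{def:coarser_finer_closure_operations}.

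For $f_{\pm}$, I would observe that $J_+$ and $J_-$ are topological spaces (a direct idempotency check), so by \cref{theorem:continuous_maps_of_topological_spaces} continuity reduces to checking that preimages of open sets are open. The nontrivial open set of $J_+$ is $\{0\}$, whose preimage under $f_+$ is $[0, \tfrac12)$, open in $(I,\tau)$; symmetrically for $f_-$ the open set $\{1\}$ has preimage $(\tfrac12, 1]$.

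For the combined map $f:([0,m],\tau) \to J_{m,k}$, I would subdivide $[0,m]$ into the closed intervals $I_i = [i-1, i]$ for $i = 1, \ldots, m$. On each $I_i$, depending on whether the $i$th rightmost bit of $k$ is $1$ or $0$, I would define $f|_{I_i}$ by first affinely rescaling $I_i \isomto I$ and then applying $f_+$ or $f_-$, followed by the inclusion $\{i-1,i\} \incl J_{m,k}$. The crucial verification is that the subspace closure on $\{i-1,i\}\subset J_{m,k}$ matches $J_+$ when the $i$th bit is $1$ and $J_-$ when it is $0$: intersecting $c_k(\{i-1\})$ and $c_k(\{i\})$ with $\{i-1,i\}$ recovers exactly the defining closure relations of $J_+$ or $J_-$ under the identification $i-1 \leftrightarrow 0$, $i \leftrightarrow 1$. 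The values at junction points $t = i$ agree (both adjacent pieces send $i$ to $i$), so $f$ is well-defined as a set map, and since $\{I_i\}_{i=1}^m$ is a finite, hence locally finite, closed cover of $[0,m]$, \cref{theorem:pasting_lemma_for_closure_spaces} assembles the pieces into a continuous map. The map from $(I,\tau)$ is then obtained by composing with the continuous scaling $t \mapsto mt$, $I \to [0,m]$. The main obstacle is the subspace calculation for $\{i-1,i\} \subset J_{m,k}$; once this is in place the pasting lemma delivers the rest routinely.
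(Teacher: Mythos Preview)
Your proof is correct. For the identity $J_{m,k}\to J_m$ and the final precomposition step you match the paper exactly. For the continuity of $f_\pm$ you take a different (and arguably cleaner) route: you observe that $J_+$ and $J_-$ are topological and invoke \cref{theorem:continuous_maps_of_topological_spaces} to reduce to checking open preimages, whereas the paper verifies the closure condition $f_+(\bar A)\subset c_+(f_+(A))$ directly by a two-case analysis on whether $0\in f_+(A)$. For the combined map, the paper appeals to \cref{lemma:generalized_intervals} (the pushout description of $J_{m,k}$) together with the pasting lemma, which delivers the continuous inclusions $J_\pm\hookrightarrow J_{m,k}$ for free as the structure maps of the iterated pushout; your direct subspace calculation on $\{i-1,i\}\subset J_{m,k}$ establishes exactly the same fact by hand. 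The paper's route is shorter but leans on the pushout machinery already set up; yours is more self-contained.
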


\begin{proof}
For each $m\ge 0$ and $0\le k\le 2^m-1$ the closure operators for $J_{m,k}$ are finer than the one for $J_m$.
For the `round up' map, note that $J_+$ is a topological space with non-empty open sets $\{0\}$ and $\{0,1\}$. By \cref{theorem:continuous_maps_of_topological_spaces}, it is sufficient to show that $f^{-1}_+(0)$ and $f_+^{-1}(\{0,1\})$ are open in $I_{\tau}$, which they are by the definition of $f$. 
The `round down' map is similar.
For the third statement consider the cover of $[0,m]$ consisting of the closed sets $[0,1],[1,2],\ldots,[m-1,m]$.
  By \cref{lemma:generalized_intervals}, 
  for each $i \in \{1,\ldots,m\}$, we have either a `round up' or a `round down' map from $[i-1,i]$ to $J_{m,k}$ whose image is $\{i-1,i\}$ and which sends $i-1$ to $i-1$ and $i$ to $i$.
The third statement follows from 
\cref{theorem:pasting_lemma_for_closure_spaces}.
The last statement follows from \cref{prop:composition_of_cont_maps}.
\end{proof}

\subsection{Pullbacks}

Closure spaces have (small) limits and, in particular, have pullbacks.

\begin{definition} \label{def:equalizer}
  Given continuous maps
  $
  \begin{tikzcd}(X,\new{c_X}) \ar[r,shift left,"f"] \ar[r,shift right,"g"'] & (Y,\new{c_Y})
  \end{tikzcd}
  $
  the \emph{equalizer of $f$ and $g$} consists of the closure space $(E,\new{c_E})$ and the map $i:E \to X$ defined as follows.
  Let $E$ be the subset $\{x \in X \ | \ f(x)=g(x)\}$ with $i$ the inclusion map.
  For $A \subset E$, define $\new{c_E}(A) = \new{c_X}(A) \cap E$.
\end{definition}

\begin{theorem}{\cite[32.A.4 and 32.A.10]{vcech1966topological}} \label{complete}
  The product (\cref{def:product_closure}) and equalizer defined above are the categorical product and equalizer in the category $\cat{Cl}$ and hence $\cat{Cl}$ is complete.
\end{theorem}

\begin{proposition}
\label{prop:limits_and_colimits_of_closure_spaces}
 Every limit of topological spaces in $\mathbf{Cl}$ is a topological space. On the other hand, colimits of topological spaces in $\mathbf{Cl}$ are not necessarily topological spaces.
\end{proposition}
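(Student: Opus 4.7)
The plan is to split the proposition into two parts: first, use the adjunction of \cref{prop:adjoints_between_closures_and_topologies} to show limits of topological spaces in $\mathbf{Cl}$ remain topological; second, exhibit a pushout supplied by \cref{lemma:generalized_intervals} as the required counterexample for colimits.

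For the limit half, I would invoke the fact that $\iota\colon\mathbf{Top}\to\mathbf{Cl}$ is a right adjoint (with left adjoint $\tau$) and therefore preserves all limits that exist in $\mathbf{Top}$. Since $\mathbf{Top}$ is complete, every small diagram $D$ of topological spaces has a limit $L$ in $\mathbf{Top}$, and $\iota(L)$ is the limit of $\iota\circ D$ in $\mathbf{Cl}$. Combined with the completeness of $\mathbf{Cl}$ from \cref{complete} and uniqueness of limits up to isomorphism, this identifies the limit in $\mathbf{Cl}$ with (the image under $\iota$ of) a topological space. As an alternative, a hands-on verification following \cref{complete} suffices: one checks that the product closure in \cref{def:product_closure} is idempotent when each factor is (by chasing the local-base description), and that the equalizer subspace $(E,c_E)$ with $c_E(A)=c_X(A)\cap E$ inherits idempotence from $c_X$ via monotonicity (\cref{lemma:closures_are_monotone}).

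For the colimit half, \cref{lemma:generalized_intervals} already constructs the needed counterexample: $J_2$ is a pushout in $\mathbf{Cl}$ of the diagram $J_1\xleftarrow{1}*\xrightarrow{0}J_1$. Both $J_1$ (the indiscrete closure on $\{0,1\}$) and the one-point space $*$ are topological, since indiscrete closures are trivially idempotent. However $J_2$, whose closure is $c(i)=\{j:|i-j|\le 1\}$, fails to be topological: $c(c(\{0\}))=c(\{0\})\cup c(\{1\})=\{0,1\}\cup\{0,1,2\}=\{0,1,2\}$, which strictly contains $c(\{0\})=\{0,1\}$. Neither half presents a real obstacle; the only subtle point is the identification of the $\mathbf{Cl}$-limit with $\iota$ applied to a $\mathbf{Top}$-limit, which is the standard preservation property of right adjoints.
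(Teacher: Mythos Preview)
Your proposal is correct and follows essentially the same approach as the paper: the right-adjoint property of $\iota$ from \cref{prop:adjoints_between_closures_and_topologies} for limits, and the pushout $J_2$ from \cref{lemma:generalized_intervals} as the counterexample for colimits. Your version is in fact more detailed, since you explicitly verify $c(c(\{0\}))\neq c(\{0\})$ in $J_2$, whereas the paper simply asserts that $J_2$ is not topological.
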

 
\begin{proof}
  The inclusion functor $\iota:\mathbf{Top}\to \mathbf{Cl}$ is a right-adjoint by \cref{prop:adjoints_between_closures_and_topologies} and thus preserves limits.
 For the second statement, consider
  the second pushout diagram in the proof of \cref{lemma:generalized_intervals}.
  Note that the one point space and $J_{\top}$ are both topological spaces (with the indiscrete topology), however the pushout $J_2$, is not a topological space.
  See also \cite[Example 2.52]{rieser2021vcech} which is a more modern take on \cite[Introduction to Section 33.B]{vcech1966topological}.
\end{proof}

\subsection{Monomorphisms and epimorphisms}
\label{sec:mono-epi}

The monomorphisms and epimorphisms in $\cat{Cl}$ are easy to describe.

\begin{lemma}
    Consider $f:(X,\new{c_X}) \to (Y,\new{c_Y}) \in \cat{Cl}$. Then
    \begin{enumerate}
    \item \label{it:mono} $f$ is a monomorphism if an only if the underlying map of sets is injective, and 
    \item \label{it:epi} $f$ is an epimorphism if and only if the underlying map of sets is surjective.
    \end{enumerate}

\end{lemma}

\begin{proof}
  Let $U: \cat{Cl} \to \cat{Set}$ denote the underlying set functor.

  \eqref{it:mono} $(\Leftarrow)$
This implication holds
 in any concrete category: $fh=fg$ implies that $U(f)U(h) = U(fh) = U(fg) = U(f)U(g)$, which implies that $Uh=Ug$, which implies that $h=g$.

  $(\Rightarrow)$ Let $x,x' \in X$. Consider the maps $x,x':* \to X$ with image $x$ and $x'$. Then $f(x)=f(x')$ implies that $x=x'$.

  \eqref{it:epi} $(\Leftarrow)$
This implication holds
 in any concrete category: if $gf=hf$ then $U(g)U(h) = U(gh) = U(hf) = U(h)U(f)$ which implies that $U(g)=U(h)$ and thus $g=h$.

  $(\Rightarrow)$ Consider the maps $g,h:Y \to (\{0,1\},c_{\top})$ given by $g(y) = 1$ if $y$ is in the image of $f$ and $0$ otherwise, and $h(y) = 1$ for all $y \in Y$. Then $hf=gf$ implies that $h=g$ and thus $Uf$ is surjective.
\end{proof}

\subsection{Symmetric closures and Alexandroff closures}
\label{section:subcategories_of_closure_spaces}

In \cref{section:graphs_and_closures}, we will relate simple directed graphs and simple graphs to Alexandroff closures spaces and symmetric Alexandroff spaces, respectively.

\begin{definition}
\label{def:symmetric}
A closure space $(X,c)$ is
\emph{symmetric}
if $y \in c(x)$ implies $x\in c(y)$ for all $x,y\in X$.
\end{definition}

A closure space is symmetric if and only if it is \emph{semi-uniformizable}
\cite[Definition 23.A.3 and Theorem 23.B.3]{vcech1966topological}.

\begin{definition}{
    \cite[Example 14.A.5(f)]{vcech1966topological}
    \cite[Section 3.6]{dikranjan2013categorical}
  }
\label{def:quasi_discrete_closure}
Let $(X,c)$ be a closure space.
Say that the closure $c$ is \emph{Alexandroff} if
  for every collection $\{A_i\}_{i \in I}$ of subsets of $X$, $c(\bigcup_{i \in I} A_i) = \bigcup_{i \in I} c(A_i)$.
  Equivalently, $c$ is Alexandroff if for every $A \subset X$,
  $c(A) = \bigcup_{a \in A} c(a)$.
\end{definition}

Note that every finite closure space is Alexandroff.
For a closure space $(X,c)$, if the closure \new{operator} is Alexandroff and Kuratowski then the corresponding topological space is an \emph{Alexandroff space}~\cite{Alexandroff:1937}.

\begin{definition}
\label{def:closure_subcategory}
We denote by $\sucl$, $\qdcl$ and $\suqdcl$ the full subcategories of $\mathbf{Cl}$ consisting of symmetric, Alexandroff, and symmetric Alexandroff closure spaces, respectively.
\end{definition}

\begin{proposition}
  \label{prop:continuity_on_atomic_closure spaces}
  Suppose $(X,\new{c_X})$ is an Alexandroff closure space. Let $(Y,\new{c_Y})$ be a closure space. Then $f:(X,\new{c_X})\to (Y,\new{c_Y})$ is continuous if and only if $\forall x\in X$, $f(\new{c_X}(x))\subset \new{c_Y}(f(x))$.
\end{proposition}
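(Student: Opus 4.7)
The plan is to verify both directions of the biconditional directly from the definition of continuity (\cref{def:continuous_functions}), which states that $f$ is continuous iff $f(c_X(A)) \subset c_Y(f(A))$ for every $A \subset X$.

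For the forward direction ($\Rightarrow$), I would simply specialize the continuity condition to singletons: for each $x \in X$, take $A = \{x\}$, so that $f(c_X(\{x\})) \subset c_Y(f(\{x\})) = c_Y(f(x))$.

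For the backward direction ($\Leftarrow$), suppose that $f(c_X(x)) \subset c_Y(f(x))$ for every $x \in X$, and fix an arbitrary $A \subset X$. Using the quasi-discreteness hypothesis on $(X,c_X)$ from \cref{def:quasi_discrete_closure}, I would write
\[
  f(c_X(A)) \;=\; f\Bigl(\bigcup_{x\in A} c_X(x)\Bigr) \;=\; \bigcup_{x\in A} f(c_X(x)) \;\subset\; \bigcup_{x\in A} c_Y(f(x)).
\]
Since $f(x) \in f(A)$ for each $x \in A$, monotonicity of $c_Y$ (\cref{lemma:closures_are_monotone}) gives $c_Y(f(x)) \subset c_Y(f(A))$, so the union on the right is contained in $c_Y(f(A))$. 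This yields $f(c_X(A)) \subset c_Y(f(A))$, establishing continuity.

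There is no real obstacle here; the argument is essentially a one-line computation whose only ingredients are the definition of continuity, the defining equation of a quasi-discrete closure, and monotonicity. The role of quasi-discreteness is precisely to reduce the ``global'' check on all subsets $A$ to a ``pointwise'' check on singletons, which is the content of the proposition.
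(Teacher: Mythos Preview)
Your proof is correct and matches the paper's argument essentially line for line: both directions use the definition of continuity, quasi-discreteness to decompose $c_X(A)$, and monotonicity of $c_Y$ to bound the union.
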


\begin{proof}
  The forward direction follows from the definition of continuity. For the reverse direction, let $A\subset X$. Then $f(\new{c_X}(A))=f(\bigcup_{x\in A}\new{c_X}(x))=\bigcup_{x\in A}(f(\new{c_X}(x)))\subset \bigcup_{x\in A}\new{c_Y}(f(x))\subset \new{c_Y}(f(A))$.
  Thus $f$ is continuous.
\end{proof}

\begin{definition} \label{def:induced-qd}
  Let $X$ be a set and let $\rho:X \to \Pow{X}$ such that $x \in \rho(x)$.
  Define $c:\Pow{X} \to \Pow{X}$ by $c(A) = \cup_{x \in A}\rho(x)$ for $A \subset X$. It is easy to verify that $c$ is an Alexandroff closure operator, which we call the \emph{induced Alexandroff closure operator}.
\end{definition}

\begin{definition} \label{def:qd}
    Let $(X,c)$ be a closure space.
    Let $\qd(c)$ denote the induced Alexandroff closure operator on the restriction of $c$ to one-point sets. 
    Then $\qd(c)$ is finer than $c$ and is called the \emph{Alexandroff modification} of $c$ (\cite[Definition 26.A.1]{vcech1966topological}).
    The mappings $(X,c)\mapsto (X,\qd(c))$ and
    $(f:(X,\new{c_X})\to (Y,\new{c_Y}))\mapsto (f:(X,\qd(\new{c_X}))\to (Y,\qd(\new{c_Y})))$
    define a functor $\qd:\mathbf{Cl}\to \qdcl$.
\end{definition}

\begin{proposition}
  \label{prop:adjunction-cl-clqd} 
Let $(X,\new{c_X}) \in \qdcl$ and $(Y,\new{c_Y}) \in \cat{Cl}$.
  Given a set map $f:X \to Y$, $f:(X,\new{c_X}) \to (Y,\qd(\new{c_Y}))$ is continuous iff $f:(X,\new{c_X}) \to (Y,\new{c_Y})$ is continuous.
  Thus, 
  we have a natural bijection 
\[\mathbf{Cl}((X,\new{c_X}),(Y,\new{c_Y}))\cong \qdcl((X,\new{c_X}),(Y,\qd(\new{c_Y}))).\]
That is, $\qd$
is right adjoint to the inclusion functor $\qdcl \incl \mathbf{Cl}$.
\end{proposition}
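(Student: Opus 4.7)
The plan is to show the two directions of the continuity equivalence separately, with the bijection of hom-sets then following immediately since the bijection will be the identity on underlying set maps.

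For the easy direction, recall from \cref{def:qd} that $\qd(d)$ is finer than $d$. By the last observation in \cref{def:coarser_finer_closure_operations}, this means the identity map $(Y,\qd(d)) \to (Y,d)$ is continuous. Therefore, if $f:(X,c) \to (Y,\qd(d))$ is continuous, then composing with the identity and invoking \cref{prop:composition_of_cont_maps} yields that $f:(X,c) \to (Y,d)$ is continuous.

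For the harder direction, suppose $f:(X,c) \to (Y,d)$ is continuous; I want to promote this to continuity into $(Y,\qd(d))$. Here I will use that $(X,c)$ is quasi-discrete. By \cref{prop:continuity_on_atomic_closure spaces}, it suffices to verify that $f(c(x)) \subset \qd(d)(f(x))$ for every $x \in X$. The key observation is that by the construction in \cref{def:induced-qd} and \cref{def:qd}, $\qd(d)$ is defined by the rule $\qd(d)(A) = \bigcup_{y \in A} d(\{y\})$, and in particular $\qd(d)(\{y\}) = d(\{y\})$ for every $y \in Y$. So the required inclusion reduces to $f(c(x)) \subset d(f(x))$, which is a special case of the continuity of $f:(X,c) \to (Y,d)$ applied to $A = \{x\}$.

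Having shown the two continuity statements coincide, the bijection $\cat{Cl}((X,c),(Y,d))\cong \qdcl((X,c),(Y,\qd(d)))$ is literally the identity map on the underlying set maps $X \to Y$, so naturality in both variables is automatic. This exhibits $\qd$ as right adjoint to the inclusion $\qdcl \hookrightarrow \cat{Cl}$. I expect no real obstacle here: the only subtlety is remembering that $\qd(d)$ is built from the values of $d$ on singletons, which makes the singleton-closure comparison a tautology. If anything, the bookkeeping step where one notes that $\qd(d)(f(x)) = d(f(x))$ is the sole place one must be careful, since one could wrongly try to compare $\qd(d)$ with $d$ on arbitrary subsets, where equality can genuinely fail.
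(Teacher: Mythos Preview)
Your proof is correct and follows essentially the same approach as the paper: both directions hinge on the same two facts, namely that $\qd(d)$ is finer than $d$ (for the forward direction) and that $\qd(d)$ agrees with $d$ on singletons, combined with \cref{prop:continuity_on_atomic_closure spaces} (for the backward direction). The paper's write-up is terser but the logical content is identical.
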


\begin{proof}
  $(\Rightarrow)$ Let $A \subset X$. Then $f(\new{c_X}(A)) \subset \qd(\new{c_Y})(f(A)) \subset \new{c_Y}(f(A))$.

 $(\Leftarrow)$ Let $x \in X$. Then $f(\new{c_X}(x)) \subset \new{c_Y}(f(x)) = \qd(\new{c_Y})(f(x))$.
The result follows from \cref{prop:continuity_on_atomic_closure spaces}.
\end{proof}

\begin{definition}
Let $(X,c) \in \qdcl$. Let the \emph{reverse closure}, $c^{\transpose}$, be the induced Alexandroff closure operator of $\rho(x) = \{y \in X \ | \ x \in c(y)\}$. That is, for $A \subset X$, $c^{\transpose}(A) = \{y \in X \ | \ c(y) \cap A \neq \varnothing\}$.
\end{definition}

\begin{lemma}
 We have a \emph{reverse} functor $(-)^{\transpose}: \qdcl \to \qdcl$ mapping $(X,c)$ to $(X,c^{\transpose})$ and sending functions to themselves.
\end{lemma}

\begin{proof}
 Let $(X,\new{c_X}),(Y,\new{c_Y}) \in \qdcl$ and $f:X \to Y$ such that for all $x \in X$, $f(\new{c_X}(x)) \subset \new{c_Y}(f(x))$. Let $x \in X$.
  It remains to check that $f(\new{c_X}^{\transpose}(x)) \subset \new{c_Y}^{\transpose}(f(x))$.
  Let $a \in \new{c_X}^{\transpose}(x)$. Then $x \in \new{c_X}(a)$ and thus $f(x) \in f(\new{c_X}(a)) \subset \new{c_Y}(f(a))$.
  Therefore $f(a) \in \new{c_Y}^{\transpose}(f(x))$.
\end{proof}

\begin{definition}
  Let $(X,\new{c_X}) \in \qdcl$. Let $s(\new{c_X})$ be the Alexandroff closure induced by $\rho(x) = \{y \in \new{c_X}(x) \ | \ x \in \new{c_X}(y)\}$.
  If $y \in \rho(x)$ then $y \in \new{c_X}(x)$ and $x \in \new{c_X}(y)$. So $x \in \rho(y)$.
  That is, $s(\new{c_X})$ is symmetric.
  Thus $s(\new{c_X})$ is a symmetric Alexandroff closure finer than $\new{c_X}$, called the \emph{symmetrization} of $\new{c_X}$.
  Let $f:(X,\new{c_X}) \to (Y,\new{c_Y})$.
  For $x \in X$, $f(s(\new{c_X})(x)) = f\{y \in \new{c_X}(x) \ | \ x \in \new{c_X}(y)\}$ and
  $s(\new{c_Y})(f(x)) = \{z \in \new{c_Y}(f(x)) \ | \ f(x) \in \new{c_Y}(z)\}$.
  Now $y \in \new{c_X}(x)$ implies that $f(y) \subset \new{c_Y}(f(x))$ and $x \in \new{c_X}(y)$ implies that $f(x) \in \new{c_Y}(f(y))$.
  Therefore $f(s(\new{c_X})(x)) \subset s(\new{c_Y})(f(x))$.
  That is, $f:(X,s(\new{c_X})) \to (Y,s(\new{c_Y}))$.
  Let $s: \qdcl \to \suqdcl$ denote the functor defined by these mappings.
\end{definition}

\begin{proposition} \label{prop:adjunction-clqd-clsqd}
Let $(X,\new{c_X}) \in \suqdcl$ and $(Y,\new{c_Y}) \in \qdcl$.
  Given a set map $f:X \to Y$, $f:(X,\new{c_X}) \to (Y,s(\new{c_Y}))$ is continuous iff $f:(X,\new{c_X}) \to (Y,\new{c_Y})$ is continuous.
  Thus, 
  we have a natural isomorphism
\[\qdcl((X,\new{c_X}),(Y,\new{c_Y}))\cong \suqdcl((X,\new{c_X}),(Y,s(\new{c_Y}))).\]
That is, $s$
is right adjoint to the inclusion functor $\suqdcl \incl \qdcl$.
\end{proposition}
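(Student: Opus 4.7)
The plan is to mirror the argument for \cref{prop:adjunction-cl-clqd}: prove the pointwise iff for continuity and then read off the natural bijection, since $\suqdcl \incl \qdcl$ is a full subcategory inclusion and the hom-sets on both sides consist of underlying set maps subject only to the relevant continuity condition. Once the iff is in hand, the adjunction is formal.

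For the $(\Rightarrow)$ direction, I would observe that $s(d)$ is finer than $d$: by construction $\rho(y) = \{z \in d(y) \mid y \in d(z)\} \subset d(y)$, so $s(d)(A) = \bigcup_{y \in A} \rho(y) \subset d(A)$ for every $A \subset Y$. Hence, by \cref{def:coarser_finer_closure_operations}, the identity map $(Y,s(d)) \to (Y,d)$ is continuous, and continuity of $f:(X,c) \to (Y,d)$ follows from continuity of $f:(X,c) \to (Y,s(d))$ by composition (\cref{prop:composition_of_cont_maps}).

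The $(\Leftarrow)$ direction is the substantive step. Since $(X,c)$ is quasi-discrete, \cref{prop:continuity_on_atomic_closure spaces} tells me that to check continuity of $f:(X,c) \to (Y,s(d))$ it suffices to verify $f(c(x)) \subset s(d)(f(x))$ for every $x \in X$. Fix $x \in X$ and $y \in c(x)$; I need $f(y) \in s(d)(f(x))$, which unpacks to the two conditions $f(y) \in d(f(x))$ and $f(x) \in d(f(y))$. The first is immediate from the assumed continuity of $f:(X,c) \to (Y,d)$, since $f(c(x)) \subset d(f(x))$. For the second, I invoke the symmetry of $(X,c)$: from $y \in c(x)$ symmetry gives $x \in c(y)$, and then continuity of $f:(X,c) \to (Y,d)$ yields $f(x) \in f(c(y)) \subset d(f(y))$, as required.

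With the iff established, the natural bijection $\qdcl((X,c),(Y,d)) \cong \suqdcl((X,c),(Y,s(d)))$ is simply the identification of underlying set maps, which is manifestly natural in both variables. This exhibits $s$ as right adjoint to the inclusion $\suqdcl \incl \qdcl$. The only subtlety to double-check is that $s(d)$ is genuinely quasi-discrete (built into \cref{def:induced-qd}, since $s(d)$ is defined as the induced quasi-discrete closure from $\rho$) and symmetric (verified in the statement defining $s$), so that $(Y,s(d)) \in \suqdcl$ makes the hom-set on the right well-defined; this is the only place where a small sanity check is needed, and it is already folded into the definition of $s$.
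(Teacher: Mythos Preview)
Your proof is correct and follows essentially the same approach as the paper: both directions hinge on the inclusion $s(d)(f(x)) \subset d(f(x))$ for $(\Rightarrow)$ and on using the symmetry of $c$ to pass from $y \in c(x)$ to $x \in c(y)$ for $(\Leftarrow)$, with \cref{prop:continuity_on_atomic_closure spaces} reducing continuity to a pointwise check. Your version is slightly more verbose in framing $(\Rightarrow)$ via composition with the identity, but the underlying argument is the same.
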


\begin{proof}
    $(\Rightarrow)$ Let $x \in X$. Then $f(\new{c_X}(x)) \subset s(\new{c_Y})(f(x)) \subset \new{c_Y}(f(x))$.

    $(\Leftarrow)$ Let $x \in X$. Then $f(\new{c_X}(x)) \subset \new{c_Y}(f(x))$. Let $y \in \new{c_X}(x)$. Then $f(y) \in \new{c_Y}(f(x))$. Also, $x \in \new{c_X}(y)$. Thus $f(x) \in \new{c_Y}(f(y))$.
    Hence $f(y) \in s(\new{c_Y})(f(x))$ and therefore $f(\new{c_X}(x)) \subset s(\new{c_Y})(f(x))$.
  \end{proof}

By definition, the following functors are equal.

\begin{proposition} \label{prop:reverse}
$s \circ (-)^{\transpose} = s$.
\end{proposition}

\subsection{Simple  graphs and simple directed graphs as closure spaces}
\label{section:graphs_and_closures}

The categories of simple directed graphs and simple graphs
are isomorphic to the categories of Alexandroff closure spaces and symmetric Alexandroff spaces, respectively
\cite[Chapter 26]{vcech1966topological}
\cite[Section 3.6]{dikranjan2013categorical}.
We allow the sets of vertices and edges of graphs to have arbitrary cardinality.
Given a set $X$, let $\Delta = \{(x,x) \ | \ x \in X\}$.

\begin{definition} \label{def:simple-graph}
A \emph{simple graph} is a pair $(X,E)$ where $X$ is a set and $E$ is a collection of pairs of elements of $X$.
  That is, $E$ is a symmetric relation on $X$ such that $E \cap \Delta = \varnothing$.
  More generally, a \emph{simple directed graph} or \emph{simple digraph} is a pair $(X,E)$ where $X$ is a set and $E$ is a relation on $X$ such that $E \cap \Delta = \varnothing$.
  Let a \emph{spatial digraph} be a pair $(X,E)$ where $X$ is a set and $E$ is a reflexive relation on $X$.
  For each simple digraph there is a corresponding spatial digraph $(X,\overline{E})$, where $\overline{E} = E \cup \Delta$. Note that $E = \overline{E} \setminus \Delta$.
  A map of sets $f:X \to Y$ is \emph{(di)graph homomorphism} between $(X,E)$ and $(Y,F)$ if whenever $x\overline{E}x'$, we have $f(x)\overline{F}f(x')$.
\end{definition}

Let $\cat{DiGph}$ denote the category of simple digraphs and digraph homomorphisms, and let $\cat{Gph}$ denote the full subcategory of simple graphs and graph homomorphisms. 
Recall the categories, $\qdcl$ and $\suqdcl$, of Alexandroff closure spaces and symmetric Alexandroff closure spaces, respectively  (\cref{def:closure_subcategory}).

\begin{definition}
\label{def:functor_from_digraphs_to_closure_spaces}
Let $\Psi: \mathbf{DiGph}\to \qdcl$ be the functor that assigns to each simple digraph $(X,E)$ the closure space $(X,c_E)$,
where $c_E$ is the induced Alexandroff closure (\cref{{def:induced-qd}}) determined by the map $\rho_E:X \to \Pow{X}$ given by
\begin{equation} \label{eq:rho}
  \rho_E(x) 
  = \{y \in X \,|\, x\overline{E}y\}.
\end{equation}

Given a digraph homomorphism $f:(X,E)\to (Y,F)$, the map $\Psi(f):(X,c_E)\to (Y,c_F)$ is given by the map of sets $f$.
By \cref{prop:continuity_on_atomic_closure spaces}, the continuity of $\Psi(f)$ is equivalent to the map $f$ being a digraph homomorphism.
\end{definition}

\begin{definition}
\label{def:functor_from_closure_spaces_to_digraphs}
Let $\Phi:\qdcl\to \mathbf{DiGph}$ be the functor that assigns to each Alexandroff closure space $(X,c)$ the simple digraph $(X,E_c)$ defined by
\[
  x\overline{E}_c y\Longleftrightarrow y\in c(x). 
\]
Given a continuous map $f:(X,\new{c_X})\to (Y,\new{c_Y})$, let $\new{\Phi(f)}:(X,E_{\new{c_X}})\to (Y,E_{\new{c_Y}})$ be the map of sets $f$.
By \cref{prop:continuity_on_atomic_closure spaces}, the continuity of $\new{\Phi(f)}$ is equivalent to the $f$ being a digraph homomorphism.
\end{definition}

We therefore have the following.

\begin{proposition}
\label{lemma:digraphs_are_quasi_discrete_closure_spaces}
The functors $\Psi$ and $\Phi$ are inverses and thus define an isomorphism of categories $\qdcl \isom \cat{DiGph}$.
Furthermore, they restrict to an isomorphism $\suqdcl \isom \cat{Gph}$.
\hfill \qedsymbol
\end{proposition}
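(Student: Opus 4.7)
The plan is to verify directly that $\Phi \circ \Psi = \mathbf{1}_{\mathbf{DiGph}}$ and $\Psi \circ \Phi = \mathbf{1}_{\mathbf{Cl_{qd}}}$, both on objects and on morphisms. Since both functors are the identity on underlying sets (and on underlying set maps), the morphism parts are automatic once the object-level identities are established, so the real content is a short reconstruction argument for the closure and for the edge relation.

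First I would take a digraph $(X,E)$, apply $\Psi$ to obtain the induced quasi-discrete closure $c_E$ with $c_E(\{x\}) = \rho_E(x) = \{y \in X \mid x\overline{E}y\}$, and then apply $\Phi$. By definition $x\overline{E_{c_E}}y \iff y \in c_E(\{x\}) \iff x\overline{E}y$, so $\overline{E_{c_E}} = \overline{E}$ and hence $E_{c_E} = E$ after subtracting the diagonal. This shows $\Phi \Psi = \mathbf{1}_{\mathbf{DiGph}}$ on objects.

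Next I would start from a quasi-discrete closure space $(X,c)$, apply $\Phi$ to obtain the digraph $(X,E_c)$ with $x\overline{E_c}y \iff y \in c(\{x\})$, and then apply $\Psi$ to get the closure $c_{E_c}$. For any $A \subset X$,
\[
c_{E_c}(A) \;=\; \bigcup_{x \in A} \rho_{E_c}(x) \;=\; \bigcup_{x \in A} \{y \mid y \in c(\{x\})\} \;=\; \bigcup_{x \in A} c(\{x\}) \;=\; c(A),
\]
where the last equality is precisely the quasi-discreteness hypothesis on $c$ (\cref{def:quasi_discrete_closure}). Thus $\Psi \Phi = \mathbf{1}_{\mathbf{Cl_{qd}}}$ on objects.

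Finally, for morphisms: a set map $f:X \to Y$ is a digraph homomorphism $(X,E) \to (Y,F)$ iff $x\overline{E}x'$ implies $f(x)\overline{F}f(x')$, and by \cref{prop:continuity_on_atomic_closure spaces} this is exactly the condition that $f:(X,c_E) \to (Y,c_F)$ is continuous. Both $\Psi$ and $\Phi$ send $f$ to the same underlying set map, so compatibility with the object identifications above yields $\Phi\Psi = \mathbf{1}$ and $\Psi\Phi = \mathbf{1}$ as functors. There is no serious obstacle here; the only subtle point is invoking quasi-discreteness at the one place where we need to recover $c$ on arbitrary subsets from its values on singletons, and that is precisely why the functors fail to be inverses on the full category $\mathbf{Cl}$.
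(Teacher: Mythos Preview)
Your proof is correct and spells out exactly what the paper leaves implicit: the paper states the result with a bare \qedsymbol\ and no explicit argument, relying on the fact that the equivalence of continuity and digraph homomorphism was already noted in \cref{def:functor_from_digraphs_to_closure_spaces,def:functor_from_closure_spaces_to_digraphs} via \cref{prop:continuity_on_atomic_closure spaces}. Your verification that the object-level compositions are the identity---using quasi-discreteness at the one essential step---is precisely the intended content.
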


\begin{definition} \label{def:reverse-digraph}
  Let $(X,E)$ be a digraph.
  The \emph{reverse digraph} $(X,E^{\transpose})$, is given by $yE^{\transpose}x$ iff $xEy$ for $x,y \in X$. That is, it is the digraph obtained by reversing the directed edges.
  We have a \emph{reverse functor}, $(-)^{\transpose}: \cat{DiGph} \to \cat{DiGph}$ sending a digraph $(X,E)$ to its reverse digraph $(X,E^{\transpose})$ and sending a digraph homomorphism
  $(X,E) \to (Y,F)$ given by $f:X \to Y$ to the digraph homomorphism $(X,E^{\transpose}) \to (Y,F^{\transpose})$ given by $f$.
\end{definition}

Observe that the reverse functors on digraphs and Alexandroff closures are compatible.
That is, $\Phi \circ (-)^{\transpose} = (-)^{\transpose} \circ \Phi$ and thus $(-)^{\transpose} \circ \Psi = \Psi \circ (-)^{\transpose}$.

\section{Graph products from closure space products}
\label{sec:graph-products}

We observe that via the identification of graphs and directed graphs as closure spaces in \cref{section:graphs_and_closures}, one may obtain the canonical product operations of graphs from the canonical product operations of closure spaces.

\subsection{A neighborhood filter base for a digraph}
\label{sec:base-digraph}

We identify a base for the neighborhood filter corresponding to the Alexandroff closure for a digraph, which we will be used in \cref{sec:graph-products-proofs}.

\begin{definition} \label{def:complement-digraph}
  Let $(X,E)$ be a digraph.
  The \emph{complement digraph} $(X,E^c)$ is given by $xE^cy$ iff $x \neq y$ and not $xEy$. That is, a directed edge is in $E^c$ iff it is not in $E$.
\end{definition}

\begin{lemma}
\label{lemma:neighborhoor_in_a_digraph}
Given a digraph $(X,E)$, we have
the corresponding Alexandroff closure $c_E$ (\cref{def:functor_from_digraphs_to_closure_spaces}),
the reverse digraph $E^{\transpose}$ (\cref{def:reverse-digraph}),
and the function $\rho_{E^{\transpose}}:X \to \Pow{X}$ given by \eqref{eq:rho}.
  Let $x\in X$. Then, the singleton
  $\{\rho_{E^{\transpose}}(x)\}$
  is a
  base for \new{the} neighborhood filter
  at $x$ (\cref{def:neighborhood}) in the closure space $(X,c_E)$.
\end{lemma}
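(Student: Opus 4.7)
The plan is to verify the two defining properties of a local base: first, that $\rho_{E^{\top}}(x)$ is itself a neighborhood of $x$ in $(X,c_E)$, and second, that every neighborhood of $x$ contains $\rho_{E^{\top}}(x)$. The key preliminary observation is a duality between the two induced maps: unwinding \eqref{eq:rho} and \cref{def:reverse-digraph}, we get
\[
  y \in \rho_{E^{\top}}(x) \iff x\overline{E^{\top}}y \iff y\overline{E}x \iff x \in \rho_E(y) = c_E(\{y\}).
\]
So membership in $\rho_{E^{\top}}(x)$ is precisely the condition ``$x$ lies in the $c_E$-closure of $\{y\}$''. Also, because $c_E$ is quasi-discrete (\cref{def:functor_from_digraphs_to_closure_spaces}, \cref{def:quasi_discrete_closure}), we have the explicit formula $c_E(A) = \bigcup_{y \in A}\rho_E(y)$ for any $A \subset X$, which I will use throughout.

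For the first property, I need $x \in \text{int}_{c_E}(\rho_{E^{\top}}(x)) = X - c_E(X - \rho_{E^{\top}}(x))$ (\cref{def:interior}, \cref{def:neighborhood}), i.e.\ $x \notin c_E(X - \rho_{E^{\top}}(x)) = \bigcup_{y \notin \rho_{E^{\top}}(x)}\rho_E(y)$. But the duality above says precisely that $y \notin \rho_{E^{\top}}(x)$ is equivalent to $x \notin \rho_E(y)$, so $x$ cannot lie in this union. Hence $\rho_{E^{\top}}(x)$ is a neighborhood of $x$.

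For the second property, let $B$ be any neighborhood of $x$ in $(X,c_E)$. By \cref{def:neighborhood}, $x \notin c_E(X - B) = \bigcup_{y \in X-B}\rho_E(y)$, so $x \notin \rho_E(y)$ for every $y \notin B$. By the same duality, this means $y \notin \rho_{E^{\top}}(x)$ for every $y \notin B$; contrapositively, $\rho_{E^{\top}}(x) \subset B$. Combining the two properties gives that $\{\rho_{E^{\top}}(x)\}$ is a local base at $x$.

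There is no serious obstacle: the proof is essentially the observation that for quasi-discrete closures the ``reverse'' relation $E^{\top}$ encodes exactly which points have $x$ in their closure, which is the dual notion needed for describing neighborhoods of $x$. The only thing to be careful about is distinguishing $\rho_E$ (used to \emph{define} $c_E$) from $\rho_{E^{\top}}$ (used to describe neighborhoods), and consistently applying the quasi-discrete formula $c_E(A)=\bigcup_{y\in A}\rho_E(y)$ when computing interiors.
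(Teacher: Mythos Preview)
Your proof is correct and follows essentially the same approach as the paper's: both establish that $\rho_{E^{\top}}(x)$ is a neighborhood by computing its $c_E$-interior via the quasi-discrete formula, and both show minimality by the contrapositive argument that $y\notin U$ implies $x\notin c_E(y)$ implies $y\notin\rho_{E^{\top}}(x)$. Your explicit statement of the duality $y\in\rho_{E^{\top}}(x)\iff x\in\rho_E(y)$ up front is a slightly cleaner presentation than the paper's detour through the complement digraph $\overline{E}^c$, but the underlying argument is the same.
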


\begin{proof}
  Let $x \in X$.
  Let $i_E$ denote the interior corresponding to the closure $c_E$.
  By definition and because the closure operator $c_E$ is Alexandroff, we have 
\begin{multline*}
  i_E(\rho_{E^{\transpose}}(x))=X\setminus c_E(X\setminus \rho_{E^{\transpose}}(x))
  = X \setminus  c_E(X \setminus  (\{y\in X \,|\, y\overline{E}x\}) \\
  = X \setminus  c_E(\{y \in X\,|\, y\overline{E}^cx\})
  =X\setminus \bigcup\limits_{y\in X,y\overline{E}^c x}\rho_E(y).
\end{multline*}
For all $y \in X$,
if $y\overline{E}^cx$ then $x \not\in \rho_E(y)$.
Therefore $x \in i_E(\rho_{E^{\transpose}}(x))$.
Thus, $\rho_{E^{\transpose}}(x)$ is a neighborhood of $\{x\}$. 

\new{Now let $U$ be a neighborhood of $x$.} Then $x\in i_E(U)=X\setminus  c_E(X\setminus  U)$. Suppose $y \in X \setminus U$. Then $c_E(y)\subset c_E(X\setminus U)$. Thus $x\not\in c_E(y)$
and hence $y \not\in c_{E^{\transpose}}(x) = \rho_{E^{\transpose}}(x)$.
That is, $y \in X \setminus \rho_{E^{\transpose}}(x)$.
Therefore $\rho_{E^{\transpose}}(x)\subset U$.
Hence $X \setminus U \subset X \setminus \rho_{E^{\transpose}}(x)$ and thus
$\rho_{E^{\transpose}}(x)$ is a base for a neighborhood filter at $x$.
\end{proof}

\subsection{Graph products as special cases of closure space products}
\label{sec:graph-products-proofs}

We will now prove that the \new{digraph product} and the \new{digraph} cartesian product are special cases of the \new{categorical} product of closure spaces and the inductive product of closure spaces, respectively.

\begin{remark}
\new{
The digraph product and the digraph cartesian product are two distinct product operations on digraphs. Given digraphs $(X,E)$ and $(Y,E)$, both products are digraph structures on the cartesian product of sets $X\times Y$. The digraph product includes ``diagonal" edges, whereas the digraph cartesian product does not.}
\end{remark}

\begin{definition}
\label{def:regular_digraph_product}
Let $(X,E)$ and $(Y,F)$ be two digraphs. Define the \emph{digraph product} $X\times Y$ to be the digraph $(X\times Y,E \times F)$,
where $(x,y)\overline{E\times F}(x',y')$ iff $x\overline{E}_Xx'$ and $y\overline{E}_Yy'$.
\end{definition}

\begin{proposition}
\label{prop:regular_digraph_product_is_product}
Let $(X,E)$ and $(Y,F)$ be digraphs. Then $(X\times Y,c_{E\times F})=(X\times Y,c_{E}\times c_{F})$ (\cref{def:product_closure}).
\end{proposition}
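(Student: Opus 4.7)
The plan is to show that both closure operations admit the same local base at every point $(x,y) \in X\times Y$, and then invoke the uniqueness of a closure determined by a local base (the appendix theorem cited in \cref{def:product_closure}) to conclude equality. This avoids trying to compute $(c_{E_X}\times c_{E_Y})(A)$ for an arbitrary subset $A$ directly, which is awkward because the product closure is defined through neighborhoods rather than closures of sets.

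First I would unpack the left side. The closure $c_{E_X\times E_Y}$ is quasi-discrete (\cref{def:functor_from_digraphs_to_closure_spaces}), so by \cref{lemma:neighborhoor_in_a_digraph} applied to the digraph $(X\times Y, E_X\times E_Y)$, the singleton $\{\rho_{(E_X\times E_Y)^{\top}}(x,y)\}$ is a local base at $(x,y)$ in $(X\times Y, c_{E_X\times E_Y})$. A short bookkeeping step using the definition of the product digraph (\cref{def:regular_digraph_product}) and the fact that $\overline{E_X\times E_Y}$ pairs with $\overline{E}_X$ and $\overline{E}_Y$ coordinatewise gives
\[
\rho_{(E_X\times E_Y)^{\top}}(x,y) = \{(x',y') : (x',y')\,\overline{E_X\times E_Y}\,(x,y)\} = \rho_{E_X^{\top}}(x)\times \rho_{E_Y^{\top}}(y).
\]

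Next I would unpack the right side. By \cref{def:product_closure}, a local base at $(x,y)$ in the product closure $(X\times Y, c_{E_X}\times c_{E_Y})$ is given by sets $U\times V$ where $U$ and $V$ range over local bases of $x$ in $(X, c_{E_X})$ and of $y$ in $(Y, c_{E_Y})$, respectively (using the remark at the end of \cref{def:product_closure} that one may restrict to local bases in each factor). Applying \cref{lemma:neighborhoor_in_a_digraph} again to each factor, we may take the singleton local bases $\{\rho_{E_X^{\top}}(x)\}$ and $\{\rho_{E_Y^{\top}}(y)\}$, producing $\{\rho_{E_X^{\top}}(x)\times \rho_{E_Y^{\top}}(y)\}$ as a local base at $(x,y)$ for the product closure.

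Since the two closures share the singleton local base $\{\rho_{E_X^{\top}}(x)\times \rho_{E_Y^{\top}}(y)\}$ at every point, they coincide by the uniqueness clause of \texttt{theorem:filter\_determines\_a\_local\_base\_for\_a\_closure}. The main obstacle (quite mild) is the indexing bookkeeping in the first step, in particular keeping track of which coordinate the bar-and-transpose operations act on; once that identification is made, the rest is a direct appeal to previously established lemmas.
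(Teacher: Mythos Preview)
Your proposal is correct and follows essentially the same approach as the paper: both compute the singleton local base at each point for the two closures via \cref{lemma:neighborhoor_in_a_digraph}, verify the identity $\rho_{(E_X\times E_Y)^{\top}}(x,y)=\rho_{E_X^{\top}}(x)\times\rho_{E_Y^{\top}}(y)$, and conclude by the uniqueness clause of \cref{theorem:filter_determines_a_local_base_for_a_closure}.
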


\begin{proof}
 We will show that both closures share a base for a neighborhood filter (\cref{def:neighborhood}) at each point.
  It follows by \cref{cor:nghd-base} that they are equal.

  Let $x \in X$ and $y \in Y$.
  By \cref{lemma:neighborhoor_in_a_digraph}, $c_{E}$ has a base for a neighborhood filter $\{\rho_{E^{\transpose}}(x)\}$ at $x$,
  $c_{F}$ has a base for a neighborhood filter $\{\rho_{F^{\transpose}}(y)\}$ at $y$, and
  $c_{E \times F}$ has a base for a neighborhood filter $\{\rho_{(E \times F)^{\transpose}}(x,y)\}$ at $(x,y)$.
  By \cref{def:product_closure}, $c_{E}\times c_{F}$ has a base for a neighborhood filter $\{\rho_{E^{\transpose}}(x)\times\rho_{F^{\transpose}}(y)\}$ at $(x,y)$.
  By \cref{eq:rho} and \cref{def:regular_digraph_product},
  $\rho_{E^{\transpose}}(x) = \{x' \in X \,|\, x'\overline{E}_Xx\}$,
  $\rho_{F^{\transpose}}(y) = \{y' \in Y \,|\, y'\overline{E}_Yy\}$, and
  $\rho_{(E \times F)^{\transpose}}(x,y) = \{(x',y') \in X \times Y \,|\, x'\overline{E}_Xx \text{ and } y'\overline{E}_Yy\}$. 
  Therefore $\rho_{(E \times F)^{\transpose}} = \rho_{E^{\transpose}} \times \rho_{F^{\transpose}}$.
\end{proof}

Combining \cref{lemma:digraphs_are_quasi_discrete_closure_spaces} and \cref{prop:regular_digraph_product_is_product} we have the following.

\begin{corollary}
\label{corollary:prod_of_quasi_dis_is_quasi_dis}
Let $(X,\new{c_X})$ and $(Y,\new{c_Y})$ be two Alexandroff closure spaces. Then $(X\times Y,\new{c_X}\times \new{c_Y})$ is also Alexandroff.
\end{corollary}

\begin{definition}{\cite[Definition 2.3]{grigor2014homotopy}}
\label{def:digraph_product}
Let $(X,E)$ and $(Y,F)$ be two digraphs. Define the \emph{\new{digraph} cartesian product} $X\boxplus Y$
to be the digraph $(X\times Y,E \boxplus F)$,
where
for $x,x'\in X$ and $y,y'\in Y$, $(x,y)(E \boxplus F)(x',y')$ if and only if either $x=x'$ and $yFy'$, or $xEx'$ and $y=y'$.
Equivalently, $(x,y)\overline{E \boxplus F}(x',y')$ iff $x\overline{E}_Xx'$ and $y\overline{E}_Yy'$ and either $x=x'$ or $y=y'$.
\end{definition}

\begin{proposition}
\label{prop:digraph_product_is_inductive_product}
Let $(X,E)$ and $(Y,F)$ be digraphs. Then $(X\times Y,c_{E\boxplus F})=(X\times Y,
c_{E}\boxplus c_{F}
)$ (\cref{def:inductive_product_closure}).
\end{proposition}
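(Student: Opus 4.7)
The plan is to mirror the proof of \cref{prop:regular_digraph_product_is_product}: show that the two closures admit a common local base at every point $(x,y) \in X \times Y$ and then invoke \cref{theorem:filter_determines_a_local_base_for_a_closure} to conclude that they coincide.

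First, I would unpack the local base of the inductive product closure $c_{E_X} \boxdot c_{E_Y}$ at $(x,y)$. By \cref{def:inductive_product_closure} and the observation that we may restrict to local bases at $x$ and $y$ when forming canonical inductive neighborhoods, together with \cref{lemma:neighborhoor_in_a_digraph}, which gives $\{\rho_{E_X^{\top}}(x)\}$ and $\{\rho_{E_Y^{\top}}(y)\}$ as local bases at $x$ and $y$ respectively, we obtain the singleton local base
\[
\bigl\{(\{x\} \times \rho_{E_Y^{\top}}(y)) \ \cup \ (\rho_{E_X^{\top}}(x) \times \{y\})\bigr\}
\]
at $(x,y)$ in $(X \times Y, c_{E_X} \boxdot c_{E_Y})$.

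Second, I would apply \cref{lemma:neighborhoor_in_a_digraph} directly to the Cartesian product digraph $(X \times Y, E_X \boxdot E_Y)$ to get the singleton local base $\{\rho_{(E_X \boxdot E_Y)^{\top}}(x,y)\}$ at $(x,y)$ in $(X \times Y, c_{E_X \boxdot E_Y})$. The key computation is then to expand this set using \cref{def:digraph_product}: a pair $(x',y')$ lies in $\rho_{(E_X \boxdot E_Y)^{\top}}(x,y)$ iff $(x',y')\, \overline{E_X \boxdot E_Y}\, (x,y)$, i.e., iff $x' \overline{E}_X x$, $y' \overline{E}_Y y$, and either $x' = x$ or $y' = y$. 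Splitting on these two cases and noting that $x' = x$ automatically satisfies $x' \overline{E}_X x$ by reflexivity (and analogously for $y$), this set equals $(\{x\} \times \rho_{E_Y^{\top}}(y)) \cup (\rho_{E_X^{\top}}(x) \times \{y\})$.

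Having matched the two local bases, the uniqueness part of \cref{theorem:filter_determines_a_local_base_for_a_closure} forces the two closure operations on $X \times Y$ to agree. The only subtle point—and the thing to watch carefully—is the bookkeeping with the reflexive closure $\overline{E}$ versus $E$ in \cref{def:digraph_product}; making sure the ``either $x'=x$ or $y'=y$'' clause interacts correctly with reflexivity is what turns the set $\rho_{(E_X \boxdot E_Y)^{\top}}(x,y)$ into precisely the canonical inductive neighborhood, with no stray or missing points.
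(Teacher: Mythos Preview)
Your proposal is correct and follows essentially the same approach as the paper: both show that the two closures share a singleton local base at each point $(x,y)$ by combining \cref{lemma:neighborhoor_in_a_digraph} with the description of canonical inductive neighborhoods in \cref{def:inductive_product_closure}, then invoke \cref{theorem:filter_determines_a_local_base_for_a_closure}. The only cosmetic difference is that the paper writes the canonical inductive neighborhood as $\{(x',y') \in \rho_{E_X^{\top}}(x)\times\rho_{E_Y^{\top}}(y) \mid x'=x \text{ or } y'=y\}$ while you write it as $(\{x\}\times\rho_{E_Y^{\top}}(y))\cup(\rho_{E_X^{\top}}(x)\times\{y\})$, but these are equivalent by the remark in \cref{def:inductive_product_closure}.
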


\begin{proof}
  We will show that both closures share a base for a neighborhood filter (\cref{def:neighborhood}) at each point.
  It follows by \cref{cor:nghd-base} that they are equal.

  Let $x \in X$ and $y \in Y$.
  By \cref{lemma:neighborhoor_in_a_digraph}, $c_{E}$ has a base for a neighborhood filter $\{\rho_{E^{\transpose}}(x)\}$ at $x$,
  $c_{F}$ has a base for a neighborhood filter $\{\rho_{F^{\transpose}}(y)\}$ at $y$, and
  $c_{E \boxplus F}$ has a base for a neighborhood filter $\{\rho_{(E \boxplus F)^{\transpose}}(x,y)\}$ at $(x,y)$.
  By \cref{def:inductive_product_closure}, $c_{E}\boxplus c_{F}$ has a base for a neighborhood filter $\{(x',y') \in \rho_{E^{\transpose}}(x)\times\rho_{F^{\transpose}}(y) \,|\, x'=x \text{ or } y'=y\}$ at $(x,y)$.
  By \cref{eq:rho} and \cref{def:digraph_product},
  $\rho_{E^{\transpose}}(x) = \{x'\in X \,|\, x'\overline{E}_Xx\}$,
  $\rho_{F^{\transpose}}(y) = \{y'\in Y  \,|\, y'\overline{E}_Yy\}$, and\new{
  \begin{align*}
  \rho_{(E \boxplus F)^{\transpose}}(x,y) &= \{(x',y') \in X \times Y \,|\, x'\overline{E}_Xx \text{ and } y'\overline{E}_Yy \text{ and either } x'=x \text{ or } y'=y\}\\
  &=\{(x',y')\in \rho_{E^T}(x)\times \rho_{F^T}(y)\,|\, x'=x \text{ or } y'=y  \}.
  \end{align*} 
  Therefore the neighborhood filters are the same at each point, and the claim follows.}
\end{proof}

Combining \cref{lemma:digraphs_are_quasi_discrete_closure_spaces} and \cref{prop:digraph_product_is_inductive_product} we have the following.

\begin{corollary}
\label{corollary:ind_prod_of_quasi_dis_is_quasi_dis}
Let $(X,\new{c_X})$ and $(Y,\new{c_Y})$ be two Alexandroff closure spaces. Then $(X\times Y,\new{c_X}\boxplus \new{c_Y})$ is also Alexandroff.
\end{corollary}

\begin{corollary}
 Given two symmetric Alexandroff closure spaces, their product and their \new{inductive} product are also symmetric and Alexandroff.
\end{corollary}

\section{Closures induced by metrics} 
\label{section:metric_spaces_and_closures}

In this section, we consider closure operators induced by a metric.
Our closure operators will be indexed by the set $[0,\infty) \times \{-1,0,1\}$, which we order by the lexicographic order. That is, $(\eps,a) \leq (\eps',a')$ if $\eps < \eps'$ or $\eps = \eps'$ and $a \leq a'$.
For $\eps \geq 0$ we denote $(\eps,-1)$, $(\eps,0)$, and $(\eps,1)$ by $\eps^-$, $\eps$, and $\eps^+$, respectively.

\subsection{Closures from thickenings}
\label{sec:clos-from-thick}

We introduce various `thickening' closures on a metric space $(X,d)$ and examine some properties of these closure operators. In particular, we investigate how they interact with Lipschitz maps between metric spaces, and we determine a
 base of the neighborhood filter at each point.

\begin{definition}
\label{def:closure_induced_by_a_metric}
Let $(X,d)$ be a metric space and let $A \subset X$.
For 
$\eps\ge 0$
define
\begin{align*}
  c_{\eps^-,d}(A)
  =\bigcup\limits_{x\in A}B_{\eps}(x)\,
  &&
  c_{\eps,d}(A)
     =\bigcup\limits_{x\in A}\overline{B}_{\eps}(x),
  &&
  c_{\eps^+,d}(A)
     =\{x\in X\,|\,d(x,A)\le \eps\},
\end{align*}
where
$B_{\eps}(x) = \{y\in X\,|\, d(x,y)<\eps\}$ for $\eps > 0$,
  $B_0(x) = \{x\}$,
$\overline{B}_{\eps}(x) = \{y\in X\,|\, d(x,y)\le \eps\}$,
and
$d(x,A)=\inf_{y\in A}d(x,y)$.
For all $\eps \geq 0$ and $a \in \{-1,0,1\}$, $c_{(\eps,a),d}$ is a closure operator on $X$.
If the metric $d$ is clear from the context then we will denote the closure spaces
$(X,c_{(\eps,a),d})$ by $(X,c_{(\eps,a)})$.
We will sometimes refer to these as \emph{metric closures}.
\end{definition}

Let $(X,d)$ be a metric space. Then $(X,c_{0^+})$ is the
Kuratowski closure space whose corresponding topology is the one induced by
the metric $d$.
Also $(X,c_{0^-})$ = $(X,c_0)$ and this closure is the discrete closure.
If the metric $d$ takes only integer values,
then for all $n$, $c_{(n+1)^-,d} = c_{n,d} = c_{n^+,d}$.

\begin{example}
\label{def:metric_interval_objects}
Consider $([0,1],d)$, the unit interval with $d(x,y) = \abs{x-y}$.
For $(\eps,a)\in [0,1] \times \{-1,0,1\}$,
we will denote the closure space $([0,1],c_{(\eps,a)})$ by $I_{(\eps,a)}$.
Note that $I_{0^-} = I_{0}$ is the unit interval with the discrete closure, $I_{\bot}$, and $I_{0^+}$ is the unit interval with the closure corresponding to the standard topology, $I_{\tau}$. At the other extreme, note that $I_{1} = I_{1^+}$ is the unit interval with the indiscrete closure, $I_{\top}$.
\end{example}

Let $(X,d)$ be a metric space. Observe that for $\eps \ge 0$
the closure space $(X,c_{\eps^+})$ is symmetric and the closure spaces $(X,c_{\eps^-})$ and $(X,c_{\eps})$ are Alexandroff and symmetric (\cref{def:quasi_discrete_closure}).
Furthermore, recall the functor $\qd:\mathbf{Cl}\to \qdcl$ from \cref{prop:adjunction-cl-clqd}. It follows from the definitions of $c_{\eps^+}$ and $c_{\eps}$ that given a metric space $(X,d)$, $\qd(X,c_{\eps^+,d})=(X,c_{\eps,d})$.
From \cref{def:closure_induced_by_a_metric}, it is easy to see the following.

\begin{lemma}
\label{lemma:order_between_closures_induced by a metric}
Let $(X,d)$ be a metric space and let
$(\eps,a) \leq (\eps',a') \in [0,\infty) \times \{-1,0,1\}$.
Then $c_{(\eps,a)} \leq c_{(\eps',a')}$.
\end{lemma}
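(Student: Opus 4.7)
The plan is to reduce the lexicographic inequality to a finite number of atomic comparisons and then chain them together by transitivity of $\leq$ on closure operations. Specifically, I would prove two kinds of inequalities: (a) for a fixed $\eps$, the chain $c_{\eps^-} \leq c_\eps \leq c_{\eps^+}$; and (b) for $\eps < \eps'$, the jump $c_{\eps^+} \leq c_{(\eps')^-}$. Given any $(\eps,a) \leq (\eps',a')$ in the lexicographic order, the general case then follows by inserting at most two intermediate operators and applying transitivity.

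For part (a), I would fix $\eps \geq 0$ and $A \subset X$. For the first comparison, I note that $B_{\eps}(x) \subset \overline{B_{\eps}}(x)$ (using the definition $B_0(x) = \{x\} = \overline{B_0}(x)$ in the degenerate case), so taking the union over $x \in A$ gives $c_{\eps^-}(A) \subset c_{\eps}(A)$. For the second comparison, if $y \in \overline{B_{\eps}}(x)$ for some $x \in A$ then $d(y,x) \leq \eps$, hence $\dist(y,A) \leq \eps$ and $y \in c_{\eps^+}(A)$.

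For part (b), suppose $\eps < \eps'$ and let $y \in c_{\eps^+}(A)$, so $\dist(y,A) \leq \eps < \eps'$. By the definition of infimum, there exists $x \in A$ with $d(y,x) < \eps'$, so $y \in B_{\eps'}(x) \subset c_{(\eps')^-}(A)$. (If $\eps' = 0$ this case does not arise, since we are assuming $\eps < \eps'$ and $\eps \geq 0$.)

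There is no real obstacle here; the only mild subtlety is remembering the convention $B_0(x) = \{x\}$ when checking $c_{0^-} \leq c_0$, and using a strict inequality in part (b) to extract a strict-ball witness from an infimum bound. Once both parts are in hand, a lexicographic pair $(\eps,a) \leq (\eps',a')$ either has $\eps = \eps'$, in which case part (a) suffices, or $\eps < \eps'$, in which case chaining $c_{(\eps,a)} \leq c_{\eps^+} \leq c_{(\eps')^-} \leq c_{(\eps',a')}$ via parts (a) and (b) completes the argument.
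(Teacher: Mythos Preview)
Your proof is correct and complete. The paper does not give an explicit proof, remarking only that the statement is easy to see from \cref{def:closure_induced_by_a_metric}; your argument is precisely the natural unwinding of those definitions that the paper leaves implicit.
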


\begin{lemma}
  Let $(X,d)$ be a metric space and let $A \subset X$.
  If $r \geq 0$
  then $c_{r^+}(A)
  = \bigcap_{s > r} c_{s^-}(A)
  = \bigcap_{s > r} c_{s}(A)
  = \bigcap_{s > r} c_{s^+}(A)$.
  If $r > 0$
  then $c_{r^-}(A)
  = \bigcup_{s < r} c_{s^-}(A)
  = \bigcup_{s < r} c_{s}(A)
  = \bigcup_{s < r} c_{s^+}(A)$.
\end{lemma}

\begin{proof}
  By \cref{lemma:order_between_closures_induced by a metric}, we have that
  for all $r \geq 0$,
  $c_{r^+}(A)
  \subset \bigcap_{s > r} c_{s^-}(A) 
  \subset \bigcap_{s > r} c_{s}(A) 
  \subset \bigcap_{s > r} c_{s^+}(A)$, and
  for all $r > 0$,
  $\bigcup_{s > r} c_{s^-}(A) \subset
  \bigcup_{s > r} c_{s}(A) \subset
  \bigcup_{s > r} c_{s^+}(A) \subset
  c_{r^-}(A)$.
  It remains to prove that for $r \geq 0$, $\bigcap_{s > r} c_{s^+}(A) \subset c_{r^+}(A)$ and for $r > 0$,
  $c_{r^-}(A) \subset \bigcup_{s < r} c_{s^-}(A)$.

  If $x \in \bigcap_{s > r} c_{s^+}(A)$ then $d(x,A) \leq s$ for all $s > r$.
  Thus $d(x,A) \leq r$ and hence $x \in c_{r^+}(A)$.
  Finally, $\bigcup_{s<r}c_{s^-}(A)
  = \bigcup_{s < r} \bigcup_{x\in A} B_s(x) 
  = \bigcup_{x \in A} \bigcup_{s < r} B_s(x) 
  = \bigcup_{x \in A} B_r(x)
  = c_{r^-}(A)$.
\end{proof}

Using the triangle inequality one obtains the following.

\begin{lemma}
\label{lemma:composition_of_closures_induced_by_a_metric}
Let $(X,d)$ be a metric space and let $\eps,\delta\ge 0$. Then $c_{\eps^-}(c_{\delta^-})\le c_{(\eps+\delta)^-},c_{\eps}(c_{\delta})\le c_{\eps+\delta}$ and $c_{\eps^+}(c_{\delta^+})\le c_{(\eps+\delta)^+}$.
\end{lemma}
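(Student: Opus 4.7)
The plan is to verify each of the three inclusions pointwise, unwinding the definitions in \cref{def:closure_induced_by_a_metric} and applying the triangle inequality for $d$. In each case, fix $A \subset X$ and take $z$ in the left-hand ``doubly-thickened'' set; we will produce witnesses lying in $A$ that certify $z$'s membership in the right-hand set.

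For the open-ball case $c_{\eps^-}(c_{\delta^-}(A)) \subset c_{(\eps+\delta)^-}(A)$, the defining union $c_{\delta^-}(A) = \bigcup_{x \in A} B_\delta(x)$ (resp.\ $c_{\eps^-}$ applied to this) gives, for each $z$ on the left, a point $y \in c_{\delta^-}(A)$ with $d(z,y) < \eps$ and an $x \in A$ with $d(y,x) < \delta$. The triangle inequality yields $d(z,x) < \eps + \delta$, so $z \in B_{\eps+\delta}(x) \subset c_{(\eps+\delta)^-}(A)$. One needs only to observe that the edge case $\eps = 0$ or $\delta = 0$ (where $B_0(x) = \{x\}$) still works with the convention in \cref{def:closure_induced_by_a_metric}, since then one of the inequalities collapses to an equality. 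The closed-ball case $c_\eps(c_\delta(A)) \subset c_{\eps+\delta}(A)$ is essentially identical, with strict inequalities replaced by $\le$ throughout.

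The third case $c_{\eps^+}(c_{\delta^+}(A)) \subset c_{(\eps+\delta)^+}(A)$ is the only one requiring a slightly more careful $\gamma$-argument, since the closure is defined via the infimum $\dist(\cdot,A)$ rather than an explicit union of balls. Here I will take $z$ with $\dist(z, c_{\delta^+}(A)) \le \eps$ and, for arbitrary $\gamma > 0$, choose $y \in c_{\delta^+}(A)$ with $d(z,y) < \eps + \gamma$ and then $x \in A$ with $d(y,x) < \delta + \gamma$. The triangle inequality gives $\dist(z,A) \le d(z,x) < \eps + \delta + 2\gamma$, and letting $\gamma \to 0$ yields $\dist(z,A) \le \eps + \delta$, so $z \in c_{(\eps+\delta)^+}(A)$.

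There is no real obstacle here; the only subtlety is being careful about strict versus non-strict inequalities and about the $\inf$ in the definition of $c_{\eps^+}$, which is handled by the standard ``pick within $\gamma$'' trick described above.
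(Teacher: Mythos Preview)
Your proof is correct and is exactly the argument the paper has in mind: the paper simply writes ``Using the triangle inequality one obtains the following'' and leaves the details to the reader. Your case analysis, including the $\gamma$-argument for $c_{\eps^+}$, is precisely the expected unpacking of that sentence.
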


Let $(X,d)$ and $(Y,e)$ be metric spaces
and let $r \geq 0$. A map $f:X \to Y$ is said to be $r$-Lipschitz if
for $x,x' \in X$, $e(fx,fx') \leq r d(x,x')$.
Maps that are $1$-Lipschitz are also
said to be nonexpansive or short.
Let $\cat{Met}$ denote the category of metric spaces and $1$-Lipschitz maps. It is easy to check the following.

\begin{lemma} \label{lem:metric-map-to-closure-map}
Let $\eps \in [0,\infty) \times \{-1,0,1\}$ and $f:(X,d) \to (Y,e)$ be a $1$-Lipschitz map. Then $f:(X,c_{\eps,d}) \to (Y,c_{\eps,e})$ is a continuous map. Thus, for each each $\eps \in [0,\infty) \times \{-1,0,1\}$ we have a functor $\cat{Met} \to \cat{Cl_s}$.
\end{lemma}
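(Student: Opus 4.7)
The plan is to verify continuity directly from the definition: for every $A \subset X$, show $f(c_{\eps,d}(A)) \subset c_{\eps,e}(f(A))$. Since the closure operators split into three families indexed by $\eps^-$, $\eps$, and $\eps^+$, I would treat each case separately, but all three arguments rest on the single inequality $e(f(x),f(x')) \leq d(x,x')$ for $x,x' \in X$.

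For the $\eps^-$ case, any element of $c_{\eps^-,d}(A) = \bigcup_{x\in A}B_\eps(x)$ is a point $y$ with $d(x,y) < \eps$ for some $x \in A$; the Lipschitz inequality gives $e(f(x),f(y)) \leq d(x,y) < \eps$, so $f(y) \in B_\eps(f(x)) \subset c_{\eps^-,e}(f(A))$. The closed-ball case $\eps$ is identical with $\leq$ replacing $<$. The $\eps^+$ case is the one worth flagging: for $y \in c_{\eps^+,d}(A)$ we have $\dist(y,A) \leq \eps$, and I would apply the Lipschitz inequality pointwise, $e(f(y),f(x)) \leq d(y,x)$ for every $x \in A$, and then take infima to obtain $\dist(f(y),f(A)) \leq \inf_{x \in A}e(f(y),f(x)) \leq \inf_{x \in A}d(y,x) = \dist(y,A) \leq \eps$, which places $f(y)$ in $c_{\eps^+,e}(f(A))$.

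For the functoriality claim, identities and composition are automatic: $1_X:(X,d) \to (X,d)$ is $1$-Lipschitz and induces the identity continuous map on $(X,c_{\eps,d})$, and the composition of two $1$-Lipschitz maps is $1$-Lipschitz, so by the preceding argument the induced set-map is continuous. The only remaining point is that the functor lands in $\cat{Cl_s}$ and not merely in $\cat{Cl}$; this is exactly the observation recorded just after \cref{def:metric_interval_objects} (together with \cref{theorem:semiuniformizable_closure}), which notes that all three closures $c_{\eps^-,d}$, $c_{\eps,d}$, $c_{\eps^+,d}$ are symmetric. Honestly there is no serious obstacle here; the lemma is a direct unpacking of the $1$-Lipschitz condition, and the only step that requires a moment of care is the infimum manipulation in the $\eps^+$ case.
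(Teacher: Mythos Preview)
Your proof is correct. The paper does not actually supply a proof of this lemma; it simply introduces the lemma with the phrase ``It is easy to check the following,'' so your direct verification of $f(c_{\eps,d}(A)) \subset c_{\eps,e}(f(A))$ in each of the three cases is exactly the routine check the authors left to the reader, and your handling of the infimum in the $\eps^+$ case is the only step requiring any care.
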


\subsection{Neighborhood filter bases for metric closures}

It will be useful to have a neighborhood filter base for our metric closures.

\begin{lemma}
\label{lemma:interior_of_a_closed_ball}
Let $(X,d)$ be a metric space, $x \in X,$ and $\eps\ge 0$.
\begin{enumerate}
\item
  The singleton $\{B_{\eps}(x)\}$ is a base for a neighborhood filter  at $x$ in $(X,c_{\eps^-})$.
\new{\textnormal{(}}Recall that $B_0(x) = \{x\}$.\new{\textnormal{)}}
\item
  The singleton $\{\overline{B}_{\eps}(x)\}$ is a base for a neighborhood filter  at $x$ in $(X,c_{\eps})$.
\item 
  The collection $\{\overline{B}_{\eps+\delta}(x)\}_{\delta>0}$ is a base for a neighborhood filter  at $x$ in $(X,c_{\eps^+})$.
\end{enumerate}
\end{lemma}

\begin{proof}[Proof of \cref{lemma:interior_of_a_closed_ball}]
  We will prove the third case. The other cases are similar.
  Let $i_{\eps^+}$ denote the interior corresponding to the closure $c_{\eps^+}$.
  Let $\delta > 0$. First we verify that $\overline{B}_{\eps+\delta}(x)$ is a neighborhood of $x$ in $(X,c_{\eps^+})$.
  We have
\begin{multline*}
i_{\eps^+}(\overline{B}_{\eps+\delta}(x))=X\setminus c_{\eps^+}(X\setminus \overline{B}_{\eps+\delta}(x))
  =X\setminus \{y\in X\,|\,d(y,X\setminus\overline{B}_{\eps+\delta}(x))\le \eps\}\\
=\{y\in X\,|\, d(y,X\setminus \overline{B}_{\eps+\delta}(x))>\eps\}.
\end{multline*} 
Now observe that 
$
d(x,X\setminus \overline{B}_{\eps+\delta}(x))=\inf\limits_{y\in X-\overline{B}_{\eps+\delta}(x)} d(x,y). 
$
However, note that for all $y\in X\setminus \overline{B}_{\eps+\delta}(x)$, $d(x,y)>\eps+\delta$. Since $\delta>0$, it follows that 
$
\inf\limits_{y\in X\setminus \overline{B}_{\eps+\delta}(x)} d(x,y)>\eps.
$
Therefore $x\in i_{\eps^+}(\overline{B}_{\eps+\delta}(x))$.

Now suppose $A\subset X$ is a neighborhood of $x$ in $(X,c_{\eps^+})$. By definition we have that $x\in i_{\eps^+}(A)=X\setminus c_{\eps^+}(X\setminus A)=\{y\in X\,|\, d(y,X\setminus A)>\eps\}$. Thus 
\begin{equation*}
d(x,X\setminus A)=\inf\limits_{y\in X\setminus A}d(x,y)>\eps.
\end{equation*} 
Therefore there exists an $\delta>0$ such that $\forall y\in X\setminus A$, $d(x,y)\ge \eps+2\delta$. Hence if $y\in X$ is such that $d(x,y)\le \eps+\delta$, the previous inequality implies that $y\in A$. Thus $\overline{B}_{\eps+\delta}(x)\subset A$.
\end{proof} 

\subsection{Lipschitz maps and metric closures}

For Lipschitz maps we obtain continuous maps between appropriate metric closure spaces. In some cases we also have the converse.

First we have a slight generalization of \cref{lem:metric-map-to-closure-map}\new{, which was also observed in \cite[Proposition 3.9]{rieser2021vcech} for the case $a=1$}.

\begin{lemma}
\label{lemma:characterizing_Lipschitz_maps_of_closure_spaces}
Let $(X,d), (Y,e) \in \cat{Met}$ and let $r \geq 0$.
Let $\eps  \in [0,\infty)$ and $a \in \{-1,0,1\}$.
If $f:X \to Y$ is $r$-Lipschitz
then we have a continuous map
$f: (X,c_{(\eps,a),d}) \to (Y,c_{(r\eps,a),e})$.
\end{lemma}

A continuous map $f:(X,c_{1,d})\to (Y,c_{r,e})$ need not be $r$-Lipschitz as the following example shows.

\begin{example}
\label{example:continuous_generalizes_Lipschitz}
Let $X=\{x_1,x_2\}$ be a two point metric space with distance $d(x_1,x_2)=2$. Let $Y=\{y_1,y_2\}$ be a two point metric space with distance $e(y_1,y_2)=5$. Let $f:X\to Y$ be a map of sets defined by $f(x_i)=y_i$ for $i=1,2$. Then observe that $f:(X,c_{1,d})\to (Y,c_{2^+,e})$ is continuous. However, $f$ is not a $2$-Lipschitz map. Indeed, $5=e(y_1,y_2)\ge 2d(x_1,x_2)=4$.
\end{example}

For a converse to \cref{lemma:characterizing_Lipschitz_maps_of_closure_spaces}, we need a stronger hypothesis.

\begin{lemma}
  \label{lem:lipschitz-converse}
Let $(X,d), (Y,e) \in \cat{Met}$.
Assume that $d$ takes only integer values and that
for all $n$, $c_{1,d}^n=c_{n,d}$.
Then $f:(X,c_{1,d})\to (Y,c_{r,e})$ is continuous if and only if $f:X\to Y$ is $r$-Lipschitz.
\end{lemma}

\begin{proof}
Assume that $\im(d) \subset \Z_{\geq 0}$ and that for all $n$, $c_{1,d}^n = c_{n,d}$.
Consider a continuous map $f:(X,c_{1,d}) \to (Y,c_{r,e})$.
Let $x,y \in X$ and let $m = d(x,y)$.
Then $y \in c_{m,d}(x)$ and thus $f(y) \in f(c_{m,d}(x)) = f(c_{1,d}^m(x)) = f(c_{1,d}(c_{1,d}^{m-1}(x)) \subset c_{r,e}(f(c_{1,d}^{m-1}(x))$.
By induction, $f(y) \in c_{r,e}^m(f(x))$,
which by the triangle inequality is contained in $c_{mr,e}(f(x))$.
Hence $e(f(x),f(y)) \leq mr$ and  
therefore $f:X \to Y$ is $r$-Lipschitz.
\end{proof}

The hypotheses of \cref{lem:lipschitz-converse} hold for the following examples.

\begin{example}
\label{example:generalizing_Lipschitz_maps}
Let $n,m\in \N$. 
\begin{itemize}
\item 
 The set of $n$ elements with pairwise distance $1$.
\item 
$\{0,1,\dots, n\}$, $\Z_{\geq 0}$, or $\Z$ with the absolute value metric.
\item $\{0,1,\ldots,n\}^m$, $\Z_{\geq 0}^m$ or $\Z^m$ with the metric induced by the $1$-norm or the $\infty$-norm.
\end{itemize}
\end{example}

\subsection{Products of metric closures}

The products and inductive products of metric closures
are related to the closures associated to two canonical product metrics.

\begin{definition}
 Let $(X,d)$ and $(Y,e)$ be metric spaces.
  Define the metrics $d+e$ and $d\vee e$ on $X \times Y$ by
  \begin{align*}
    (d+e)((x,y),(x',y')) &= d(x,x') + e(y,y') \\
    (d \vee e)((x,y),(x',y')) &= \max( d(x,x'), e(y,y'))
  \end{align*}
\end{definition}

\begin{theorem}
\label{theorem:prod_closure_as_closure_induced_by_sup_metric}
Let $(X,d)$ and $(Y,e)$ be metric spaces. Let $\eps
\ge 0$, $a\in \{-1,0,1\}$ 
Then $(X\times Y,c_{{(\eps,a)},d}\times c_{{(\eps,a)},e})=(X\times Y,c_{{(\eps,a)},d \vee e})$. 
\end{theorem}

\begin{proof}
We prove the case for $a=1$. The other cases are similar.
First we show that $c_{\eps^+,d}\times c_{\eps^+,e}$ is coarser than $c_{\eps^+,d \vee e}$.  We proceed by showing the projection $\pi_X:(X\times Y, c_{\eps^+,d \vee e})\to (X,c_{\eps^+,d})$ is continuous. Let $A\subset X\times Y$. Let $x\in \pi_X(c_{\eps^+,d \vee e}(A))$. Thus, there is a $y\in Y$ such that $(x,y)\in c_{\eps^+,d \vee e}(A)$. Thus, by definition we have:
\begin{equation*}
d_{d \vee e}((x,y),A)=\inf\limits_{(x',y')\in A}d_{d \vee e}((x,y),(x',y'))\le \eps
\end{equation*}
It follows that $\inf_{x'\in \pi_X(A)}d(x,x')\le \eps$. Thus $x\in c_{\eps^+,d}(\pi_X(A))$. Therefore
$\pi_X$ is continuous. Similarly, $\pi_Y:(X\times Y, c_{\eps^+,d \vee e})\to (Y,c_{\eps^+,e})$ is continuous. By \cref{prop:product_closure}, the product closure is the coarsest closure so that each projection map is continuous. Thus, 
$c_{\eps^+,d}\times c_{\eps^+,e}$ is coarser than $c_{\eps^+,d \vee e}$.

Now we show that $c_{\eps^+,d \vee e}$ is coarser than $c_{\eps^+,d}\times c_{\eps^+,e}$.
Let $A\subset X\times Y$.
Now let $(x,y)\in c_{\eps^+,d}\times c_{\eps^+,e}(A)$.
Let $\delta>0$.
Then by \cref{def:product_closure}, \cref{cor:nghd-base}, and
   \cref{lemma:interior_of_a_closed_ball}
 $\pi_X^{-1}(\overline{B}_{\eps+\delta,d}(x))\cap \pi_Y^{-1}(\overline{B}_{\eps+\delta,e}(y))\cap A\neq \varnothing$. Observe that $\pi_X^{-1}(\overline{B}_{\eps+\delta,d}(x))\cap \pi_Y^{-1}(\overline{B}_{\eps+\delta,e}(y))=\overline{B}_{\eps+\delta,d \vee e}(x,y)$. Thus, $d_{d \vee e}((x,y),A)\le \eps$.
 Therefore, $(x,y)\in c_{\eps^+,d \vee e}(A)$ and thus $c_{\eps^+,d}\times c_{\eps^+,e}(A)\subset c_{\eps^+,d \vee e}(A)$.
\end{proof}

\begin{theorem}
\label{theorem:ind_prod_closure_as_closure_induced_by_+_metric}
Let $(X,d)$ and $(Y,e)$ be metric spaces and let $\eps\ge 0$, $a=\{-1,0,1\}$.
Then for $X \times Y$, $c_{{(\eps,a)},d+e}$ is coarser than $c_{{(\eps,a)},d}\boxplus c_{{(\eps,a)},e}$ \new{\textnormal{(}}\cref{def:inductive_product_closure}\new{\textnormal{)}}.  If 
$e$ takes only integer values
then $(X\times Y,c_{1,d+e})=(X\times Y,c_{1,d}\boxplus c_{1,e})$.
\end{theorem}

\begin{proof}
We prove the first statement for the case $a=1$. The other cases are similar.
To show that $c_{\eps^+,d+e}$ is coarser than $c_{\eps^+,d}\boxplus c_{\eps^+,e}$ by \cref{theorem:characterization_of_ind_prod_closure} it is sufficient to show that for each $x\in X$ and each $y\in Y$ the functions $f_y:(X,c_{\eps^+,d})\to (X\times Y,c_{\eps^+,d+e})$ and $f_x:(Y,c_{\eps^+,e})\to (X\times Y,c_{\eps^+,d+e})$ defined by $f_y(x')=(x',y)$ and $f_x(y')=(x,y')$ are continuous. Let $y\in Y$ and consider the mapping $f_y$. Let $A\subset X$. Let $(x,y)\in f_y(c_{\eps^+,d}(A))$. Then $x\in c_{\eps^+,d}(A)$, therefore $d_{d}(x,A)\le \eps$. Thus, by definition we have that $(d+e)((x,y),A\times \{y\})\le \eps$. Therefore, $(x,y)\in c_{\eps^+,d+e}(f_y(A))$. Thus $f_y(c_{\eps^+,d}(A))\subset c_{\eps^+,d+e}(f_y(A))$ and hence $f_y$ is continuous. Similarly, given $x\in X$, $f_x$ is continuous. Therefore $c_{\eps^+,d+e}$ is coarser than $c_{\eps^+,d}\boxplus c_{\eps^+,e}$.

Now assume that $\im e\subset \Z_{\geq 0}$.
It remains
to show that $c_{1,d}\boxplus c_{1,e}$ is coarser than $c_{1,d+e}$. Let $A\subset X\times Y$ and let $(x,y)\in c_{1,d+e}(A)$. Thus, there exists a $(x',y')\in A$ such that $d(x,y) + e(y,y') \le 1$.
By \cref{def:inductive_product_closure,lemma:interior_of_a_closed_ball},
$(X\times Y,c_{1,d}\boxplus c_{1,e})$
has a base for a neighborhood filter at $(x,y)$ consisting of the element
$W = \{x\} \times \overline{B}_{1,e}(y) \cup \overline{B}_{1,d}(x) \times \{y\}$.
If $y\neq y'$ then, since $\text{im}\, e\subset \Z_{\geq 0}$ it follows that $e(y,y')=1$ and thus $x=x'$.
If $y=y'$ then $e(y,y')=0$ and thus $d(x,x')\le 1$.
In either case we have that $(x',y')\in
W$.
Therefore $W\cap A\neq \varnothing$
and thus
$(x,y)\in (c_{1,d}\boxplus c_{1,e})(A)$. Hence, it follows that $c_{1,d}\boxplus c_{1,e}$ is coarser than $c_{1,d+e}$.
\qedhere
\end{proof}

The next example shows that
we cannot remove the integer-value hypothesis.

\begin{example}
\label{example:inductive_prod_closure_isnt_always_ell_1_c1}
Let $(X,d)=(Y,e)=(\R,d)$ where $d$ is the absolute value metric on $\R$. Let $(0,0)\in \R^2$ and consider $c_{1,d+d}((0,0))$. Note that $(\frac{1}{2},\frac{1}{2})\in c_{1,d+d}((0,0))$. However, $(\frac{1}{2},\frac{1}{2})\not\in (c_{1,d}\boxplus c_{1,d})((0,0))$.
\end{example}

The final example of this section shows that $I_{\tau}\boxplus I_{\tau}$ is not the same as $I_{\tau}\times I_{\tau}$.

\begin{example}
\label{example:inductive_prod_closure_thickening_0}
Consider the unit interval $[0,1]$ with 
the absolute value metric, $d$.
The metric $d$ generates a topology on $[0,1]$ whose closure is $c_{0^+,d}$.
Now consider $[0,1] \times [0,1]$.
By \cref{theorem:prod_closure_as_closure_induced_by_sup_metric} we have $c_{0^+,d}\times c_{0^+,d}=c_{0^+,d \vee d}$.
Since the metrics $d+d$ and $d \vee d$ are equivalent they induce the same topology.
Hence, $c_{0^+,d \vee d}=c_{0^+,d+d}$.
Let $A=(0,1)\times (0,1)\subset [0,1]\times [0,1]$.
Thus $c_{0^+,d+d}(A)=[0,1]\times [0,1]$.
On the other hand, 
no neighborhood of the form $\overline{B}_{\eps}(0) \times \{0\}\cup \{0\}\times \overline{B}_{\delta}(0)$ for any $\eps,\delta>0$ has a non-empty intersection with $A$.
Thus, by
\cref{lemma:interior_of_a_closed_ball,def:inductive_product_closure} and \cref{cor:nghd-base},
$(0,0)\not \in c_{0^+}\boxplus c_{0^+}(A)$.
Therefore $c_{0^+,d+d}\neq c_{0^+,d}\boxplus c_{0^+,d}$.
\end{example}
  
\section{Homotopy in closure spaces}
\label{section:homotopy}

In this section we define various homotopy theories for closure spaces using different intervals together with either the product or the inductive product.
We will study the relationships between
these theories.
We will observe that
these homotopy theories restrict to the full subcategories of closure spaces, $\cat{Top}$, $\qdcl$, $\suqdcl$ where some of them have been previously studied under different names.

\subsection{Product operations}

We formalize the properties of the product and the inductive product that we will need.
Recall that the category of sets, $\cat{Set}$, together with the cartesian product and the one point set $*$ forms a symmetric monoidal category. In addition, there is a functor $U: \cat{Cl} \to \cat{Set}$ that forgets the closure. Also, the terminal object in $\cat{Cl}$ is the one point set $*$ with its unique closure.

\begin{definition}
  \label{def:product_operator}
A \emph{product operation} 
is a functor $\otimes:\mathbf{Cl}\times \mathbf{Cl}\to \mathbf{Cl}$
for which $\cat{Cl}$ has a symmetric monoidal category structure that commutes with the forgetful functor $U$ and the cartesian symmetric monoidal category structure on $\cat{Set}$.
That is, for all $(X,\new{c_X})$, $(Y,\new{c_Y})$, $(X,\new{c_X}) \otimes (Y,\new{c_Y}) = (X \times Y, \new{c_X} \otimes \new{c_Y})$ for some closure operator $\new{c_X} \otimes \new{c_Y}$ on $X \times Y$,
the unit object is the one point space $*$ with its unique closure, and
the associator, unitors, and braiding are given by those on $\cat{Set}$.
\end{definition}

\begin{example}
\label{example:products}
The product closure and inductive product closures are examples of product operations.
Indeed, by their definition, $\boxplus$ and $\times$ commute with the forgetful functor. 
The unit object for both $\boxplus$ and $\times$ is the one point space $*$.

The braiding isomorphisms $\gamma_{XY}:X\times Y \to Y\times X$ and $\gamma_{XY}: X\boxplus Y \to Y\boxplus X$ are both given by $\gamma_{XY}(x,y)=(y,x)$.
We will show that $\gamma_{XY}: X \times Y \to Y \times X$ is continuous. The proof that $\gamma_{XY}: X \boxplus Y \to Y \boxplus X$ is continuous is similar.
Let $A\subset X\times Y$ and let $(x,y)\in (\new{c_X} \times \new{c_Y})(A)$.
Let $U_x$ and $V_y$ be neighborhoods of $x$ and $y$, respectively.
  Then by definition, $(U_x \times V_y) \cap A \neq \varnothing$.
  It follows that $(V_y \times U_x) \cap \gamma_{XY}(A) \neq \varnothing$.
  Thus, by definition, $(y,x) \in (\new{c_Y} \times \new{c_X})(\gamma_{XY}(A))$.
  Hence $\gamma_{XY}((\new{c_X}\times \new{c_Y})(A)) \subset (\new{c_Y} \times \new{c_X})(\gamma_{XY}(A))$.
  Therefore $\gamma_{XY}$ is continuous.

  The continuity of the associator $a_{XYZ}:(X\times Y)\times Z\to X\times (Y\times Z)$ follows from observing that $(X\times Y)\times Z$ and $X\times (Y\times Z)$ share a base for a neighborhood filter at each $(x,y,z)$ consisting of sets of the form $(U_x\times U_y\times U_z)$ where $U_x,U_y,U_z$ are
  neighborhoods of
  $X$, $Y$ and $Z$ respectively.
  The continuity of the associator $a_{XYZ}:(X\boxplus Y)\boxplus Z \to X\boxplus (Y\boxplus Z)$ follows from observing that $(X\boxplus Y)\boxplus Z$ and $X\boxplus(Y\boxplus Z)$ share a
  base for a neighborhood filter  at each $(x,y,z)$ consisting of sets of the form $U_x\times\{y,z\}\cup \{x\}\times U_y\times\{z\}\cup \{x,y\}\times U_z$ where $U_x,U_y,U_z$ are
  neighborhoods of
  $x$, $y$ and $z$ respectively.

Let $A \subset X$. Since $(c_* \times c)(* \times A) = * \times c(A)$, the left unitor $\lambda_X: * \times X \to X$ is continuous. Similarly, the left unitor $\lambda_X: * \boxplus X \to X$ is continuous and both right unitors are continuous.
  The triangle and pentagon identities are satisfied for both $\times$ and $\boxplus$ since they are satisfied for the underlying sets and all the maps in question are continuous. 
\end{example}

\begin{definition} \label{def:partial-order-products}
  Define a partial order on product operations by setting $\otimes_1 \leq \otimes_2$ if there exists natural transformation $\alpha$ from $\otimes_1$ to $\otimes_2$ such that for all closure spaces $X$ and $Y$, $\alpha_{(X,Y)}:X \otimes_1 Y \to X \otimes_2 Y$ is the identity map.
\end{definition}

\begin{lemma}
\label{lemma:projection_maps_on_products}
Let $\otimes $ be a product operation
and let $(X,\new{c_X})$ and $(Y,\new{c_Y})$ be closure spaces. Then the projection maps $\pi_X:(X \times Y, \new{c_X} \otimes \new{c_Y}) \to (X,\new{c_X})$ and $\pi_Y: (X \times Y, \new{c_X} \otimes \new{c_Y}) \to (Y,\new{c_Y})$ are continuous.
\end{lemma}

\begin{proof}
  We verify the first case; the second is similar.
  Let $\cat{1}_X:X\to X$ be the identity and $a:Y\to *$ be the constant map. Then, by functoriality, we have that $\cat{1}_X\otimes a:X\otimes Y\to X\otimes *$ is continuous.
The result follows from composing with
the right unitor isomorphism, $X\otimes * \isomto X$.
\end{proof}

\begin{lemma}
\label{lemma:embedding_maps_into_product}
Let $\otimes$ be a product operation,
let $(X,\new{c_X}), (Y,\new{c_Y})$ be closure spaces and let $x_0 \in X$ and $y_0 \in Y$.
Then the maps $Y\to X\otimes Y$ given by $y \mapsto (x_0,y)$ and $X\to X\otimes Y$ given by $x \mapsto (x,y_0)$ are continuous. 
\end{lemma}

\begin{proof}
  Consider the continuous maps $x_0:*\to X$ given by $x_0(*)=x_0$
  and $\cat{1}_Y:Y\to Y$, the identity map.
  By functoriality, $x_0\otimes \cat{1}_Y:*\otimes Y\to X\otimes Y$ is continuous.
  By precomposing with the right unitor isomorphism $Y \isomto *\otimes Y$, we get that the map $Y \to X \otimes Y$ given by $y \mapsto (x_0,y)$ is continuous.
  The other case is similar.
\end{proof}

From \cref{theorem:characterization_of_ind_prod_closure,lemma:projection_maps_on_products,lemma:embedding_maps_into_product,prop:product_closure} we get the following.

\begin{proposition}
\label{corollary:regular_product_is_coarsest}
Let $\otimes$ be a product operation. Then $\boxplus\le\otimes \le \times$.
\end{proposition}

\begin{lemma} \label{lem:prod-cl-disc-cl} 
  Let $(X,\new{c_X}),(Y,\new{c_Y})$ be closure spaces and assume that $\new{c_Y}$ is the discrete closure.
  Then for any product operation $\otimes$, $(X \times Y, \new{c_X} \otimes \new{c_Y}) = (X \times Y, \new{c_X} \times \new{c_Y})$.
\end{lemma}

\begin{proof}
 Let $\otimes$ be a product operation.
  By \cref{corollary:regular_product_is_coarsest}, we have the following partial ordering on closures on $X \times Y$,
  $\new{c_X} \boxplus \new{c_Y} \leq \new{c_X} \otimes \new{c_Y} \leq \new{c_X} \times \new{c_Y}$.
  By \cref{lemma:ind_product_and_product_of_discrete_spaces}, $\new{c_X} \boxplus \new{c_Y} = \new{c_X} \times \new{c_Y}$.
  Therefore $\new{c_Y} \otimes \new{c_Y} = \new{c_X} \times \new{c_Y}$.
\end{proof}

\subsection{Intervals}
\label{sec:interval}

We formalize the structure of an interval which we will use to develop a homotopy theory and give examples.
Our definitions and terminology are closely related to those of Berger and Moerdijk~\cite{Berger:2006}.
Recall that the terminal object in $\cat{Cl}$ is the one point closure space $*$ with its unique closure.
Also recall that maps from a discrete closure space are always continuous.

\begin{definition}
  \label{def:interval_object}
  Let $\otimes$ be a product operation.
An \emph{interval}
for $\otimes$ 
is a closure space $J$ together with two continuous maps $*\xrightarrow{0}J\xleftarrow{1}*$
and a symmetric, associative continuous map $\vee: J \otimes J \to J$ which has $0$ as its \emph{neutral} element and $1$ as its \emph{absorbing} element.
That is, if we write $s \vee t$ for $\vee(s,t)$ then $0 \vee t = t$ and $1 \vee t = 1$.
A \emph{morphism of intervals} is a continuous map of intervals $f: J \to K$ that preserves the distinguished points and commutes with the symmetric, associative continuous maps. That is, $f 0_J = 0_K$, $f 1_J = 1_K$, and $f(s \vee_J t) = f(s) \vee_K f(t)$.
Let $\cat{Int}(\otimes)$ denote the category whose objects are intervals for $\otimes$ and whose morphisms are morphisms of intervals.
Let $\cat{Int_{0\neq 1}}(\otimes)$ be the full subcategory of $\cat{Int}(\otimes)$ whose objects are intervals with $0\neq 1$.
\end{definition}

\begin{example}
  Consider the discrete closure space $* \amalg *$, which we also write as $J_{\bot} = (\{0,1\},c_{\bot})$.
  We have continuous maps $0,1:* \to \{0,1\}$ with the specified image. By \cref{lem:prod-cl-disc-cl},
  $J_{\bot} \otimes J_{\bot} = (\{0,1)\} \times \{0,1\}, c_{\bot})$.
  Let $\vee: \{0,1\}^2 \to \{0,1\}$ be the continuous map given by $s \vee t = \max(s,t)$.
  With this structure, $* \amalg *$ is an interval for $\otimes$.
  Furthermore, using the universal property of the coproduct, for any interval $J$ for $\otimes$ there is a unique morphism of intervals $* \amalg * \to J$.
  That is, $* \amalg *$ is the initial object in $\cat{Int(\otimes)}$ and $\cat{Int_{0\neq 1}(\otimes)}$. 

  The one point space $*$ has unique continuous maps $0:* \to *$ and $1:* \to *$. It follows from the definition that $* \otimes * = *$ and thus we have a unique continuous map $\vee: * \to *$. With this structure $*$ is an interval for $\otimes$
  and furthermore it is the terminal object in $\cat{Int(\otimes)}$.
\end{example}

\begin{example} \label{ex:intervals}
  In each of the following, we show that the maximum map 
  gives us an interval with $0\neq 1$ 
  for $\times$. 
  By \cref{corollary:regular_product_is_coarsest} and \cref{lemma:interval_object_for_a_product} it will follow that these are also intervals with $0\neq 1$
  for any product operation $\otimes$.
  \begin{enumerate}
  \item $I_{\tau}$ with the inclusions of 
    $0$ and $1$ (\cref{def:interval_objects}) is an interval for $\times$.
    It is elementary to show that the map $I_{\tau} \times I_{\tau} \to I_{\tau}$ given by the maximum function is a continuous map of topological spaces.
  \item For $m\geq 1$ and $0 \leq k \leq 2^m-1$, each of $J_{m,\bot}$, $J_{m,\top}$, $J_m$, $J_{m,\leq}$, and $J_{m,k}$ with the inclusions of the points $0$ and $m$  (\cref{def:interval_objects}) and the maximum map is an interval for $\times$. We will show that the maximum map is continuous in each case.
  \begin{enumerate}
  \item Since $J_{m,\bot}\times J_{m,\bot }$ is a discrete space, the maximum map is continuous.
  \item Similarly, since $J_{m,\top}$ is an indiscrete space,
    the maximum map is continuous. 
  \item $J_m$.
    Suppose $(s',t') \in (c \times c)(s,t)$. Then $\abs{s-s'} \leq 1$ and $\abs{t-t'} \leq 1$. Therefore $|\max(s,t) - \max(s',t')| \leq 1$
      and thus $\max(s',t') \in c(\max(s,t))$. 
      
    \item $J_{m,\leq}$.
      Suppose that $(s',t') \in (c \times c)(s,t)$. Then $s \leq s'$ and $t \leq t'$.
        It follows that $\max(s,t) \leq \max(s',t')$ and thus $\max(s',t') \in c(\max(s,t))$.
    \item $J_{m,k}$.
      Suppose that $(s',t') \in (c \times c)(s,t)$. Then $s' \in c(s)$ and $t' \in c(t)$. If $\max(s,t)=s$ and $\max(s',t')=s'$ or $\max(s,t)=t$ and $\max(s',t') = t'$ then $\max(s',t') \in c(\max(s,t))$. If not, then $s'=t$ and $t'=s$ and we also have that  $\max(s',t') \in c(\max(s,t))$.
  \end{enumerate}  
  As special cases, we have the intervals $J_{\bot}, J_{\top}, J_+$, and $J_-$.
\item Let $\eps \in [0,1]$. Then $I_{\eps^-}$, $I_{\eps}$, and $I_{\eps^+}$
  with the inclusions of the points $0$ and $1$ (\cref{def:metric_interval_objects}) and the maximum map
    is an interval for $\times$. 
  We give the proof the case $I_{\eps^+}$. The other cases are similar.
    Let $A \subset [0,1] \times [0,1]$ and suppose that $(x,y) \in (c_{\eps^+} \times c_{\eps^+})(A)$.
        Then by \cref{lemma:interior_of_a_closed_ball}, for all $\delta_1,\delta_2 > 0$, $(\overline{B}_{\eps+\delta_1}(x) \times \overline{B}_{\eps+\delta_2}(y)) \cap A \neq \varnothing$.
        Thus for all $n \geq 1$, there exists $(x_n,y_n) \in A$ such that $|x-x_n| \leq \eps + \frac{1}{n}$ and $|y-y_n| \leq \eps + \frac{1}{n}$.
        It follows that $|\max(x,y)-\max(x_n,y_n)| \leq \eps + \frac{1}{n}$.
        Therefore $d(\max(x,y),\max(A)) \leq \eps$.
        Hence $\max(x,y) \in c_{\eps^+}(\max(A))$.
  \end{enumerate}
\end{example}

\begin{lemma} \label{lem:interval-constructions}
    Let $J$ and $K$ be intervals for $\otimes$. Then so are the following:
    \begin{enumerate}
    \item \label{it:prod} the \emph{product} $J \otimes K$,
    \item \label{it:coprod} the \emph{coproduct} $J \amalg K$, and
    \item \label{it:wedge} the \emph{wedge product}, given by the pushout (\cref{def:pushout})
    \begin{center}
      \begin{tikzcd}
        * \ar[r,"1"] \ar[d,"0"'] & J \ar[d,dashed,"\iota"]\\
        K \ar[r,dashed,"\kappa"'] & J \vee K
      \end{tikzcd}
    \end{center}
    which we will 
    also call the \emph{concatenation} of $J$ and $K$. We will denote the $m$-fold concatenation of $J$ with itself by $J^{\vee m}$.
  \end{enumerate}
  Furthermore, if $J$ and $K$ are intervals with $0\neq 1$ then so are $J\otimes K,J\amalg K$, and $J\vee K$.
\end{lemma}

\begin{proof}
  By assumption there are symmetric, associative continuous maps $\vee_J: J \otimes J \to J$ and $\vee_{K}:K\otimes K \to K$ which have $0_J$, $0_{K}$ and $1_J$, $1_{K}$ as their neutral elements and absorbing elements, respectively.

  \eqref{it:prod}
  Let $0_{J \otimes K} = (0_J,0_{K})$ and $1_{J \otimes K} = (1_J,1_{K})$.
  Let $\vee_{J \otimes K}$ be the continuous map $(J \otimes K) \otimes (J \otimes K) \isomto J \otimes J \otimes K \otimes K \xto{\vee_J \otimes \vee_K} J \otimes K$.
One may check that it is symmetric and associative and has  $0_{J \otimes K}$ as its neutral element and $1_{J \otimes K}$ as its absorbing element.

  \eqref{it:coprod}
  Let $0_{J \amalg K} = 0_J$ and $1_{J \amalg K} = 1_K$.
  Define $\vee_{J \amalg K}: (J \amalg K) \otimes (J \amalg K) \to J \amalg K$ by
\begin{equation} \label{eq:coprod-interval}
  s\vee_{J \amalg K}t =
  \begin{cases}
s\vee_Jt, & s,t\in J\\
s\vee_{K}t, & s,t\in K\\
t, & s\in J,t\in K\\
s, & s\in K,t\in J.
\end{cases}
\end{equation}
By \cref{lemma:projection_maps_on_products} and
\cref{theorem:pasting_lemma_for_closure_spaces},
$\vee_{J\amalg K}$ is continuous.
One may also check that it is symmetric and associative and has $0_J$ as its neutral element and $1_{K}$ as its absorbing element.

\eqref{it:wedge}
Let $0_{J \vee K} = \iota \circ 0_J$ and $1_{J \vee K} = \kappa \circ 1_{K}$.
There is a universal continuous map $J \amalg K \to J \vee K$. 
  One may check that the map
  \eqref{eq:coprod-interval}
  induces a well-defined continuous map
$\vee_{J\vee K}: (J \vee K) \otimes (J \vee K) \to J \vee K$.
It is symmetric and associative and has $0_{J \vee K}$ as its neutral element and $1_{J \vee K}$ as its absorbing element.

In each of these constructions, if $0_J\neq 1_J$ and $0_K\neq 1_K$, then the induced neutral and absorbing elements will be distinct as well.
\end{proof}

\begin{proposition} \label{prop:canonical-interval-morphisms}
Let $J,K$ be intervals for the product operation $\otimes$.
  There are canonical morphisms of intervals
  $J \amalg K \to J \vee K \to J \otimes K \to J,K$.
\end{proposition}

\begin{proof}
The first map is given by the universal continuous map.
    The second is obtained from \cref{lemma:embedding_maps_into_product} and the universal property of the pushout.
    The third is obtained from \cref{lemma:projection_maps_on_products}.
    One may check that each of these maps respects the inclusion of $0$ and $1$ and the symmetric associative map.
\end{proof}

By the universal property of the pushout, we have the following.

\begin{proposition}
The category $\cat{Int(\otimes)}$ together with tensor product given by concatenation and unit element given by the terminal interval $*$ is a monoidal category.
\end{proposition}

\subsection{Relations between  product operations and intervals}

We study relations between product operations and between intervals which will later give us relations between their corresponding homotopy theories.

\begin{lemma}
\label{lemma:interval_object_for_a_product}
Suppose $\otimes_1\le \otimes_2$. If $J$ is an interval for $\otimes_2$ then $J$ is also an interval for $\otimes_1$.
\end{lemma}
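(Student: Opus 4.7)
The proof is essentially a straightforward transfer argument: all the structure defining an interval, apart from continuity of $\vee$, lives on the level of underlying sets, and the comparison $\otimes_1 \leq \otimes_2$ gives a continuous identity map $\alpha_{(J,J)}: J \otimes_1 J \to J \otimes_2 J$ that we can pre-compose with. So the plan is to produce the interval data for $\otimes_1$ from the interval data for $\otimes_2$ by essentially doing nothing on underlying sets.

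First, I would take over the pointings $0,1 : * \to J$ unchanged: these are maps of closure spaces whose definition makes no reference to the product operation, and the injectivity of $0 \coprod 1 : * \amalg * \to J$ is a set-theoretic statement, hence immediate. Next, for the associative morphism, I would define $\vee_1 : J \otimes_1 J \to J$ as the composition
\[
J \otimes_1 J \xrightarrow{\alpha_{(J,J)}} J \otimes_2 J \xrightarrow{\vee_2} J,
\]
which is continuous as a composition of continuous maps (\cref{prop:composition_of_cont_maps}). Because $\alpha_{(J,J)}$ is the identity on underlying sets by \cref{def:partial-order-products}, the underlying set function of $\vee_1$ coincides with that of $\vee_2$.

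Since the neutrality of $0$, the absorbing property of $1$, symmetry, and associativity of $\vee_2$ are all identities of set maps (and the associators, unitors, and braidings for both $\otimes_1$ and $\otimes_2$ agree with those of $\cat{Set}$ by \cref{def:product_operation}, hence are also identified via $\alpha$), these properties transfer verbatim to $\vee_1$. For instance, the symmetry diagram for $\vee_1$ is obtained from the symmetry diagram for $\vee_2$ by pre-composing with $\alpha_{(J,J)}$ and using naturality of $\alpha$ with respect to the braiding. Concretely, $\vee_1(s,t) = \vee_2(s,t)$ as set functions, and $\vee_2(s,t) = \vee_2(t,s)$, etc.

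There is no real obstacle here, since $\alpha$ is the identity on underlying sets and all the interval axioms other than continuity are formulated on underlying sets; the only thing one must verify is that each of the diagrams witnessing symmetry, associativity, and the unit/absorbing conditions, which hold in $(\cat{Cl}, \otimes_2)$, pulls back along $\alpha$ to analogous diagrams in $(\cat{Cl}, \otimes_1)$. This is exactly the kind of bookkeeping that the naturality of $\alpha$ together with the coherence built into \cref{def:product_operation} handles automatically.
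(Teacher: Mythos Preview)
Your proof is correct and follows essentially the same approach as the paper: define $\vee_1$ by precomposing $\vee_2$ with the continuous identity map $J\otimes_1 J \to J\otimes_2 J$ supplied by $\otimes_1 \leq \otimes_2$. The paper's argument is terser, leaving implicit the (immediate) verification that the set-level axioms transfer, which you spell out more carefully.
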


\begin{proof}
Let $\vee:J\otimes_2 J\to J$ be the associative morphism for $J$ an interval for $\otimes_2$. Precomposing $\vee$ with the set-theoretic identity map $J\otimes_1 J\to J\otimes_2 J$, which is continuous by the assumption that $\otimes_1\le \otimes_2$, we get an associative morphism $\vee:J\otimes_1 J\to J$.
\end{proof}

\begin{definition} \label{def:preorder-intervals}
  Define a preorder on intervals for $\otimes$ by setting $J \leq K$ if there exists a morphism of intervals $f: J \to K$.
\end{definition}

From \cref{prop:canonical-interval-morphisms}, we have the following.

\begin{corollary}
\label{corollary:order_among_segment_constructions}
 Let $J,K$ be intervals for $\otimes$. Then
  $J \amalg K \leq J \vee K \leq J \otimes K \leq J,K$.
\end{corollary}

\begin{lemma}
  \label{lemma:indiscrete_intervals_are_maximal}
  For any $K$ in $\cat{Int_{0 \neq 1}}(\otimes)$, $K\le J_{\top}$.
\end{lemma}

\begin{proof}
  Let $K$ be an interval with $0\neq 1$ for $\otimes$.
  Define a map $f:K\to J_{\top}$ by $f(t)=1$ if $t\neq 0_K$ and $f(0_K)=0$.
One may check that this is a morphism of intervals.
\end{proof}

We will also give examples of this preorder relation among many of the intervals in \cref{ex:intervals}.
  First we prove the following.

\begin{lemma} \label{lem:retract}
  Given $m \geq 0$, $J_m$ is a retract of $([0,m],c_{{1}})$.
\end{lemma}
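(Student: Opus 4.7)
The plan is to exhibit explicit continuous maps $i \colon J_m \to ([0,m], c_1)$ and $r \colon ([0,m], c_1) \to J_m$ satisfying $r \circ i = \mathbf{1}_{J_m}$, thereby witnessing $J_m$ as a retract in $\cat{Cl}$. I will take $i$ to be the set-theoretic inclusion of $\{0, 1, \ldots, m\}$ into $[0,m]$, and $r$ to be the floor function $x \mapsto \lfloor x \rfloor$, which maps $[0,m]$ into $\{0, 1, \ldots, m\}$ since $\lfloor m \rfloor = m$.

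For the continuity of $i$, I will use that $J_m$ is quasi-discrete and apply \cref{prop:continuity_on_atomic_closure spaces}: it suffices to show $i(c_{J_m}(k)) \subset c_1(i(k))$ for each $k \in \{0, \ldots, m\}$, which is immediate since $c_{J_m}(k) = \{j \in \{0, \ldots, m\} \,|\, |j-k| \le 1\} \subset \{y \in [0,m] \,|\, |y-k| \le 1\} = c_1(k)$. The closure $c_1$ is itself quasi-discrete (as noted earlier in the paper), so the same reduction applies to $r$: it suffices to check $r(c_1(x)) \subset c_{J_m}(r(x))$ for each $x \in [0,m]$. For $y \in c_1(x) = [x-1, x+1] \cap [0,m]$, the identities $\lfloor x \pm 1 \rfloor = \lfloor x \rfloor \pm 1$ together with monotonicity of $\lfloor \cdot \rfloor$ give $|\lfloor y \rfloor - \lfloor x \rfloor| \le 1$, so $r(y) \in c_{J_m}(r(x))$. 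The retract identity $r(i(k)) = \lfloor k \rfloor = k$ is then immediate.

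The argument is essentially routine; the one substantive input is the elementary inequality $|\lfloor y \rfloor - \lfloor x \rfloor| \le 1$ whenever $|y - x| \le 1$, and I do not anticipate any real obstacles. The only point requiring mild care is handling the right endpoint, where the fact that $m$ is an integer ensures $r$ is well-defined into $J_m$.
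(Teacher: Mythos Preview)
Your argument is correct and follows essentially the same route as the paper: exhibit the inclusion and an integer-valued retraction, then verify continuity via the criterion $|r(s)-r(t)|\le 1$ whenever $|s-t|\le 1$. The only difference is cosmetic---the paper uses the nearest-integer map (rounding up on ties) rather than the floor, but either choice satisfies the key inequality for the same reason.
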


\begin{proof}
  The inclusion $i: J_m \hookrightarrow [0,m]$ is continuous since the closure on $J_m$ is the restriction of $c_{{1}}$ to the subset $J_m$.
    In the other direction, let $r:[0,m] \to J_m$ be given by rounding up.
    That is,
    for $i=0,1,\ldots,m$, let $r(x)=i$ if $x \in [i-\frac{1}{2},i+\frac{1}{2})$.
    Let $A \subset [0,m]$.
    We want to show that $r(\overline{A}) \subset c(r(A))$.
    By definition,
    $i \in r(\overline{A})$ iff there exists $x \in \overline{A}$ such that $x \in [i-\frac{1}{2},i+\frac{1}{2})$.
    Also,
    $i \in c(r(A)$ iff there exists $x \in A$ such that $x \in [i-\frac{3}{2},i+\frac{3}{2})$.
    Therefore $r(\overline{A}) \subset c(r(A))$.
\end{proof}

\begin{example} \label{ex:partial-order}
\begin{enumerate}
\item \label{it:IJmkJm}
  Let $m \geq 1$ and $0 \leq k \leq 2^m-1$.
  Recall (\cref{lemma:comparison_of_intervals}) that we have continuous maps $I_{\tau} \xto{f} J_{m,k} \xto{\Id} J_m$.
  These maps respect the inclusion of the two distinguished points and also commute with taking maximums.
  Thus they are morphisms of intervals and we write $I_{\tau} \leq J_{m,k} \leq J_m$.
In particular, $I_{\tau} \leq J_+ \leq J_{\top}$.
\item \label{it:Ia} 
  Let $(\eps,a) \leq (\eps',a')$.
  By \cref{lemma:order_between_closures_induced by a metric}, the identity map gives a morphisms of intervals
  $I_{(\eps,a)} \to I_{(\eps',a')}$.
  Therefore $I_{(\eps,a)} \leq I_{(\eps',a')}$.
\item \label{it:Jn-to-Jm} Let $1 \leq m \leq n$.
  There is a morphism of intervals $J_n \to J_m$ given by $i \mapsto \min(i,m)$. Therefore $J_n \leq J_m$.
The same map gives morphisms of intervals $J_{n,\bot} \to J_{m,\bot}$, $J_{n,\top} \to J_{m,\top}$, and $J_{n,\leq} \to J_{m,\leq}$.
    Therefore $J_{n,\bot} \leq J_{m,\bot}$, $J_{n,\top} \leq J_{m,\top}$, and $J_{n,\leq} \leq J_{m,\leq}$.
    In particular, $J_m \leq J_{\top}$, $J_{m,\bot} \leq J_{\bot}$, $J_{m,\top} \leq J_{\top}$ and $J_{m,\leq} \leq J_{1,\leq} = J_+$.
  \item \label{it:JnlJmk} Let $1 \leq m \leq n$. Let $0 \leq k \leq 2^m-1$ and $0 \leq \ell \leq 2^n-1$, where the binary representation of $k$ is the first $m$ lowest order bits of the binary representation of $\ell$.
      Then the map $i \mapsto \min(i,m)$ gives a morphism on intervals $J_{n,\ell} \to J_{m,k}$. Therefore $J_{n,\ell} \leq J_{m,k}$.
      In particular, for any $m \geq 1$ and $0 \leq k \leq 2^n-1$, if $k$ is odd then $J_{m,k} \leq J_+$ and if $k$ is even then $J_{m,k} \leq J_-$.  
  \item \label{it:J1-to-Jm} Let $m \geq 1$. There is a morphism of intervals $J_{1,\leq} \to J_{m,\leq}$ given by $0 \mapsto 0$ and $1 \mapsto m$. The same map gives morphisms of intervals $J_{\bot} \to J_{m,\bot}$ and $J_{\top} \to J_{\top}$.
      Therefore $J_+ = J_{1,\leq} \leq J_{m,\leq}$, $J_{\bot} \leq J_{m,\bot}$, and $J_{\top} \leq J_{m,\top}$.
\item \label{it:IJ} Let $m \geq 1$.
  We have morphisms of intervals $i: J_m \to ([0,m],c_{{1}})$ and $r:([0,m],c_{{1}}) \to J_m$ (see \cref{lem:retract}). Furthermore, by rescaling we have $([0,m],c_{{1}}) \isom ([0,1],c_{{\frac{1}{m}}})$ and this homeomorphism is given by inverse morphisms of intervals.
  Thus we have morphisms of intervals $J_m \to I_{{\frac{1}{m}}}$ and $I_{{\frac{1}{m}}} \to J_m$. 
  Therefore $J_m \leq I_{{\frac{1}{m}}}$ and $I_{{\frac{1}{m}}} \leq J_m$.
\item \label{it:I0J1bot} Let $f:I_{\bot}\to J_{\bot}$ be the map of sets given by rounding up. 
  This map is continuous because $I_{\bot}$ is discrete and it respects the structure of the intervals. Therefore it is a morphism of intervals and $I_{\bot} \leq J_{\bot}$.
\end{enumerate} 
\end{example}

\subsection{Homotopy}

An interval and a product operation give rise to a homotopy.
For a product operation $\otimes$ and a closure space $X$, by \cref{lem:prod-cl-disc-cl}, there is a canonical isomorphism $X \otimes * \isom X$.

\begin{definition}
\label{def:homotopy}
Let $f,g:X \to Y \in \cat{Cl}$.
Let $\otimes$ be a product operation and let $J$ be an interval for $\otimes$.
An \emph{elementary $(J,\otimes)$ homotopy} from $f$ to $g$ is a morphism $H$ making the following diagram commute.
\begin{equation} \label{eq:homotopy}
\begin{tikzcd}
  X \otimes * \arrow[dr,"f"] \arrow[d,"\Id_X \otimes 0_{J}"'] \\
  X \otimes J \arrow[r,dashed,"H",pos=0.3] & Y \\
  X \otimes * \arrow[ur,"g"'] \arrow[u,"\Id_X \otimes 1_{J}"]
\end{tikzcd}
\end{equation}
Say that the ordered pair $(f,g)$ is \emph{one-step $(J,\otimes)$-homotopic}.
Note that the existence of an elementary homotopy from $f$ to $g$ does not imply the existence of an elementary homotopy from $g$ to $f$.
However,  for any $f:X \to Y \in \cat{Cl}$, we have an elementary $(J,\otimes)$ homotopy from $f$ to $f$ given by $f\pi_X$, where $\pi_X$ is the canonical continuous projection $\pi_X:X \otimes J \to X$.
Let $\sim_{(J,\otimes)}$ be the equivalence relation on the set $\cat{Cl}(X,Y)$ generated by elementary $(J,\otimes)$ homotopies.
It is given by zigzags of elementary $(J,\otimes)$ homotopies.
If $f\sim_{(J,\otimes)} g$, say $f$ and $g$ are \emph{$(J,\otimes)$-homotopic}.
Call the equivalence relation, $\sim_{(J,\otimes)}$, \emph{$(J,\otimes)$ homotopy}.
\end{definition}

\begin{lemma} \label{lem:interval-homotopy}
Let $\otimes$ be a product operation and let $J$ be an interval for $\otimes$. Then $\vee$ is an elementary $(J,\otimes)$ homotopy from the identity map on $J$ to the composite map $J \to * \xto{1} J$.
\end{lemma}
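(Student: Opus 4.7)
The plan is to simply unpack \cref{def:homotopy} with $X = Y = J$, $f = \mathbf{1}_J$, and $g$ the composite $J \to * \xto{1} J$ (i.e.\ the constant map at $1_J$), and propose $H = \vee$ as the one-step homotopy. Since $\vee : J \otimes J \to J$ is a morphism in $\cat{Cl}$ by hypothesis (it is part of the interval data in \cref{def:interval_object}), the only thing to check is commutativity of the diagram
\begin{center}
\begin{tikzcd}
J \coprod J \arrow[r,"\mathbf{1}_J \coprod g"] \arrow[d] & J \\
J \otimes J \arrow[ru,dashed,"\vee"']
\end{tikzcd}
\end{center}
where the vertical map is $\mathbf{1}_J \otimes (0 \coprod 1)$ from \cref{lemma:cylinder_object}.

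The key step is to chase an element through each summand. On the first copy of $J$, the vertical map sends $s$ to $(s,0_J)$, and then $\vee(s,0_J) = s \vee 0_J = 0_J \vee s = s$ by symmetry of $\vee$ together with the neutrality of $0_J$ stipulated in \cref{def:interval_object}; this agrees with $f(s) = \mathbf{1}_J(s) = s$. On the second copy of $J$, the vertical map sends $s$ to $(s,1_J)$, and then $\vee(s,1_J) = s \vee 1_J = 1_J \vee s = 1_J$ by symmetry and the absorbing property of $1_J$; this agrees with $g(s) = 1_J$.

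I do not expect any substantial obstacle here: the entire content of the lemma is that the symmetry/neutral/absorbing axioms packaged into the definition of an interval are precisely what is needed to make $\vee$ realize the homotopy between $\mathbf{1}_J$ and the constant map at $1_J$. The proof will therefore be essentially a one-paragraph diagram chase invoking \cref{def:interval_object} and \cref{def:homotopy}, with continuity of $H$ being free.
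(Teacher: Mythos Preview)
Your proposal is correct and takes essentially the same approach as the paper's proof, which is a terse two-liner: ``By definition, for all $s \in J$, $s \vee 0 = s$ and $s \vee 1 = 1$. The result follows.'' You have simply spelled out the diagram chase in more detail and made explicit the appeal to symmetry of $\vee$ (needed because \cref{def:interval_object} states the neutral and absorbing axioms in the form $0 \vee t = t$ and $1 \vee t = 1$).
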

 
\begin{proof}
By definition, for all $s \in J$, $s \vee 0  = s$ and $s \vee 1 = 1$. The result follows.
\end{proof}

\begin{example} \label{ex:J1discrete}
  Let $\otimes$ be a product operation.
Recall that $J_{\bot}$ and $*$ are the initial and terminal objects, respectively, in $\cat{Int(\otimes)}$.
Since $J_{\bot} \isom * \amalg *$, $X \otimes J_{\bot} \isom X \amalg X$
and we may define $H= f \amalg g$. Therefore, any $f,g:X \to Y \in \cat{Cl}$
  are one-step $(J_{\bot},\otimes)$-homotopic.
Since $X \otimes * \isom X$, we have $H=f$ and $H=g$. It follows that $f \sim_{(*,\otimes)} g$ if and only if $f=g$.
\end{example}

\begin{lemma} 
\label{lemma:product_of_one-step_homotopy}
  Let $\otimes$ be a product operation and $J$ be an interval for $\otimes$.
  By \cref{lem:interval-constructions}, $J \otimes J$ is also an interval for $\otimes$.
  If $(f,g)$ are one-step $(J,\otimes)$-homotopic and $(h,k)$ are one-step $(J,\otimes)$-homotopic then $(f \otimes h, g \otimes k)$ are one-step $(J \otimes J, \otimes)$-homotopic.
\end{lemma}

\begin{proof}
  Let $H$ and $F$ be elementary $(J,\otimes)$ homotopies between $f$ and $g$ and $h$ and $k$, respectively.
  Then the following diagram commutes.
  \begin{center}
  \begin{tikzcd}
  X\otimes Z\otimes *\ar[d,"1_{X\otimes Z}\otimes 0_{J\otimes J}"']\ar[drr,"f\otimes h",bend left=10]\\
  X\otimes Z\otimes J\otimes J\ar[r,"\isom"]&X\otimes J\otimes Z\otimes J\ar[r,"H\otimes F"',pos=0.4] & Y\otimes W \\
  X\otimes Z\otimes *\ar[u,"1_{X\otimes Z}\otimes 1_{J\otimes J}"]\ar[urr,"g\otimes k"',bend right=10]
  \end{tikzcd}
\end{center}\qedhere
\end{proof}

\begin{corollary}
\label{corollary:product_of_homotopies}
 Let $\otimes$ be a product operation and $J$ be an interval for $\otimes$.
  If $f\sim_{(J,\otimes)}g$ and $h\sim_{(J,\otimes)}k$ then $f \otimes h\sim_{(J\otimes J,\otimes)} g \otimes k$.
\end{corollary}

\begin{proof}
By assumption, we have a zigzag of elementary homotopies from $f$ to $g$ and a zigzag of elementary homotopies from $h$ to $k$.
  By adding identity maps as needed, we may assume that these zigzags have the same length and have matching elementary homotopies in the same direction.
  The result then follows from applying \cref{lemma:product_of_one-step_homotopy} to each of the paired elementary homotopies.
\end{proof}

\begin{lemma}
\label{lemma:diagonal_map_into_product}
Let $X$ be a closure space. Then the diagonal map $\Delta:X \to X\times X$ is continuous.
\end{lemma}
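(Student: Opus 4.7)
The plan is to prove continuity of $\Delta$ in either of two essentially formal ways; I will sketch both briefly.

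The cleanest route is via the universal property of the product in $\cat{Cl}$ (\cref{complete}): a map into $X \times X$ is continuous if and only if its compositions with the two projections $\pi_1,\pi_2:X\times X \to X$ are continuous. Since $\pi_1 \circ \Delta = \mathbf{1}_X = \pi_2 \circ \Delta$, and the identity is trivially continuous, $\Delta$ is continuous. This invokes only the characterization of the product closure by a universal property (referenced in \cref{def:product_closure} as \cref{prop:product_closure}).

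Alternatively, one can argue directly at the level of neighborhoods, using \cref{theorem:equivalent_definitions_of_continuity}. Fix $x \in X$ and let $W$ be a neighborhood of $(x,x)$ in $(X \times X, c_X \times c_X)$. By the definition of the product closure (\cref{def:product_closure}), the local base at $(x,x)$ consists of sets of the form $U \times V$, where $U$ and $V$ are neighborhoods of $x$ in $(X,c_X)$, so we may choose neighborhoods $U,V$ of $x$ with $U \times V \subset W$. By property 3 of the interior operation (\cref{def:interior}), $\text{int}_{c_X}(U \cap V) = \text{int}_{c_X}(U) \cap \text{int}_{c_X}(V)$, which contains $x$; hence $U \cap V$ is itself a neighborhood of $x$. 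Finally $\Delta(U \cap V) = \{(y,y) \mid y \in U \cap V\} \subset U \times V \subset W$, verifying continuity of $\Delta$ at $x$.

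There is no real obstacle here — both routes are a few lines. The only minor point worth flagging is that one needs that a finite intersection of neighborhoods is a neighborhood, which is what property 3 of the interior supplies in the closure-space setting (it is not automatic from the axioms for a closure, but it follows from the dual axioms for the interior). I would present the universal-property proof as the main argument and mention the direct argument as an alternative perspective.
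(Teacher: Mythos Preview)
Your proposal is correct. Your second (neighborhood) argument is essentially the paper's own proof, just phrased dually: the paper shows $\Delta(c(A)) \subset (c\times c)(\Delta(A))$ by noting that if $x \in c(A)$ then every neighborhood $U$ of $x$ meets $A$, hence $U\times U$ meets $\Delta(A)$, so $(x,x)\in (c\times c)(\Delta(A))$; you instead pull back a basic neighborhood $U\times V$ of $(x,x)$ to the neighborhood $U\cap V$ of $x$. Same content, different direction.

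Your first route via the universal property of the categorical product is genuinely different from the paper's direct computation. It is cleaner and more conceptual, relying only on \cref{complete} (or \cref{prop:product_closure}) and the triviality that $\pi_i\circ\Delta = \mathbf{1}_X$; it also makes transparent why the analogous statement can fail for $\boxdot$ (which is not the categorical product), as illustrated immediately afterward in \cref{example:diagonal_map_into_inductive_product}. The paper's hands-on argument, by contrast, stays closer to the explicit local-base description of the product closure and so parallels the style of the surrounding computations (e.g.\ the proof that $\gamma_{XY}$ is continuous in \cref{example:products}). Either is perfectly adequate here.
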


\begin{proof}
Let $A \subset X$. Let $x \in c(A)$.
    Then, for each neighborhood $U$ of $x$, $U \cap A \neq \varnothing$. 
    It follows that $(U \times U) \cap \Delta(A) \neq \varnothing$.
    Therefore $(x,x) \in (c \times c)(\Delta(A))$.
\end{proof}

\begin{lemma}
\label{example:diagonal_map_is_an_interval_morphism}
Let $J$ be an interval for $\times$. Then $\Delta:J\to J\times J$ is a morphism of intervals. In particular, $J\le J\times J$.
\end{lemma}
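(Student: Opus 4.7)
The plan is to verify the three defining conditions for a morphism of intervals (\cref{def:interval_object}): continuity, preservation of the two distinguished points, and compatibility with the associative morphisms. Continuity of $\Delta$ is already established in \cref{lemma:diagonal_map_into_product}, so that step is free.

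Next I would identify the interval structure on $J \times J$. By \cref{lemma:product_of_intervals_is_an_interval}, the distinguished points of $J \times J$ are $0_{J \times J} = (0_J, 0_J)$ and $1_{J \times J} = (1_J, 1_J)$, and the associative morphism is given componentwise by
\[
(q,r) \vee_{J \times J} (s,t) = (q \vee_J s,\ r \vee_J t).
\]
Preservation of the distinguished points is then immediate from the definition of $\Delta$: $\Delta(0_J) = (0_J, 0_J) = 0_{J \times J}$ and $\Delta(1_J) = (1_J, 1_J) = 1_{J \times J}$.

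For compatibility with $\vee$, I would compute directly:
\[
\Delta(s) \vee_{J \times J} \Delta(t) = (s,s) \vee_{J \times J} (t,t) = (s \vee_J t,\ s \vee_J t) = \Delta(s \vee_J t).
\]
This uses only the componentwise definition of $\vee_{J \times J}$ and the definition of $\Delta$. Together with continuity, this shows $\Delta$ is a morphism of intervals, and then the statement $J \leq J \times J$ follows immediately from \cref{def:preorder-intervals}.

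There is no serious obstacle here; the proof is essentially unwinding definitions. The only point to be careful about is making sure to invoke \cref{lemma:product_of_intervals_is_an_interval} to pin down the interval structure on $J \times J$ before performing the calculation, so that the checks are done against the correct distinguished points and associative morphism.
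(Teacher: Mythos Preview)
Your proposal is correct and follows essentially the same approach as the paper: invoke \cref{lemma:diagonal_map_into_product} for continuity, then verify that $\Delta$ preserves the distinguished points and commutes with $\vee$. Your version is slightly more explicit in citing \cref{lemma:product_of_intervals_is_an_interval} to pin down the interval structure on $J\times J$, which is a helpful clarification but not a different argument.
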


\begin{proof}
By \cref{lemma:diagonal_map_into_product}, $\Delta$ is continuous.
  Furthermore $\Delta(0) = (0,0)$, $\Delta(1) = (1,1)$ and $\Delta(s \vee t) = \Delta(s) \vee \Delta(t)$.
\end{proof}

\begin{example}
\label{example:diagonal_map_into_inductive_product}
Let $X$ be a closure space. The diagonal map $\Delta: X \to X \boxplus X$ need not be continuous. Consider the following.
Let $X$ be $J_{\top}$ or $J_+$. Then $1 \in c(0)$ but $(1,1) \notin c(0,0)$.
 Let $X = I$. Consider $A = [0,1)$. Then $1 \in c(A)$ but $(1,1) \notin c(\Delta(A))$.
\end{example}

  We end this section with a characterization of one-step $(J_{\top},\times)$-homotopy.
  
\begin{proposition} \label{lem:J1times-homotopy}
Let $f,g:(X,\new{c_X}) \to (Y,\new{c_Y}) \in \cat{Cl}$.
  Then $f,g$ are one-step $(J_{\top},\times)$-homotopic iff for all $A \subset X$, $f(\new{c_X}(A)) \cup g(\new{c_X}(A)) \subset \new{c_Y}(f(A)) \cap \new{c_Y}(g(A))$.
\end{proposition}

\begin{proof}
  Let $\new{c_{\top}}$ denote the indiscrete closure on $J_{\top}$.
  
  $(\Rightarrow)$ Let $H:X \times J_{\top} \to Y$ be an elementary $(J_{\top},\times)$ homotopy from $f$ to $g$.
  For all $x \in X$, $H(x,0) = f(x)$ and $H(x,1) = g(x)$.
  Let $A \subset X$.
  Then $f(\new{c_X}(A)) \cup g(\new{c_X}(A)) = H(\new{c_X}(A) \times J_{\top}) = H((\new{c_X} \times \new{c_{\top}})(A \times 0)) \subset \new{c_Y}(H(A \times 0)) = \new{c_Y}(f(A))$.
  Similarly $f(\new{c_X}(A)) \cup g(\new{c_X}(A)) \subset \new{c_Y}(g(A))$.

$(\Leftarrow)$ Define $H: X \times J_{\top} \to Y$ by $H(x,0) = f(x)$ and $H(x,1) = g(x)$.
  Let $A \subset X \times J_{\top}$.
  Let $A_0 = \{(x,0) \in A\}$ and $A_1 = \{(x,1) \in A\}$.
  Then $H((\new{c_X} \times \new{c_{\top}})(A)) = H((\new{c_X} \times \new{c_{\top}})(A_0 \cup A_1)) =
  H((\new{c_X} \times \new{c_{\top}})(A_0) \cup (\new{c_X} \times \new{c_{\top}})(A_1)) = 
  H((\new{c_X} \times \new{c_{\top}})(A_0)) \cup H((\new{c_X} \times \new{c_{\top}})(A_1)) =
  f(\new{c_X}(A_0) \cup g(\new{c_X}(A_0) \cup f(\new{c_X}(A_1)) \cup g(\new{c_X}(A_1))
  \subset \new{c_Y}(f(A_0)) \cap \new{c_Y}(g(A_0)) \cup \new{c_Y}(f(A_1)) \cap \new{c_Y}(g(A_1))
  \subset \new{c_Y}(f(A_0)) \cup \new{c_Y}(g(A_1))
  = \new{c_Y}(H(A_0)) \cup \new{c_Y}(H(A_1)) = \new{c_Y}(H(A_0) \cup H(A_1)) = \new{c_Y}(H(A_0 \cup A_1))
  = \new{c_Y}(H(A))$.
  Therefore $H$ is continuous.
\end{proof}

  As a special case, consider $(X,E), (Y,F) \in \cat{DiGph}$. Recall that $f:X \to Y$ is a digraph homomorphism iff whenever $x\overline{E}x'$ we have that $fx\overline{F}fx'$.
  Note that $J_{\top}$ is the complete digraph on $\{0,1\}$ and $(x,i)\overline{E \times J_{\top}}(x',j)$ iff $x\overline{E}x'$.

\begin{lemma} \label{lem:digraph-one-step-j1-homotopy}
 Let $f,g: (X,E) \to (Y,F) \in \cat{DiGph}$.
  Then $f,g$ are one-step $(J_{\top},\times)$-homotopic iff whenever $x\overline{E}x'$, we have that $fx\overline{F}gx'$ and $gx\overline{F}fx'$.
\end{lemma}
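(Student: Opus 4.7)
The plan is to translate the definition of a one-step $(J_1,\times)$-homotopy directly into the digraph language, using \cref{prop:regular_digraph_product_is_product,lemma:digraphs_are_quasi_discrete_closure_spaces,corollary:prod_of_quasi_dis_is_quasi_dis}. A one-step $(J_1,\times)$-homotopy from $f$ to $g$ is a continuous map $H:(X,c_E)\times J_1 \to (Y,c_F)$ with $H(x,0)=fx$ and $H(x,1)=gx$. Since all spaces involved are quasi-discrete, continuity of $H$ is equivalent to $H$ being a digraph homomorphism on the digraph product $(X,E)\times J_1$ mentioned in the remark preceding the statement, whose reflexive closure satisfies $(x,i)\overline{E\times J_1}(x',j)$ iff $x\overline{E}x'$ (because $J_1$ is the complete digraph on $\{0,1\}$).

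For the forward direction, given such an $H$ and $x\overline{E}x'$, the relation $(x,0)\overline{E\times J_1}(x',1)$ forces $fx = H(x,0)\overline{F}H(x',1) = gx'$, and symmetrically $(x,1)\overline{E\times J_1}(x',0)$ forces $gx\overline{F}fx'$. For the converse, define $H(x,0):=fx$ and $H(x,1):=gx$, and verify $H$ is a homomorphism by checking the four cases $(i,j)\in\{0,1\}^2$ under the hypothesis $x\overline{E}x'$: when $i=j$ the required relation is $fx\overline{F}fx'$ or $gx\overline{F}gx'$, both of which hold because $f$ and $g$ are themselves digraph homomorphisms, while when $i\neq j$ the two mixed cases are exactly the two hypothesized conditions.

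There is no real obstacle here; once the identification of the digraph and closure-space products is in hand, the argument reduces to unpacking four cases. The only point requiring a small amount of care is the use of the reflexive closure $\overline{E}$ in place of $E$: applying the hypothesis with $x'=x$ (which is legitimate since $x\overline{E}x$) recovers $fx\overline{F}gx$ and $gx\overline{F}fx$, expressing the expected pointwise compatibility between two homomorphisms connected by a homotopy.
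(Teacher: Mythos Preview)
Your proposal is correct and follows essentially the same approach as the paper. The paper's proof is a single sentence observing that $H$ is a digraph homomorphism iff $H(x,i)\overline{F}H(x',j)$ for all $i,j\in J_1$ whenever $x\overline{E}x'$; you have simply spelled out the four cases of this equivalence in detail, noting (as the paper leaves implicit) that the diagonal cases $i=j$ follow from $f$ and $g$ being homomorphisms.
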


\begin{proof}
A map $H:X \times J_{\top} \to Y$ with $H(x,0) = f(x)$ and $H(x,1) = g(x)$ is a digraph homomorphism iff whenever $x\overline{E}x'$ we have that for all $i,j \in J_{\top}$, $H(x,i)\overline{F}H(x',j)$.
\end{proof}
  
\subsection{Relations between homotopy equivalences}

We study relations between our homotopy theories.

\begin{lemma}
\label{lemma:general_homotopic}
Let $\otimes$ be a product operation.
Let $f,g: X \to Y \in \cat{Cl}$.
\begin{enumerate}
\item $f,g$ are $(I_{\tau},\otimes)$-homotopic iff $f,g$ are one-step $(I_{\tau},\otimes)$-homotopic.
\item \label{it:J_1} $f,g$ are $(J_{\top},\otimes)$-homotopic iff there exists $m \geq 1$ such that $f,g$ are one-step $(J_m,\otimes)$-homotopic.
\item \label{it:Jplus_Jmk} $f,g$ are $(J_+,\otimes)$-homotopic iff there exists $m \geq 1$ and $0 \leq k\leq 2^m-1$ such that $f,g$ are one-step $(J_{m,k},\otimes)$-homotopic.
\end{enumerate}
\end{lemma}

\begin{proof}
Suppose that $f,g$ are $(J_+,\otimes)$-homotopic.
  A $(J_+,\otimes)$ homotopy is obtained by the symmetric transitive closure of the elementary $(J_+,\otimes)$ homotopy.
  Thus for some $m \geq 1$,  
  there is a finite sequence $f=f_0,f_1,\dots, f_m=g$ of maps where consecutive maps $f_i,f_{i+1}$ or $f_{i+1},f_i$
  are one-step $(J_+,\otimes)$-homotopic.
  We may concatenate the homotopies to obtain a homotopy $H:X\otimes J_{m,k} \to Y$ between $f$ and $g$.
  The other cases are easier since the elementary homotopies are symmetric.
  Note that $I_{\tau}^{\vee m}$ is homeomorphic to $I_{\tau}$.
\end{proof}

\begin{proposition} \label{prop:partial-order}
    Let $\otimes_1$, $\otimes_2$ be  product operations with $\otimes_1 \leq \otimes_2$ and let  $J$, $K$ be intervals for $\otimes_2$ such that $J \leq K$.
    If $f,g:X \to Y$ are one-step $(K,\otimes_2)$-homotopic then they are also one-step $(J,\otimes_1)$-homotopic.
(By \cref{lemma:interval_object_for_a_product}, $J,K$ are intervals for $\otimes_1$.)
\end{proposition}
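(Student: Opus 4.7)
The plan is to construct the required one-step $(J,\otimes_1)$-homotopy as a composite of three continuous maps. Suppose $H: X \otimes_2 K \to Y$ is a one-step $(K,\otimes_2)$-homotopy, so the composite
\[
  X \coprod X \isom X \otimes_2 (* \coprod *) \xto{\mathbf{1}_X \otimes_2 (0_K \coprod 1_K)} X \otimes_2 K \xto{H} Y
\]
equals $f \coprod g$. Since $J \leq K$, there is a morphism of intervals $\varphi: J \to K$ (\cref{def:preorder-intervals}) satisfying $\varphi(0_J) = 0_K$ and $\varphi(1_J) = 1_K$. Since $\otimes_1 \leq \otimes_2$, the natural transformation $\alpha$ of \cref{def:partial-order-products} gives a continuous map $\alpha_{(X,K)}:X \otimes_1 K \to X \otimes_2 K$ whose underlying set map is the identity.

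Now define
\[
  H' \defeq H \circ \alpha_{(X,K)} \circ (\mathbf{1}_X \otimes_1 \varphi): X \otimes_1 J \to Y.
\]
The first map is continuous by functoriality of $\otimes_1$ (applied to the continuous maps $\mathbf{1}_X$ and $\varphi$), and the remaining two maps are continuous by construction, so $H'$ is continuous. To verify the homotopy condition, note that the naturality of $\alpha$ together with the fact that it is the identity on underlying sets and commutes with the forgetful functor to $\cat{Set}$ implies that the following diagram commutes:
\begin{center}
\begin{tikzcd}
X \coprod X \ar[r,"\isom"] \ar[d,equal] & X \otimes_1 (* \coprod *) \ar[r,"\mathbf{1}_X \otimes_1 (0_J \coprod 1_J)"] \ar[d,"\alpha"] & X \otimes_1 J \ar[r,"\mathbf{1}_X \otimes_1 \varphi"] \ar[d,"\alpha"] & X \otimes_1 K \ar[d,"\alpha_{(X,K)}"] \\
X \coprod X \ar[r,"\isom"] & X \otimes_2 (* \coprod *) \ar[r,"\mathbf{1}_X \otimes_2 (0_K \coprod 1_K)"'] & X \otimes_2 K \ar[r,equal] & X \otimes_2 K
\end{tikzcd}
\end{center}
where we have used $\varphi(0_J) = 0_K$ and $\varphi(1_J) = 1_K$. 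Postcomposing with $H$ gives $f \coprod g$ along the bottom row, so the top row followed by $H$ — which is exactly $H'$ precomposed with $X \coprod X \isom X \otimes_1 (* \coprod *) \xto{\mathbf{1}_X \otimes_1 (0_J \coprod 1_J)} X \otimes_1 J$ — also equals $f \coprod g$. Hence $H'$ is a one-step $(J,\otimes_1)$-homotopy from $f$ to $g$.

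There is no real obstacle here; the proof is a direct diagram chase exploiting the two hypotheses $J \leq K$ and $\otimes_1 \leq \otimes_2$. The only point requiring a moment's care is observing that $J$ and $K$ are indeed both intervals for $\otimes_1$ (by \cref{lemma:interval_object_for_a_product}), so that the statement even makes sense, and that $\alpha$ being the identity on underlying sets ensures the distinguished points $0$ and $1$ are preserved under the transition between product operations.
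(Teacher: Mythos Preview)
Your proof is correct and takes essentially the same approach as the paper: both construct the $(J,\otimes_1)$-homotopy as the composite of $H$ with the identity map $X\otimes_1 K \to X\otimes_2 K$ coming from $\otimes_1\le\otimes_2$ and with $\mathbf{1}_X\otimes\varphi$ coming from the interval morphism $\varphi:J\to K$. The only cosmetic difference is the layout of the commutative diagram; the paper routes through $X\otimes_2 J$ rather than $X\otimes_1 K$, but by naturality of $\alpha$ these two composites agree.
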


\begin{proof}
  Let $h:J \to K$ be a morphism of intervals.
  Consider the following diagram.
  \begin{equation*}
    \begin{tikzcd}
      X \otimes_1 *   \ar[r,"\Id"',"\isom"]
      \ar[d,"1_{X} \otimes_1 0_J"'] & 
      X \otimes_2 * 
      \ar[d,"1_X \otimes_2 0_J"'] 
      \ar[dr,"1_X \otimes_2 0_K",pos=0.3]\ar[drr,"f",bend left=20] \\
      X \otimes_1 J \ar[r,"\Id"'] &
      X \otimes_2 J \ar[r,"1_X \otimes_2 h"'] &
      X \otimes_2 K \ar[r,dashed,"H"',pos=0.3] & Y\\
      X\otimes_1 *\ar[u,"1_X\otimes_1 1_J"]\ar[r,"\Id"',"\isom"]                        & X\otimes_2 * \ar[urr,"g"',bend right=20]\ar[u,"1_X\otimes_2 1_J"]\ar[ur,"1_X\otimes_2 1_K"',pos=0.3]
    \end{tikzcd}
  \end{equation*}
  The left squares commute by the natural transformation $\otimes_1 \To \otimes_2$.
  The middle triangles commute because $h$ is a morphism on intervals.
 If $f$ and $g$ are one-step $(K,\otimes_2)$-homotopic then there exists a map $H$ such that right triangles commute. It follows that $f$ and $g$ are one-step $(J,\otimes_1)$-homotopic.
\end{proof}

\begin{corollary} \label{cor:partial-order}
 Let $J,K$ be intervals for $\otimes_2$ with $J \leq K$ and let $\otimes_1,\otimes_2$ be product operations with $\otimes_1 \leq \otimes_2$.
  Let $f,g:X \to Y \in \cat{Cl}$.
  If $f \sim_{(K,\otimes_2)} g$ then $f \sim_{(J,\otimes_1)} g$.
\end{corollary}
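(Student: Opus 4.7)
The plan is to reduce the corollary to \cref{prop:partial-order} via the concatenation characterization of homotopy given by \cref{lemma:homotopy_and_one_step_concatenations}. Since $f \sim_{(K,\otimes_2)} g$ is defined as the equivalence relation generated by one-step $(K,\otimes_2)$-homotopies, the first step is to replace this multi-step homotopy with a single one-step homotopy over a larger interval. Concretely, I will invoke \cref{lemma:homotopy_and_one_step_concatenations} to produce some $m \geq 1$ such that $f$ and $g$ are one-step $(K^{*m},\otimes_2)$-homotopic.

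Next, I will upgrade the comparison $J \leq K$ to a comparison $J^{*m} \leq K^{*m}$ of intervals for $\otimes_2$. By \cref{lemma:concatenation_of_intervals_is_an_interval}, both $J^{*m}$ and $K^{*m}$ are intervals for $\otimes_2$ (note that $J$ is an interval for $\otimes_2$ by hypothesis). Given a morphism of intervals $h: J \to K$, the universal property of iterated pushouts produces a continuous map $h^{*m}: J^{*m} \to K^{*m}$ that sends the initial $0$ to $0$ and the terminal $1$ to $1$. I would then verify, using the piecewise description of the associative morphism on a concatenation from the proof of \cref{lemma:concatenation_of_intervals_is_an_interval}, that $h^{*m}$ commutes with $\vee_{J^{*m}}$ and $\vee_{K^{*m}}$ on each of the four cases; this makes $h^{*m}$ a morphism of intervals, so $J^{*m} \leq K^{*m}$.

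With these two preparations in place, \cref{prop:partial-order} applies to $\otimes_1 \leq \otimes_2$ and $J^{*m} \leq K^{*m}$ and yields that $f$ and $g$ are one-step $(J^{*m},\otimes_1)$-homotopic. A final application of \cref{lemma:homotopy_and_one_step_concatenations} in the reverse direction then gives $f \sim_{(J,\otimes_1)} g$, as required.

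The only nonroutine step is the verification that $h^{*m}$ is a morphism of intervals, since the concatenation $\vee_{J^{*m}}$ is defined by cases depending on which copy of $J$ a given input lies in. The obstruction is essentially bookkeeping: one must check that $h^{*m}$ preserves the assignment of points to copies, which follows from the inductive construction of $J^{*m}$ as an iterated pushout and the fact that $h$ preserves $0$ and $1$. Once this is in hand, the corollary is immediate.
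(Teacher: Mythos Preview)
Your argument is correct, but it takes a longer route than necessary. The paper leaves the corollary without an explicit proof because it follows immediately from \cref{prop:partial-order} at the level of generators: $\sim_{(K,\otimes_2)}$ is by definition the equivalence relation generated by one-step $(K,\otimes_2)$-homotopy, and \cref{prop:partial-order} says that every such one-step homotopy is already a one-step $(J,\otimes_1)$-homotopy. Hence the equivalence relation generated by the former is contained in the equivalence relation generated by the latter, which is $\sim_{(J,\otimes_1)}$. No concatenations, no induced morphisms $h^{*m}$, and no second appeal to \cref{lemma:homotopy_and_one_step_concatenations} are needed.

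Your approach instead packages the chain of one-step homotopies into a single one-step $(K^{*m},\otimes_2)$-homotopy, transports it along a morphism of intervals $J^{*m}\to K^{*m}$, and then unpacks. This does work---the verification that $h^{*m}$ respects the piecewise-defined $\vee$ on concatenations is routine once one observes that $h$ preserves $0$ and $1$ and hence preserves the ``which copy'' decomposition. The payoff of your route is that it makes explicit a functoriality of the concatenation construction in the interval variable, which is a nice fact in its own right; the cost is the extra bookkeeping, which the direct equivalence-relation argument avoids entirely.
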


\begin{corollary}
\label{corollary:product_of_intervals_for_arbitrary_product}
Let $J\in \{I_{\tau},J_{\top},J_+\}$.
  If $f \sim_{(J,\otimes)} g$ and $h \sim_{(J,\otimes)} k$ then $f \otimes h \sim_{( J,\otimes)} g \otimes k$.
\end{corollary}

\begin{proof}
  By \cref{corollary:product_of_homotopies}, $f\otimes h\sim_{(J\otimes J,\otimes)}g\otimes k$.
  By \cref{corollary:order_among_segment_constructions,prop:partial-order}, this implies that $f\otimes h\sim_{(J\vee J,\otimes)}g\otimes k$. 
Thus, there is a zigzag of elementary $(J \vee J,\otimes)$ homotopies from $f \otimes h$ to $g \otimes k$.
  Let $m$ denote the number of elementary homotopies in the zigzag.
  If $J = I_{\tau}$ then these $m$ elementary homotopies combine to give an elementary $([0,2m]_{c_{0^+}},\otimes)$ homotopy.
  Since $I_{\tau}$ is homeomorphic to $[0,2m]_{c_{0^+}}$, this is an elementary $(I_{\tau},\otimes)$ homotopy.
  If $J = J_{\top}$ then these $m$ elementary homotopies combine to give an elementary $(J_{2m},\otimes)$ homotopy.
  If $J = J_{+}$ then these $m$ elementary homotopies combine to give an elementary $(J_{2m,k},\otimes)$ homotopy for some $0 \leq k \leq 2^{2m}-1$.
  In each case, the result then follows by \cref{lemma:general_homotopic}.
\end{proof}

\begin{theorem} \label{thm:J1}
  Let $m \geq 1$, $0^+ < (\eps,a) \leq 1$ and let $\otimes$ be a product operation.
  Let $f,g:X \to Y$ be a continuous map of closure spaces.
  The following are equivalent.
  \begin{enumerate}
  \item \label{it:thm1_1} $f,g$ are $(J_{\top},\otimes)$-homotopic.
  \item \label{it:thm1_m} $f,g$ are $(J_m,\otimes)$-homotopic.
 \item \label{it:thm1_m_top} $f,g$ are $(J_{m,\top},\otimes)$-homotopic.
  \item \label{it:thm1_a} $f,g$ are $(I_{(\eps,a)},\otimes)$-homotopic.
  \end{enumerate}
\end{theorem}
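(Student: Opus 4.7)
The plan is to prove the four-way equivalence by applying \cref{cor:partial-order} to interval morphisms established in \cref{ex:partial-order} and \cref{lemma:indiscrete_intervals_are_maximal}, supplemented by the characterization of $(J_1,\otimes)$-homotopy provided by \cref{lemma:homotopy_and_one_step_concatenations} and \cref{lemma:generalized_intervals}.

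First I would dispose of $(1) \Leftrightarrow (3)$: items \ref{it:Jn-to-Jm} and \ref{it:J1-to-Jm} of \cref{ex:partial-order} furnish morphisms of intervals in both directions between $J_1 = J_{1,\top}$ and $J_{m,\top}$, so two applications of \cref{cor:partial-order} yield the equivalence.

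For $(1) \Leftrightarrow (2)$, the implication $(1) \Rightarrow (2)$ follows from $J_m \leq J_1$ (item \ref{it:Jn-to-Jm} of \cref{ex:partial-order}) combined with \cref{cor:partial-order}. For the reverse, \cref{lemma:generalized_intervals} identifies $J_m$ with the $m$-fold concatenation $J_1^{*m}$, and \cref{lemma:homotopy_and_one_step_concatenations} applied to $J = J_1$ shows that any one-step $(J_m,\otimes) = (J_1^{*m},\otimes)$-homotopy is in particular a $(J_1,\otimes)$-homotopy. Transitivity of the equivalence relation $\sim_{(J_1,\otimes)}$ then upgrades this to the full statement $(2) \Rightarrow (1)$.

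Finally, $(1) \Leftrightarrow (4)$. Since $J_1$ is an indiscrete space with at least two elements, \cref{lemma:indiscrete_intervals_are_maximal} gives $I_{(\eps,a)} \leq J_1$, and \cref{cor:partial-order} yields $(1) \Rightarrow (4)$. The reverse direction is the crux of the argument and is where the hypothesis $(\eps,a) > 0^+$, equivalently $\eps > 0$, becomes essential. I would choose an integer $m' \geq 1$ with $1/m' < \eps$; then $(1/m',0) < (\eps,a)$ in the lexicographic order, so item \ref{it:Ia} of \cref{ex:partial-order} gives $I_{1/m'} \leq I_{(\eps,a)}$, while item \ref{it:IJ} gives $J_{m'} \leq I_{1/m'}$. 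Composing yields $J_{m'} \leq I_{(\eps,a)}$, so \cref{cor:partial-order} produces the implication from $(I_{(\eps,a)},\otimes)$-homotopy to $(J_{m'},\otimes)$-homotopy. Applying the already-established $(1) \Leftrightarrow (2)$ with the auxiliary parameter $m'$ in place of $m$ then completes $(4) \Rightarrow (1)$.

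The main obstacle is the direction $(4) \Rightarrow (1)$, which requires introducing an auxiliary integer $m'$ chosen large enough to satisfy $1/m' < \eps$ and then invoking the previously established equivalence with the auxiliary parameter. This is precisely where the standing hypothesis $0^+ < (\eps,a)$ is used; the remaining equivalences are essentially formal consequences of the partial order on intervals.
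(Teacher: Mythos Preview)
Your proof is correct and follows essentially the same approach as the paper's: both arguments reduce everything to \cref{cor:partial-order} applied to the interval inequalities of \cref{ex:partial-order}, and both handle $(2)\Rightarrow(1)$ and $(4)\Rightarrow(1)$ via the concatenation characterization \cref{lemma:homotopy_and_one_step_concatenations} (the paper cites its corollary \cref{corollary:general_homotopic}). The one small deviation is your use of \cref{lemma:indiscrete_intervals_are_maximal} for $(1)\Rightarrow(4)$, which is a slightly more direct route than the paper's detour through $I_{(\eps,a)}\le I_{1/n}\le J_n$ with $n$ chosen so that $(\eps,a)\le 1/n$.
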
 

\begin{proof}
   By \cref{ex:partial-order}\eqref{it:Jn-to-Jm}, $J_m \leq J_{\top}$.
    By \cref{cor:partial-order}, \eqref{it:thm1_1} implies \eqref{it:thm1_m}.
  By \cref{lemma:general_homotopic}\eqref{it:J_1}, if $f,g$ are one-step $(J_m,\otimes)$-homotopic then $f,g$ are $(J_{\top},\otimes)$-homotopic.
  If $f,g$ are $(J_m,\otimes)$-homotopic then they are connected by a finite sequence of elementary $(J_m,\otimes)$ homotopies.
  So they are $(J_{\top},\otimes)$-homotopic.
  Thus \eqref{it:thm1_m} implies \eqref{it:thm1_1}.

 By \cref{ex:partial-order}\eqref{it:Jn-to-Jm}, $J_{m,\top} \leq J_{\top}$.
    By \cref{cor:partial-order}, \eqref{it:thm1_1} implies \eqref{it:thm1_m_top}.
    Similarly, by \cref{ex:partial-order}\eqref{it:J1-to-Jm}, $J_{\top} \leq J_{m,\top}$ and thus \eqref{it:thm1_m_top} implies \eqref{it:thm1_1}.
  
  Choose integers $n$, $N$, with $1 \leq n \leq N$ such that $\frac{1}{N} \leq (\eps,a) \leq \frac{1}{n}$.
  By \cref{ex:partial-order}\eqref{it:Ia} and  \cref{ex:partial-order}\eqref{it:IJ}, $J_N \leq I_{\frac{1}{N}} \leq I_{(\eps,a)} \leq I_{\frac{1}{n}} \leq J_n$. 
    Thus \eqref{it:thm1_a} implies that $f,g$ are $(J_N,\otimes)$-homotopic, which we have shown implies that they are $(J_{\top},\otimes)$-homotopic.
    In addition, we have shown that \eqref{it:thm1_1} implies that $f,g$ are $(J_n,\otimes)$-homotopic and since $I_{(\eps,a)} \leq J_n$ this implies \eqref{it:thm1_a}.
\end{proof}

\begin{theorem} \label{thm:Jplus}
 Let $m \geq 1$ and $0 \leq k \leq 2^m-1$ and let $\otimes$ be a product operation. Let $f,g: X \to Y$ be continuous maps of closure spaces. The following are equivalent.
  \begin{enumerate}
  \item \label{it:thm2_Jplus} $f,g$ are $(J_+,\otimes)$-homotopic.
  \item \label{it:thm2_Jmleq} $f,g$ are $(J_{m,\leq},\otimes)$-homotopic.
  \item \label{it:thm2_Jmk} $f,g$ are $(J_{m,k},\otimes)$-homotopic.
  \end{enumerate}
\end{theorem}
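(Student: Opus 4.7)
The plan is to parallel the proof of \cref{thm:J1}, leveraging the partial order relations in \cref{ex:partial-order} together with \cref{cor:partial-order} and \cref{corollary:general_homotopic}. The product operation $\otimes$ plays no essential role beyond appearing on both sides of each implication.

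For \eqref{it:thm2_Jplus}$\Leftrightarrow$\eqref{it:thm2_Jmleq}, I would combine $J_+ \leq J_{m,\leq}$ from \cref{ex:partial-order}\eqref{it:J1-to-Jm} with $J_{m,\leq} \leq J_+$ from \cref{ex:partial-order}\eqref{it:Jn-to-Jm} and apply \cref{cor:partial-order} in each direction. For \eqref{it:thm2_Jmk}$\Rightarrow$\eqref{it:thm2_Jplus}, I would use \cref{corollary:general_homotopic}\eqref{it:Jplus_Jmk}: every one-step $(J_{m,k},\otimes)$-homotopy is in particular a $(J_+,\otimes)$-homotopy, so any finite chain of one-step $(J_{m,k},\otimes)$-homotopies concatenates to a $(J_+,\otimes)$-homotopy.

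The main obstacle is \eqref{it:thm2_Jplus}$\Rightarrow$\eqref{it:thm2_Jmk}. By \cref{ex:partial-order}\eqref{it:JnlJmk}, either $J_{m,k} \leq J_+$ or $J_{m,k} \leq J_-$ according to whether the rightmost bit of $k$ is $1$ or $0$. In the first case \cref{cor:partial-order} gives the implication immediately. In the second case, I would first show that $(J_+,\otimes)$- and $(J_-,\otimes)$-homotopy agree as equivalence relations on $\mathbf{Cl}(X,Y)$. The swap map $\sigma: J_- \to J_+$ sending $0\mapsto 1$ and $1\mapsto 0$ is a closure-space isomorphism (an immediate check from the definitions of $c_{\pm}$ in \cref{def:interval_objects}), so given any one-step $(J_+,\otimes)$-homotopy $H$ from $f$ to $g$, the map $H \circ (\mathbf{1}_X \otimes \sigma): X \otimes J_- \to Y$ is a one-step $(J_-,\otimes)$-homotopy from $g$ to $f$, and symmetrically. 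Since both equivalence relations are symmetric by construction, they coincide; then $J_{m,k} \leq J_-$ together with \cref{cor:partial-order} completes the argument.
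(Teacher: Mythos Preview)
Your proposal is correct and follows essentially the same route as the paper: the equivalence \eqref{it:thm2_Jplus}$\Leftrightarrow$\eqref{it:thm2_Jmleq} via the mutual inequalities $J_+ \leq J_{m,\leq}$ and $J_{m,\leq} \leq J_+$, the implication \eqref{it:thm2_Jmk}$\Rightarrow$\eqref{it:thm2_Jplus} via \cref{corollary:general_homotopic}\eqref{it:Jplus_Jmk}, and \eqref{it:thm2_Jplus}$\Rightarrow$\eqref{it:thm2_Jmk} via the dichotomy $J_{m,k} \leq J_+$ or $J_{m,k} \leq J_-$ together with the coincidence of the $(J_+,\otimes)$- and $(J_-,\otimes)$-homotopy relations. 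In fact you supply more detail than the paper, which simply asserts that the two equivalence relations agree; your swap-map argument via the closure isomorphism $\sigma$ and functoriality of $\otimes$ is exactly the right justification.
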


\begin{proof}
By \cref{ex:partial-order}\eqref{it:Jn-to-Jm}, $J_{m,\leq} \leq J_+$.
    By \cref{cor:partial-order}, \eqref{it:thm2_Jplus} implies \eqref{it:thm2_Jmleq}.
    Similarly, by \cref{ex:partial-order}\eqref{it:J1-to-Jm}, $J_+ \leq J_{m,\leq}$ and thus \eqref{it:thm2_Jmleq} implies \eqref{it:thm2_Jplus}.
    By \cref{lemma:general_homotopic}\eqref{it:Jplus_Jmk}, if $f,g$ are one-step $(J_{m,k},\otimes)$-homotopic then they are $(J_+,\otimes)$-homotopic.
    If $f,g$ are $(J_{m,k},\otimes)$-homotopic then they are connected by a sequence of elementary $(J_{m,k},\otimes)$ homotopies. So they are $(J_+,\otimes)$-homotopic. That is, \eqref{it:thm2_Jmk} implies \eqref{it:thm2_Jplus}.
    Finally, note that the equivalence relation generated by elementary $(J_+,\otimes)$ homotopy equals the equivalence relation generated by elementary $(J_-,\otimes)$ homotopy. Since by \cref{ex:partial-order}\eqref{it:JnlJmk} either $J_{m,k} \leq J_+$ or $J_{m,k} \leq J_{-}$, it follows that  \eqref{it:thm2_Jplus} implies \eqref{it:thm2_Jmk}.
\end{proof}

\begin{theorem} \label{cor:homotopy-poset}
Let $X$ and $Y$ be closure spaces. Let $f,g:X\to Y$ be continuous maps. Then we have the following implications 
\begin{equation*}
\begin{tikzcd}
    f \sim_{(J_{\top},\times)} g \ar[r,Rightarrow] \ar[d,Rightarrow] & f \sim_{(J_+,\times)} g \ar[r,Rightarrow] \ar[d,Rightarrow] & f \sim_{(I_{\tau},\times)} g \ar[d,Rightarrow]  \\
    f \sim_{(J_{\top},\boxplus)} g \ar[r,Rightarrow] & f \sim_{(J_+,\boxplus)} g \ar[r,Rightarrow] & f \sim_{(I_{\tau},\boxplus)} g
  \end{tikzcd}
\end{equation*}
Furthermore,
among
homotopy relations obtained from an interval with $0\neq 1$ and a product operation,
the relation $\sim_{(J_{\top},\times)}$
implies any other such homotopy relation.
\end{theorem}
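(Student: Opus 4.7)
The plan is to obtain every implication from a single workhorse, \cref{cor:partial-order}, combined with the already-established comparisons $\boxdot \le \otimes \le \times$ (\cref{corollary:regular_product_is_coarsest}) and the interval morphisms $I \leq J_+ \leq J_1$ (\cref{ex:partial-order}\eqref{it:IJmkJm}). No new computations are needed; the task is just to fit the pieces together.

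For the six implications displayed in the diagram, I would handle rows and columns separately. For each row, fix $\otimes \in \{\times,\boxdot\}$. Since $I, J_+, J_1$ are intervals for $\times$, by \cref{lemma:interval_object_for_a_product} they are intervals for $\boxdot$ as well, and the morphisms witnessing $I \leq J_+ \leq J_1$ continue to serve as morphisms of intervals for $\boxdot$ (their underlying set maps are identical and $\vee$ is obtained by precomposition). Applying \cref{cor:partial-order} with $\otimes_1 = \otimes_2 = \otimes$ yields the two horizontal arrows in each row. For the three vertical arrows, fix $J \in \{I, J_+, J_1\}$ and apply \cref{cor:partial-order} with $\otimes_1 = \boxdot$, $\otimes_2 = \times$, and the identity morphism $J \to J$; since $\boxdot \leq \times$, we obtain $f \sim_{(J,\times)} g \Rightarrow f \sim_{(J,\boxdot)} g$.

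For the maximality statement, let $\otimes$ be an arbitrary product operation and $K$ an arbitrary interval for $\otimes$. I would show $f \sim_{(J_1,\times)} g \Rightarrow f \sim_{(K,\otimes)} g$ in two applications of \cref{cor:partial-order}. By \cref{lemma:indiscrete_intervals_are_maximal}, $J_1$ is an interval for every product operation (in particular for both $\times$ and $\otimes$) and $K \leq J_1$ as intervals for $\otimes$. First, with $J' = K' = J_1$, $\otimes_1 = \otimes$, $\otimes_2 = \times$, and using $\otimes \leq \times$, we get $f \sim_{(J_1,\times)} g \Rightarrow f \sim_{(J_1,\otimes)} g$. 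Second, with $\otimes_1 = \otimes_2 = \otimes$, $K' = J_1$, $J' = K$, and using $K \leq J_1$ as intervals for $\otimes$, we get $f \sim_{(J_1,\otimes)} g \Rightarrow f \sim_{(K,\otimes)} g$. Composing yields the claim.

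The one subtle point—mild enough to mention but easy to address—is why one cannot collapse the two steps above into a single invocation of \cref{cor:partial-order}: that corollary demands both interval arguments to live over the same product operation $\otimes_2$, whereas $K$ is only assumed to be an interval for $\otimes$ and need not be an interval for $\times$ at all. Splitting the argument through the universal indiscrete interval $J_1$ sidesteps this and is where \cref{lemma:indiscrete_intervals_are_maximal} does the essential work.
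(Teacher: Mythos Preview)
Your proof is correct and follows essentially the same approach as the paper: vertical arrows via \cref{corollary:regular_product_is_coarsest} and \cref{cor:partial-order}, horizontal arrows via \cref{ex:partial-order}\eqref{it:IJmkJm} and \cref{cor:partial-order}, and maximality via \cref{lemma:indiscrete_intervals_are_maximal}, \cref{corollary:regular_product_is_coarsest}, and \cref{cor:partial-order}. Your explicit remark on why the maximality argument must pass through $J_1$ in two steps (since $K$ need not be an interval for $\times$) is a nice clarification that the paper leaves implicit.
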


\begin{proof}
Combining \cref{corollary:regular_product_is_coarsest,cor:partial-order} gives the vertical implications.
The horizontal implications follow from \cref{ex:partial-order}\eqref{it:IJmkJm} and \cref{cor:partial-order}.
The
second statement
follows from \cref{cor:partial-order,lemma:indiscrete_intervals_are_maximal,corollary:regular_product_is_coarsest}.
\end{proof}

For closure spaces $X$ and $Y$ define a partial order on pairs $(J,\otimes)$, where $J$ is an interval and $\otimes$ is a product operation, given by $(J,\otimes) \leq (J',\otimes')$ if for all $f,g:X \to Y$, $f \sim_{(J,\otimes)} g$
implies that
$f \sim_{(J',\otimes')} g$.
That is  $(J,\otimes) \leq (J',\otimes')$ if $(J',\otimes')$ gives a coarser partition of the set of continuous maps from $X$ to $Y$.
By \cref{cor:homotopy-poset,ex:J1discrete},
we have the following poset, which is independent of $X$ and $Y$.

\begin{theorem} \label{thm:homotopy-poset}
With the above partial order on
  intervals and product operations, we have
  the following Hasse diagram,
  \begin{equation*} 
    \begin{tikzcd}[every arrow/.append style={dash},row sep=2ex]
      & (J_{\bot},\times) \ar[d] \\
      & (I_{\tau},\boxplus) \ar[dr] \ar[dl] \\
      (I_{\tau},\times) \ar[d] & & (J_+,\boxplus) \ar[dll] \ar[d]\\
      (J_+,\times) \ar[dr] & & (J_{\top},\boxplus) \ar[dl]\\
      & (J_\top,\times) \ar[d] \\
      & (*,\times)
    \end{tikzcd}
  \end{equation*}
  where $f \sim_{(*,\times)} g$ if and only if $f=g$ and thus $(*,\times)$ is the minimum
    and $f \sim_{(J_{\bot},\times)} g$ for all $f,g:X \to Y$ and thus $(J_{\bot},\times)$ is the maximum.
\end{theorem}

\begin{proposition} \label{ex:nontrivial}
For each of the equivalence relations in \cref{cor:homotopy-poset} there exist $f,g:X \to Y$ such that $f$ is not homotopy equivalent to $g$.
\end{proposition}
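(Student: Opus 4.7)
The plan is to observe that the relation $\sim_{(I,\boxdot)}$ is the coarsest among the six equivalence relations appearing in \cref{cor:homotopy-poset}: by that theorem, each of $\sim_{(J_1,\times)}$, $\sim_{(J_+,\times)}$, $\sim_{(I,\times)}$, $\sim_{(J_1,\boxdot)}$, $\sim_{(J_+,\boxdot)}$ implies $\sim_{(I,\boxdot)}$. Consequently, a single pair of maps $(f,g)$ that fails to be $(I,\boxdot)$-homotopic will, by contrapositive, simultaneously fail to be homotopic under every one of the six relations. So the task reduces to exhibiting one such pair.

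First, I would invoke \cref{ex:nontrivial}, which supplies exactly such a pair: take $X$ to be the two-point discrete space, $f = \mathbf{1}_X$ the identity map, and $g:X \to X$ any constant map. That example already verifies $f \not\sim_{(I,\boxdot)} g$, the argument being that the discreteness of $X$ together with \cref{lemma:ind_product_and_product_of_discrete_spaces} forces any hypothetical $(I,\boxdot)$-homotopy to be an $(I,\times)$-homotopy, which cannot exist because $X$ is not path-connected in the topological sense.

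Second, I would conclude by combining these two ingredients: if $f \sim_{(J,\otimes)} g$ for any of the five finer relations in \cref{cor:homotopy-poset}, then the implications in that theorem would give $f \sim_{(I,\boxdot)} g$, contradicting \cref{ex:nontrivial}. Hence the same pair $(f,g)$ witnesses the non-triviality of all six relations at once, which is a strictly stronger statement than what the proposition demands.

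There is essentially no technical obstacle here, since the substantive work has already been done in proving \cref{cor:homotopy-poset} (to obtain the partial order of implications) and in \cref{ex:nontrivial} (to exhibit a concrete non-trivial example at the coarsest level). The only minor care needed is to note that \cref{cor:homotopy-poset} does indeed place $\sim_{(I,\boxdot)}$ at the bottom of the diagram of implications, so that a single counter-example at that level suffices uniformly.
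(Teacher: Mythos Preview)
Your proposal is correct and follows exactly the same approach as the paper, which simply states that the proposition is an immediate consequence of \cref{ex:nontrivial} and \cref{cor:homotopy-poset}. You have spelled out the contrapositive argument in more detail, but the logical structure is identical.
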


\begin{proof}
By \cref{cor:homotopy-poset}, it suffices to verify the statement for the case $(I_{\tau},\boxplus)$.
  Let $X$ be the two point discrete space. Then the identity map $\mathbf{1}_X$ is not $(I_{\tau},\boxplus)$ homotopic to a constant map. Indeed, if it was then we would have a homotopy $H:X\boxplus I_{\tau}\to X$ between the two maps. However, since $X$ is discrete, by \cref{lemma:ind_product_and_product_of_discrete_spaces}, this would be equivalent to asking for a homotopy $H:X\times I_{\tau}\to X$ between the two maps, which we know does not exist since the two point discrete space $X$ is not $(I_{\tau},\times)$-contractible.
\end{proof}

\begin{definition}
  \label{def:homotopy_equivalent_spaces}
  Let $J$ be an interval and let $\otimes$ be a product operation.
  We say that ${X}$ and ${Y}$ are \emph{$(J,\otimes)$-homotopy equivalent}
  if there exist morphisms $f:{X\to Y}$ and $g:{Y\to X}$ such that
  $gf \ \sim_{(J,\otimes)} \Id_X$ and
  $fg \ \sim_{(J,\otimes)} \ \Id_Y$.
  We say that ${X}$ is \emph{$(J,\otimes)$-contractible} 
  if it is $(J,\otimes)$-homotopy equivalent to the one-point space.
\end{definition}

\begin{example} \label{ex:contractible}
  Let $\otimes$ be a product operation and $J$ be an interval for $\otimes$.
  By Lemma~\ref{lem:interval-homotopy}, $J$ is $(J,\otimes)$-contractible.
Furthermore,
 for $n \geq 0$ and $J\in \{I_{\tau},J_{\top},J_+\}$, $J^{\otimes n}$ is $(J,\otimes)$-contractible, which follows by induction using \cref{corollary:product_of_intervals_for_arbitrary_product}.
\end{example}

\begin{example} \label{ex:Z} 
 Consider $(\Z,d)$ where $d(x,y) = |x-y|$. Then 
 $(\Z,c_1)$ is $(I_{\tau},\times)$-contractible~\cite[Lemma 4.49]{rieser2021vcech}.
 In contrast, we will show that the space $(\Z,c_1)$ is not $(J_+,\boxplus)$ contractible.
 Indeed, suppose that $f_0,\dots, f_m$ is a zigzag of one-step $(J_+,\boxplus)$-homotopic maps where $f_0=\mathbf{1}_{\Z}$ is the identity map and $f_m=0$ is the constant map to $0$.
A map $f:\Z \to \Z$ gives a continuous map $f:(\Z,c_1) \to (\Z,c_1)$ iff for all $n$, $\abs{f(n)-f(n-1)} \leq 1$.
  Let $f$ and $g$ be two such maps.
 By \cref{def:homotopy,def:inductive_product_closure}, the ordered pair $(f,g)$ is one-step $(J_+,\boxplus)$-homotopic iff for all $n$, $\abs{f(n)-g(n)} \leq 1$.
Since $f_0(m+1) = m+1$, $f_m(m+1) \geq 1$, we have a contradiction.
\end{example}

\subsection{Restrictions to full subcategories}

We end this section by remarking that a number of our homotopy theories have been previously studied in various full subcategories of closure spaces.

The interval $I$ lies in the full subcategory $\mathbf{Top}$. The intervals $J_{m,k}$, for $m\ge 1, 0\le k\le 2^{m-1}$ lie in the full subcategory $\qdcl \isom \cat{DiGph}$. The intervals $J_m$ for $m\ge 1$ and $I_{(\eps,a)}$, for $a=-1,0$ and $0\le \eps\le 1$ lie in the full subcategory $\suqdcl\isom \cat{Gph}$.

The product closure restricts to $\cat{Top}$, $\cat{Gph}$ and $\cat{DiGph}$ by \cref{prop:limits_and_colimits_of_closure_spaces,prop:regular_digraph_product_is_product}. The inductive product closure restricts to $\cat{Gph}$ and $\cat{DiGph}$ by \cref{prop:digraph_product_is_inductive_product,corollary:ind_prod_of_quasi_dis_is_quasi_dis}, where it is known as the cartesian product of graphs and digraphs. 

\begin{lemma}
\label{lemma:restriction_of_homotopies}
\begin{enumerate}
\item $(I_{\tau},\times)$-homotopy restricts to $\mathbf{Top}$ where it is called homotopy.
\item $(I_{\varepsilon^-},\times)$-homotopy restricts to $\mathbf{Gph}$ where it is called homotopy (\cite{sossinsky1986tolerance}). 
\item $(J_{\top},\boxplus)$-homotopy restricts to $\mathbf{Gph}$ where it is called $A$-homotopy or discrete homotopy (\cite{kramer1998combinatorial,barcelo2001foundations,barcelo2005perspectives,
,babson2006homotopy}).
\item $(J_{\top},\times)$-homotopy restricts to $\mathbf{Gph}$ where it is called $\times$-homotopy (\cite{dochtermann2009hom}).
\item $(J_+,\boxplus)$-homotopy restricts to $\mathbf{DiGph}$ where it is called homotopy (\cite{grigor2014homotopy}).
\item $(J_{\top},\times)$-homotopy restricts to $\mathbf{DiGph}$ where it is called bihomotopy (\cite{dochtermann2023homomorphism}).
\end{enumerate}
\end{lemma}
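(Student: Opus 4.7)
My plan is to verify each of the six items by (a) checking that the chosen interval lies in the claimed subcategory and that the relevant product operation (or inductive product) preserves that subcategory, so that the data $X \otimes J^{*m} \to Y$ in $\cat{Cl}$ really is data internal to the subcategory; and (b) unfolding the definition of $(J,\otimes)$-homotopy and matching it syntactically against the definition used in the cited reference.

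For item (1), I would note that $I \in \cat{Top}$ and that, by \cref{prop:limits_and_colimits_of_closure_spaces}, the product of topological spaces in $\cat{Cl}$ is topological; in particular $X \times I$ in $\cat{Cl}$ is the usual product in $\cat{Top}$. Hence a $(J,\times)$ one-step homotopy is exactly a continuous map $X \times I \to Y$ restricting to $f,g$ on the endpoints, which is the textbook definition of a homotopy between continuous maps of topological spaces; the equivalence relation is already transitive for $J=I$ by \cref{corollary:general_homotopic}.

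For items (2)--(5), the key input is the isomorphism of categories $\cat{Cl_{qd}} \cong \cat{DiGph}$ of \cref{lemma:digraphs_are_quasi_discrete_closure_spaces} and its specialization $\cat{Cl_{sqd}} \cong \cat{Gph}$ of \cref{prop:isom-clsqd-gph}, together with the identifications $(X \times Y, c_{E_X} \times c_{E_Y}) = (X\times Y, c_{E_X \times E_Y})$ (\cref{prop:regular_digraph_product_is_product}) and $(X \times Y, c_{E_X} \boxdot c_{E_Y}) = (X\times Y, c_{E_X \boxdot E_Y})$ (\cref{prop:digraph_product_is_inductive_product}). These imply that once $X$ and the interval lie in $\cat{Gph}$ or $\cat{DiGph}$, so does their $\times$- or $\boxdot$-product (also via \cref{corollary:prod_of_quasi_dis_is_quasi_dis,corollary:ind_prod_of_quasi_dis_is_quasi_dis}). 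Under these identifications, $J_1$ corresponds to the reflexive complete graph on two vertices and $J_+$ to the reflexive directed edge $0 \to 1$. For (2), \cref{lem:digraph-one-step-j1-homotopy} characterizes one-step $(J_1,\times)$-homotopy as: whenever $x \overline{E} x'$, both $fx \overline{F} gx'$ and $gx \overline{F} fx'$, which is exactly Dochtermann's $\times$-homotopy condition; the iterated version matches via \cref{corollary:general_homotopic}. For (3), an analogous unfolding on $\boxdot$ using the canonical inductive neighborhoods in \cref{def:inductive_product_closure} shows that $(J_1,\boxdot)$-homotopies correspond to sequences $f_0,f_1,\ldots,f_m$ of graph homomorphisms with $f_i(x)\overline{F}f_{i+1}(x)$ for all $x$, which is the $A$-homotopy definition of the cited references (a map out of $X \boxdot I_m$, where $I_m$ is the reflexive path). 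Item (4) only asserts stability of the subcategory, which is immediate. For item (5), the same unfolding for $J_+$ with $\boxdot$ gives sequences $f_0,\ldots,f_m$ with $f_i(x) = f_{i+1}(x)$ or $f_i(x) F f_{i+1}(x)$, which is exactly the digraph homotopy of Grigor'yan et al.

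For item (6), I would observe that $I_{\eps^-}$ is symmetric and quasi-discrete (the remark following \cref{def:metric_interval_objects} and \cref{def:quasi_discrete_closure}), so it corresponds under $\cat{Cl_{sqd}}\isom \cat{Gph}$ to the tolerance space $[0,1]$ with $x \sim y$ iff $|x-y| < \eps$; the product $X \times I_{\eps^-}$ stays in $\cat{Cl_{sqd}}$ by \cref{corollary:prod_of_quasi_dis_is_quasi_dis}. A one-step $(I_{\eps^-},\times)$-homotopy then unfolds, via the local base description of $\times$ (\cref{def:product_closure}), to a family $\{f_t\}_{t \in [0,1]}$ satisfying Sossinsky's tolerance-homotopy condition.

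The main obstacle is bookkeeping rather than mathematics: the delicate step in each case is to check that the unfolded ``one-step'' condition really matches the definition in the cited paper, since some authors state the condition via $n$-step discrete paths rather than single steps. This is handled uniformly via \cref{lemma:homotopy_and_one_step_concatenations,corollary:general_homotopic}, which show that homotopy via $J$ is the same as one-step homotopy via $J^{*m}$ for some $m$; in the graph and digraph settings $J_1^{*m}$ and $J_+^{*m}$ are precisely the reflexive path graph and the reflexive directed path used in the discrete and directed homotopy literature.
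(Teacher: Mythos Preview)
Your proposal is correct and follows essentially the same approach as the paper: the paper does not give a formal proof of this lemma but instead precedes it with two paragraphs noting that the relevant intervals lie in the claimed subcategories and that the product and inductive product restrict to those subcategories (citing \cref{prop:limits_and_colimits_of_closure_spaces,prop:regular_digraph_product_is_product,prop:digraph_product_is_inductive_product,corollary:ind_prod_of_quasi_dis_is_quasi_dis}). Your write-up is in fact more thorough than the paper's, since you additionally unfold the one-step conditions and match them against the cited definitions via \cref{lem:digraph-one-step-j1-homotopy,lemma:homotopy_and_one_step_concatenations,corollary:general_homotopic}.
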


\section{Homology in closure spaces} \label{section:homology}

In this section we define several homology theories for closure spaces. We start by using some of our previously defined intervals and product operations to define various simplices and cubes. We then use the standard constructions to produce corresponding simplicial and cubical \new{singular} homology theories. \new{Let $\otimes$ denote either $\times$ or $\boxplus$.} For a closure space $X$,
\new{let $X^{\otimes n}$ denote the corresponding $n$-fold product of $X$ with itself.}

\subsection{Cubical homology} \label{sec:cubical}

We use intervals and either the product or the inductive product to define cubical singular homology theories. 
Let $J$ be one of the intervals $I_{\tau}, J_{\top}, J_+$.

\begin{definition}
\label{def:cube}
For $n\ge 1$, define the \emph{$(J,\otimes)$ $n$-cube} to be $\gcube{n}=J^{\otimes n}$. Define $\gcube{0}$ to be the one point space. Denote $\ttimesgcube{n}$ by $\timesgcube{n}$.
\end{definition}

\new{By \cref{def:product_operator}, if $J = (J,c)$ then $\gcube{n} = (J^n,c^{\otimes n})$. Furthermore, note that $\indcube{n}$ is the set $\{0,1\}^n$ with the indiscrete topology.}

\begin{definition}
  \label{def:singular_cubes}
Let $X$ be a closure space. Given a \emph{$(J,\otimes)$ singular $n$-cube}, $\sigma:\gcube{n}\to X$, for $1\le i\le n$ define
\begin{gather}
\new{d_{i,0}^n}(\sigma)(a_1,\dots, a_{n-1})=\sigma (a_1,\dots ,a_{i-1},0,a_i,\dots, a_{n-1})\\
\new{d_{i,1}^n}(\sigma)(a_1,\dots, a_{n-1})=\sigma (a_1,\dots ,a_{i-1},1,a_i,\dots, a_{n-1}).
\end{gather}
Say $\sigma$ is \emph{degenerate} if $\new{d_{i,0}^n}\sigma=\new{d_{i,1}^n}\sigma$ for some $i$. 
Let $\gcubechain{n}{X}$ be the quotient of the free abelian group on the $(J,\otimes)$ singular $n$-cubes in $X$, which we will denote by $\gcubechainwhole{n}{X}$, by the free abelian group on the degenerate singular $n$-cubes.
Elements of $\gcubechain{n}{X}$ are called \emph{$(J,\otimes)$ singular cubical $n$-chains} in $X$.
The \emph{boundary} map $\partial_n: \gcubechain{n}{X} \to \gcubechain{n-1}{X}$ is the linear map defined by 
\[\partial_n \sigma=\sum_{i=1}^n(-1)^i(\new{d_{i,0}^n}\sigma-\new{d_{i,1}^n}\sigma).
\]
One can check that $\partial_{n-1}\partial_n=0$ and thus $(\gcubechain{\bullet}{X},\partial_{\bullet})$ is a chain complex of abelian groups.
The \emph{cubical singular homology groups} are the homology groups of this chain complex, which we denote by $\gcubehom{\bullet}{X}$.
\end{definition}

\begin{definition} \label{def:reduced-homology}
  We may augment the singular chain complex with the \emph{augmentation map} $\eps: \gcubechain{0}{X} \to \Z$ given by $\sum_i n_i \sigma_i = \sum_i n_i$.
  The homology of the augmented singular chain complex is called \emph{reduced homology} and denoted $\rgcubehom{\bullet}{X}$.
\end{definition}

\begin{example}
\label{example:homology_of_one_point}
Let $*$ denote the one point space.
There is a single nondegenerate $(J,\otimes)$ $0$-cube given by the identity map and for $k \geq 1$ the $(J,\otimes)$ singular $k$-cubes are all degenerate.
Therefore $\rgcubehom{k}{*} = 0$ for all $k\ge 0$.
\end{example}

\new{Let $f:X \to Y$ be a continuous map of closure spaces.
Let $\sigma:\gcube{n}\to X$ be a $(J,\otimes)$ singular $n$-cube.
Then $f\circ \sigma:\gcube{n}\to Y$ is a $(J,\otimes)$ singular $n$-cube in $Y$.  Furthermore $f$ induces a group homomorphism $f_{\#}:\gcubechainwhole{n}{X}\to \gcubechainwhole{n}{Y}$, which sends degenerate cubes to degenerate cubes. Thus it also induces a group homomorphism $f_{\#}:\gcubechain{n}{X}\to \gcubechain{n}{Y}$. It can be checked that for all $n \geq 0$ these maps respect the boundary operators and thus they induce maps on homology $f_{*}:\gcubehom{n}{X}\to \gcubehom{n}{Y}$. In particular, for each $n \geq 0$, we have a functor $\gcubehom{n}{-}:\cat{Cl}\to \cat{Ab}$.}

\begin{theorem}
\label{theorem:homotopy_invariance_of_homology}
Let $f,g:(X,\new{c_X})\to (Y,\new{c_Y})$.
If $f\sim_{(J,\otimes)}g$, then $f_*=g_*:\gcubehom{n}{X,\new{c_X}}\to\gcubehom{n}{Y,\new{c_Y}}$.
\end{theorem}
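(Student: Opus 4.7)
The plan is to imitate the classical topological proof of homotopy invariance of singular homology by constructing a chain homotopy (``prism operator'') between $f_\#$ and $g_\#$.

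First, by \cref{lemma:homotopy_and_one_step_concatenations}, $\sim_{(J,\otimes)}$ is the equivalence relation generated by one-step $(J,\otimes)$-homotopies. Since $f\mapsto f_*$ is functorial (\cref{def:induced_maps_on_homology}), it suffices to treat the case in which $f$ and $g$ are connected by a single one-step homotopy $H:X\otimes J\to Y$ with $H(x,0)=f(x)$ and $H(x,1)=g(x)$ for all $x\in X$.

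Given such an $H$, I would define the prism operator $P_n:\gcubechainwhole{n}{X}\to\gcubechainwhole{n+1}{Y}$ on generators by
$$P_n(\sigma):=H\circ(\sigma\otimes \Id_J),$$
identifying $J^{\otimes n}\otimes J$ with $J^{\otimes(n+1)}=\gcube{n+1}$ via the associator. The map $\sigma\otimes\Id_J$ is continuous because $\otimes$ is a bifunctor (\cref{def:product_operation}), so $P_n\sigma$ is a $(J,\otimes)$ singular $(n+1)$-cube. Extending linearly and reading off the face maps from \cref{def:singular_cubes}, I expect to obtain, for $1\le i\le n$,
$$A_i^{n+1}P_n\sigma=P_{n-1}(A_i^n\sigma),\qquad B_i^{n+1}P_n\sigma=P_{n-1}(B_i^n\sigma),$$
while $A_{n+1}^{n+1}P_n\sigma=f\circ\sigma=f_\#\sigma$ and $B_{n+1}^{n+1}P_n\sigma=g\circ\sigma=g_\#\sigma$. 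From the first two identities it follows that $P_n$ sends degenerate cubes to degenerate cubes, so $P_n$ descends to a well-defined map $\gcubechain{n}{X}\to\gcubechain{n+1}{Y}$.

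Plugging these face identities into $\partial_{n+1}=\sum_{i=1}^{n+1}(-1)^i(A_i^{n+1}-B_i^{n+1})$ and separating off the $i=n+1$ term should yield
$$\partial_{n+1}P_n\sigma-P_{n-1}\partial_n\sigma=(-1)^{n+1}\bigl(f_\#\sigma-g_\#\sigma\bigr),$$
so after rescaling $P_n$ by $(-1)^{n+1}$ one obtains the standard chain-homotopy identity $\partial P+P\partial=f_\#-g_\#$, from which $f_*=g_*$ follows at once by standard homological algebra. The main obstacle I anticipate is purely bookkeeping: tracking signs in the chain-homotopy identity, and verifying on the nose that the top and bottom faces of $P_n\sigma$ are $f_\#\sigma$ and $g_\#\sigma$ after the associator identification $J^{\otimes n}\otimes J\cong J^{\otimes(n+1)}$. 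Continuity of $\sigma\otimes\Id_J$, well-definedness of $P_n$ on the normalized complex, and the $i\le n$ face commutations are all straightforward consequences of the functoriality of $\otimes$ and the monoidal coherence guaranteed by \cref{def:product_operation}.
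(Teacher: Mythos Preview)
Your proof is correct and follows essentially the same prism-operator approach as the paper. The only cosmetic difference is that the paper inserts the extra $J$-factor in the first coordinate rather than the last, which yields the chain-homotopy identity $\partial P = g_\# - f_\# - P\partial$ directly without the $(-1)^{n+1}$ rescaling you need.
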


\begin{proof}
  It is sufficient to assume that $f$ and $g$ are one-step $(J,\otimes)$-homotopic.
  That is, there exists
  $H:\new{J\otimes X}\to Y$ 
  such that \new{$H(0,-)=f(-)$} and \new{$H(1,-)=g(-)$}.
By definition,
  $\gcube{n}\otimes J$  is
  $\gcube{n+1}$.
  
  Let $\sigma:\gcube{n}\to (X,\new{c_X})$ be a singular $n$-cube. Define a map $\new{P_n}:\gcubechain{n}{X,\new{c_X}}\to \gcubechain{n+1}{Y,\new{c_Y}}$
  as follows.
  For a $(J,\otimes)$ singular $n$-cube $\sigma:\gcube{n}\to (X,\new{c_X})$ let
\new{$P_n(\sigma):\gcube{n+1}\to (Y,\new{c_Y})$ be the $(J,\otimes)$ singular $(n+1)$-cube defined by
\begin{equation*}
P_n(\sigma)(a_1,\dots,a_{n+1})=H(a_1,\sigma(a_2,\dots, a_{n+1})).
\end{equation*}
Note that immediately from the definition we have the following:
\begin{gather*}
\new{d_{1,0}^{n+1}}(P_n(\sigma))=H(0,\sigma)=f_{\#}(\sigma),\\
\new{d_{1,1}^{n+1}}(P_n(\sigma))=H(1,\sigma)=g_{\#}(\sigma),\\
\new{d_{i,0}^{n+1}}(P_n(\sigma))=P_{n-1}\new{d_{i-1,0}^n}(\sigma), 2\le i\le n+1,\\
\new{d_{i,1}^{n+1}}(P_n(\sigma))=P_{n-1}\new{d_{i-1,1}^n}(\sigma), 2\le i\le n+1.
\end{gather*}
}
\new{The last two equalities show that if $\sigma$ is degenerate, $P(\sigma)$ is also degenerate.}
We now show that $\new{\partial_{n+1} P_n=g_{\#}-f_{\#}-P_{n-1}\partial_n}$. We have \new{
\begin{align*}
\partial_{n+1} P_n(\sigma)=&\sum_{i=1}^{n+1}(-1)^i(\new{d_{i,0}^{n+1}}P_n(\sigma)-\new{d_{i,1}^{n+1}} P_n(\sigma))
=\\
&=-(\new{d_{1,0}^{n+1}}P_n(\sigma)-\new{d_{1,1}^{n+1}}P_n(\sigma))+\sum_{i=2}^{n+1}(-1)^iP_{n-1}(\new{d_{i-1,0}^n}(\sigma)-\new{d_{i-1,1}^n}(\sigma))=\\
&=-f_{\#}(\sigma)+g_{\#}(\sigma)+\sum_{i=1}^{n}(-1)^{i+1}P_{n-1}(\new{d_{i,0}^n}(\sigma)-\new{d_{i,1}^n}(\sigma))=\\
&=-f_{\#}(\sigma)+g_{\#}(\sigma)-P_{n-1}\partial_n(\sigma).
\end{align*}}
\new{Extending linearly we get that $\partial_{n+1} P_n=g_{\#}-f_{\#}-P_{n-1}\partial_n$.}  
Thus, $P$ is induces a homomorphism $P:\gcubechain{n}{X,\new{c_X}}\to \gcubechain{n+1}{Y,\new{c_Y}}$.
Therefore, $P$ is a chain homotopy between $f_{\#}$ and $g_{\#}$ and hence $f_*=g_*$.
\end{proof}

By \cref{theorem:homotopy_invariance_of_homology,example:homology_of_one_point}
we get the following corollary.

\begin{corollary}
\label{corollary:reduced_homology_of_contractible_spaces}
Let $X$ be a $(J,\otimes)$-contractible closure space. Then $\rgcubehom{n}{X}=0$ for all $n\ge 0$.
\end{corollary}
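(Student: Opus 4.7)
The plan is to combine the homotopy invariance of homology (\cref{theorem:homotopy_invariance_of_homology}) with the vanishing of the reduced homology of a point (\cref{example:homology_of_one_point}), via the contractibility hypothesis.

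First, I would unpack $(J,\otimes)$-contractibility: there exist continuous maps $f: X \to *$ and $g: * \to X$ with $g \circ f \sim_{(J,\otimes)} \mathbf{1}_X$ and $f \circ g \sim_{(J,\otimes)} \mathbf{1}_*$. Applying \cref{theorem:homotopy_invariance_of_homology} to each of these homotopies, together with the functoriality of $\gcubehom{n}{-}$ (\cref{def:induced_maps_on_homology}), gives $g_* \circ f_* = (gf)_* = \mathbf{1}$ on $\gcubehom{n}{X}$ and $f_* \circ g_* = (fg)_* = \mathbf{1}$ on $\gcubehom{n}{*}$. Therefore $f_* : \gcubehom{n}{X} \to \gcubehom{n}{*}$ is an isomorphism for every $n \ge 0$.

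Next I would split into two cases. For $n \ge 1$, reduced and unreduced homology coincide, so by the isomorphism above and \cref{example:homology_of_one_point},
\[
\rgcubehom{n}{X} \;=\; \gcubehom{n}{X} \;\cong\; \gcubehom{n}{*} \;=\; \rgcubehom{n}{*} \;=\; 0.
\]
For $n = 0$, I would use the augmentation short exact sequence
\[
0 \longrightarrow \rgcubehom{0}{X} \longrightarrow \gcubehom{0}{X} \xrightarrow{\;\eps_X\;} \mathbb{Z} \longrightarrow 0,
\]
which comes from \cref{def:reduced-homology} together with the fact that $X$ is nonempty (being contractible). The key observation is that $\eps_X$ coincides, under the canonical identification $\gcubehom{0}{*} \cong \mathbb{Z}$, with the map $f_* : \gcubehom{0}{X} \to \gcubehom{0}{*}$ induced by the unique continuous map $f : X \to *$: both send each nondegenerate $0$-cube to $1$. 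Since $f_*$ is an isomorphism, so is $\eps_X$, and hence $\rgcubehom{0}{X} = \ker \eps_X = 0$.

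There is no real obstacle here; the only mild subtlety is the identification of the augmentation with the map induced by the terminal morphism, which is immediate from the definitions of $\eps$ and of the induced chain map. Everything else is a direct invocation of homotopy invariance and the computation for a point.
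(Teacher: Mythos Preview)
Your proof is correct and follows the same approach as the paper, which simply cites \cref{theorem:homotopy_invariance_of_homology} and \cref{example:homology_of_one_point} without further detail. Your treatment of the $n=0$ case via the augmentation sequence and the identification of $\eps_X$ with $f_*$ is a careful elaboration of what the paper leaves implicit.
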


\new{The following examples will help demonstrate that four of our six cubical singular homology groups are pairwise distinct. Furthermore, these four homology groups are also distinct from the other two.}

\begin{example} \label{ex:J+}
  The only continuous maps from $J_{\top}$ to $J_+$ are the constant maps, which are degenerate $(J_{\top},\times)$ and $(J_{\top},\boxplus)$ singular $1$-cubes, and thus $\iindcubehom{0}{J_+}=\indcubehom{0}{J_+}=\Z^2$.
\end{example}

\begin{example} \label{ex:R}
  Consider the space $(\R,c_{0^+})$.
  The only continuous maps from 
$\indcube{1}$, $\iindcube{1}$, $\dicube{1}$ and $\idicube{1}$ into $(\R,c_{0^+})$ are the constant maps. Therefore
 \[\indcubehom{0}{\R,c_{0^+}}=\iindcubehom{0}{\R,c_{0^+}}=\dicubehom{0}{\R,c_{0^+}}=\idicubehom{0}{\R,c_{0^+}}=\bigoplus_{x\in \R}\Z.\] 
\end{example} 

\begin{example}
  \label{example:homology_of_directed_inductive_cubes}
  Consider $\idicube{2} = J_+^{\boxplus 2}$.
We will show that $\dicubehom{1}{\idicube{2}}\cong\Z$.
Let $a$,$b$,$c$,$d$, denote the vertices $(0,0)$, $(0,1)$, $(1,0)$ and $(1,1)$ of $\idicube{2}$.
Denote the 
singular $k$ cubes by the images of the vertices of $\dicube{k}$, when those vertices are listed in lexicographic order. 
Then, $\dicubechain{0}{\idicube{2}} = \Z\langle a,b,c,d \rangle$ and
$\dicubechain{1}{\idicube{2}} = \Z \langle ab, ac, bd, cd \rangle$. 
\new{Note that  for example, $abab$ is degenerate, and the map corresponding to $abcd$ is not continuous. Similar arguments exclude other potential elements 
resulting in}  $\dicubechain{2}{\idicube{2}} = \langle aaab, abbb, aaac, accc, bbbd, bddd, cccd, cddd \rangle$.  
One can check that $\partial_2 = 0$. 
Since, $\ker \partial_1 = \Z \langle \tau \rangle$, where $\tau = ab + bd - ac - cd$, the result follows.
\end{example}

\begin{example}
  \label{example:homology_of_undirected_inductive_cubes}
 Consider $\iindcube{2} = J_{\top}^{\boxplus 2}$.
We will show that $\indcubehom{1}{\iindcube{2}}\cong\Z$.
  Let $a$,$b$,$c$,$d$, denote the vertices $(0,0)$, $(0,1)$, $(1,0)$ and $(1,1)$ of $\iindcube{2}$.
  Denote the 
singular $k$ cubes by the images of the vertices of $\indcube{k}$, when those vertices are listed in lexicographic order. 
Then, $\indcubechain{0}{\iindcube{2}} = \Z\langle a,b,c,d \rangle$ and
$\indcubechain{1}{\iindcube{2}} = \Z \langle ab, ba, ac, ca, bd, db, cd, dc \rangle$.

Furthermore, $\indcubechain{2}{\iindcube{2}}$ is the free abelian group with generators of the form 
$uuuv$, $uuvu$, $uvuu$, $vuuu$, and $uvvu$ where
$(u,v) \in \{(a,b),(b,a),(a,c),(c,a),(b,d),(d,b),(c,d),(d,c)\}$.
Note that, for example, $\partial_2(abaa)=ab+ba-2aa=ab+ba$ since $aa$  is degenerate. No map whose image contains $3$ distinct vertices of $J_{\top}\boxplus J_{\top}$ is continuous, and the map corresponding to $abcd$ is not continuous.
The result follows from checking that $\ker \partial_1 = \Z \langle ab+ba, ac+ca, bd + db, cd + dc, \tau \rangle$, where $\tau = ab + bd - ac -cd$ and
$\im \partial_2 =  \Z\langle ab+ba, ac+ca, bd + db, cd + dc \rangle$.
\end{example}


\begin{proposition}
\label{prop:cub_theories_are_distinct}
With the possible exception of
$H_{\bullet}^{(I_{\tau},\times)}$ and
$H_{\bullet}^{(I_{\tau},\boxplus)}$ our six cubical singular homology theories are distinct. More specifically, we have the following table:
\end{proposition}

\new{
\begin{center}
\begin{tabular}{|*{7}{c|}}
                               \cline{1-1}
  $\iindcubehom{\bullet}{-}$                    \\ \cline{1-2}
  $\indcubehom{\bullet}{-}$ & \textnormal{Distinct}                 \\ \cline{1-3}
  $\idicubehom{\bullet}{-}$ & \textnormal{Distinct} &  \textnormal{Distinct}            \\ \cline{1-4}
  $\dicubehom{\bullet}{-}$ & \textnormal{Distinct} & \textnormal{Distinct} & \textnormal{Distinct}         \\ \cline{1-5}
  $\itopcubehom{\bullet}{-}$ & \textnormal{Distinct} & \textnormal{Distinct} & \textnormal{Distinct} & \textnormal{Distinct}      \\ \cline{1-6}
  $\topcubehom{\bullet}{-}$ & \textnormal{Distinct} & \textnormal{Distinct} & \textnormal{Distinct} & \textnormal{Distinct} & \textnormal{Open}  \\ \hline
    & $\iindcubehom{\bullet}{-}$  & $\indcubehom{\bullet}{-}$  & $\idicubehom{\bullet}{-}$ & $\dicubehom{\bullet}{-}$ & $\itopcubehom{\bullet}{-}$ & $\topcubehom{\bullet}{-}$ \\ \hline
\end{tabular}    
\end{center}
}

\begin{proof}
Since $(\R,c_{0^+})$ is $(I_{\tau},\times)$-contractible it is also $(I_{\tau},\boxplus)$-contractible by \cref{cor:homotopy-poset}. Thus $\topcubehom{0}{\R,c_{0^+}} \isom\itopcubehom{0}{\R,c_{0^+}} \isom \mathbb{Z}$ by \cref{theorem:homotopy_invariance_of_homology}.
Therefore by \cref{ex:R} each of $\indcubehom{\bullet}{-},\iindcubehom{\bullet}{-},\dicubehom{\bullet}{-},\idicubehom{\bullet}{-}$ is distinct from each of $\topcubehom{\bullet}{-},\itopcubehom{\bullet}{-}$.
By \cref{ex:contractible,theorem:homotopy_invariance_of_homology} we have that $\dicubehom{0}{J_+} \isom \idicubehom{0}{J_+} \isom \mathbb{Z}$ .  Combined with \cref{ex:J+} we have that  $\iindcubehom{\bullet}{-}$, $\indcubehom{\bullet}{-}$ are each distinct from each of $\dicubehom{\bullet} {-}, \idicubehom{\bullet}{-}$.

By \cref{ex:contractible,theorem:homotopy_invariance_of_homology} we have that $J_+^{\boxplus 2}$ is $(J_+,\boxplus)$-contractible and thus $\idicubehom{1}{J_+^{\boxplus 2}}=0$. By
\cref{example:homology_of_directed_inductive_cubes} we then have that $\idicubehom{\bullet}{-}$ is distinct from $\dicubehom{\bullet}{-}$.

Similarly,
 $\iindcubehom{1}{J_{\top}^{\boxplus 2}}=0$, 
which together with
\cref{example:homology_of_undirected_inductive_cubes} 
shows
that $\iindcubehom{\bullet}{-}$ is distinct from $\indcubehom{\bullet}{-}$.
\end{proof}

\subsection{Simplicial homology} \label{sec:simplicial}

In the case of the product, we define corresponding simplicial singular homology theories.
Let $J$ be one of $I_{\tau}$, $J_{\top}$, or $J_+$. Denote $(J,\times)$ simply by $J$.

\new{We give a uniform definition of simplices as subspaces of cubes.}

\begin{definition}
\label{def:simplices}
For $n\ge 0$ define $\iota:\{0,\dots ,n\}\to \{0,1\}^n$ by $\iota(k)=(\underbrace{1,\dots, 1}_{k},\underbrace{0,\dots, 0}_{n-k})$.
\new{Define the \emph{$J$ $n$-simplex}, denoted $\gsimp{n}$, to be the convex hull of $\im(\iota)$ in $\ttimesgcube{n}$ with the subspace closure.}
For $0\le i\le n$, 
the \emph{$i$-face} of $\gsimp{n}$ is the convex hull of the image of $\iota|_{\{0,\dots, \hat{i},\dots, n\}}$.
\end{definition}

Note that $\topsimp{n}$ is homeomorphic to the standard $n$-simplex,
$\indsimp{n}$ is homeomorphic to $J_{n,\top}$, the set $\{0,1,\ldots,n\}$ with the indiscrete topology, and
$\disimp{n}$ is homeomorphic to $J_{n,\leq}$, the set $\{0,1,\dots ,n\}$ with the closure operator $c(i)=\{j\,\mid\, i\le j\}$.

\begin{definition}
  \label{def:singular_simplices}
Let  $X$ be a closure space. Let $\gsimpchain{n}{X}$ be the free abelian group on the \emph{$J$ singular $n$-simplices}, $\sigma:\gsimp{n}\to X$. For $n\ge 1$, let $\partial_n:\gsimpchain{n}{X}\to \gsimpchain{n-1}{X}$ be the map defined by
\[\partial_n\sigma = \sum_{i=0}^n(-1)^id_i\sigma,
\]
where $d_i\sigma$ is the restriction of $\sigma$ to the $i$-th face of $\gsimp{n}$.
Since $\partial_{n-1}\partial_n=0$  we have a chain complex of free abelian groups, $(\gsimpchain{\bullet}{X},\partial_{\bullet})$ whose homology groups we denote by $\gsimphom{n}{X}$
and are called the simplicial singular homology groups.
\end{definition}

\new{
Let $f:X \to Y$ be a continuous map of closure spaces.
Let $\sigma:\gsimp{n}\to X$ be a $J$ singular $n$-simplex.
Then $f\circ \sigma:\gsimp{n}\to Y$ is a singular $J$ singular $n$-simplex in $Y$.  Furthermore $f$ induces a group homomorphism $f_{\#}:\gsimpchain{n}{X}\to \gsimpchain{n}{Y}$. It can be checked that for all $n \geq 0$ these maps respect the boundary operators and thus they induce maps on homology $f_{*}:\gsimphom{n}{X}\to \gsimphom{n}{Y}$. In particular,
for each $n \geq 0$ we have a functor
$\gsimphom{n}{-}:\cat{Cl}\to \cat{Ab}$.}

In a companion paper~\cite{bubenik2021eilenberg} we show that the corresponding simplicial singular homology groups and cubical singular homology groups agree.

\subsection{Restrictions to full subcategories}

We end this section by remarking that a number of our homology theories have been previously studied in various full subcategories of closure spaces.

Recall that $\suqdcl\cong \mathbf{Gph}$.
Under this isomorphism, $\indsimp{n}$ corresponds to $K_{n+1}$, the complete graph on $n+1$ vertices, and $\iindcube{n}$ corresponds to the hypercube graph $Q_n$. Also recall that $\qdcl\cong \mathbf{DiGph}$.
Under this isomorphism, $\disimp{n}$ corresponds to $K_{n+1}^{\nearrow}$, the digraph of the poset $(\{0,\dots, n\},\le)$ and  $\idicube{n+}$ corresponds to the hypercube digraph $Q_n^{\nearrow}$, where the vertices are the elements of $\{0,1\}^n$ and the directed edges given by $(a,a+e_i)$ where $e_i$ is a standard basis vector.

\begin{lemma}
\label{lemma:restricting_homology}
\begin{enumerate}
\item $\topcubehom{\bullet}{X}$ restricts to $\mathbf{Top}$ where it is called singular homology.
\item $\indcubehom{\bullet}{X}$ restricts to $\mathbf{Gph}$ where it is the homology of the clique complex of a graph. 
\item $\iindcubehom{\bullet}{X}$ restricts to $\mathbf{Gph}$ where it is called discrete (cubical) homology 
(\cite{barcelo2014discrete,Barcelo:2019,barcelo2018homology}).
\item $\dicubehom{\bullet}{X}$ restricts to $\mathbf{DiGph}$ where it is the homology of the directed clique complex.  
\end{enumerate}
\end{lemma}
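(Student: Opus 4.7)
The plan is to unwind definitions in each of the five subcategories, using the isomorphisms $\qdcl \isom \cat{DiGph}$ and $\suqdcl \isom \cat{Gph}$ from \cref{lemma:digraphs_are_quasi_discrete_closure_spaces,prop:isom-clsqd-gph}, together with the product identifications $c_{E_X} \times c_{E_Y} = c_{E_X \times E_Y}$ and $c_{E_X} \boxdot c_{E_Y} = c_{E_X \boxdot E_Y}$ from \cref{prop:regular_digraph_product_is_product,prop:digraph_product_is_inductive_product}, and the characterization of continuous maps out of quasi-discrete spaces in \cref{prop:continuity_on_atomic_closure spaces}. Under these identifications one rewrites each cube or simplex as a concrete (di)graph, identifies the singular cubes and simplices with the homomorphisms used in the classical definitions, and then observes that the alternating-sum boundary and the quotient by degenerate cubes of \cref{def:singular_cubes} match the classical boundary and normalization.

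For item (1), $\topcube{n} = I^n$ is the standard topological cube, \cref{def:singular_cubes} is precisely the normalized cubical singular chain complex of a topological space, and its homology agrees with ordinary singular homology by the classical Eilenberg--Mac Lane/Massey theorem. For items (3) and (5), the cubes $\iindcube{n}$ and $\idicube{n}$ are, respectively, the hypercube graph $Q_n$ and the directed hypercube $Q_n^{\nearrow}$, so that continuous maps into $X$ are exactly (di)graph homomorphisms. The resulting normalized chain complex coincides on the nose with the discrete cubical ($A$-theoretic) chain complex of \cite{barcelo2014discrete,barcelo2018homology} in the undirected case, and with the cubical homology of digraphs from \cite{grigor2014homotopy,grigor2017homologies} in the directed case, so the homology groups agree.

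For items (2) and (4), the simplices $\indsimp{n} \isom K_{n+1}$ and $\disimp{n} \isom K_{n+1}^{\nearrow}$ are exactly the complete graph and the poset digraph on $\{0,\dots,n\}$, so continuous maps from these into a (di)graph $X$ are precisely the ordered simplices of the clique complex (resp.\ the directed clique complex) of $X$, and the simplicial singular chain complex of \cref{def:singular_simplices} equals the ordered chain complex of that complex. To go from this simplicial description to the cubical statement claimed by the lemma, I invoke the comparison $\gsimphom{\bullet}{X} \isom \gcubehom{\bullet}{X}$ that is deferred to \cite{peterbubeniknikolamilicevic2021}.

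The main obstacle is precisely this cubical-to-simplicial comparison underlying items (2) and (4): the cubes $J_1^n$ and $J_+^n$ are not themselves the combinatorial simplices $K_{n+1}$ and $K_{n+1}^{\nearrow}$, so the agreement with clique complex homology is not visible directly from the chain-level definitions and must be routed through a nontrivial chain-level comparison that is not carried out in this paper. Items (1), (3), and (5), by contrast, reduce to transcribing definitions and appealing to well-established results in the cited literature.
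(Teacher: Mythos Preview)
The paper does not supply a proof for this lemma; the only argument is the paragraph preceding it, which records that under the isomorphisms $\suqdcl\cong\cat{Gph}$ and $\qdcl\cong\cat{DiGph}$ the relevant simplices and cubes become $K_{n+1}$, $Q_n$, $K_{n+1}^{\nearrow}$, and $Q_n^{\nearrow}$. Your proposal is a reasonable and essentially correct expansion of that remark, and your routing of items (2) and (4) through the simplicial theory plus the cubical--simplicial comparison deferred to \cite{peterbubeniknikolamilicevic2021} is exactly right: the cube $\indcube{n}=J_1^{\times n}$ is the indiscrete space on $2^n$ points (equivalently $K_{2^n}$), not $K_{n+1}$, so the identification with clique-complex homology cannot be read off at the chain level and genuinely needs that comparison.

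One small correction: for item (5) you overshoot. The lemma only asserts that $\idicubehom{\bullet}{-}$ restricts to $\cat{DiGph}$, with no identification to any theory in the literature (note the absence of a citation, in contrast to items (1)--(4)). Your attribution to \cite{grigor2014homotopy,grigor2017homologies} conflates the homotopy restriction of \cref{lemma:restriction_of_homotopies}(5) with a homology statement that the paper does not make; those references develop path homology, which is not the cubical theory defined here. The actual content of item (5) is just that $\idicube{n}=J_+^{\boxdot n}$ is quasi-discrete (by \cref{corollary:ind_prod_of_quasi_dis_is_quasi_dis}) and hence the singular cubes into a digraph are digraph homomorphisms---so the functor lands in $\cat{DiGph}$. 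No further comparison is claimed or needed.
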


\subsection{Homology with coefficients} \label{sec:coefficients}

   Let $C$ be one of the chain complexes of \cref{sec:cubical} or \cref{sec:simplicial}. 
Let $A$ be an abelian group, which we consider to be a chain complex concentrated in degree zero. 
Then the tensor product $C \otimes A$ is a chain complex whose homology $H_{\bullet}(C \otimes A)$ is called the homology of $C$ with coefficients in $A$.
As a special case, for a field $k$, the homology groups $H_j(C \otimes k)$ are $k$-vector spaces.

\section{Simplicial complexes from closure spaces} \label{sec:sc}

\new{In this section we give a sequence of adjunctions from closure spaces to graphs to simplicial complexes to hypergraphs.
Using these functors, we generalize the Vietoris-Rips complex of a metric space~\cite{Adamaszek:2017} and the (intrinsic) \v{C}ech complex of a metric space~\cite{cdso:geometric} to closure spaces.}

\subsection{Hypergraphs and simplicial complexes}
\label{sec:hypergraphs}

We define hypergraphs and simplicial complexes and related categories and functors. We obtain a sequence of adjunctions connecting closure spaces and hypergraphs via graphs and simplicial complexes.
\new{The definitions and adjunctions are straightforward and we encourage the enterprising reader to skip ahead to the statement of \cref{thm:adjunctions} and to work out the details for themselves.}

\begin{definition}
\label{def:hypergraph}
A \emph{simple hypergraph} $H$ is a pair $H=(X,E)$ where $X$ is a set and $E$ is a collection of non-empty subsets of $X$. We will call a simple hypergraph a \emph{hypergraph}. Elements of $X$ are called \emph{vertices} of the hypergraph $H$ and elements of $E$ are called \emph{hyperedges} of the hypergraph $H$. A \emph{hypergraph homomorphism} $f:(X,E)\to (Y,F)$ between two hypergraphs is a map $f:X\to Y$ such that for each $e\in E$, $f(e)\in F$.
Let $\cat{HypGph}$ denote the category of hypergraphs and hypergraph homomorphisms.
Say that a hypergraph has \emph{finite type} if its hyperedges are finite sets.
Say that a hypergraph is \emph{downward closed} if $\tau \in E$ and $\varnothing \neq \sigma \subset \tau$ implies that $\sigma \in E$ and $x \in X$ implies that $\{x\} \in E$.
A \emph{simplicial complex} is a downward-closed finite-type hypergraph.
Let $\cat{HypGph_{ft}}$, $\cat{HypGph_{dc}}$, and $\cat{Simp}$ denote the full subcategories of $\cat{HypGph}$ consisting of finite type hypergraphs, downward closed hypergraphs, and simplicial complexes.
In $\cat{Simp}$, hyperedges and hypergraph homomorphisms are called \emph{simplices} and \emph{simplicial maps}, respectively.
\end{definition}

Let $(X,E) \in \cat{HypGph}$.
Define the \emph{downward closure} of $E$, $\dc(E)$, to be the collection of nonempty subsets $\sigma$ of $X$ such there exists $\tau \in E$ with $\sigma \subset \tau$ or $\sigma = \{x\}$ for some $x \in X$.
Assume $f:(X,E) \to (Y,F) \in \cat{HypGph}$.
Given $\varnothing \neq \sigma \subset \tau \in E$,
$\varnothing \neq f(\sigma) \subset f(\tau) \in F$.
So $f(\sigma) \in \dc(F)$.
Also $f(\{x\}) = \{f(x)\} \in Y$.
So $f(\{x\}) \in \dc(F)$.
Therefore $f:(X,\dc(E)) \to (Y,\dc(F)) \in \cat{HypGph}$.
Thus the mappings $(X,E)$ to $(X,\dc(E)$ and $f:(X,E) \to (Y,E)$ to $f:(X,\dc(E)) \to (Y,\dc(F))$ define a functor $\dc:\cat{HypGph} \to \cat{HypGph_{dc}}$.

\begin{proposition} \label{prop:adjunction-hypgphdc-hypgph}
Let $(X,E) \in \cat{HypGph}$ and $(Y,\new{F}) \in \cat{HypGph_{dc}}$.
  Given a set map $f:X \to Y$, $f:(X,E) \to (Y,F)$ is a hypergraph homomorphism iff $f:(X,\dc(E)) \to (Y,F)$ is a hypergraph homomorphism.
  Thus, we have a natural isomorphism
  \[ \cat{HypGph_{dc}}((X,\dc(E)),(Y,F)) \isom \cat{HypGph}((X,E),(Y,F)).
  \]
That is, $\dc$ is left adjoint to the inclusion functor
$\cat{HypGph_{dc}} \incl \mathbf{HypGph}$.
\end{proposition}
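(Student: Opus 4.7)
The plan is to prove the two directions of the biconditional at the level of set maps, noting that both sides of the claimed bijection are subsets of set maps $f: X \to Y$, so the biconditional establishes the bijection and naturality is automatic.

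For the forward direction, I would assume $f: (X, E) \to (Y, F)$ is a hypergraph homomorphism and take an arbitrary $\sigma \in \dc(E)$. By definition of $\dc$, either $\sigma = \{x\}$ for some $x \in X$, or there exists $\tau \in E$ with $\sigma \subset \tau$. In the first case, $f(\sigma) = \{f(x)\}$ and since $(Y,F)$ is downward closed, $\{f(x)\} \in F$. In the second case, $f(\tau) \in F$ because $f$ is a hypergraph homomorphism, and $\emptyset \neq f(\sigma) \subset f(\tau)$, so $f(\sigma) \in F$ by downward closure of $F$. Either way $f(\sigma) \in F$, so $f: (X, \dc(E)) \to (Y, F)$ is a hypergraph homomorphism.

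For the reverse direction, assume $f: (X, \dc(E)) \to (Y, F)$ is a hypergraph homomorphism. Since $E \subset \dc(E)$ (every $\tau \in E$ satisfies $\tau \subset \tau$), for each $e \in E$ we have $e \in \dc(E)$, hence $f(e) \in F$. Thus $f: (X, E) \to (Y, F)$ is a hypergraph homomorphism.

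Both hom-sets $\cat{HypGph_{dc}}((X,\dc(E)),(Y,F))$ and $\cat{HypGph}((X,E),(Y,F))$ consist of set maps $f: X \to Y$ satisfying a condition, and the biconditional above says these conditions are equivalent. The resulting bijection is the identity on underlying set maps, so it is automatically natural in both variables, establishing the adjunction $\dc \dashv \iota$. There is no real obstacle here; the only thing to be careful about is the case $\sigma = \{x\}$ in the forward direction, which relies on the convention built into the definition of $\dc$ (and of downward-closed hypergraphs) that all singletons are automatically included.
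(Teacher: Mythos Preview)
Your proof is correct and follows essentially the same approach as the paper: the same two-case split on $\sigma \in \dc(E)$ for the forward direction, and the inclusion $E \subset \dc(E)$ for the reverse. Your additional remark that naturality is automatic because the bijection is the identity on underlying set maps is a helpful observation that the paper leaves implicit.
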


\begin{proof}
  $(\Rightarrow)$ If $\tau \in E$ and $\varnothing \neq \sigma \subset \tau$ then $\varnothing \neq f(\sigma) \subset f(\tau) \in F$ and thus $f(\sigma) \in F$. If $x \in X$ then $f(\{x\}) = \{f(x)\} \in F$.
  $(\Leftarrow)$ $E \subset \dc(E)$.
\end{proof}

Let $(X,E) \in \cat{HypGph_{dc}}$.
Define $\tr_{\infty}(E) = \{\sigma \in E \ | \ \abs{\sigma} < \infty\}$.
Let $\tr_{\infty}: \cat{HypGph_{dc}} \to \cat{Simp}$ denote the functor defined by mapping $(X,E)$ to $(X,\tr_{\infty}(E))$ and mapping $f:(X,E) \to (Y,F)$ to $f:(X,\tr_{\infty}(E)) \to (Y,\tr_{\infty}(F))$.

Let $(X,E) \in \cat{Simp}$.
Define $\cosk_{\infty}(E)$ be the collection of nonempty subsets $\tau$ of $X$ such that for all finite nonempty subsets $\sigma \subset \tau$, $\sigma \in E$.
Note that $\sigma \in \cosk_{\infty}(E)$ and $\abs{\sigma} < \infty$ implies that $\sigma \in E$.
Let $\cosk_{\infty}:\cat{Simp} \to \cat{HypGph_{dc}}$ denote the functor defined by mapping $(X,E)$ to $(X,\cosk_{\infty}(E))$ and mapping $f:(X,E) \to (Y,F)$ to $f:(X,\cosk_{\infty}(E)) \to (Y,\cosk_{\infty}(F))$.

\begin{proposition} \label{prop:adjunction-simp-hypgphdc}
Let $(X,E) \in \cat{HypGph_{dc}}$ and $(Y,E) \in \cat{Simp}$.
  Given a set map $f:X \to Y$, $f:(X,\tr_{\infty}(E)) \to (Y,F)$ is a simplicial map iff $f:(X,E) \to (Y,\cosk_{\infty}(F))$ is a hypergraph homomorphism.
  Thus, we have a natural isomorphism
  \[ \cat{Simp}((X,\tr_{\infty}(E)),(Y,F)) \isom \cat{HypGph_{dc}}((X,E),(Y,\cosk_{\infty}(F))).
  \]
That is, $\tr_{\infty}$ is left adjoint to $\cosk_{\infty}$.
\end{proposition}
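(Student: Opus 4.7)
The claim is a standard adjunction-by-unit-counit argument, so I would prove it directly by unwinding the definitions of $\tr_{\infty}$ and $\cosk_{\infty}$ and checking each direction of the natural bijection. The naturality in $(X,E)$ and $(Y,F)$ is immediate from the fact that both $\tr_{\infty}$ and $\cosk_{\infty}$ act as the identity on underlying set maps, so once the bijection of Hom-sets is established, the adjunction follows.

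\emph{Forward direction.} Assume $f:(X,\tr_{\infty}(E)) \to (Y,F)$ is a simplicial map. Given $\sigma \in E$, I must show $f(\sigma) \in \cosk_{\infty}(F)$, i.e.\ that every finite nonempty subset $\tau \subset f(\sigma)$ lies in $F$. For each $y \in \tau$ pick a preimage $x_y \in \sigma$ with $f(x_y)=y$ and set $\sigma' = \{x_y : y \in \tau\} \subset \sigma$. Then $\sigma'$ is a nonempty finite subset of $\sigma \in E$; since $E$ is downward closed, $\sigma' \in E$, and since $\sigma'$ is finite, $\sigma' \in \tr_{\infty}(E)$. Because $f$ is simplicial, $f(\sigma') = \tau \in F$, as required.

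\emph{Reverse direction.} Assume $f:(X,E) \to (Y,\cosk_{\infty}(F))$ is a hypergraph homomorphism. Given $\sigma \in \tr_{\infty}(E)$, we have $\sigma \in E$ and $\sigma$ finite, so $f(\sigma) \in \cosk_{\infty}(F)$ and $f(\sigma)$ is finite. By the remark just after the definition of $\cosk_{\infty}$ (every finite element of $\cosk_{\infty}(F)$ already lies in $F$), we conclude $f(\sigma) \in F$, so $f$ is simplicial.

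\emph{Main obstacle.} There is no real obstacle; the proof is a diagram-chase through the definitions. The only point that requires a small amount of care is the forward direction, where one must explicitly construct a finite witness $\sigma' \subset \sigma$ mapping onto an arbitrary finite subset $\tau \subset f(\sigma)$ in order to invoke the hypothesis that $f$ is simplicial on $\tr_{\infty}(E)$; this uses that the domain hypergraph is downward closed so that the finite $\sigma'$ is automatically in $E$.
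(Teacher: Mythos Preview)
Your proof is correct and follows essentially the same approach as the paper: both directions unwind the definitions, with the forward direction using downward closure of $E$ to produce a finite witness $\sigma' \subset \sigma$ in $\tr_{\infty}(E)$ mapping onto a given finite $\tau \subset f(\sigma)$, and the reverse direction using that finite elements of $\cosk_{\infty}(F)$ already lie in $F$. Your exposition is in fact slightly more explicit about the choice of preimages in the forward direction.
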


\begin{proof}
  $(\Rightarrow)$ Let $\tau \in E$. Note that for all nonempty finite subsets of $f(\tau)$ equal $f(\sigma)$ for some nonempty finite $\sigma\subset \tau$. Since $\tau\in E$, for all nonempty, finite subsets $\sigma \subset \tau$, $\sigma \in \tr_{\infty}(E)$ and hence $f(\sigma)\in F$. Thus for all nonempty finite subsets $\sigma' \subset f(\tau)$, $\sigma' \in F$. Therefore $f(\tau) \in \cosk_{\infty}(F)$.

  $(\Leftarrow)$ If $\sigma \in \tr_{\infty}(E)$ then $f(\sigma) \in \cosk_{\infty}(F)$ and $\abs{f(\sigma)} < \infty$. Therefore $f(\sigma) \in F$.
\end{proof}

Let $(X,E) \in \cat{Simp}$. Define $\tr_1(E) = \{\sigma \in E \ | \ \abs{\sigma} = 2\}$. 
Let $\tr_1: \cat{Simp} \to \cat{Gph}$ denote the functor defined by mapping $(X,E)$ to $(X,\tr_1(E))$ and mapping $f:(X,E) \to (Y,F)$ to $f:(X,\tr_1(E)) \to (Y,\tr_1(F))$.

Let $(X,E) \in \cat{Simp}$.
Define $\cosk_1(E)$ be the collection of nonempty finite subsets $\tau$ of $X$ such that for all distinct $x,y \in \tau$, $\{x,y\} \in E$.
Note that this includes all subsets of $X$ of cardinality one.
Let $\cosk_1:\cat{Gph} \to \cat{Simp}$ denote the functor defined by mapping $(X,E)$ to $(X,\cosk_1(E))$ and mapping $f:(X,E) \to (Y,F)$ to $f:(X,\cosk_1(E)) \to (Y,\cosk_1(F))$.
Given a graph $(X,E)$, the simplicial complex $(X,\cosk_1(E))$ is called the \emph{clique complex} of the graph. A simplicial complex in the image of $\cosk_1: \cat{Gph} \to \cat{Simp}$ is called a \emph{flag complex} or, equivalently, is said to satisfy Gromov's \emph{no-$\Delta$ condition}.
Observe that for a graph $(X,E)$, $(X,\tr_1(\cosk_1(E))) = (X,E)$.

\begin{proposition} \label{prop:adjunction-gph-simp}
Let $(X,E) \in \cat{Simp}$ and $(Y,E) \in \cat{Gph}$.
  Given a set map $f:X \to Y$, $f:(X,\tr_1(E)) \to (Y,F)$ is a graph homomorphism iff $f:(X,E) \to (Y,\cosk_1(F))$ is a simplicial map.
  Thus, we have a natural isomorphism
  \[ \cat{Gph}((X,\tr_1(E)),(Y,F)) \isom \cat{Simp}((X,E),(Y,\cosk_1(F))).
  \]
That is, $\tr_1$ is left adjoint to $\cosk_1$.
\end{proposition}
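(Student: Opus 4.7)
The plan is to verify that the natural bijection is implemented by the identity on underlying set maps: that is, a set map $f: X \to Y$ is a graph homomorphism $(X, \tr_1(E)) \to (Y, F)$ if and only if it is a simplicial map $(X, E) \to (Y, \cosk_1(F))$. Naturality in both variables is then automatic, since the bijection preserves the underlying function $f$ and the morphisms in the two hom-functors are composed the same way. So the entire content is the biconditional on $f$.

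For the forward direction, I would assume $f: (X, \tr_1(E)) \to (Y, F) \in \cat{Gph}$ and take $\sigma \in E$. Since $(X,E) \in \cat{Simp}$, $\sigma$ is a nonempty finite subset of $X$, so $f(\sigma)$ is nonempty and finite. If $\abs{f(\sigma)} = 1$, it is a singleton and lies in $\cosk_1(F)$ by the convention that all singletons are included. Otherwise, given distinct $z, w \in f(\sigma)$, choose preimages $x, y \in \sigma$ with $f(x) = z$, $f(y) = w$; since $z \neq w$ we have $x \neq y$. The downward-closure of $E$ yields $\{x,y\} \in E$ and hence $\{x,y\} \in \tr_1(E)$. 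The graph homomorphism convention inherited from \cref{def:simple_digraph_and_digraph_morphisms}, together with $f(x) \neq f(y)$, forces $\{f(x), f(y)\} = \{z, w\} \in F$. Thus every pair of distinct elements of $f(\sigma)$ is an edge of $F$, so $f(\sigma) \in \cosk_1(F)$.

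For the backward direction, assume $f: (X, E) \to (Y, \cosk_1(F))$ is a simplicial map and take $\{x, y\} \in \tr_1(E) \subset E$. Then $f(\{x,y\}) \in \cosk_1(F)$. Either $f(x) = f(y)$, in which case the graph homomorphism condition is satisfied by the equality clause, or $f(x) \neq f(y)$, in which case the 2-element set $\{f(x), f(y)\}$ lies in $\cosk_1(F)$ and hence, by the very definition of $\cosk_1$, satisfies $\{f(x), f(y)\} \in F$.

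I do not anticipate a serious obstacle: the proof is a direct unpacking of \cref{def:hypergraph} and of the graph homomorphism convention. The only point requiring mild care is the edge-collapsing case $f(x) = f(y)$, which is precisely what makes the singleton clause in the definition of $\cosk_1$ and the disjunctive clause in the definition of a graph homomorphism compatible.
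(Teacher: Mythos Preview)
Your proof is correct and follows essentially the same route as the paper's: both directions are handled by unpacking the definitions of $\tr_1$, $\cosk_1$, and graph homomorphism, with the edge-collapsing case $f(x)=f(y)$ treated via the disjunctive clause. Your version is slightly more explicit about naturality and the singleton case, but the argument is the same.
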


\begin{proof}
  $(\Rightarrow)$ Let $\sigma \in E$. For all $x \neq y \in \sigma$, $\{x,y\} \in \tr_1(E)$. Thus $f(x)=f(y)$ or $\{f(x),f(y)\} \in F$. Therefore $f(\sigma) \in \cosk_1(F)$. 

  $(\Leftarrow)$ Let $x,y \in X$ such that $\{x,y\} \in E$ and $x \neq y$. Then $f(\{x,y\}) \in \cosk_1(F)$, which implies that either $f(x) = f(y)$ or $\{f(x),f(y)\} \in F$.
Therefore $f$ is a graph homomorphism.
\end{proof}

Combining  \cref{prop:adjunction-cl-clqd,prop:adjunction-clqd-clsqd,lemma:digraphs_are_quasi_discrete_closure_spaces} with  \cref{prop:adjunction-gph-simp,prop:adjunction-simp-hypgphdc,prop:adjunction-hypgphdc-hypgph}, we have the following sequence of adjunctions. Recall that adjunctions compose to give adjoint functors.

\begin{theorem} \label{thm:adjunctions}
  We have the following composite adjunction between $\cat{Cl}$ and $\cat{HypGph}$.
  \begin{equation*}
    \begin{tikzcd}
      \cat{Cl} \ar[r,shift right=1ex,"\qd"',"\bot"] &
      \qdcl \ar[l,shift right=1ex,hook'] \ar[r,shift right=1ex,"s"',"\bot"] &
      \suqdcl \ar[l,shift right=1ex,hook'] \ar[r,shift right=1ex,"\Phi"',"\isom"] &
      \cat{Gph} \ar[l,shift right=1ex,"\Psi"'] \ar[r,shift right=1ex,"\cosk_1"',"\bot"] &
      \cat{Simp} \ar[l,shift right=1ex,"\tr_1"'] \ar[r,shift right=1ex,"\cosk_{\infty}"',"\bot"] &
      \cat{HypGph_{dc}} \ar[l,shift right=1ex,"\tr_{\infty}"'] \ar[r,shift right=1ex,hook,"\bot"] &
      \cat{HypGph} \ar[l,shift right=1ex,"\dc"'] 
    \end{tikzcd}
  \end{equation*}
\end{theorem}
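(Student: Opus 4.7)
The plan is to assemble the composite adjunction from the six individual adjunctions (or equivalences) already established earlier in the paper, invoking the standard fact that adjoint functors compose: if $F \dashv G$ and $F' \dashv G'$ with codomains/domains matching, then $F' F \dashv G G'$.

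First I would list, in order, the constituent adjunctions (reading left to right, with the left adjoint on top):
\begin{enumerate}
\item $\qd: \cat{Cl} \rightleftarrows \cat{Cl_{qd}}: \iota$, where $\iota$ is the inclusion; this is \cref{prop:adjunction-cl-clqd} (here $\qd$ is the right adjoint of the inclusion, so for the composite reading right-to-left we use $\iota \dashv \qd$).
\item $s: \cat{Cl_{qd}} \rightleftarrows \cat{Cl_{sqd}}: \iota$, i.e.\ the inclusion is left adjoint to the symmetrization $s$, from \cref{prop:adjunction-clqd-clsqd}.
\item $\Phi \dashv \Psi$ (in fact an equivalence, indeed an isomorphism) between $\cat{Cl_{sqd}}$ and $\cat{Gph}$, from \cref{prop:isom-clsqd-gph}; an isomorphism of categories provides an adjunction in either direction with either functor as left adjoint.
\item $\tr_1 \dashv \cosk_1$ between $\cat{Simp}$ and $\cat{Gph}$, from \cref{prop:adjunction-gph-simp}.
\item $\tr_\infty \dashv \cosk_\infty$ between $\cat{HypGph_{dc}}$ and $\cat{Simp}$, from \cref{prop:adjunction-simp-hypgphdc}.
\item $\dc \dashv \iota$, with $\iota$ the inclusion $\cat{HypGph_{dc}} \incl \cat{HypGph}$, from \cref{prop:adjunction-hypgphdc-hypgph}.
\end{enumerate}

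Next I would verify that the functors compose as written in the diagram: in each place, the codomain of one left adjoint equals the domain of the next, and similarly for the right adjoints. This is immediate by inspection of the categories involved. Then I would invoke the composition of adjoints to conclude that the top (right-to-left) row of left adjoints composes to a single functor from $\cat{HypGph}$ to $\cat{Cl}$, and the bottom (left-to-right) row of right adjoints composes to a single functor from $\cat{Cl}$ to $\cat{HypGph}$, and these form an adjoint pair. This gives the asserted composite adjunction between $\cat{Cl}$ and $\cat{HypGph}$.

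There is essentially no technical obstacle: all six adjunctions have been proven, the middle equivalence $\Phi \isom \Psi^{-1}$ admits natural unit and counit that are identities (so composition is transparent), and composition of adjoints is formal. The only mild subtlety is keeping track of which functor in each adjunction is the left adjoint when reading the diagram as drawn; once the directions are aligned the composite is immediate, so the proof reduces to the single sentence already given after the theorem statement, namely that adjunctions compose.
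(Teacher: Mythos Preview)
Your proposal is correct and follows exactly the same approach as the paper: assemble the six individual adjunctions from \cref{prop:adjunction-cl-clqd,prop:adjunction-clqd-clsqd,prop:isom-clsqd-gph,prop:adjunction-gph-simp,prop:adjunction-simp-hypgphdc,prop:adjunction-hypgphdc-hypgph} and invoke the fact that adjunctions compose. The paper's own proof is precisely the sentence preceding the theorem statement, which you have correctly identified and expanded.
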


We end this section by defining one more functor that will be used \new{in} \cref{sec:vr}.

\begin{definition} \label{def:Gamma}
  Let $(X,\new{c_X}) \in \qdcl$. Define $\Gamma(\new{c_X})$ to be the downward closure of the collection of subsets of $X$, $\{\new{c_X}(x)\}_{x \in X}$.
  Let $\Gamma:\qdcl \to \cat{HypGph_{dc}}$ be the functor defined by mapping $(X,\new{c_X}) \to (X,\Gamma(\new{c_X}))$ and mapping $f:(X,\new{c_X}) \to (Y,\new{c_Y})$ to $f:(X,\Gamma(\new{c_X})) \to (Y,\Gamma(\new{c_Y}))$.
\end{definition}

\subsection{Vietoris-Rips and \v Cech complexes}
\label{sec:vr}

We give functorial constructions of Vietoris-Rips complexes and \v Cech complexes for closure spaces which send one-step $(J_{\top},\times)$-homotopic maps to contiguous simplicial maps. We give an adjoint functor to the Vietoris-Rips construction which sends contiguous simplicial maps to one-step $(J_{\top},\times)$-homotopic maps.

Let $(X,\new{c_X})$ be a closure space.
Let $\VR(\new{c_X})$ be the collection of nonempty finite subsets $\sigma \subset X$, such that for all $x \in \sigma$, $\sigma \subset \new{c_X}(x)$.
Note that for all $x \in X$, $x \in \new{c_X}(x)$, so $\{x\} \in \VR(\new{c_X})$.
Also, if $\tau \in \VR(\new{c_X})$ and $\varnothing \neq \sigma \subset \tau$ then $\sigma \in \VR(\new{c_X})$. Thus, $(X,\VR(\new{c_X}))$ is a simplicial complex.
Assume $f:(X,\new{c_X}) \to (Y,\new{c_Y}) \in \cat{Cl}$.
Let $\sigma \in \VR(X)$.
Then $f(\sigma)$ is a finite nonempty subset of $Y$.
Furthermore, all elements of $f(\sigma)$ are of the form $f(x)$ for some $x \in \sigma$.
As $\sigma \subset \new{c_X}(x)$, it follows that $f(\sigma) \subset f(\new{c_X}(x)) \subset \new{c_Y}(f(x))$.
Therefore $f(\sigma) \in \VR(\new{c_Y})$, which implies that $f:(X,\VR(\new{c_X})) \to (Y,\VR(\new{c_Y}))$ is a simplicial map.

\begin{definition}
\label{def:vr}
Define the functor $\VR:\cat{Cl} \to \cat{Simp}$ by mapping $(X,\new{c_X})$ to $(X,\VR(\new{c_X}))$ and $f:(X,\new{c_X}) \to (Y,\new{c_Y})$ to $f:(X,\VR(\new{c_X})) \to (Y,\VR(\new{c_Y}))$.
\end{definition}

Let $(X,d)$ be a metric space and $\eps > 0$.
Then
$(X,\VR(c_{\eps,d}))$ consists of simplices $\{x_0,\ldots,x_n\}$ where $d(x_i,x_j) \leq \eps$ for all $i,j$.
That is, it is the usual Vietoris-Rips complex on $(X,d))$.
We also have the variant,
$(X,\VR(c_{\eps^-,d}))$, consisting of simplices $\{x_0,\ldots,x_n\}$ where $d(x_i,x_j) < \eps$ for all $i,j$~\new{\cite{Adamaszek:2017}}.

Let $(X,\new{c_X})$ be a closure space.
Define $\Cech(\new{c_X})$ to be the collection of nonempty finite subsets $\sigma \subset X$ such that there exists $x \in X$ with $\sigma \subset \new{c_X}(x)$.
Note that for all $x \in X$, $x \in \new{c_X}(x)$, so $\{x\} \in  \Cech(\new{c_X})$.
Furthermore, if $\tau \in \Cech(\new{c_X})$ and $\varnothing \neq \sigma \subset \tau$ then $\sigma \in  \Cech(\new{c_X})$.
That is, $(X, \Cech(\new{c_X}))$ is a simplicial set.
Assume $f:(X,\new{c_X}) \to (Y,\new{c_Y}) \in \cat{Cl}$.
Let $\sigma \in  \Cech(\new{c_X})$.
Then $f(\sigma)$ is a nonempty finite subset of $Y$.
There exists $x \in X$ such that $\sigma \subset \new{c_X}(x)$,
which implies that $f(\sigma) \subset f(\new{c_X}(x)) \subset \new{c_Y}(f(x))$.
Therefore $f(\sigma) \in  \Cech(\new{c_Y})$
and thus $f:(X, \Cech(\new{c_X})) \to (Y, \Cech(\new{c_Y}))$ is a simplicial map.

\begin{definition} \label{def:cech}
Define the functor $ \Cech:\cat{Cl} \to \cat{Simp}$ by mapping $(X,\new{c_X})$ to $(X, \Cech(\new{c_X}))$ and $f:(X,\new{c_X}) \to (Y,\new{c_Y})$ to $f:(X, \Cech(\new{c_X})) \to (Y, \Cech(\new{c_Y}))$.
\end{definition}

Let $(X,d)$ be a metric space and $\eps > 0$.
Then
$(X, \Cech(c_{\eps,d}))$ consists of simplices $\{x_0,\ldots,x_n\}$ such that there is a $x \in X$ with $d(x_i,x) \leq \eps$ for all $i$.
That is, it is the (intrinsic) \v{C}ech complex on $(X,d))$~\new{\cite{cdso:geometric}}.
We also have the variant,
$(X, \Cech(c_{\eps^-,d}))$ consisting of simplices $\{x_0,\ldots,x_n\}$ such that there is a $x \in X$ with $d(x_i,x) < \eps$ for all $i$.

It is straightforward from the definitions to check that $\VR=\VR \iota \qd$ and $\check{\text{C}}=\check{\text{C}} \iota \qd$, where $\iota:\qdcl\to \cat{Cl}$ is the inclusion functor. That is, for an closure space $(X,\new{c_X})$, we have $(X,\VR(\new{c_X}))=(X,\VR(\qd(\new{c_X}))), (X,\check{\text{C}}(\new{c_X}))=(X,\check{\text{C}}(\qd(\new{c_X})))$.

\begin{example}
\label{example:Vietoris_Rips_and_Cech_Generalization}
Let $X=\{x,y,z\}$ be a $3$-point set with the closure operator $\new{c_X}$, defined by 
\[\new{c_X}(x)=\{x,y\},\new{c_X}(y)=\{x,y\},\new{c_X}(z)=\{x,y,z\}\]
Note that $\new{c_X}$ does not arise from a metric since it is not symmetric.
From the definitions, 
$\VR(\new{c_X})=\{\{x\},\{y\},\{z\},\{x,y\}\}$ and $\text{\v{C}}(\new{c_X})=\{\{x\},\{y\},\{z\},\{x,y\},\{x,z\},\{y,z\},\{x,y,z\}\}$. See  \cref{fig:Vietoris_Rips_Generalization}.

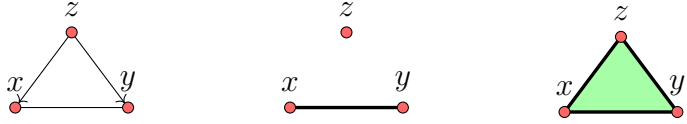
\begin{figure}[h]
\centering
\begin{tikzpicture}[scale=0.5]
   \node[label=$x$,shape=circle,fill=red!60,draw=black,scale=0.375] (A) at (-6,-4) {};
      \node[label=$y$,shape=circle,fill=red!60,draw=black,scale=0.375] (B) at (-3,-4) {};
      \node[label=$z$,shape=circle,fill=red!60,draw=black,scale=0.375] (C) at (-4.5,-2) {};
              \path [](B) edge node[left] {} (A);
              \path [->](C) edge node[left] {} (B);
              \path [->](C) edge node[left] {} (A);
 \end{tikzpicture}\quad \quad \quad \quad
 \begin{tikzpicture}[scale=0.5]  
    \node[label=$x$,shape=circle,fill=red!60,draw=black,scale=0.375] (A') at (-6,-4) {};
      \node[label=$y$,shape=circle,fill=red!60,draw=black,scale=0.375] (B') at (-3,-4) {};
      \node[label=$z$,shape=circle,fill=red!60,draw=black,scale=0.375] (C') at (-4.5,-2) {};
              \path [line width=0.45mm](B') edge node[left] {} (A');
    \end{tikzpicture}\quad \quad \quad \quad
\begin{tikzpicture}[scale=0.5]
  \draw[fill=green!35, line width=0.45mm] (-3,-4)--(-6,-4)--(-4.5,-2)--cycle;
      \node[label=$x$,shape=circle,fill=red!60,draw=black,scale=0.375] (A'') at (-6,-4) {};
      \node[label=$y$,shape=circle,fill=red!60,draw=black,scale=0.375] (B'') at (-3,-4) {};
      \node[label=$z$,shape=circle,fill=red!60,draw=black,scale=0.375] (C'') at (-4.5,-2) {};
\end{tikzpicture}
\caption{For the Alexandroff closure space in \cref{example:Vietoris_Rips_and_Cech_Generalization},
  we have its corresponding directed graph (left), its Vietoris-Rips complex (middle), and its \v Cech complex (right).}
\label{fig:Vietoris_Rips_Generalization}
\end{figure}
\end{example}

\begin{definition} \label{def:star}
Let $(X,E) \in \cat{Simp}$. For $x \in X$, let 
$\St(x)$ be 
\new{the set of vertices in the closed star of $x$, that is,}
the union of the simplices in $E$ containing $x$.
Since $\{x\} \in E$, $x \in \St(x)$.
By \cref{def:induced-qd}, let the \emph{star} closure, $\St(E)$, be the induced Alexandroff closure, given by $\St(E)(A) = \bigcup_{a \in A} \St(a)$,
which equals the union of the simplices in $E$ that intersect $A$.
\end{definition}

Assume $f:(X,E) \to (Y,F) \in \cat{Simp}$.
For $A \subset X$, $f(\St(E)(A))$ is the union of $f(\sigma)$, where $\sigma$ is a simplex in $E$ that intersects $A$, which implies that $f(\sigma)$ intersects $f(A)$.
Therefore $f(\St(E)(A)) \subset \St(F)(f(A))$.
Hence $f:(X,\St(E)) \to (Y,\St(F))$ is a continuous map.

\begin{definition} \label{def:G}
  Define $\St:\cat{Simp} \to \cat{Cl}$ to be the functor given by mapping $(X,E)$ to $(X,\St(E))$ and mapping $f:(X,E) \to (Y,F)$ to $f:(X,\St(E)) \to (Y,\St(F))$.
\end{definition}

\begin{theorem} \label{prop:adjunction-vr-g}
Let $(X,E) \in \cat{Simp}$ and $(Y,\new{c_Y}) \in \cat{Cl}$.
  Given a set map $f:X \to Y$,
  $f:(X,\St(E)) \to (Y,\new{c_Y})$ is a continuous map iff $f:(X,E) \to (Y,\VR(\new{c_Y}))$ is a simplicial map.
  Thus, we have a natural isomorphism
  \[ \cat{Cl}((X,\St(E)),(Y,\new{c_Y})) \isom \cat{Simp}((X,E),(Y,\VR(\new{c_Y}))).
  \]
That is, $\VR$ is right adjoint to $\St$.
\end{theorem}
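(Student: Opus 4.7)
The plan is to prove the claimed natural bijection by verifying the two directions of the equivalence for an individual set map $f:X\to Y$; naturality in $(X,E)$ and $(Y,c)$ is then immediate because everything is defined on underlying sets.

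For the forward direction, suppose $f:(X,G(E))\to (Y,c)$ is continuous. Given $\sigma\in E$, I want to show $f(\sigma)\in \VR(c)$. The key observation is that for any vertex $x\in\sigma$, the simplex $\sigma$ intersects $\{x\}$, so $\sigma\subset G(E)(\{x\})$. Continuity then gives $f(\sigma)\subset f(G(E)(\{x\}))\subset c(f(\{x\}))=c(f(x))$. Since $\sigma$ is finite and nonempty, so is $f(\sigma)$, and the inclusion $f(\sigma)\subset c(f(x))$ holding for every $x\in\sigma$ translates to $f(\sigma)\subset c(y)$ for every $y\in f(\sigma)$. By \cref{def:vr}, this is exactly the condition $f(\sigma)\in \VR(c)$, so $f:(X,E)\to(Y,\VR(c))$ is a simplicial map.

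For the reverse direction, suppose $f:(X,E)\to(Y,\VR(c))$ is simplicial. I need to show that for every $A\subset X$, $f(G(E)(A))\subset c(f(A))$. Take $y\in f(G(E)(A))$; write $y=f(x)$ where $x\in \sigma$ for some $\sigma\in E$ intersecting $A$, and pick $a\in\sigma\cap A$. Since $\sigma\in E$, by hypothesis $f(\sigma)\in\VR(c)$, which means $f(\sigma)\subset c(z)$ for every $z\in f(\sigma)$; in particular $f(x)\in c(f(a))$. By monotonicity (\cref{lemma:closures_are_monotone}), $c(f(a))\subset c(f(A))$, so $y=f(x)\in c(f(A))$, as desired.

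Combining the two directions yields the natural bijection $\cat{Cl}((X,G(E)),(Y,c))\isom\cat{Simp}((X,E),(Y,\VR(c)))$, exhibiting $\VR$ as right adjoint to $G$. No step here presents a real obstacle since the argument reduces to unpacking the definitions of $G(E)$ and $\VR(c)$; the main conceptual point to identify is that the single-vertex closure $G(E)(\{x\})$ already contains every simplex through $x$, which is what makes the forward direction work with only one application of continuity.
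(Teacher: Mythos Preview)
Your proof is correct and follows essentially the same approach as the paper's own proof: both directions hinge on the observation that $\sigma\subset G(E)(\{x\})$ for any $x\in\sigma$, and that $f(\sigma)\in\VR(c)$ unpacks to $f(\sigma)\subset c(f(a))$ for each $a\in\sigma$. Your version is slightly more explicit in the element-chasing and in invoking monotonicity of the closure, but the argument is the same.
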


\begin{proof}
  $(\Rightarrow)$  Let $\sigma \in E$.
  Let $x \in \sigma$. Then $\sigma \subset \St(E)(x)$, which implies that $f(\sigma) \subset f(\St(E)(x)) \subset \new{c_Y}(f(x))$.
  Therefore $f(\sigma) \in \VR(\new{c_Y})$.

  $(\Leftarrow)$ Let $A \subset X$.
  Then $f(\St(E)(A)) = f(\bigcup_{\sigma \in E, \sigma \cap A \neq \varnothing}\sigma) = \bigcup_{\sigma \in E, \sigma \cap A \neq \varnothing}f(\sigma)$.
  If $\sigma \in E$ then $f(\sigma) \in \VR(\new{c_Y})$, which implies that for all $x \in \sigma$, $f(\sigma) \subset \new{c_Y}(f(x))$. So, 
  $\bigcup_{\sigma \in E, \sigma \cap A \neq \varnothing}f(\sigma) \subset \new{c_Y}(f(A))$.
\end{proof}

The following two examples show that the functor $\Cech$ does not preserve limits or colimits.

\begin{example}
\label{example:cech_doesnt_preserve_colimits}
The functor $\Cech$ does not preserve pushouts.
Let $P$ be the closure space on the left of \cref{fig:Vietoris_Rips_Generalization}.
Let $A$ be the set $\{x,y\}$ with the discrete closure.
Let $X$ be the set $\{x,y\}$ with the indiscrete closure.
Let $Y$ be the closure space corresponding the directed graph on the left of
\cref{fig:Vietoris_Rips_Generalization} with the bottom (undirected) edge removed.
Then $P$ is the pushout of the continuous maps $A \to X$ and $A \to Y$
given by $x \mapsto x$ and $y \mapsto y$.
The \v{C}ech complex of $P$ is the simplicial complex on the right of
\cref{fig:Vietoris_Rips_Generalization}.
However, if we apply the functor $\Cech$ first and then take the pushout then we obtain the boundary of this simplicial complex.
\end{example}

\begin{example}
\label{example:cech_doesnt_preserve_limits}
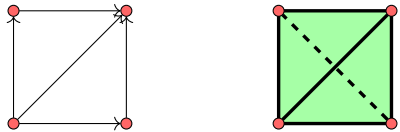
\begin{figure}[H]
\centering
\begin{tikzpicture}[scale=0.5]       
    \node[shape=circle,fill=red!60,draw=black,scale=0.375] (A) at (1,-4) {};          
    \node[shape=circle,fill=red!60,draw=black,scale=0.375] (B) at (4,-4) {};
     \node[shape=circle,fill=red!60,draw=black,scale=0.375] (C) at (1,-1) {};
      \node[shape=circle,fill=red!60,draw=black,scale=0.375] (D) at (4,-1) {};         
           \path [->](A) edge node[left] {} (B);
     \path [->](C) edge node[left] {} (D);
     \path [->](A) edge node[left] {} (C);
     \path [->](B) edge node[left] {} (D);
          \path [->](A) edge node[left] {} (D);   
    \end{tikzpicture}\quad\quad\quad\quad
    \begin{tikzpicture}[scale=0.5]                       
     \draw[fill=green!35, line width=0.45mm] (6,-4) rectangle (9,-1);
    \node[shape=circle,fill=red!60,draw=black,scale=0.375] (A') at (6,-4) {};          
    \node[shape=circle,fill=red!60,draw=black,scale=0.375] (B') at (9,-4) {};
     \node[shape=circle,fill=red!60,draw=black,scale=0.375] (C') at (6,-1) {};
      \node[shape=circle,fill=red!60,draw=black,scale=0.375] (D') at (9,-1) {};         
           \path [](A') edge node[left] {} (B');
     \path [](C') edge node[left] {} (D');
     \path [](A') edge node[left] {} (C');
     \path [](B') edge node[left] {} (D');
          \path [line width=0.45mm](A') edge node[left] {} (D'); 
     \path [dashed,line width=0.45mm](B') edge node[left] {} (C'); 
\end{tikzpicture}
\caption{The closure space $J_+\times J_+$ (left) and its \v{C}ech complex (right).}
\label{fig:cech_limits}
\end{figure}

The functor $\Cech$ does not preserve equalizers.
  Consider the closure space $J_+ \times J_+$
  on the left of \cref{fig:cech_limits},
  its subset $A = \{(1,0),(0,1)\}$, and the 
  the maps from $J_+ \times J_+$ to $J_{\top}$ given by the constant function $1$ and the indicator function on $A$.
  The equalizer of these maps is the subspace $A$ and its \v{C}ech complex is the simplicial complex with two vertices and no edges.
  In contrast, if we apply $\Cech$ to the equalizer diagram we obtain two maps from the $3$-simplex to the $1$-simplex whose equalizer is the $1$-simplex.

\end{example}

Since right adjoints preserve limits and left adjoints preserve colimits, from \cref{example:cech_doesnt_preserve_colimits,example:cech_doesnt_preserve_limits}
we have the following.

\begin{theorem}
\label{theorem:cech_doesnt_have_adjoint}
The functor $\Cech: \cat{Cl} \to \cat{Simp}$ does not have a have a right or a left adjoint.
\end{theorem}

We observe that the functors $\VR$, $\Cech$, and $\St$ may be written as compositions of the more elementary functors in \cref{thm:adjunctions,def:Gamma}.

\begin{proposition} \label{prop:functor-expansion}
  $\VR = \cosk_1 \circ \Phi \circ s \circ \qd$, $\St = \Psi \circ \tr_1$, and $ \Cech = \tr_{\infty} \circ \Gamma \circ \qd$.
It follows that restricted to $\suqdcl$, $\St \circ \VR = \Id_{\suqdcl}$.
\end{proposition}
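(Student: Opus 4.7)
The plan is to verify each of the three equalities by unwrapping definitions on objects; since every functor involved acts as the identity on underlying sets and on the underlying maps of morphisms, functoriality (naturality in morphisms) will follow automatically once the object-level equality is established. Throughout, let $(X,c) \in \cat{Cl}$ and recall that by \cref{def:qd}, $\qd(c)(A) = \bigcup_{x\in A} c(x)$, with $\qd(c)(\{x\}) = c(\{x\})$.

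For $\VR = \cosk_1 \circ \Phi \circ s \circ \qd$, I would trace a point through the composition. After $\qd$, the closure is determined by its values $c(x)$ on singletons. Applying $s$ yields the symmetric quasi-discrete closure with $s(\qd(c))(x) = \{y \in c(x) \mid x \in c(y)\}$. Applying $\Phi$ (\cref{def:functor_from_closure_spaces_to_digraphs}) produces the graph on $X$ with $\{x,y\}$ an edge iff $x \neq y$, $y \in c(x)$, and $x \in c(y)$. Finally, $\cosk_1$ (\cref{prop:adjunction-gph-simp}) produces the simplicial complex whose simplices are the nonempty finite $\sigma \subset X$ such that for every pair $x \neq y$ in $\sigma$, $y \in c(x)$ and $x \in c(y)$. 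This is precisely $\VR(c)$ from \cref{def:vr}: indeed, $\sigma \subset c(x)$ for all $x \in \sigma$ is equivalent to the pairwise condition together with the trivial fact $x \in c(x)$.

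For $G = \Psi \circ \tr_1$, let $(X,E) \in \cat{Simp}$. By \cref{def:functor_from_digraphs_to_closure_spaces}, $(\Psi \circ \tr_1)(E)$ is the quasi-discrete closure $c$ with $c(x) = \{x\} \cup \{y \mid \{x,y\} \in E\}$. On the other hand, $G(E)(x)$ is the union of all simplices in $E$ containing $x$. By downward closure of the simplicial complex, every $y$ lying in some simplex of $E$ containing $x$ (with $y \neq x$) satisfies $\{x,y\} \in E$; conversely, every such $y$ lies in the simplex $\{x,y\}$. Thus $G(E)(x) = c(x)$. A short computation using $G(E)(A) = \bigcup_{\sigma \cap A \neq \emptyset} \sigma = \bigcup_{x \in A} G(E)(x)$ shows $G(E)$ is quasi-discrete, so it is determined by its values on singletons and equals $\Psi(\tr_1(E))$.

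For $\check{C} = \tr_\infty \circ \Gamma \circ \qd$, since $\qd(c)(x) = c(x)$, \cref{def:Gamma} gives $\Gamma(\qd(c))$ as the downward closure of $\{c(x)\}_{x \in X}$, i.e., all nonempty $\sigma \subset X$ with $\sigma \subset c(x)$ for some $x$ (automatically including all singletons since $x \in c(x)$). Applying $\tr_\infty$ keeps only the finite such $\sigma$, which by \cref{def:cech} is exactly $\check{C}(c)$.

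The only step that requires any care is the $G$ identity, where one must use downward closure of simplicial complexes to reduce higher-dimensional simplices to pairs; the other two are straightforward unwinding. Finally, for each identity, both sides send a morphism $f$ to the same underlying set map $f$, so naturality in morphisms is automatic.
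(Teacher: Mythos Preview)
Your proposal is correct. The paper states this proposition without proof, presenting it as a direct observation from the definitions in \cref{thm:adjunctions,def:Gamma}; your argument is exactly the routine unwinding of those definitions that the paper leaves implicit, and each of your three verifications is accurate.
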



It follows from \cref{prop:reverse} that the Vietoris-Rips functor is symmetric in the sense that $\VR = \cosk_1 \circ \Phi \circ s \circ \qd = \cosk_1 \circ \Phi \circ s \circ (-)^{\transpose} \circ \qd$.
On the other hand, there is a \emph{reverse \v{C}ech functor} given by $\Cech^{\transpose} = \tr_{\infty} \circ \Gamma \circ (-)^{\transpose} \circ A$.
Following~\cite{chowdhury2018functorial}, we may also call $\Cech$ the \emph{source \v{C}ech functor} and $\Cech^{\transpose}$ the \emph{sink \v{C}ech functor}.
Note that $\Cech^{\transpose}
= \tr_{\infty} \circ \Gamma \circ (-)^{\transpose} \circ A 
= \tr_{\infty} \circ \Gamma \circ A \circ (-)^{\transpose} \circ A
= \Cech \circ (-)^{\transpose} \circ A$.
\cref{fig:cech_and_cech_transpose} shows that $\Cech\neq \Cech^{\transpose}$.
As was the case for $\Cech$, $\Cech^{\transpose}$ does not preserve pushouts or equalizers and hence does not have a left or right adjoint.

\begin{figure}[H]
\centering
\begin{tikzpicture}[scale=0.5]       
    \node[shape=circle,fill=red!60,draw=black,scale=0.375] (A) at (1,-4) {};          
    \node[shape=circle,fill=red!60,draw=black,scale=0.375] (B) at (4,-4) {};
     \node[shape=circle,fill=red!60,draw=black,scale=0.375] (C) at (1,-1) {};
      \node[shape=circle,fill=red!60,draw=black,scale=0.375] (D) at (4,-1) {};         
           \path [->](A) edge node[left] {} (B);
     \path [->](A) edge node[left] {} (C);
          \path [->](A) edge node[left] {} (D);   
    \end{tikzpicture}\quad\quad\quad\quad
    \begin{tikzpicture}[scale=0.5]                       
     \draw[fill=green!35, line width=0.45mm] (6,-4) rectangle (9,-1);
    \node[shape=circle,fill=red!60,draw=black,scale=0.375] (A') at (6,-4) {};          
    \node[shape=circle,fill=red!60,draw=black,scale=0.375] (B') at (9,-4) {};
     \node[shape=circle,fill=red!60,draw=black,scale=0.375] (C') at (6,-1) {};
      \node[shape=circle,fill=red!60,draw=black,scale=0.375] (D') at (9,-1) {};         
           \path [](A') edge node[left] {} (B');
     \path [](C') edge node[left] {} (D');
     \path [](A') edge node[left] {} (C');
     \path [](B') edge node[left] {} (D');
          \path [line width=0.45mm](A') edge node[left] {} (D'); 
     \path [dashed,line width=0.45mm](B') edge node[left] {} (C'); 
\end{tikzpicture}\quad\quad\quad\quad
      \begin{tikzpicture}[scale=0.5]                       
    \node[shape=circle,fill=red!60,draw=black,scale=0.375] (A') at (6,-4) {};          
    \node[shape=circle,fill=red!60,draw=black,scale=0.375] (B') at (9,-4) {};
     \node[shape=circle,fill=red!60,draw=black,scale=0.375] (C') at (6,-1) {};
      \node[shape=circle,fill=red!60,draw=black,scale=0.375] (D') at (9,-1) {};         
     \path [line width=0.45mm](A') edge node[left] {} (B');
     \path [line width=0.45mm](A') edge node[left] {} (C');
     \path [line width=0.45mm](A') edge node[left] {} (D'); 
\end{tikzpicture}
\caption{A closure space given by a digraph (left), its source \v{C}ech complex (middle), and its sink \v{C}ech complex (right).}
\label{fig:cech_and_cech_transpose}
\end{figure}
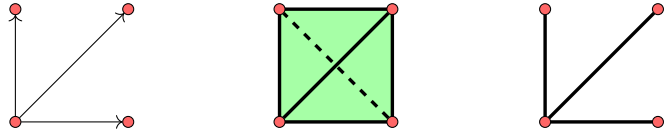

\subsection{Contiguous maps} \label{sec:contiguous}

We relate elementary $(J_{\top},\times)$ homotopies and contiguous simplicial maps via the functors $\VR$, $\Cech$, and $\St$.
Simplicial maps $f,g:(X,E)\to (Y,F)$ are said to be \emph{contiguous} if for all $\sigma\in E$, $f(\sigma)\cup g(\sigma)\in F$.
The contiguous relation is a reflexive and symmetric relation on the set of simplicial maps $\cat{Simp}((X,E),(Y,F))$. Taking the transitive closure of this relation yields an equivalence relation whose equivalence classes are known as \emph{contiguity classes}. Two simplicial complexes $(X,E)$ and $(Y,F)$ are said to be \emph{strongly equivalent} if there are simplicial maps $f:(X,E)\to (Y,F)$, $g:(Y,F)\to (X,E)$ such that $fg$ is in the same contiguity class as $1_Y$ and $gf$ is in the same contiguity class as $1_X$~\cite{barmak2012strong}.
Two simplicial complexes are strongly equivalent if and only if they have the same \emph{strong homotopy type}~\cite{barmak2012strong}, a notion that has been used to study discrete Morse functions~\cite{donovan2023homotopy}.

\begin{theorem}
\label{proposition:homotopy_and_contiguity}
Let $f,g:(X,\new{c_X})\to (Y,\new{c_Y}) \in \cat{Cl}$ be one-step $(J_{\top},\times)$-homotopic maps.
Then $f,g:(X,\VR(\new{c_X})) \to (Y,\VR(\new{c_Y}))$ are contiguous simplicial maps and so are
$f,g:(X, \Cech(\new{c_X})) \to (Y, \Cech(\new{c_Y}))$
and
$f,g:(X, \Cech^{\transpose}(\new{c_X})) \to (Y, \Cech^{\transpose}(\new{c_Y}))$.
Conversely, let $f,g:(X,E)\to (Y,F)$ be contiguous simplicial maps.
Then $f,g: (X,\St(E)) \to (Y,\St(F))$ are one-step $(J_{\top},\times)$-homotopic.
\end{theorem}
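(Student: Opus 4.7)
The plan is to rewrite everything in terms of the pointwise characterization of one-step $(J_1,\times)$-homotopy given by \cref{lem:J1times-homotopy}, which says that $f,g:(X,c)\to(Y,d)$ are one-step $(J_1,\times)$-homotopic iff for all $A\subset X$,
\[
  f(c(A))\cup g(c(A)) \ \subset \ d(f(A))\cap d(g(A)).
\]
Specializing to singletons $A=\{x\}$, this gives $f(c(x))\cup g(c(x))\subset d(f(x))\cap d(g(x))$. I will use this form to prove contiguity, and use the full (all-$A$) form when going back.

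For the first assertion (VR case), let $\sigma\in\VR(c)$, so that $\sigma\subset c(x)$ for every $x\in\sigma$. I need to show $f(\sigma)\cup g(\sigma)\in \VR(d)$; that is, for every $y\in f(\sigma)\cup g(\sigma)$, $f(\sigma)\cup g(\sigma)\subset d(y)$. Say $y=f(x)$ with $x\in\sigma$ (the case $y=g(x)$ is identical). For any $x'\in\sigma$, we have $x'\in c(x)$, so by the singleton form of \cref{lem:J1times-homotopy}, both $f(x'),g(x')\in f(c(x))\cup g(c(x))\subset d(f(x))=d(y)$. Hence $f(\sigma)\cup g(\sigma)\subset d(y)$. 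For the \v Cech case, let $\sigma\in\Cech(c)$; then there is some $x\in X$ with $\sigma\subset c(x)$. The same singleton inequality gives $f(\sigma)\cup g(\sigma)\subset d(f(x))$, witnessing $f(\sigma)\cup g(\sigma)\in \Cech(d)$.

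For the converse, suppose $f,g:(X,E)\to(Y,F)$ are contiguous simplicial maps. By \cref{lem:J1times-homotopy} it suffices to check that for every $A\subset X$,
\[
  f(G(E)(A)) \cup g(G(E)(A)) \ \subset \ G(F)(f(A))\cap G(F)(g(A)).
\]
Unwinding the definition of $G$, $G(E)(A)=\bigcup\{\sigma\in E\,|\, \sigma\cap A\neq\emptyset\}$, so it is enough to show that for each such $\sigma$, both $f(\sigma)$ and $g(\sigma)$ lie in $G(F)(f(A))\cap G(F)(g(A))$. By contiguity, $\tau:=f(\sigma)\cup g(\sigma)\in F$, and picking any $a\in \sigma\cap A$ we see $f(a)\in \tau\cap f(A)$ and $g(a)\in \tau\cap g(A)$, so $\tau$ is a simplex of $F$ meeting both $f(A)$ and $g(A)$. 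Hence $f(\sigma)\subset\tau$ and $g(\sigma)\subset\tau$ are contained in $G(F)(f(A))\cap G(F)(g(A))$, completing the argument.

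I do not anticipate a real obstacle: once the two relevant conditions (contiguity on one side, the $f(c(A))\cup g(c(A))\subset d(f(A))\cap d(g(A))$ characterization on the other) are placed side by side, each direction is a direct translation via the definitions of $\VR$, $\Cech$, and $G$. The only mild care is in the \v Cech direction, where one must use the same witness $x$ that exists for $\sigma$ to produce $f(x)$ as the witness for $f(\sigma)\cup g(\sigma)$.
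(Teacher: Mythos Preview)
Your proof is correct and follows essentially the same approach as the paper: both use the characterization of one-step $(J_1,\times)$-homotopy from \cref{lem:J1times-homotopy} and unwind the definitions of $\VR$, $\Cech$, and $G$ in exactly the same way. The paper's argument for the converse is phrased slightly more compactly as a single chain of inclusions, but the content is identical to yours.
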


\begin{proof}
  Let $\sigma \in \VR(\new{c_X})$. For $x \in \sigma$, $\sigma \subset \new{c_X}(x)$.
  Then $f(\sigma) \subset f(\new{c_X}(x))$ and $g(\sigma) \subset g(\new{c_X}(x))$, and by \cref{lem:J1times-homotopy}, $f(\new{c_X}(x)) \cup g(\new{c_X}(x)) \subset \new{c_Y}(f(x)) \cap \new{c_Y}(g(x))$.
  Therefore $f(\sigma) \cup g(\sigma) \subset \new{c_Y}(f(x)) \cap \new{c_Y}(g(x))$.
  Thus, for all $y \in f(\sigma) \cup g(\sigma)$, $f(\sigma) \cup g(\sigma) \subset \new{c_Y}(y)$.
  Hence $f(\sigma) \cup g(\sigma) \in \VR(\new{c_Y})$.
  Therefore $f$ and $g$ are contiguous.

  Let $\sigma \in  \Cech(\new{c_X})$. Then there exists $x \in X$ such that $\sigma \subset \new{c_X}(x)$.
  Then by \cref{lem:J1times-homotopy}, $f(\sigma) \subset f(\new{c_X}(x)) \subset \new{c_Y}(f(x)) \cap \new{c_Y}(g(x))$.
  Similarly, $g(\sigma) \subset g(c(x)) \subset \new{c_Y}(f(x)) \cap \new{c_Y}(g(x))$.
  Therefore $f(\sigma) \cup g(\sigma) \subset \new{c_Y}(f(x)) \cap \new{c_Y}(g(x)) \subset \new{c_Y}(f(x))$.
  Hence $f(\sigma) \cup g(\sigma) \in  \Cech(\new{c_Y})$,
  and thus $f$ and $g$ are contiguous.

    Recall that $\Cech^{\transpose} = \Cech \circ (-)^{\transpose} \circ A$.
    We will show if
    $f,g:(X,\new{c_X}) \to (Y,\new{c_Y})$ are one-step $(J_{\top},\times)$-homotopic
    then so are
    $f,g:(X,A(\new{c_X})) \to (Y,A(\new{c_Y}))$ and
    $f,g:(X,(A(\new{c_X}))^{\transpose}) \to (Y,(A(\new{c_Y}))^{\transpose})$,
    from which it follows that
    $f,g:(X,\Cech^{\transpose}(\new{c_Y})) \to (Y,\Cech^{\transpose}(\new{c_Y}))$ are contiguous.
    First, assume that $H$ is a one-step $(J_{\top},\times)$-homotopy from $f:(X,\new{c_X}) \to (Y,\new{c_Y})$ to $g:(X,\new{c_X}) \to (Y,\new{c_Y})$.
  Apply the functor $A$ to the commutative diagram \eqref{eq:homotopy}.
  One may check that $(X \times \{0,1\},A(\new{c_X} \times c_{\top})) = (X \times \{0,1\},A(\new{c_X}) \times c_{\top})$.
  Indeed, the latter closure is Alexandroff and both closures agree for points in $X \times \{0,1\}$.
  Thus, we have the desired one-step $(J_{\top},\times)$-homotopy.
  Second, apply \cref{lem:digraph-one-step-j1-homotopy}.

  Assume that $f,g:(X,E) \to (Y,F)$ are contiguous simplicial maps.
  Let $A \subset X$. Then $\St(E)(A)$ is the union of all simplices in $E$ that intersect $A$.
  Hence 
  \[f(\St(E)(A)) \cup g(\St(E)(A)) = \bigcup_{\sigma \in E, \sigma \cap A \neq \varnothing} f(\sigma) \cup g(\sigma) \subset \bigcup_{\tau \in F, \tau \cap f(A) \neq \varnothing} \tau = \St(F)(f(A)).\] 
  Similarly $f(\St(E)(A)) \cup g(\St(E)(A)) \subset \St(F)(g(A))$.
  Therefore 
  \[f(\St(E)(A)) \cup g(\St(E)(A)) \subset \St(F)(f(A)) \cap \St(F)(g(A)).\]   So by \cref{lem:J1times-homotopy}, $f,g: (X,\St(E)) \to (Y,\St(F))$ are one-step $(J_{\top},\times)$-homotopic.
\end{proof}

Applying  \cref{proposition:homotopy_and_contiguity} inductively to a sequence of one-step $(J_{\top},\times)$-homotopy maps or a sequence of contiguous maps yields the following corollary.

\begin{corollary}
\label{corollary:contiguous}
Let $f,g:(X,\new{c_X})\to (Y,\new{c_Y}) \in \cat{Cl}$ and suppose that $f\sim_{(J_{\top},\times)}g$ .
Then $f,g:(X,\VR(\new{c_X})) \to (Y,\VR(\new{c_Y}))$ are in the same contiguity class and so are
$f,g:(X, \Cech(\new{c_X})) \to (Y, \Cech(\new{c_Y}))$
and $f,g:(X, \Cech^{\transpose}(\new{c_X})) \to (Y, \Cech^{\transpose}(\new{c_Y}))$.
Conversely, let $f,g:(X,E)\to (Y,F)$ be simplicial maps in the same contiguity class.
Then $f,g: (X,\St(E)) \to (Y,\St(F))$ are $(J_{\top},\times)$-homotopic.
\end{corollary}

We also have the following.

\begin{theorem}
Let $f,g: (X,\new{c_X}) \to (Y,\new{c_Y}) \in \suqdcl$.
  Then $f,g: (X,\new{c_X}) \to (Y,\new{c_Y}) \in \suqdcl$ are one-step $(J_{\top},\times)$-homotopic iff
  $f,g: (X,\VR(\new{c_X})) \to (Y,\VR(\new{c_Y})) \in \cat{Simp}$ are contiguous.
  Furthermore, $f,g: (X,\new{c_X}) \to (Y,\new{c_Y}) \in \suqdcl$ are $(J_{\top},\times)$-homotopic iff $f,g: (X,\VR(\new{c_X})) \to (Y,\VR(\new{c_Y})) \in \cat{Simp}$ are in the same contiguity class.
\end{theorem}

\begin{proof}
  The forward direction of the two statements is contained in
  \cref{proposition:homotopy_and_contiguity} and \cref{corollary:contiguous}, respectively.
  For the reverse direction, we also use 
  \cref{proposition:homotopy_and_contiguity} and \cref{corollary:contiguous}, respectively, together with
  \cref{prop:functor-expansion}, which gives us that
  $\St(\VR(\new{c_X})) = \new{c_X}$ and $\St(\VR(\new{c_Y})) = \new{c_Y}$.
\end{proof}

Finally, as a consequence of these results we have the following.

\begin{theorem}
  Suppose that $(X,\new{c_X})$ and $(Y,\new{c_Y})$ are closure spaces that are $(J_{\top},\times)$ homotopy equivalent. Then $(X,\VR(\new{c_X}))$ and $(Y,\VR(\new{c_Y}))$ are strongly equivalent,
and so are
$(X,\Cech(\new{c_X}))$ and $(Y,\Cech(\new{c_Y}))$,
as well as
$(X,\Cech^{\transpose}(\new{c_X}))$ and $(Y,\Cech^{\transpose}(\new{c_Y}))$.
Conversely, if $(X,E)$ and $(Y,F)$ are simplicial complexes that
are strongly equivalent, then $(X,\St(E))$ and $(Y,\St(F))$ are $(J_{\top},\times)$ homotopy equivalent.
Furthermore, if $(X,\new{c_X}),(Y,\new{c_Y}) \in \cat{Cl_{sA}}$ then $(X,\new{c_X})$ and $(Y,\new{c_Y})$ are $(J_{\top},\times)$-homotopy equivalent iff $(X,\VR(\new{c_X}))$ and $(Y,\VR(\new{c_Y}))$ are strongly equivalent.
\end{theorem}

\section{\new{Filtrations of} closure spaces and persistent homology} \label{sec:fcl}



\new{In this section, we define filtrations of closure spaces and give a sequence of generalizations from metric spaces to filtrations of closure spaces. 
If we apply our cubical and simplicial singular homology functors to any of these, then we obtain persistence modules.}


\subsection{\new{Filtrations}} \label{sec:filtrations}

\new{Let $P$ be a partially ordered set and let $\cat{C}$ be a category.}

\begin{definition} \label{def:filtered-closure-space}
\new{A \emph{$P$-filtration in $\cat{C}$ is} a functor $X:P\to \cat{C}$ such that each inequality $p\le q$ in $P$ is mapped to a monomorphism. Morphisms of $P$-filtrations in $\cat{C}$ are natural transformations of such functors. Let $\cat{F
_P C}$ denote the category of $P$-filtrations in $\cat{C}$ and their morphisms.}
\end{definition}

\new{A $P$-filtration of closure spaces,} $(X_{\bullet},c_{\bullet})$, is $P$-indexed set of closure spaces $\{(X_p,c_p)\}_{p\in P}$ such that for all $p\le q$ there is an injective continuous map $X_{p\le q}:(X_p,c_p)\to (X_q,c_q)$. A morphism of $P$-\new{filtrations of} closure spaces $f:(X_{\bullet},c_{X_{\bullet}})\to (Y_{\bullet},c_{Y_{\bullet}})$ consists of continuous maps $f_p:(X_p,c_{X_p})\to (Y_p,c_{Y_p})$ for each $p\in P$ such that for all $p\le q$, we have $f_q \circ X_{p\le q}=Y_{p\le q} \circ f_p$.

\begin{example} \label{ex:fcl}
Consider the following \new{two} examples.
First, let $X$ be a set together with closure operators $\{c_p\}_{p \in P}$ such that for all $p \leq q$, $c_p \leq c_q$.
Then for $p \leq q$, the identity gives a continuous map $(X,c_p) \to (X,c_q)$ and 
thus $(X,c_{\bullet}) \in \cat{F_P Cl}$. 
\new{An important class of such filtrations of closure spaces is given by metric closures for scales $r\ge 0$ as discussed in \cref{sec:clos-from-thick}. 
}
Second, consider a closure space $(X,c)$ together with a set map $f:X \to P$.
For $p \in P$, let $X_p = f^{-1}(D_p)$, where $D_p=\{q\in P\,|\, q\le p\}$, and let $c_p$ be the subspace closure. 
Then $(X_{\bullet},c_{\bullet}) \in \cat{F_P Cl}$.
\new{An important case is given by $P=\R$, called the \emph{sublevel-set filtration}.}
\end{example}



\new{
\begin{definition}
    Let $X \in \cat{F_P Cl}$ be such that for all $p \leq q$, $X_{p \leq q}: (X_p,c_p) \to (X_q,c_q)$ is given by $X_p \subset X_q$.
Abusing notation, let $X = \bigcup_{p \in P} X_p$ and for $A \subset X$, let $c(A) = \bigcup_{p \in P} c_p(A \cap X_p)$.
Then $(X,c) = \colim X$.
Furthermore, for all $p \in P$, the canonical continuous map $(X_p,c_p) \to (X,c)$ is given by $X_p \subset X$.
In this case, call $X$ a \emph{$P$-filtered closure space}.
We denote the full subcategory of $\cat{F_P Cl}$ consisting of $P$-filtered closure spaces by $\cat{F_P^{\subset} Cl}$.
Similarly, we define \emph{$P$-filtered (di)graphs}, \emph{$P$-filtered simplicial complexes}, and \emph{$P$-filtered topological spaces}.
\end{definition}
}


\new{Let $X,Y \in \cat{F_P^{\subset} Cl}$.
Observe that a morphism $f:X \to Y$ induces a canonical set function $f:X \to Y$ such that $f|_{X_p} = f_p$.
Conversely, if we are given a set function $f:X \to Y$ such that for all $p \in P$, $f|_{X_p}: X_p \to Y_p$ is continuous, then we have an induced morphism of $P$-filtered closure spaces, $f:X \to Y$, such that $f_p = f|_{X_p}$.
}

\new{
Let $X \in \cat{F_P Cl}$. 
Then for each $p \in P$, the canonical map $X_p \to \colim X$ is a monomorphism. 
Choose a closure space $X'$ in the isomorphism class  $\colim X$.
Then we have a $P$-filtered closure space $X'$, where $X'_p$ is given by the image of the canonical map from $X_p$ to $X'$.
Furthermore, $X' \isom X$.}

\new{
\begin{definition}
    Say that a $P$-filtration in $\cat{C}$, $X$, is \emph{right continuous} if for all $p \in P$, $X_p = \lim_{p \leq q} X_q$.
    Let $\cat{F_P^{rc} C}$ denote the full subcategory of right continuous $P$-filtrations in $\cat{C}$.
\end{definition}
}

\subsection{Persistence modules and interleaving} \label{sec:pm}
\new{Let $P=(P,\le)$ be a partial order.}
We recall the definitions of persistence modules, persistence diagrams, interleavings and matchings, and their corresponding distances. A functor from $\new{P}$ to a category $\cat{C}$ is called a \emph{persistence module} indexed by $P$ and with values in $\cat{C}$~\cite{bubenik2015metrics}.

\begin{definition}
\label{def:interleaving_distance}
Let $\cat{C}$ be a category, let $P$ be either of the totally ordered sets $[0,\infty)$ or $\new{\R} = (\R,\leq)$ and let $\eps \geq 0$.
Let $M,N: \new{P} \to \cat{C}$.
We say $M$ and $N$ are $\eps$-\emph{interleaved} if there are collections of maps $\{\varphi_p:M_p\to N_{p+\eps}\}_{p\in P}$ and $\{\psi_p:N_p\to M_{p+\eps}\}_{p\in P}$ such that:
\begin{enumerate}
\item \label{it:interleaving1} For all $p,q\in P$ with $p\le q$, $\varphi_{q}\circ M_{p\le q}=N_{p+\eps\le q+\eps}\circ\varphi_{p}$.
\item \label{it:interleaving2} For all $p,q\in P$ with $p\le q$,  $\psi_{q}\circ N_{p\le q}=M_{p+\eps\le q+\eps}\circ\psi_{p}$.
\item \label{it:interleaving3} For all $p\in P$, $\psi_{p+\eps}\circ\varphi_p=M_{p+2\eps}$.
\item \label{it:interleaving4} For all $p\in P$, $\varphi_{p+\eps}\circ \psi_p=N_{p+2\eps}$.
\end{enumerate}
The \emph{interleaving distance} between $M$ and $N$ is then defined to be 
\[d_I(M,N)=\inf\{\eps\,|\,\text{M and N are }\eps\text{-interleaved}\}.\]
\end{definition}

Assume that we have persistence modules
  indexed by 
  $R = (\R,\leq)$ and with values in $\cat{Vect}$, where $\cat{Vect}$ is the category of $k$-vector spaces and $k$-linear maps for some field $k$.
  Such a persistence module $M$ is called \emph{q-tame} if for each $s < t$, the linear map $M_{s < t}:M_s\to M_t$ has finite rank.
  Persistence modules that are q-tame have well-defined \emph{persistence diagrams}~\cite{Bauer:2015,cdsgo:book}. For a persistence module $M$ that is $q$-tame, we denote its persistence diagram by $D(M)$. Let $\eps \geq 0$. An \emph{$\eps$-matching} between persistence diagrams is a partial bijection such that matched pairs are within $\eps$ of each other and unmatched elements are within $\eps$ of the diagonal.
  The \emph{bottleneck distance} between two persistence diagrams is the infimum of all $\eps \geq 0$ such that there exists an $\eps$-matching between them~\cite{cseh:stability}.

\subsection{Filtered closure spaces  from metric spaces}
\label{sec:fcl-metric}

Here we consider filtered closure spaces \new{using the} closure operators \new{on} a metric space in \cref{def:closure_induced_by_a_metric}.

Recall the poset $[0,\infty) \times \{-1,0,1\}$ with the lexicographic order from \cref{section:metric_spaces_and_closures} and the corresponding closure operators (\cref{def:closure_induced_by_a_metric}).
Given a metric space $(X,d)$, there is a $[0,\infty) \times \{-1,0,1\}$-filtered symmetric closure space
$(X,\{c_{\eps}\}_{\eps \in [0,\infty) \times \{-1,0,1\}})$.
This filtered closure space restricts to
the $[0,\infty)$-filtered symmetric Alexandroff closure space
$(X,\{c_{\eps^-}\}_{\eps \in [0,\infty)})$,
the $[0,\infty)$-filtered symmetric Alexandroff closure space
$(X,\{c_{\eps}\}_{\eps \in [0,\infty)})$,
the $[0,\infty)$-filtered symmetric closure space
$(X,\{c_{\eps^+}\}_{\eps \in [0,\infty)})$.

The latter three are persistence modules indexed by $[0,\infty)$ with values in $\cat{Cl_s}$. 
For each of the three distinct pairs in the set $\{-1,0,1\}$, 
$(X,\{c_{\eps}\}_{\eps \in [0,\infty) \times \{-1,0,1\}})$
restricts to a persistence module indexed by $I_{0^+}$, which shows that each of the three $[0,\infty)$-indexed persistence modules have pairwise interleaving distance zero. 
Furthermore,
$(X,\{c_{\eps}\}_{\eps \in [0,\infty) \times \{-1,0,1\}})$
gives a coherent interleaving of these three persistence modules~\cite{bdsn}.
Combining this construction with \cref{lem:metric-map-to-closure-map},
and using the next definition we obtain the subsequent result.

\begin{definition}
Say that $F,G: \cat{C} \to \cat{D}^{\new{P}}$ are \emph{objectwise interleaved} if for all $C \in \cat{C}$, $F(C)$ and $G(C)$ are interleaved.
\end{definition}

\begin{theorem} \label{thm:metric-fcl}
   Let $\cat{Met} \to \cat{F_{[0,\infty) \times \{-1,0,1\}}^{\new{\subset}} Cl_s}$ be the functor defined by mapping $(X,d)$ to $(X,c_{\bullet,d})$ and mapping $f:(X,d) \to (Y,e)$ to $f:(X,c_{\bullet,d}) \to (Y,c_{\bullet,e})$.
Restricting to $-1$ and $0$, this functor specializes to two functors
$\cat{Met} \to \cat{F_{[0,\infty)}^{\new{\subset}} Cl_{sA}}$.
Restricting to $1$ this functor specializes to a functor
$\cat{Met} \to \cat{F_{[0,\infty)}^{\new{\subset}} Cl_{s}}$.
The objectwise interleaving distance between the resulting three functors $\cat{Met} \to \cat{F_{[0,\infty)}^{\new{\subset}} Cl_s}$ is zero and these interleavings are coherent.
\end{theorem}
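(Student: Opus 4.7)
The plan has three parts: first, establish that the main functor to $\cat{F_{[0,\infty)\times\{-1,0,1\}}Cl_s}$ is well-defined; second, identify the images of the three restrictions; third, verify the interleavings and their coherence.

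For well-definedness, given $(X,d) \in \cat{Met}$, the family $\{c_{(\eps,a),d}\}_{(\eps,a)}$ defines a filtered closure space since $(\eps,a) \leq (\eps',a')$ implies $c_{(\eps,a),d} \leq c_{(\eps',a'),d}$ by \cref{lemma:order_between_closures_induced by a metric}, which is exactly the statement that the identity on $X$ is continuous between the corresponding closure spaces. Each $c_{(\eps,a),d}$ is symmetric by the observations immediately following \cref{def:closure_induced_by_a_metric}. On morphisms, if $f:(X,d) \to (Y,e)$ is $1$-Lipschitz then \cref{lem:metric-map-to-closure-map} shows $f:(X,c_{(\eps,a),d}) \to (Y,c_{(\eps,a),e})$ is continuous for each index, and compatibility with the filtration inclusions is automatic since the underlying set map does not depend on $(\eps,a)$. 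Functoriality in $f$ is immediate.

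For the restrictions, fix $a \in \{-1,0\}$: the closures $c_{\eps^-}$ and $c_{\eps}$ are symmetric and quasi-discrete by the same observations, so the restricted functor factors through $\cat{F_{[0,\infty)}Cl_{sqd}}$; for $a=1$, $c_{\eps^+}$ is symmetric but in general not quasi-discrete, so the restriction lands in $\cat{F_{[0,\infty)}Cl_{s}}$. Composing with the inclusion $\cat{F_{[0,\infty)}Cl_{sqd}} \incl \cat{F_{[0,\infty)}Cl_s}$ yields three functors $F_{-1},F_0,F_1:\cat{Met} \to \cat{F_{[0,\infty)}Cl_s}$.

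For the interleaving claim, fix a metric space $(X,d)$ and $\delta > 0$. For any $a,a' \in \{-1,0,1\}$ and any $\eps \geq 0$ we have $(\eps,a) \leq (\eps+\delta,a')$ in the lexicographic order, so by \cref{lemma:order_between_closures_induced by a metric} the identity set map $1_X$ is a continuous map $(X,c_{(\eps,a),d}) \to (X,c_{(\eps+\delta,a'),d})$. Using the identities as the interleaving maps $\varphi_\eps$ and $\psi_\eps$, all the squares and triangles required by \cref{def:interleaving_distance} commute tautologically, giving a $\delta$-interleaving between $F_a(X,d)$ and $F_{a'}(X,d)$. Since $\delta > 0$ was arbitrary, the objectwise interleaving distance between any two of the three functors is zero. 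For coherence in the sense of~\cite{bdsn}, the unrestricted filtered closure space $(X,\{c_{(\eps,a),d}\})$ viewed as a functor out of $[0,\infty) \times \{-1,0,1\}$ provides a simultaneous common refinement witnessing all three pairwise interleavings at once, which is precisely the coherence data. The main subtlety, rather than a real obstacle, is bookkeeping: one must recognize that every interleaving diagram reduces to an inequality in the lexicographic poset and that the refined-index extension directly realizes coherent interleavings.
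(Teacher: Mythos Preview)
Your proof is correct and follows essentially the same approach as the paper. The paper's argument, given in the paragraphs preceding the theorem, invokes the same ingredients you do: \cref{lemma:order_between_closures_induced by a metric} for the filtration structure, \cref{lem:metric-map-to-closure-map} for functoriality on morphisms, the observation that restricting to any pair in $\{-1,0,1\}$ yields a functor out of the poset $I_{0^+}$ (equivalently, your direct construction of $\delta$-interleavings for all $\delta>0$), and the full $[0,\infty)\times\{-1,0,1\}$-indexed object as the witness for coherence.
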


Let $H_{\bullet}$ be one of the cubical singular homology functors,
$H_{\bullet}^{(I_{\tau},\times)}$,
$H_{\bullet}^{(J_{\top},\times)}$,
$H_{\bullet}^{(J_+,\times)}$,
$H_{\bullet}^{(I_{\tau},\boxplus)}$,
$H_{\bullet}^{(J_{\top},\boxplus)}$, or
$H_{\bullet}^{(J_+,\boxplus)}$,
from \cref{sec:cubical}
or one of the singular simplicial homology functors,
$H_{\bullet}^{I_{\tau}}$,
$H_{\bullet}^{J_{\top}}$,
$H_{\bullet}^{J_+}$,
from \cref{sec:simplicial},
each with coefficients in the field $k$ (\cref{sec:coefficients}).
By composing
the functors in \cref{thm:metric-fcl} with one of these homology functors we obtain the following~\cite{bubenik2015metrics,bubenik2017interleaving}.

\begin{corollary} \label{cor:metric-pm}
     For each of our cubical and simplicial singular homology functors and for each $j \geq 0$, there is a functor $\cat{Met} \to \cat{Vect^{[0,\infty) \times \{-1,0,1\}}}$, which maps $(X,d)$ to $H_j(X,c_{\bullet,d})$ and $f:(X,d) \to (Y,e)$ to $f_*:H_j(X,c_{\bullet,d}) \to H_j(Y,c_{\bullet,e})$.
Choosing one of $\{-1,0,1\}$, this functor specializes to three functors
$\cat{Met} \to \cat{Vect^{[0,\infty)}}$,
the objectwise interleaving distance between these functors is zero,
and these interleavings are coherent.
\end{corollary}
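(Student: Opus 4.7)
The plan is to derive this corollary as a purely formal consequence of the functoriality of the constructions already in place. A $P$-filtered closure space is exactly a functor $\cat{P} \to \cat{Cl}$ whose arrows are mapped to monomorphisms, and a morphism of such filtered spaces is a natural transformation; thus $\cat{F_PCl}$ embeds fully faithfully in the functor category $\cat{Cl^P}$, and the same holds for its full subcategory $\cat{F_PCl_s}$. Each singular (cubical or simplicial) homology functor $H_j:\cat{Cl}\to\cat{Vect}$ from \cref{def:induced_maps_on_homology} and \cref{sec:simplicial}, with coefficients in $k$ as in \cref{sec:coefficients}, induces by post-composition a functor $H_j^{\cat{P}}:\cat{Cl^P}\to\cat{Vect^P}$.

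First I would apply this post-composition with $P = [0,\infty)\times\{-1,0,1\}$ to the functor of \cref{thm:metric-fcl}. The composition sends $(X,d)$ to the persistence module $\{\,H_j(X,c_{\bullet,d})\,\}$ with structure maps induced by the inclusions of closure spaces, and sends a $1$-Lipschitz map $f:(X,d)\to(Y,e)$ to the natural transformation $f_*$ whose $\varepsilon$-component is $H_j$ applied to $f:(X,c_{\varepsilon,d})\to(Y,c_{\varepsilon,e})$. Naturality in $\varepsilon$ is automatic because $f$ is itself a morphism of filtered closure spaces.

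Restricting the index set to $[0,\infty)\times\{a\}$ for $a\in\{-1,0,1\}$ yields the three promised functors $\cat{Met}\to\cat{Vect^{[0,\infty)}}$. For the interleaving claim, I would use the characterization of interleavings by extensions to the poset $\cat{I_{\varepsilon^+}}$ recalled in \cref{def:interleaving_distance}. Fix any two choices $a<a'$ in $\{-1,0,1\}$ and any $\varepsilon>0$. From $(\varepsilon_1,a)\le(\varepsilon_1,a')\le(\varepsilon_2,a)$ whenever $\varepsilon_1<\varepsilon_2$, one constructs a functor $\cat{I_{\varepsilon^+}}\to\cat{Cl}$ out of the $P$-filtered closure space of $(X,d)$ that extends both $[0,\infty)$-restrictions; applying $H_j$ gives an $\varepsilon$-interleaving of the two persistence modules. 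Since $\varepsilon>0$ was arbitrary, the objectwise interleaving distance is zero. Coherence of the three interleavings comes for free: they are all restrictions of the single persistence module indexed by $[0,\infty)\times\{-1,0,1\}$ obtained above, in the sense of \cite{bdsn}.

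The statement contains no genuine obstacle; it is a diagram-chase relying on functoriality. The only point requiring a little care is the bookkeeping that post-composition with $H_j$ takes inclusions of closure spaces to the correct linear maps and preserves the ordering structure needed to re-express the interleavings as functors on $\cat{I_{\varepsilon^+}}$, but this is immediate from \cref{def:induced_maps_on_homology} and the construction of the filtration in \cref{thm:metric-fcl}.
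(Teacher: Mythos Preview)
Your proposal is correct and takes essentially the same approach as the paper, which simply states that the corollary follows by composing the functor of \cref{thm:metric-fcl} with $H_j$ and cites \cite{bubenik2015metrics,bubenik2017interleaving} for the preservation of interleavings under post-composition. You have spelled out in more detail the mechanism (post-composition on functor categories, the $\cat{I_{\varepsilon^+}}$ characterization of interleavings, and coherence via restriction of a single $[0,\infty)\times\{-1,0,1\}$-indexed module) that the paper leaves to those references.
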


For the remainder of this section, we restrict to the case $0 \in \{-1,0,1\}$. That is, we consider the closures $(X,c_{r,d})$ for $r \geq 0$,
where for $A \subset X$, $c_{r,d}(A) = \{x \in X \ | \ d(a,x) \leq r \text{ for some } a \in A \}$.

\subsection{Full subcategories of \new{filtrations of} closure spaces}

We consider a sequence of generalizations from metric spaces to \new{filtrations of} closure spaces and their corresponding persistence modules.

\begin{definition}
\label{def:lawvere_metric}
Let $\mathbf{Lawv}$ denote the category of small Lawvere metric spaces, i.e., extended quasi-pseudo-metric spaces, and $1$-Lipschitz maps.
\end{definition}

\begin{definition}
\label{def:weighted_digraphs}
Let $P$ be a poset.
Recall that we may identify a simple digraph $(X,E)$ with its corresponding spatial digraph $(X,\overline{E})$, where $\overline{E} = E \cup \Delta$.
A $P$-weighted digraph consists of a (simple) digraph $(X,E)$ and a function $w: \overline{E} \to P$, called the \emph{weight}, such that for $xEx'$, $w(x,x),w(x',x') \leq w(x,x')$.
For $xEx'$ we call $w(x,x')$ the weight of the directed edge $(x,x')$ and for $x \in X$ we call $w(x,x)$ the weight of the vertex $x$, which we also denote by $w(x)$.
A morphism of weighted digraphs $f:(X,E,w) \to (Y,F,v)$ is a digraph homomorphisms $f:(X,E) \to (Y,F)$ such that
for all $x\overline{E}x'$,
$v(f(x),f(x'))\le w(x,x')$.
Let $\mathbf{w_P DiGph}$ be the category of $P$-weighted digraphs $(X,E,w)$ and their morphisms.
\end{definition}

\begin{proposition} \label{prop:embeddings-R}
We have the following full embeddings of categories
\[
\cat{Met} \hookrightarrow \cat{Lawv} \hookrightarrow \cat{w_{\new{\R}} DiGph} \hookrightarrow \cat{F_{\new{\R}}^{rc,\new{\subset}}DiGph} \isomto \cat{F_{\new{\R}}^{rc,\new{\subset}} Cl_{A}} \hookrightarrow \cat{F_{\new{\R}}Cl}, 
\]
whose composition sends the metric space $(X,d)$ to the $\new{\R}$-filtered closure space $(X_{\bullet},c_{\bullet,d})$,
where $X_a = \varnothing$ if $a < 0$ and $X_a=X$ if $a\geq 0$.
\end{proposition}

\new{
\begin{proof}
    For each small Lawvere metric space $(X,d)$, we have the digraph with vertex set $X$ and edges consisting of all ordered pairs $(x,x')$ such that $d(x,x') < \infty$, and weight given by $d$.
    For each $\R$-weighted digraph $(X,E,w)$, we have the corresponding right continuous $\R$-filtered digraph given by $X_a = \{x \in X \ | \ w(x) \leq a\}$ and $E_a = \{(x,x') \in E \ | \ w(x,x') \leq a\}$.
\end{proof}
}

Note that the image of the composition $\cat{Met} \hookrightarrow \cat{Lawv} \hookrightarrow \cat{w_{\new{\R}} DiGph} \hookrightarrow \cat{F_{\new{\R}}^{rc,\new{\subset}} DiGph}$ lies in $\cat{F_{\new{\R}}^{rc,\new{\subset}} Gph}$.
In particular, a metric space $(X,d)$ is mapped to the right continuous $\new{\R}$-filtered graph $(X_{\bullet},E_{\bullet})$, where
for $x \in X$, $x \in X_t$ iff $t \geq 0$ and
for $x,x' \in X$, $x \overline{E}_t x'$ iff $d(x,x') \leq t$.

Composing the functors in \cref{prop:embeddings-R} with the functors induced by homology, we have the following.

\begin{corollary} \label{cor:functor-to-pm-2}
  For each of our cubical and simplicial singular homology functors and for each of the categories $\cat{Met}$,
  $\cat{Lawv}$, $\cat{w_{\new{\R}} DiGph}$, 
  $\cat{F_{\new{\R}}Cl}$, we have a functor to $\cat{Vect^{\new{\R}}}$.
\end{corollary}

\subsection{Lipschitz maps}

We now extend \new{the results of the previous section} to the case of Lipschitz maps.
\new{Since we are allowing additional morphisms of metric spaces, we will also need additional morphisms of weighted digraphs, filtered digraphs and filtered closure spaces.}

\begin{definition} \label{def:Lipschitz}
Let $\cat{Lip}$ denote the category of metric spaces and Lipschitz maps.
Let $\cat{LipLawv}$ denote the category of small Lawvere metric spaces and Lipschitz maps.
\end{definition} 


  


\begin{definition} \label{def:plus}
  Let $\cat{w_{[\infty,\infty)}^+ DiGph}$ denote the category of $[-\infty,\infty)$-weighted digraphs and maps $f:(X,E,w) \to (Y,F,v)$ given by digraph homomorphisms $f:(X,E) \to (Y,F)$ such that
there exists an $L \geq 0$ such that
for all $x\overline{E}x'$, $v(f(x),f(x')) \leq w(x,x') + L$.

Let $\cat{F_{\new{\R}}^{\new{\subset},+} DiGph}$ denote the category of $\R$-filtered digraphs together with morphisms $f:(X_{\bullet},E_{\bullet}) \to (Y_{\bullet},F_{\bullet})$
consisting of functions $f:X \to Y$, where $X = \bigcup_{t \in \R} X_t$ and $Y = \bigcup_{t \in \R} Y_t$, 
such that there exists an $L \geq 0$ for which for all $t$,
$f(X_t) \subset Y_{t+L}$ and
whenever $x\overline{E}x'$ we have that $f(x)\overline{F}_{t+L}f(x')$.

Similarly, let $\cat{F^{\new{\subset},+}_{\new{\R}} Cl}$ denote the category of
$\new{\R}$-filtered
closure spaces together with morphisms $f:(X_{\bullet},\new{c_{X_{\bullet}}}) \to (Y_{\bullet},\new{c_{Y_{\bullet}}})$
consisting of functions $f:X \to Y$, 
such that there exists an $L \geq 0$ for which for all $t$, $f(X_t) \subset Y_{t+L}$ and for all $A \subset X_t$, $f(\new{c_{X_t}}(A)) \subset \new{c_{Y_{t+L}}}(f(A))$. Say that $f$ has \emph{shift} $L$.
\end{definition}






\begin{proposition} \label{prop:embeddings-Lip}
  We have the following full embeddings of categories
\[\cat{Lip} \hookrightarrow \cat{LipLawv} \hookrightarrow \cat{w_{[-\infty,\infty)}^{+}DiGph} \hookrightarrow \cat{F^{rc,\new{\subset},+}_{\new{\R}}DiGph} \isomto \cat{F_{\new{\R}}^{rc,\new{\subset},+}Cl_{A}} \hookrightarrow \cat{F^{\new{\subset},+}_{\new{\R}}Cl}\]
whose composition sends the metric space $(X,d)$ to $(X,c_{\exp(\bullet),d})$
and sends a map $f$ with Lipschitz constant $K\geq 1$ to a map with shift $\log K$.
\end{proposition}

\begin{proof}
\new{
  The second functor 
  is defined on objects by mapping $(X,d)$ to $(X,E,w)$, where $E = \{(x,x') \in X \times X \ | \ x \neq x', d(x,x') < \infty$ and $w:E \cup \Delta: [-\infty,\infty)$ is given by $w = \log d$, where $\log 0 = -\infty$.
  It is defined on morphisms by sending the function $f$ to the function $f$.
  Note that $d_Y(fx,fx') \leq Kd_X(x,x')$ implies that $\log d_Y(fx,fx') \leq \log d_X(x,x') + \log K$.
}

\new{
  The third functor is defined in the same way as the corresponding functor in \cref{prop:embeddings-R}.
  Note that for $f:(X,E,w) \to (Y,F,v)$, $v(fx) \leq w(x) + L$ implies that $f(X_a) \leq Y_{a+L}$ for all $a$, 
  and $v(fx,fx') \leq w(x,x') + L$ implies that if $x E_a x'$ then $(fx)F_{a+L}(fx')$.
}

\new{
  Composing these functors, $(X,d)$ is mapped to $(X_{\bullet},E_{\bullet})$, where $X_a=X$ for all $a$, and
  $E_a = \{(x,x') \ | \ \log d(x,x') \leq a\} = \{(x,x') \ | \ d(x,x') \leq \exp a\}$.
  This filtered digraph corresponds to the filtered closure space $(X,c_{\exp(\bullet),d})$.
  In addition, a $K$-Lipschitz map is sent to a morphism with shift $\log K$.
  }
%
%
\end{proof}

It follows directly from the definitions that
$(X_{\bullet},\new{c_{X_{\bullet}}}), (Y_{\bullet},\new{c_{Y_{\bullet}}}) \in \cat{F_{\new{\R}} Cl}$
are isomorphic if and only if
they are $0$-interleaved (\cref{def:interleaving_distance}).
More generally, we have the following.

\begin{theorem} \label{thm:isom-interleaving}
  Let $L \geq 0$.
  Then $(X_{\bullet},\new{c_{X_{\bullet}}}), (Y_{\bullet},\new{c_{Y_{\bullet}}}) \in \cat{F_{\new{\R}}^{\new{\subset},+} Cl}$
  are isomorphic via maps with shift $L$ if and only if
  they are $L$-interleaved.
  In particular, $(X,d), (Y,e) \in \cat{Lip}$ are isomorphic via maps with Lipschitz constant $K \geq 1$ if and only if $(X,c_{\exp(\bullet),d})$ and $(Y,c_{\exp(\bullet),e})$ are $\log(K)$-interleaved.
\end{theorem}

\begin{proof}
 Let $(X_{\bullet},\new{c_{X_{\bullet}}}), (Y_{\bullet},\new{c_{Y_{\bullet}}}) \in \cat{F_{\new{\R}}^+ Cl}$.
  Let $X = \bigcup_t X_t$ and $Y = \bigcup_t Y_t$.

  $(\Rightarrow)$
  There exist functions $f:X \to Y$ and $g:Y \to X$ such that
  for all $t$,
  $f|_{X_t}: (X_t,c_{\new{X_t}}) \to (Y_{t+L},c_{\new{Y_{t+L}}})$ and 
  $g|_{Y_t}: (Y_t,\new{c_{Y_t}}) \to (X_{t+L},\new{c_{X_{t+L}}})$,
  $gf = 1_X$, and $fg = 1_Y$.
  These maps provide the desired $L$-interleaving.

  $(\Leftarrow)$
  For all $t$ we have maps
  $f_t: (X_t,\new{c_{X_t}}) \to (Y_{t+L},\new{c_{Y_{t+L}}})$ and 
  $g_t: (Y_t,\new{c_{Y_t}}) \to (X_{t+L},\new{c_{X_{t+L}}})$ satisfying the four conditions in \cref{def:interleaving_distance}.
  By \cref{def:interleaving_distance}\eqref{it:interleaving1},
  the maps $\{f_t\}$ define a function $f:X \to Y$ with $f|_{X_t} = f_t$.
  By \cref{def:interleaving_distance}\eqref{it:interleaving2},
  the maps $\{g_t\}$ define a function $g:Y \to X$ with $g|_{Y_t} = g_t$.
  By \cref{def:interleaving_distance}\eqref{it:interleaving3},
  $gf = 1_X$.
  By \cref{def:interleaving_distance}\eqref{it:interleaving4},
  $fg = 1_Y$.
  Therefore we have the desired isometry.
\end{proof}

\section{Stability} \label{sec:stability}

\new{Given $\eps \geq 0$, we define the notion of $\eps$-correspondence of filtered closure spaces. Using this definition we obtain a distance for filtrations of closure spaces that generalizes the Gromov-Hausdorff distance for metric spaces. 
We prove that applying any homotopy-invariant functor to a pair of filtered closure spaces with an $\eps$-correspondence, produces $\eps$-interleaved persistence modules.
As a consequence, we obtain stability theorems that generalize many of the existing stability theorems for $\R$-indexed persistence modules.}


\subsection{Sublevel sets}

We start by defining sublevel set filtrations and showing that they are stable.
The following is a special case of \cref{ex:fcl}.

\begin{definition}
  Let $(X,c) \in \cat{Cl}$ and let $f:X \to \R$ be a set map.
  Define $\Sub(f) \in \cat{F_{\new{\R}}^{\new{\subset}} Cl}$ to be given by $\Sub(f)_t = f^{-1}(-\infty,t]$ together with the subspace closure.
\end{definition}

For a closure space $(X,c)$ and $f,g:X \to \R$, let $d_{\infty}(f,g) = \sup_{x \in X}\abs{f(x)-g(x)}$.
Then for $\eps = d_{\infty}(f,g)$, $\Sub(f),\Sub(g) \in \cat{F_{\new{\R}}^{\new{\subset}} Cl}$ are $\eps$-interleaved. It follows, \new{by the functoriality of homology}, that for any of our cubical and simplicial singular homology theories, $H_j(\Sub(f))$ and $H_j(\Sub(g))$ are $\eps$-interleaved~\cite{bubenik2015metrics,bubenik2017interleaving}.
Furthermore, if coefficients are in a field and $H_j(\Sub(f))$ and $H_j(\Sub(g))$ are q-tame, then there is an $\eps$-matching between $D(H_j(\Sub(f)))$ and $D(H_j(\Sub(g)))$~\cite{Bauer:2015,cdsgo:book}.

\begin{theorem}[Sublevel set Stability Theorem] \label{thm:sublevelset-stability}
  Let $(X,c) \in \cat{Cl}$ and $f,g:X \to \R$. Let $H$ denote one of our cubical or simplicial singular homology theories and let $j \geq 0$. Then
  \[
    d_I(H_j(\Sub(f)), H_j(\Sub(g))) \leq d_{\infty}(f,g).
  \]
  If coefficients are in a field and $H_j(\Sub(f))$ and $H_j(\Sub(g))$ are q-tame then
  \[
    d_B(D(H_j(\Sub(f))), D(H_j(\Sub(g)))) \leq d_{\infty}(f,g).
  \]  
\end{theorem}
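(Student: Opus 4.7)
The plan is to realize the inequality in two stages: first at the level of filtered closure spaces, producing an explicit $\eps$-interleaving between $\Sub(f)$ and $\Sub(g)$; then transporting that interleaving through the homology functor and, in the q-tame case, through the isometry theorem to get a matching of persistence diagrams. Most of the work has in fact been sketched in the paragraph immediately preceding the theorem statement, so my task is to make that reasoning precise.

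First I would set $\eps = d_\infty(f,g)$ and verify the interleaving of the sublevel set filtrations. For each $t \in \R$, if $x \in \Sub(f)_t$ then $f(x) \leq t$, hence $g(x) \leq f(x) + \eps \leq t + \eps$, so $x \in \Sub(g)_{t+\eps}$; this gives a set-theoretic inclusion $\varphi_t:\Sub(f)_t \hookrightarrow \Sub(g)_{t+\eps}$, and symmetrically $\psi_t:\Sub(g)_t \hookrightarrow \Sub(f)_{t+\eps}$. Because the closures on every level are defined as subspace closures from the single ambient closure $(X,c)$, every such inclusion is automatically continuous, and all of the required diagrams commute on the nose because they are commutative diagrams of set inclusions inside $X$. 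Thus $(\varphi_\bullet,\psi_\bullet)$ is an $\eps$-interleaving in $\cat{F_R Cl}$, in the sense of \cref{def:interleaving_distance} applied to the functor category.

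Next I would transport this interleaving through homology. Each of our singular cubical or simplicial homology constructions $H_j$ is, as discussed in \cref{def:induced_maps_on_homology} and its simplicial analogue, a functor $\cat{Cl} \to \cat{Ab}$ (or to $\cat{Vect}$ after tensoring with a field via \cref{sec:coefficients}). Post-composing a functor $\cat{R}\to\cat{Cl}$ with $H_j$ therefore produces an $R$-indexed persistence module, and functoriality converts the interleaving maps $\varphi_\bullet,\psi_\bullet$ into natural $\eps$-interleaving maps between $H_j(\Sub(f))$ and $H_j(\Sub(g))$. This yields $d_I(H_j(\Sub(f)),H_j(\Sub(g))) \leq \eps = d_\infty(f,g)$.

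For the second inequality I would simply invoke the algebraic stability / isometry theorem for q-tame $R$-indexed persistence modules due to Bauer--Lesnick and Chazal--de Silva--Glisse--Oudot (the references already cited in the paragraph preceding the theorem): an $\eps$-interleaving of q-tame persistence modules induces an $\eps$-matching of their persistence diagrams, and hence $d_B \leq d_I$. Combined with the previous step this gives $d_B(D(H_j(\Sub(f))),D(H_j(\Sub(g)))) \leq d_\infty(f,g)$. There is no real obstacle here: the only thing to check carefully is that the strict continuity of $\varphi_t$ and $\psi_t$ under the subspace closures is automatic, so that the reduction to the standard interleaving and isometry machinery is legitimate; once that is observed, the proof is essentially immediate.
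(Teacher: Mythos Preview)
Your proposal is correct and follows essentially the same approach as the paper: the paper's argument is contained entirely in the paragraph immediately preceding the theorem statement, and you have simply made precise the three steps sketched there (the $\eps$-interleaving of sublevel sets via inclusions, functoriality of $H_j$, and the algebraic stability theorem). The only minor omission is the trivial case $d_\infty(f,g)=\infty$, where the inequality is vacuous.
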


\subsection{Correspondences and Gromov-Hausdorff distance}

Next, we use multivalued maps and correspondences to define a Gromov-Hausdorff distance for filtered closure spaces.
\new{For a product $X \times Y$,
  denote the canonical projections by
  $\pi_1:X \times Y \to X$ and $\pi_2:X \times Y \to Y$.
}

\begin{definition}
\label{def:multivalued_map}
\new{Given sets $X$ and $Y$, a \emph{multivalued map} from $X$ to $Y$,
denoted $C:X\rightrightarrows Y$, is a subset of $X\times Y$, also denoted $C$,
such that $\pi_1(C) = X$.
If $(x,y) \in C$, we write $xCy$.
}
\new{Given $A \subset X$, let $C(A) = \pi_2(C \cap (A \times Y))$.}
That is, 
  $C(A) = \{y \in Y \ | \ \exists a \in A, aCy\}$.
\end{definition}

\begin{definition}
\label{def:subordinate_to_a_multivalued_map}
A \emph{(single-valued)} map \new{$f:X\to Y$} is \emph{subordinate} to \new{a multivalued map $C:X\rightrightarrows Y$} if we have $(x,f(x))\in C$ for every $x\in X$. In that case we write $f:X\xrightarrow{C}Y$. The composition of two multivalued maps $C:X\rightrightarrows Y$ and $D:Y\rightrightarrows Z$ is the multivalued map $D\circ C:X\rightrightarrows Z$ defined by:
\[x(D\circ C)z\Longleftrightarrow \exists y\in Y, \, xCy, yDz\]
\end{definition}

\begin{lemma} \label{lem:subordinate}
  Let $X \rightrightarrows Y$ be a multivalued map. Then there exists a single-valued map $f:X \to Y$ subordinate to $C$.
\end{lemma}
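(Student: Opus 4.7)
The plan is to invoke the axiom of choice. By \cref{def:multivalued_map}, the multivalued map $C \subset X \times Y$ projects surjectively onto $X$ via $\pi_X: X \times Y \to X$. Equivalently, for each $x \in X$ the fiber $C_x := \{y \in Y \mid (x,y) \in C\} = \pi_Y(\pi_X^{-1}(x))$ is nonempty.

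First I would exhibit the family $\{C_x\}_{x \in X}$ of nonempty subsets of $Y$ and apply the axiom of choice to obtain a choice function $f: X \to Y$ with $f(x) \in C_x$ for every $x \in X$. Then, by construction, $(x, f(x)) \in C$ for all $x \in X$, which is exactly the definition of $f$ being subordinate to $C$ given in \cref{def:subordinate_to_a_multivalued_map}.

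There is no substantive obstacle here; the statement is essentially a restatement of the axiom of choice in the language of multivalued maps. The only thing to be careful about is to confirm that the surjectivity condition on $\pi_X$ translates exactly to the nonemptiness of each fiber $C_x$, which is immediate from the definition.
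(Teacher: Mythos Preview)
Your proposal is correct and takes essentially the same approach as the paper: the paper's one-line proof simply says ``For each $x \in X$, choose $(x,f(x)) \in \pi_X^{-1}(x)$,'' which is exactly the axiom-of-choice argument you describe.
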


\begin{proof}
  For each $x \in X$, choose
  \new{$f(x) \in Y$ such that $xCf(x)$.}
\end{proof}

Note that if $f$ is subordinate to $C$ and $g$ is subordinate to $D$ then $g \circ f$ is subordinate to $D \circ C$.
In addition, if $A \subset X$, then $(D \circ C)(A) = D(C(A))$.

\begin{definition}
\label{def:transpose_of_a_multivalued_map}
If $C:X\rightrightarrows Y$ is a multivalued map, the \emph{transpose} of $C$, denoted $C^T$, is the image of $C$ through the symmetry map $(x,y)\mapsto (y,z)$.
\end{definition}

\begin{remark} 
\new{Although $C^T$ is well-defined as a subset of $Y\times X$, it is not always a multivalued map because it may not project surjectively onto $Y$.}
\end{remark}

\begin{definition}
\label{def:correspondences}
A multivalued map $C:X\rightrightarrows Y$ is a \emph{correspondence} if
\new{$\pi_2(C) = Y$},
or equivalently, if $C^T$ is also a multivalued map.
\end{definition}

Note that $C:X\rightrightarrows Y$ is a correspondence \new{if and only if the} identity maps $\mathbf{1}_X$ and $\mathbf{1}_Y$ are subordinate to the compositions $C^T\circ C$ and $C\circ C^T$, respectively.

\new{Next we use correspondences to define the Gromov-Hausdorff distance for metric spaces. This definition agrees with the definition that embeds metric spaces into a common metric space and uses the Hausdorff distance in that space~\cite[Theorem 7.3.25]{bbi:book}.}

\begin{definition}
\label{def:GH-metric}
  Let $(X,d), (Y,e) \in \cat{Met}$ and let $\eps \geq 0$.
    Say that $C: X \rightrightarrows Y$ is a \emph{metric $\eps$-multivalued map} if whenever $xCy$ and $x'Cy'$ then $e(y,y') \leq d(x,x') + \eps$.
    Say that a correspondence $C: X \rightrightarrows Y$ is a \emph{metric $\eps$-correspondence} if $C$ and $C^T$ are metric $\eps$-multivalued maps.
    The \emph{metric distortion} of a correspondence $C: X \rightrightarrows Y$ is given by $\mdist(C) = \inf\{ \eps \geq 0 \ | \ C \text{ is a metric $\eps$-correspondence}\}$, with $\mdist(C) = \infty$ if there is no such $\eps$.
    The \emph{Gromov-Hausdorff distance} between $(X,d)$ and $(Y,e)$ is given by $d_{GH}((X,d),(Y,e)) = \frac{1}{2} \inf \mdist(C)$, where the infimum is taken over all correspondences $C: X \rightrightarrows Y$~\cite[\new{Definition 7.3.10}]{bbi:book}.
\end{definition}

  
Recall that $\new{\R}$ denotes the totally ordered set $(\R,\leq)$
and for $(X_{\bullet},c_{\bullet}) \in \cat{F_{\new{\R}}^{\new{\subset}} Cl}$, $X = \bigcup_t X_t$.

\begin{definition} \label{def:eps-multivalued-map}
  Let $(X_{\bullet},\new{c_{X_{\bullet}}}), (Y_{\bullet},\new{c_{Y_{\bullet}}}) \in \cat{F_{\new{\R}}^{\new{\subset}} Cl}$.
  A multivalued map from $(X_{\bullet},\new{c_{X_{\bullet}}})$ to $(Y_{\bullet},\new{c_{Y_{\bullet}}})$
  is a multivalued map $C: X \rightrightarrows Y$.
  Write $C: (X_{\bullet},\new{c_{X_{\bullet}}}) \rightrightarrows (Y_{\bullet},\new{c_{Y_{\bullet}}})$.
  Similarly, a correspondence from $(X_{\bullet},\new{c_{X_{\bullet}}})$ to $(Y_{\bullet},\new{c_{Y_{\bullet}}})$ is a correspondence $C: X \rightrightarrows Y$.
 Let $\eps \geq 0$.
 Say that a multivalued map $C: (X_{\bullet},\new{c_{X_{\bullet}}}) \rightrightarrows (Y_{\bullet},\new{c_{Y_{\bullet}}})$ is an \emph{$\eps$-multivalued map}
 if
 for all $t$,
 \begin{enumerate}
 \item whenever $x \in X_t$ and $xCy$ we have that $y \in Y_{t+\eps}$, and
 \item whenever $A \subset X_t$ and $f:X \xto{C} Y$ we have that $C(\new{c_{X_t}}(A)) \subset \new{c_{Y_{t+\eps}}}(f(A))$.
 \end{enumerate}
 Say that a correspondence $C: (X_{\bullet},\new{c_{X_{\bullet}}}) \rightrightarrows (Y_{\bullet},\new{c_{Y_{\bullet}}})$ is an \emph{$\eps$-correspondence}
 if $C : (X_{\bullet},\new{c_{X_{\bullet}}}) \rightrightarrows (Y_{\bullet},\new{c_{Y_{\bullet}}})$ is an $\eps$-multivalued map and
 if $C^T : (Y_{\bullet},\new{c_{Y_{\bullet}}}) \rightrightarrows (X_{\bullet},\new{c_{X_{\bullet}}})$ is an $\eps$-multivalued map.
\end{definition}

\begin{lemma} \label{lem:eps-multivalued-map}
A multivalued map $C: (X_{\bullet},\new{c_{X_{\bullet}}}) \rightrightarrows (Y_{\bullet},\new{c_{Y_{\bullet}}})$ is an \emph{$\eps$-multivalued map}
 iff
for all $t$,
 \begin{enumerate}
 \item whenever $x \in X_t$ and $xCy$ we have that $y \in Y_{t+\eps}$, and
 \item for all
   \new{$S \subset C \cap (X_t \times Y)$},
   $C (c_{X_t} (\pi_{\new{1}} S)) \subset c_{Y_{t+\eps}} (\pi_{\new{2}} S)$.
 \end{enumerate}
\end{lemma}

\begin{proof}
  $(\Leftarrow)$ Let $A \subset X_t$ and $f:X \xto{C} Y$.
  Let $S = \{(a,f(a) \ | \ a \in A\}$.
  Then $\pi_{\new{1}}(S) = A$, $\pi_{\new{2}}(S) = f(A)$,
  \new{and $S \subset C \cap (X_t \times Y)$}.
  Hence $C (c_{\new{X_t}}(A))
  \new{= C(c_{X_t}(\pi_1 S))}
  \subset
  \new{c_{Y_{t+\eps}}(\pi_2 S) = 
  c_{Y_{t+\eps}}}(f(A))$.

  $(\Rightarrow)$ Let
  \new{$S \subset C \cap (X_t \times Y)$.}
  Let $A = \pi_{\new{1}}(S)$. Then $A \subset X_t$. 
  Choose $f:X \xto{C} Y$ \new{such that for all $a \in A$, $(a,f(a)) \in S$}.
  Then $f(A) \subset \pi_{\new{2}}(S)$.
  Therefore $C (c_{\new{X_t}} (\pi_{\new{1}} S)) \subset \new{c_{Y_{t+\eps}}} (f(A)) \subset \new{c_{Y_{t+\eps}}} (\pi_{\new{2}} S)$.
\end{proof}

\begin{lemma} \label{lem:compose-eps-mvm}
  If $C:(X_{\bullet},\new{c_{X_{\bullet}}}) \rightrightarrows (Y_{\bullet},\new{c_{Y_{\bullet}}})$ is an $\eps$-multivalued map and $D:(Y_{\bullet},\new{c_{Y_{\bullet}}}) \rightrightarrows (Z_{\bullet},\new{c_{Z_{\bullet}}})$ is a $\delta$-multivalued map, then $D \circ C:(X_{\bullet},\new{c_{X_{\bullet}}}) \rightrightarrows (Z_{\bullet},\new{c_{Z_{\bullet}}})$ is an $(\eps+\delta)$-multivalued map.
\end{lemma}

\begin{proof}
  We will use \new{\cref{lem:eps-multivalued-map}}.
  \begin{enumerate}
  \item For all $t$ and $x \in X_t$, if $xDCz$ then there is a $y \in Y$ such that $xCy$, $yDz$. Since $x \in X_t$, $y \in Y_{t+\eps}$ and thus $z \in Z_{t+\eps+\delta}$.
  \item 
\new{Let $S \subset DC \cap (X_t \times Z)$.}
    We want to show that 
    $DC(\new{c_{X_t}}(\pi_{\new{1}}(S))) \subset \new{c_{Z_{t+\eps+\delta}}}(\pi_{\new{2}}(S))$.
   \new{Let $T = C \cap (\pi_1 S \times Y)$.}
    Then $\pi_{\new{1}} (T) = \pi_{\new{1}} (S)$ 
   and 
   \new{$T \subset C \cap (X_t \times Y)$.}
    Thus $\pi_{\new{2}}(T) \subset Y_{t+\eps}$ and
    $(DC)(c_{\new{X_t}} (\pi_{\new{1}}(S))) = D(C(\new{c_{X_t}} (\pi_{\new{1}} T))) \subset D(\new{c_{Y_{t+\eps}}} (\pi_{\new{2}} T))$.
    \new{Let $U = D \cap (\pi_2 T \times Z)$.}
    Then $\pi_{\new{1}} (U) = \pi_{\new{2}} (T)$
    and 
    \new{$U \subset D \cap (Y_{t + \eps} \times Z)$.}
      Thus 
    $D(\new{c_{Y_{t+\eps}}} (\pi_{\new{2}} (T))) = D(\new{c_{Y_{t+\eps}}} (\pi_{\new{1}} (U))) \subset \new{c_{Z_{t+\eps+\delta}}} (\pi_{\new{2}} (U))$.
    It remains to show that $\pi_{\new{2}} (U) = \pi_{\new{2}} (S)$.
    \begin{align*}
      \pi_{\new{2}}(S) &= \{z \in Z \ | \ \exists x \in X, \exists y \in Y, xCy, yDz, x \in \pi_{\new{1}}(S)\}\\
    &= \{z \in Z \ | \ \exists (x,y) \in T, yDz\}\\
    &= \{z \in Z \ | \ \exists y \in \pi_{\new{2}} T, yDz\}\\
    &= 
    \new{\pi_2 (D \cap (\pi_2 T \times Z))}\\
    &= \pi_{\new{2}} (U).\qedhere
    \end{align*}
  \end{enumerate}
\end{proof}

\begin{definition}
Let $(X_{\bullet},\new{c_{X_{\bullet}}}), (Y_{\bullet},\new{c_{Y_{\bullet}}}) \in \cat{F_{\new{\R}}^{\new{\subset}} Cl}$.
 For a correspondence $C: (X_{\bullet},\new{c_{X_{\bullet}}}) \rightrightarrows (Y_{\bullet},\new{c_{Y_{\bullet}}})$ 
  define the \emph{distortion} of $C$ by $\dist(C) = \inf \{ \eps \geq 0 \ | \ C \text{ is an } \eps\text{-correspondence}\}$, where $\dist(C) = \infty$ if there is no such $\eps$.
\end{definition}

\begin{definition} \label{def:GH-FRCl}
  Let $(X_{\bullet},\new{c_{X_{\bullet}}}), (Y_{\bullet},\new{c_{Y_{\bullet}}}) \in \cat{F_{\new{\R}}^{\new{\subset}} Cl}$.
  Define the \emph{Gromov-Hausdorff distance} between $(X_{\bullet},\new{c_{X_{\bullet}}})$ and $(Y_{\bullet},\new{c_{Y_{\bullet}}})$ to be given by
  $d_{GH}((X_{\bullet},\new{c_{X_{\bullet}}}), (Y_{\bullet},\new{c_{Y_{\bullet}}})) = \frac{1}{2} \inf \dist(C)$, where the infimum is taken over all correspondences $C: (X_{\bullet},\new{c_{X_{\bullet}}}) \rightrightarrows (Y_{\bullet},\new{c_{Y_{\bullet}}})$.
\end{definition}

It follows from \cref{lem:compose-eps-mvm} that $d_{GH}$ satisfies the triangle inequality.
  Also note that since the definition of $\eps$-correspondence is symmetric, $d_{GH}$ is symmetric.
Furthermore if $(X_{\bullet},\new{c_{X_{\bullet}}}) \isom (Y_{\bullet},\new{c_{Y_{\bullet}}})$ then
$d_{GH}((X_{\bullet},\new{c_{X_{\bullet}}}), (Y_{\bullet},\new{c_{Y_{\bullet}}})) = 0$.
\new{However, $d_{GH}((X_{\bullet},\new{c_{X_{\bullet}}}), (Y_{\bullet},\new{c_{Y_{\bullet}}})) = 0$ does not imply that $(X_{\bullet},\new{c_{X_{\bullet}}}) \isom (Y_{\bullet},\new{c_{Y_{\bullet}}})$.
For example, consider 
$X_a = \varnothing$ if $a <0$ and $X_a = *$ if $a \geq 0$, and 
$Y_a = \varnothing$ if $a \leq 0$ and $Y_a = *$ if $a > 0$.
}

\begin{theorem} \label{thm:gh-metric}
The Gromov-Hausdorff distance is an extended pseudometric on 
\new{any set of
$\R$-filtrations of} closure spaces.
\end{theorem}

\new{
\begin{proof}
    Let $X,Y \in \cat{F_{\R} Cl}$. 
    As observed in \cref{sec:filtrations}, a choice of colimits of $X$ and $Y$ determines $\R$-filtered closure spaces $X'$ and $Y'$, with $X' \isom X$ and $Y' \isom Y$.
    Define $d_{GH}(X,Y) = d_{GH}(X',Y')$, which is well defined
    since $d_{GH}$ is an isomorphism invariant.
\end{proof}
}

\begin{proposition} \label{prop:gh-dinfty}
Let $(X,c) \in \cat{Cl}$ and let $f,g:X \to \R$. Then $d_{GH}(\Sub(f),\Sub(g)) \leq \frac{1}{2}d_{\infty}(f,g)$.
\end{proposition}

\begin{proof}
 Let $\eps = d_{\infty}(f,g)$.
  Consider the correspondence of $\Sub(f)$ and $\Sub(g)$ given by the diagonal $\Delta = \{(x,x) \ | \ x \in X\} \subset X \times X$.
  By assumption, for all $t$, $\Sub(f)_t \subset \Sub(g)_{t+\eps}$ and $\Sub(g)_t \subset \Sub(f)_{t+\eps}$.
  Furthermore, the only function $f:X \to X$ subordinate to $\Delta$ is the identity function.
  It follows that $\Delta$ is an $\eps$-correspondence.
\end{proof}

The following example shows that the inequality in the previous proposition may be strict.

\begin{example}  Let $(X,c) = (\{0,1\},c_{\bot})$.
  Let $f,g:X \to \R$ be given by $f(i) = i$ and $g(i) = 1-i$, respectively.
  Then $d_{\infty}(f,g) = 1$.
  Let $C = \{(0,1),(1,0)\} \subset X \times X$.
  We will use \cref{def:eps-multivalued-map} to
  show that $C$ is a $0$-correspondence.
  Note that $\Sub(f)_t = \varnothing$ if $t< 0$, $\Sub(f)_t = \{0\}$ if $0 \leq t < 1$ and $\Sub(f)_t = X$ if $t \geq 1$.
  Similarly, $\Sub(g)_t = \varnothing$ if $t< 0$, $\Sub(g)_t = \{1\}$ if $0 \leq t < 1$ and $\Sub(g)_t = X$ if $t \geq 1$.
  Thus, if $xCy$ then $x \in \Sub(g)_t$ iff $y \in \Sub(f)_t$.
  The only function $h:X \to X$ subordinate to $C$ is given by $h(0)=1$ and $h(1)=0$.
  Therefore for all $A \subset X$, $C(A) = h(A)$ and $C^T(A) = h(A)$.
  Thus $C$ is a $0$-correspondence. 
  Hence $2 d_{GH}(\Sub(f),\Sub(g)) = 0 < d_{\infty}(f,g)=1$.
\end{example}

Recall (\cref{prop:embeddings-R}) that $\cat{Met}$ is a full subcategory of $\cat{F_{\new{\R}}^{\new{\subset}} Cl}$ given by mapping a metric space $(X,d)$ to $(X_{\bullet},c_{\bullet,d})$, where
for $t < 0$, $X_t = \varnothing$ and for $t \geq 0$, $X_t = X$ and
for $A \subset X$, $c_{t,d}(A) = \{x \in X \ | \ d(a,x) \leq t \text{ for some } a \in A \}$.

\begin{lemma}
  Let $(X,d),(Y,e) \in \cat{Met}$.
  Then $(X,c_{\bullet,d}), (Y,c_{\bullet,e}) \in \cat{F_{\new{\R}}^{\new{\subset}} Cl}$.
  Let $C: X \rightrightarrows Y$ and
  let $\eps \geq 0$.
  Then $C$ is a metric $\eps$-multivalued map iff $C$ is an $\eps$-multivalued map.
\end{lemma}

\begin{proof}
    $(\Rightarrow)$ 
  Since 
  $X_t \neq \varnothing$ implies that $t \geq 0$
and hence $Y_{t+\eps}=Y$, 
  the first condition of \cref{def:eps-multivalued-map} is trivial.
  For the second condition, let $A \subset X$ and $f:X \xto{C} Y$.
  Let $y\in C (c_{t,d}(A))$. Then,
    $ \exists a \in A, \exists x \in X, d(a,x) \leq t, xCy$.
  Since $aCf(a)$, it follows that $e(f(a),y) \leq d(a,x) + \eps \le t + \eps$ and thus $y\in c_{t+\eps,e} (f(A))$.
  Therefore $C (c_{t,d} (A)) \subset c_{t+\eps,e} (f(A))$.

 $(\Leftarrow)$
  Assume $xCy$, $x'Cy'$. Let $t = d(x,x')$ and let $S = \{(x,y)\} \subset C$.
  Since $x' \in c_{t,d} (\pi_{\new{1}}(S))$, we have that $y' \in C (c_{t,d} (\pi_{X} S))$, which by \cref{lem:eps-multivalued-map} is contained in $c_{t+\eps,e} (\pi_{\new{2}} S) = c_{t+\eps,e}(y)$.
  Therefore $e(y,y') \leq t+\eps$.
\end{proof}

It follows that restricted to metric spaces our definition of Gromov-Hausdorff distance for $\new{\R}$-filtered closure spaces, \cref{def:GH-FRCl}, agrees with the definition of the Gromov-Hausdorff distance for metric spaces, \cref{def:GH-metric}. 
\new{We define a \emph{Lawvere metric space} to be a category enriched over the monoidal poset $(([0,\infty],\geq),+,0)$~\cite{lawvere:metric}.}

\begin{theorem} \label{thm:gh}
The full embedding, $(\cat{Met},d_{GH})\to (\cat{F_{\new{\R}} Cl},d_{GH})$, 
\new{of Lawvere metric spaces},
is an isometric embedding.
\end{theorem}

\begin{proposition}
  Let $\eps \geq 0$.
  If $(X_{\bullet},\new{c_{X_{\bullet}}}), (Y_{\bullet},\new{c_{Y_{\bullet}}}) \in \cat{F_{\new{\R}}^{\new{\subset}} Cl}$ and
 $C:(X_{\bullet},\new{c_{X_{\bullet}}}) \rightrightarrows (Y_{\bullet},\new{c_{Y_{\bullet}}})$ is an $\eps$-multivalued map then
$C:(X_{\bullet},\qd(\new{c_{X_{\bullet}}})) \rightrightarrows (Y_{\bullet},\qd(\new{c_{Y_{\bullet}}}))$ is an $\eps$-multivalued map.
Furthermore,
  if $(X_{\bullet},\new{c_{X_{\bullet}}}), (Y_{\bullet},\new{c_{Y_{\bullet}}}) \in \cat{F_{\new{\R}}^{\new{\subset}} Cl_{A}}$ and
 $C:(X_{\bullet},\new{c_{X_{\bullet}}}) \rightrightarrows (Y_{\bullet},\new{c_{Y_{\bullet}}})$ is an $\eps$-multivalued map 
then
$C:(X_{\bullet},s(\new{c_{X_{\bullet}}})) \rightrightarrows (Y_{\bullet},s(\new{c_{Y_{\bullet}}}))$ is an $\eps$-multivalued map.
\end{proposition}

\begin{proof}
  The first statement follows from specializing the second condition in \cref{def:eps-multivalued-map} to $x \in X_t$.
  By \cref{lem:eps-multivalued-map}, we may write this specialized second condition as $x \in X_t$, $xCy$, $x' \in \new{c_{X_t}}(x)$, $x'Cy'$ implies that $y' \in \new{c_{Y_{t+\eps}}}(y)$. \new{By specializing this observation to those $x'\in c_{X_t}(x)$ such that $x\in c_{X_t}(x')$, we have that}
  $x \in X_t$, $xCy$, $x' \in s(\new{c_{X_t}}(x))$, $x'Cy'$ implies that $y' \in s(\new{c_{Y_{t+\eps}}}(y))$.
  By \cref{lem:eps-multivalued-map}, we obtain the desired result.
\end{proof}

\begin{corollary} \label{cor:dGH-inequalities}
  Let $(X_{\bullet},\new{c_{X_{\bullet}}}), (Y_{\bullet},\new{c_{Y_{\bullet}}}) \in 
  \cat{F_{\new{\R}} Cl}$.
  Then
  \begin{gather*}
  d_{GH}((X_{\bullet},\qd(\new{c_{X_{\bullet}}})),(Y_{\bullet},\qd(\new{c_{Y_{\bullet}}}))) \leq
  d_{GH}((X_{\bullet},\new{c_{X_{\bullet}}}), (Y_{\bullet},\new{c_{Y_{\bullet}}})),\textnormal{ and}\\
  d_{GH}((X_{\bullet},s(\qd(\new{c_{X_{\bullet}}}))), (Y_{\bullet},s(\qd(\new{c_{Y_{\bullet}}})))) \leq
  d_{GH}((X_{\bullet},\qd(\new{c_{X_{\bullet}}})), (Y_{\bullet},\qd(\new{c_{Y_{\bullet}}}))).
  \end{gather*}
\end{corollary}

\subsection{Correspondences, 
interleaving, \new{and stability}}
\label{subsection:correspondences_homotopy}
We connect elementary $(J_{\top},\times)$ homotopies, $\eps$-correspondences, and contiguous simplicial maps, 
\new{to obtain stability theorems}.

\begin{lemma} \label{lem:graph-homomorphism}
    Let $(X_{\bullet},\new{c_{X_{\bullet}}}), (Y_{\bullet},\new{c_{Y_\bullet}}) \in \cat{F_{\new{\R}}^{\new{\subset}} Cl}$.
  Let $C: (X_{\bullet},\new{c_{X_{\bullet}}}) \rightrightarrows (Y_{\bullet},\new{c_{Y_{\bullet}}})$ be an $\eps$-multivalued map.
  Let $f:X \xto{C} Y$.
  Then for all $t$, $f|_{X_t}: (X_t,\new{c_{X_t}}) \to (Y_{t+\eps},\new{c_{Y_{t+\eps}}})$ is  continuous.
\end{lemma}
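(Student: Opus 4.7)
The plan is straightforward: apply the two conditions in \cref{def:eps-multivalued-map} directly to the single-valued map $f$ subordinate to $C$.

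First I would verify that $f$ actually lands in the right target, i.e.\ that $f(X_t)\subset Y_{t+\eps}$. For any $x\in X_t$, subordination gives $xCf(x)$, so by the first condition of \cref{def:eps-multivalued-map} we get $f(x)\in Y_{t+\eps}$. Thus $f|_{X_t}$ makes sense as a set map $X_t\to Y_{t+\eps}$.

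Next, I would establish the continuity condition. Let $A\subset X_t$. Since $c_t$ is a closure operation on $X_t$, $c_t(A)\subset X_t$. Applying the second condition of \cref{def:eps-multivalued-map} to the map $f$ (which is subordinate to $C$) and the subset $A\subset X_t$, we get
\[
C(c_t(A))\subset d_{t+\eps}(f(A)).
\]
Because $f$ is subordinate to $C$, for every $x'\in c_t(A)$ we have $(x',f(x'))\in C$, hence $f(x')\in C(c_t(A))$. This gives $f(c_t(A))\subset C(c_t(A))$, and combining with the previous inclusion yields
\[
f(c_t(A))\subset d_{t+\eps}(f(A)),
\]
which is precisely the continuity of $f|_{X_t}:(X_t,c_t)\to(Y_{t+\eps},d_{t+\eps})$.

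There is no substantial obstacle here; the lemma is essentially an unpacking of \cref{def:eps-multivalued-map}, and the only small thing to notice is that the closure $c_t(A)$ of $A\subset X_t$ remains inside $X_t$ so the second defining condition of an $\eps$-multivalued map applies.
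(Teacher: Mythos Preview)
Your proof is correct and follows essentially the same approach as the paper: first use condition (1) of \cref{def:eps-multivalued-map} to see $f(x)\in Y_{t+\eps}$, then use condition (2) together with $f(c_t(A))\subset C(c_t(A))$ to obtain continuity. The paper's version is just a more compressed one-line chain of inclusions.
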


\begin{proof}
  Let $x \in X_t$. Since $xCf(x)$, by \cref{def:eps-multivalued-map} we have that $f(x) \in Y_{t+\eps}$.
  Let $A \subset X_t$. Then, by \cref{def:eps-multivalued-map}, $f (c_t (A)) \subset C (\new{c_{X_t}} (A)) \subset \new{c_{Y_{t+\eps}}} (f(A))$.
\end{proof}

\begin{proposition} \label{prop:one-step-J1-homotopic}
  Let $(X_{\bullet},\new{c_{X_{\bullet}}}), (Y_{\bullet},\new{c_{Y_{\bullet}}}) \in \cat{F_{\new{\R}}^{\new{\subset}} Cl}$.
  Let $C: (X_{\bullet},\new{c_{X_{\bullet}}}) \rightrightarrows (Y_{\bullet},\new{c_{Y_{\bullet}}})$ be an $\eps$-multivalued map.
  For all $f$, $g$ subordinate to $C$, and for all $t \in \R$, $f|_{X_t},g|_{X_t}: (X_t,\new{c_{X_t}}) \to (Y_{t+ \eps}, \new{c_{Y_{t + \eps}}})$ are one-step $(J_{\top},\times)$-homotopic.
\end{proposition}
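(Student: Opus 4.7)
The plan is to apply the characterization of one-step $(J_1,\times)$-homotopy given in \cref{lem:J1times-homotopy}. That lemma tells us that $f|_{X_t}$ and $g|_{X_t}$ are one-step $(J_1,\times)$-homotopic iff, for every $A \subset X_t$,
\[
f(c_t(A)) \cup g(c_t(A)) \subset d_{t+\eps}(f(A)) \cap d_{t+\eps}(g(A)).
\]
So the goal reduces to verifying this set inclusion, after first checking that both restrictions are continuous maps of closure spaces (so that \cref{lem:J1times-homotopy} applies).

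First I would invoke \cref{lem:graph-homomorphism} on each of $f$ and $g$ (both are subordinate to $C$) to conclude that $f|_{X_t}, g|_{X_t}: (X_t,c_t)\to (Y_{t+\eps},d_{t+\eps})$ are continuous. Next I would fix an arbitrary $A \subset X_t$ and exploit that $f$ and $g$ are both subordinate to $C$: by definition of subordination, $f(A) \subset C(A)$ and $g(A) \subset C(A)$, and likewise $f(c_t(A)) \cup g(c_t(A)) \subset C(c_t(A))$. Then I would apply the second condition of the $\eps$-multivalued map definition (\cref{def:eps-multivalued-map}) twice — once with the subordinate map $f$ and once with $g$ — to obtain
\[
C(c_t(A)) \subset d_{t+\eps}(f(A)) \quad\text{and}\quad C(c_t(A)) \subset d_{t+\eps}(g(A)).
\]
Combining these gives $C(c_t(A)) \subset d_{t+\eps}(f(A)) \cap d_{t+\eps}(g(A))$, and combining with the containment $f(c_t(A)) \cup g(c_t(A)) \subset C(c_t(A))$ yields the inequality required by \cref{lem:J1times-homotopy}.

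There is no real obstacle here — the statement is essentially a direct unpacking of the definitions, with the key conceptual point being that the same $\eps$-multivalued map $C$ dominates both choices of subordinate single-valued map, which is precisely what allows $f$ and $g$ to be linked through a common ``thickening'' $C(c_t(A))$ inside $d_{t+\eps}(f(A)) \cap d_{t+\eps}(g(A))$. The only mildly subtle point is remembering that the second clause of \cref{def:eps-multivalued-map} is stated per choice of subordinate map $f:X\xto{C}Y$, so one must apply it separately to $f$ and to $g$ rather than trying to produce the intersection in one shot.
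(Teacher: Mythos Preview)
Your proof is correct and follows essentially the same approach as the paper: both invoke \cref{lem:graph-homomorphism} for continuity and then verify the criterion of \cref{lem:J1times-homotopy} via the second clause of \cref{def:eps-multivalued-map}. Your organization is in fact slightly tidier, bundling all four inclusions through the single chain $f(c_t(A))\cup g(c_t(A))\subset C(c_t(A))\subset d_{t+\eps}(f(A))\cap d_{t+\eps}(g(A))$, whereas the paper separately derives the ``diagonal'' inclusions from continuity and the ``cross'' inclusions from the $\eps$-multivalued map condition.
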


\begin{proof}
  By \cref{lem:graph-homomorphism}, $f|_{X_t},g|_{X_t}: (X_t,\new{c_{X_t}}) \to (Y_{t+ \eps}, \new{c_{Y_{t + \eps}}}) \in \cat{Cl}$.
Let $A \subset X_t$. By the continuity of $f|_{X_t}$ and $g|_{X_t}$, we have $\new{f|_{X_t}}(\new{c_{X_t}}(A)) \subset \new{c_{Y_{t+\eps}}}(\new{f|_{X_t}}(A))$ and $\new{g|_{X_t}}(\new{c_{X_t}}(A)) \subset \new{c_{Y_{t+\eps}}}(\new{g|_{X_t}}(A))$.
  Since $C$ is an $\eps$-multivalued map, we also have $\new{g|_{X_t}} (\new{c_{X_t}} (A)) \subset C (\new{c_{X_t}} (A)) \subset \new{c_{Y_{t+\eps}}} (\new{f|_{X_t}} (A))$ and
  similarly $\new{f|_{X_t}} (\new{c_{X_t}} (A)) \subset \new{c_{Y_{t+\eps}}} (\new{g|_{X_t}} (A))$. \new{Combining all these observations we have that $f|_{X_t}(c_{X_t}(A))\cup g|_{X_t}(c_{X_t}(A))\subset c_{Y_{t+\eps}}(f|_{X_t}(A))\cap c_{Y_{t+\eps}}(g|_{X_t}(A))$. Therefore, by}
   \cref{lem:J1times-homotopy}, $f|_{X_t}$ and $g|_{X_t}$ are one-step $(J_{\top},\times)$-homotopic.  
\end{proof}



\begin{definition}
    Let $(X_{\bullet},\new{c_{X_{\bullet}}}), (Y_{\bullet},\new{c_{Y_{\bullet}}}) \in \cat{F_{\new{\R}}^{\new{\subset}} Cl}$.
    Say that $(X_{\bullet},\new{c_{X_{\bullet}}})$ and $(Y_{\bullet},\new{c_{Y_{\bullet}}})$ are
    \emph{$\eps$ one-step $(J_{\top},\times)$-homotopy interleaved} if there exist
  $f: (X_{\bullet},\new{c_{X_{\bullet}}}) \to (Y_{\bullet},\new{c_{Y_{\bullet + \eps}}})$ and
$g: (Y_{\bullet},\new{c_{Y_{\bullet}}}) \to (X_{\bullet},\new{c_{X_{\bullet + \eps}}})$ such that
for all $t$,
$g|_{Y_{t+\eps}} \circ f|_{X_t}$ is one-step $(J_{\top},\times)$ homotopic \new{to the map} $X_t \incl X_{t+2\eps}$ and
$f|_{X_{t+\eps}} \circ g|_{Y_t}$ is one-step $(J_{\top},\times)$ homotopic \new{to the map} $Y_t \incl Y_{t+2\eps}$.
\end{definition}

\new{
\begin{definition}
    Let $J$ be an interval for the product operation $\otimes$.
    Let $\cat{D}$ be a category.
    Say that a functor $H: \cat{Cl} \to \cat{D}$ is \emph{$(J,\otimes)$-homotopy invariant} if 
    $f \sim_{(J,\otimes)} g$ implies that $H(f) = H(g)$.
\end{definition}
}

\new{By functoriality, if $X,Y \in \cat{Cl}$ are $(J,\otimes)$-homotopy equivalent and $H$ is a $(J,\otimes)$-homotopy invariant functor, then $H(X) \isom H(Y)$.}

\new{The following is the main result of this section.}

\new{
\begin{theorem}[Sharp Stability Theorem]
    \label{thm:stability-new}
     Let $(X_{\bullet},c_{X_{\bullet}}), (Y_{\bullet},c_{Y_{\bullet}}) \in \cat{F_{\R}^{\subset} Cl}$ and let $\eps \geq 0$. 
     Suppose there is an $\eps$-correspondence $C: (X_{\bullet},c_{X_{\bullet}}) \rightrightarrows (Y_{\bullet},c_{Y_{\bullet}})$. 
    Then $(X_{\bullet},c_{X_{\bullet}})$ and $(Y_{\bullet},c_{Y_{\bullet}})$ are $\eps$ one-step $(J_{\top},\times)$-homotopy interleaved.
    Furthermore, if $H:\cat{Cl} \to \cat{D}$ is a $(J_{\top},\times)$-homotopy invariant functor, 
    then the persistence modules  
    $H(X_{\bullet},c_{X_{\bullet}})$ and
    $H(Y_{\bullet},c_{Y_{\bullet}})$
    are $\eps$-interleaved.    
\end{theorem}
}

\begin{proof}
  Let  $C: (X_{\bullet},\new{c_{X_{\bullet}}}) \rightrightarrows (Y_{\bullet},\new{c_{Y_{\bullet}}})$ be \new{an} $\eps$-correspondence.
  By \cref{lem:graph-homomorphism} there exist $f:X \xto{C} Y$ and $g:Y \xto{C^T} X$ such that
  $f:(X_{\bullet},\new{c_{X_{\bullet}}}) \to (Y_{\bullet + \eps}, \new{c_{Y_{\bullet + \eps}}})$ and
  $g:(Y_{\bullet},\new{c_{Y_{\bullet}}}) \to (X_{\bullet + \eps}, \new{c_{X_{\bullet + \eps}}})$.
  Since the composition $g \circ f: (X_{\bullet},\new{c_{X_{\bullet}}}) \to (X_{\bullet+2\eps},\new{c_{X_{\bullet+2\eps}}})$ given by $(g \circ f)_t = g_{t+\eps} \circ f_t$ and the inclusion map $(X_{\bullet},\new{c_{X_{\bullet}}}) \incl (X_{\bullet+2\eps},\new{c_{X_{\bullet+2\eps}}})$ are both subordinate to $C^T \circ C$ 
  and similarly the composition $f \circ g: (Y_{\bullet},\new{c_{Y_{\bullet}}}) \to (Y_{\bullet+2\eps},\new{c_{Y_{\bullet+2\eps}}})$ given by $(f \circ g)_t = f_{t+\eps} \circ g_t$ and the inclusion map $(Y_{\bullet},\new{c_{Y_{\bullet}}}) \incl (Y_{\bullet+2\eps},\new{c_{Y_{\bullet+2\eps}}})$ are both subordinate to $C \circ C^T$\new{. By} \cref{prop:one-step-J1-homotopic} we have that
  $(X_{\bullet},\new{c_{X_{\bullet}}})$ and $(Y_{\bullet},\new{c_{Y_{\bullet}}})$ are $\eps$ one-step $(J_{\top},\times)$-homotopy interleaved.  

  \new{Applying a $(J_{\top},\times)$-homotopy invariant functor $H$, $H(f)$ and $H(g)$ give an $\eps$-interleaving of the persistence modules $H(X_{\bullet},c_{X_{\bullet}})$ and $H(Y_{\bullet},c_{Y_{\bullet}})$.}
\end{proof}

\new{Applying the algebraic stability theorem for q-tame persistence modules~\cite{Bauer:2015,cdsgo:book}, we have the following.}

\new{
\begin{corollary}[Matching Theorem] 
\label{prop:stability2}
  Let $(X_{\bullet},c_{X_{\bullet}}), (Y_{\bullet},c_{Y_{\bullet}}) \in \cat{F_{\R}^{\subset} Cl}$ and let $\eps \geq 0$.
  Suppose that for every $\delta > 0$, there is an $(\eps + \delta)$-correspondence from $(X_{\bullet},c_{X_{\bullet}})$ to $(Y_{\bullet},c_{Y_{\bullet}})$.
  Let $H:\cat{Cl} \to \cat{Vect}$ be a $(J_{\top},\times)$-homotopy invariant functor
  and suppose that $H(X_{\bullet},c_{X_{\bullet}})$ and $H(Y_{\bullet},c_{Y_{\bullet}})$ are
  q-tame.
  Then there exists an $\eps$-matching between
  $D(H(X_{\bullet},c_{X_{\bullet}}))$ and $D(H(Y_{\bullet},c_{Y_{\bullet}}))$,
  where $D(M)$ denotes the persistence diagram of the q-tame persistence module $M$~\cite{Bauer:2015,cdsgo:book}.
\end{corollary}
}

\new{By \cref{cor:homotopy-poset}, if a functor $H: \cat{Cl} \to \cat{D}$ is homotopy invariant for one of $(J_+,\times)$, $(J_{\top},\boxplus)$, $(I_{\tau},\times)$, $(J_+,\boxplus)$, or $(I_{\tau},\boxplus)$, then $H$ is also $(J_{\top},\times)$-homotopy invariant.
Therefore we can apply \cref{thm:stability-new} and
\cref{prop:stability2}.}

\new{
\begin{example} \label{rem:homotopy-invariant-functor}
 Let $H$ denote one of our cubical or simplicial singular homology theories and let $j \geq 0$.
 If there is an $\eps$-correspondence between the $\R$-filtered closure spaces $(X_{\bullet},c_{X_{\bullet}})$ and $(Y_{\bullet},c_{Y_{\bullet}})$ then 
 by \cref{thm:stability-new}, 
 the persistence modules $H_j(X_{\bullet},c_{X_{\bullet}})$ and $H_j(Y_{\bullet},c_{Y_{\bullet}})$ are $\eps$-interleaved.
If in addition the coefficients are in a field and the persistence modules are q-tame, then by \cref{prop:stability2}, there is an $\eps$-matching between their persistence diagrams.
\end{example}
}



As a direct consequence of \cref{thm:stability-new} we have the following.

\begin{theorem}[Stability Theorem] \label{thm:stability}
  Let $(X_{\bullet},\new{c_{X_{\bullet}}}), (Y_{\bullet},\new{c_{Y_{\bullet}}}) \in \cat{F_{\new{\R}} Cl}$.
  \new{Let $H:\cat{Cl} \to \cat{D}$ be a $(J_{\top},\times)$-homotopy invariant functor.}
Then
  \begin{equation*}
    d_{I}(H(X_{\bullet},\new{c_{X_{\bullet}}}), H(Y_{\bullet},\new{c_{Y_{\bullet}}})) \leq 2 d_{GH}((X_{\bullet},\new{c_{X_{\bullet}}}), (Y_{\bullet},\new{c_{Y_{\bullet}}}) ),
  \end{equation*}
where $d_I$ denotes the interleaving distance.
\end{theorem}

As a special case, \cref{thm:stability} strengthens \cref{thm:sublevelset-stability}, by \cref{prop:gh-dinfty}.
As a direct consequence of \cref{prop:stability2} we have the following.

\begin{corollary}[Bottleneck Stability Theorem] \label{thm:stability-bottleneck}
  Let $(X_{\bullet},\new{c_{X_{\bullet}}}), (Y_{\bullet},\new{c_{Y_{\bullet}}}) \in \cat{F_{\new{\R}} Cl}$.
  \new{Let $H:\cat{Cl} \to \cat{Vect}$ be a $(J_{\top},\times)$-homotopy invariant functor.}
If $H(X_{\bullet},\new{c_{X_{\bullet}}})$ and $H(Y_{\bullet},\new{c_{Y_{\bullet}}})$ are
q-tame 
  then 
  \begin{equation*}
    d_{B}(D(H(X_{\bullet},\new{c_{X_{\bullet}}})), D(H(Y_{\bullet},\new{c_{Y_{\bullet}}}))) \leq 2 d_{GH}((X_{\bullet},\new{c_{X_{\bullet}}}), (Y_{\bullet},\new{c_{Y_{\bullet}}}) ),
  \end{equation*}
  where $D(M)$ denotes the persistence diagram of the q-tame persistence module $M$ and $d_B$ denotes the bottleneck distance.
\end{corollary}

\new{
\begin{example}
    By \cref{rem:homotopy-invariant-functor}, \cref{thm:stability}   applies to our  cubical and simplicial singular homology functors, and if we use coefficients in a field then we may apply \cref{thm:stability-bottleneck}.
\end{example}
}

\begin{definition}
  Let $(X_{\bullet},E_{\bullet}), (Y_{\bullet},F_{\bullet}) \in \cat{F_{\new{\R}}^{\new{\subset}} Simp}$.
  Say that $(X_{\bullet},E_{\bullet})$ and $(Y_{\bullet},F_{\bullet})$ are
  \emph{$\eps$-contiguity interleaved} if there exist
  $f: (X_{\bullet},E_{\bullet}) \to (Y_{\bullet},F_{\bullet + \eps})$ and
$g: (Y_{\bullet},F_{\bullet}) \to (X_{\bullet},E_{\bullet + \eps})$ such that
for all $t$,
$g|_{Y_{t+\eps}} \circ f|_{X_t}$ is contiguous \new{to} $X_t \incl X_{t+2\eps}$ and
$f|_{X_{t+\eps}} \circ g|_{Y_t}$ is contiguous \new{to} $Y_t \incl Y_{t+2\eps}$.
\end{definition}

\new{By \cref{proposition:homotopy_and_contiguity},
we have the following.}

\new{
\begin{proposition} \label{prop:stability-new}
    Let $S \in \{\VR,\Cech,\Cech^{\transpose}\}$.
    If $(X_{\bullet},c_{X_{\bullet}}), (Y_{\bullet},c_{Y_{\bullet}}) \in \cat{F_{\R}^{\subset} Cl}$
    are $\eps$-one-step $(J_{\top},\times)$-homotopy interleaved then
    $(X_{\bullet},S(c_{X_{\bullet}})), (Y_{\bullet},S(c_{Y_{\bullet}})) \in \cat{F_{\R}^{\subset} Simp}$
    are $\eps$-contiguity interleaved.
    Conversely, if
    $(X_{\bullet},E_{\bullet}), (Y_{\bullet},F_{\bullet}) \in \cat{F_{\R}^{\subset} Simp}$
    are $\eps$-contiguity interleaved then $(X_{\bullet},\St(E_{\bullet})), (Y_{\bullet},\St(F_{\bullet})) \in \cat{F_{\R}^{\subset} Cl}$
    are $\eps$-one-step $(J_{\top},\times)$-homotopy interleaved.
\end{proposition}
}

\new{
\begin{example} \label{ex:simplicial-homology-functor}
    Let $S \in \{\VR,\Cech,\Cech^{\transpose}\}$.
    Let $H$ denote simplicial homology and let $j \geq 0$.
    If there is an $\eps$-correspondence between $(X_{\bullet},c_{X_{\bullet}})$ and $(Y_{\bullet},c_{Y_{\bullet}})$ then 
    by \cref{prop:stability-new} and \cref{thm:stability-new},
    the persistence modules $H_j(X_{\bullet},S(c_{X_{\bullet}}))$ and $H_j(Y_{\bullet},S(c_{Y_{\bullet}}))$
    are $\eps$-interleaved.
    If in addition the coefficients are in a field and the persistence modules are q-tame, then by \cref{prop:stability2}, there is an $\eps$-matching between their persistence diagrams.
\end{example}
}

\new{
By \cref{ex:simplicial-homology-functor}, we may apply \cref{thm:stability} and \cref{thm:stability-bottleneck} to obtain the following.
}
 

\begin{theorem}[Rips and \v{C}ech Stability Theorem]  \label{thm:stability-rips}
  Let $(X_{\bullet},\new{c_{X_{\bullet}}}), (Y_{\bullet},\new{c_{Y_{\bullet}}}) \in \cat{F_{\new{\R}} Cl}$.
  Let \new{$S \in \{\VR, \Cech, \Cech^{\transpose}\}$}.
  Let $H$ denote simplicial homology and let $j \geq 0$. Then
  \begin{multline*}
    d_{I}(H_j(X_{\bullet},S(\new{c_{X_{\bullet}}})), H_j(Y_{\bullet},S(\new{c_{Y_{\bullet}}})))
    = d_{I}(H_j(X_{\bullet},S(A(\new{c_{X_{\bullet}}}))), H_j(Y_{\bullet},S(A(\new{c_{Y_{\bullet}}}))))\\
    \leq 2 d_{GH} ((X_{\bullet},S(A(\new{c_{X_{\bullet}}}))), (Y_{\bullet},S(A(\new{c_{Y_{\bullet}}}))))
     \leq 2 d_{GH} ((X_{\bullet},S(\new{c_{X_{\bullet}}})), (Y_{\bullet},S(\new{c_{Y_{\bullet}}}))).
  \end{multline*}
  If coefficients are in a field and the persistence modules are 
  q-tame then 
  \begin{gather*}
    d_{B}(D(H_j(X_{\bullet},S(\new{c_{X_{\bullet}}}))), D(H_j(Y_{\bullet},S(\new{c_{Y_{\bullet}}}))))
    \leq 2 d_{GH} ((X_{\bullet},S(A(\new{c_{X_{\bullet}}}))), (Y_{\bullet},S(A(\new{c_{Y_{\bullet}}})))).
    \end{gather*}
  \end{theorem}

\subsection*{Acknowledgments}

This research was partially supported by the Southeast Center for Mathematics and Biology, an NSF-Simons Research Center for Mathematics of Complex Biological Systems, under National Science Foundation Grant No. DMS- 1764406 and Simons Foundation Grant No. 594594.
This material is based upon work supported by, or in part by, the Army Research Laboratory and the Army Research Office under contract/grant number W911NF-18-1-0307.
\new{Most of this work was part of the second author's PhD dissertation under the direction of the first author in the Department of Mathematics at the University of Florida~\cite{Milicevic:thesis}.}
\new{The first author thanks the Mathematical Institute at the University of Oxford for hosting him while he was editing this manuscript.}
We thank Jamie Scott for suggesting the use of concatenation, 
\new{and we thank the referee for their many helpful comments which improved our paper. Finally, we thank Antonio Rieser for helping us strengthen the presentation of our stability results.}

\vspace{1em}

On behalf of all authors, the corresponding author states that there is no conflict of interest.

\printbibliography
\end{document}